\def\timenow{\@tempcnta\time
  \@tempcntb\@tempcnta
  \divide\@tempcntb60
  \ifnum10>\@tempcntb0\fi\number\@tempcntb
  \multiply\@tempcntb60
  \advance\@tempcnta-\@tempcntb
  :\ifnum10>\@tempcnta0\fi\number\@tempcnta}
\newtheorem{theo}{Theorem}[section]
\newtheorem{prop}[theo]{Proposition}
\newtheorem{lemme}[theo]{Lemma}
\newtheorem{conj}[theo]{Conjecture}
\newtheorem{remarque}[theo]{Remark}
\newtheorem{fact}[theo]{Fact}
\def\tA2{\tilde{A}({\tilde L_2})}
\def \tts2{\tilde \tau^+_2(  h_t/2 ))}
\title{Almost sure behavior for the local time of a diffusion in a spectrally negative L\'evy environment}
\author{Gr\'egoire V\'echambre}
\thanks{NYU-ECNU Institute of Mathematical Sciences at NYU Shanghai, 3663 Zhongshan Road North, Shanghai, 200062, China}
\thanks{E-mail address: ghv2@nyu.edu}
\address{NYU-ECNU Institute of Mathematical Sciences at NYU Shanghai, 3663 Zhongshan Road North, Shanghai, 200062, China}
\email{ghv2@nyu.edu}
\subjclass[2010]{60K37, 60J55, 60G51}
\keywords{Diffusion, random potential, L\'evy process, renewal process, local time, L\'evy processes conditioned to stay positive, exponential functionals.}
\begin{document}

\maketitle

\begin{abstract}
We study the almost sure asymptotic behavior of the supremum of the local time for a transient diffusion in a spectrally negative L\'evy environment. More precisely, we provide the proper renormalizations for the extremely large and the extremely small values of the supremum of the local time. In particular, in the sub-ballistic regime, we link the almost sure behavior of the supremum of the local time with the left asymptotic tail of the exponential functional of the environment conditioned to stay positive, which allows to use the properties of such exponential functionals to characterize the sought behavior. It appears from our results that the renormalization of the extremely large values of the supremum of the local time in the sub-ballistic regime is determined by the asymptotic behavior of the Laplace exponent of the L\'evy environment, and is surprisingly greater than the renormalization that was previously known for the recurrent case. For the renormalization of the extremely small values of the supremum of the local time in the sub-ballistic regime, our results show that there is only one possible renormalization, whatever is the choice of the L\'evy environment. 
\end{abstract}

%\tableofcontents

\pagestyle{myheadings}
\markboth{Right}{Almost sure behavior for the local time of a diffusion in a spectrally negative L\'evy environment}

\section{Introduction}

We study the almost sure asymptotic behavior of the supremum of the local time for a transient diffusion in a spectrally negative L\'evy environment. Let $(V(x), \ x \in \mathbb{R})$ be a spectrally negative L\'evy process on $\mathbb{R}$ which is not the opposite of a subordinator (in particular, $V$ can not be a compound Poisson process), drifts to $-\infty$ at $+\infty$, and such that $V(0)=0$. We denote its Laplace exponent by $\Psi_V$: 
\[ \forall t, \lambda \geq 0, \ \mathbb{E} \left [ e^{\lambda V(t)} \right ] = e^{t \Psi_V(\lambda)}. \]
It is well-known, for such $V$, that $\Psi_V$ admits a non trivial zero that we denote here by $\kappa$, $\kappa := \inf \{ \lambda > 0, \ \Psi_V(\lambda) = 0 \} > 0$. 

We are here interested in a diffusion in this potential $V$. Such a diffusion $(X(t),\ t\geq 0)$ is defined informally by $X(0)=0$ and
\[ \text{d}X(t)= \text{d}\beta(t)-\frac{1}{2}V'(X(t))\text{d}t, \]
where $\beta$ is a Brownian motion independent from $V$. Rigorously, $X$ is defined by its conditional generator given $V$, 
\begin{align*} & \frac{1}{2}e^{V(x)}\frac{\text{d}}{\text{d} x}\left(e^{-V(x)}\frac{\text{d}}{\text{d} x}\right). 
\end{align*}

{Diffusions in random potentials have been introduced, in the case of a brownian potential, by Schumacher \cite{Schumacher} and Brox \cite{Brox}. They are generally considered as the continuous time analogue of \textit{Random Walks in Random Environments} (RWRE), which have many applications in physics and biology (see for example Le Doussal and al. \cite{DoMoFi}). We refer to R\'ev\'esz \cite{Revesz} and Zeitouni \cite{Zeitouni} for general properties of RWRE and to Shi \cite{Shi1} for the embedding of a RWRE into a diffusion in random potential. In the case of a drifted brownian potential, the diffusion $X$ has first been studied by Kawazu, Tanaka \cite{KawazuTanaka} who exhibit three different regimes. These results have then been refined by Kawazu, Tanaka \cite{kawazu1998}, Tanaka \cite{Tanaka19971807}, and Hu, Shi and Yor \cite{HuShYo} who, in the ballistic case, exhibit several possible behaviors and prove theorems of type \textit{central limit}. Large deviations results are obtained in Taleb \cite{Taleb0}, Talet \cite{talet2007}, 
%, and Devulder [20], 
and moderate deviations are given by Hu, Shi \cite{hu2004} in the recurrent case, and by Faraud \cite{Faraud2011} in the transient case. A result on localization and an aging phenomenon are proved in Andreoletti, Devulder \cite{AndDev} for the case of a drifted Brownian potential in the sub-ballistic regime. The diffusion has been studied in the case of a L\'evy potential by Carmona \cite{Carmona}, Cheliotis \cite{Cheliotis2006715} and Singh \cite{Singh}, \cite{Singh2007101}, \cite{Singh2007}. 
%In particular, in \cite{Singh}, the results of Kawazu, Tanaka \cite{KawazuTanaka} are generalized to the case where the potential is a spectrally negative L\'evy process. 
Let us also mention that diffusions in random potentials have been studied in dimension higher than $1$ by Tanaka \cite{tanaka1993} and Mathieu \cite{Mathieu}, \cite{Mathieu2}.}

In our case of a spectrally negative L\'evy potential $V$ drifting to $-\infty$, the fact that $V$ drifts to $- \infty$ puts us in the case where the diffusion $X$ is a.s. transient to the right. The asymptotic behavior of this diffusion has been studied by Singh \cite{Singh}, he distinguishes three main possible behaviors depending on $0 < \kappa < 1$, $\kappa = 1$ or $\kappa > 1$ (the case $\kappa > 1$ being also divided into three sub-cases). We denote by $(\mathcal{L}_X(t, x), t\geq0, x \in \mathbb{R})$ the version of the local time that is continuous in time and c\`ad-l\`ag in space, and we define the supremum of the local time until instant $t$ as
\[ \mathcal{L}_X^*(t) = \sup_{x \in \mathbb{R}} \mathcal{L}_X(t, x). \]
Here, we study the almost sure asymptotic behavior of $\mathcal{L}_X^*(t)$. {The analogous problems for RWRE have attracted much attention, and have been studied, for example, in R\'ev\'esz \cite{Revesz}, Shi \cite{Shi}, Gantert and al. \cite{GanPerShi}, \cite{GanShi}, Hu and al. \cite{HuShi1}, Dembo and al. \cite{DemGanPerShi} and Andreoletti \cite{Pierre3}, \cite{Pierre2}. The local time of processes in random environment plays an important role in the problem of estimating the potential (Comets and al. \cite{Comets_etal}, 
%Adelman and Enriqez \cite{AdeEnr}, NON CAR N'UTILISE PAS LE TL
Andreoletti \cite{Pierre4}), in the study of persistence (see Devulder \cite{devulder2016}) and is useful for the study of related models like random walks in stratified random environment (see Kochler \cite{Kochler}) or processes in random scenery (see Zindy \cite{Zindy2006UpperLO}). When the environment is a Brownian motion with no drift (in this case the diffusion is recurrent), Shi \cite{Shi} has studied the behavior of $\mathcal{L}_X^*(t)$ and shown that}
\begin{eqnarray}
\mathbb{P} \text{-a.s.} \ \limsup_{t \rightarrow +\infty} \frac{\mathcal{L}_X^*(t)}{t \log (\log (\log (t)))} \geq \frac{1}{32}, \label{ltcuriosity}
\end{eqnarray}
where $\mathbb{P}$ is the so-called annealed probability measure whose definition is recalled in Subsection \ref{factnot}. In the same case, Andreoletti and Diel \cite{AndDiel} have proved more recently the convergence in distribution of $\mathcal{L}_X^*(t) /t$ under $\mathbb{P}$. Diel \cite{Diel} has then continued the study by giving a finite upper bound for the $\limsup$ in \eqref{ltcuriosity} and doing the same study for the $\liminf$: 
\begin{eqnarray}
\mathbb{P} \text{-a.s.} \ \limsup_{t \rightarrow +\infty} \frac{\mathcal{L}_X^*(t)}{t \log (\log (\log (t)))} \leq \frac{e^2}{2} \ \ \ \text{and} \ \ \ \frac{j_0^2}{64} \leq \liminf_{t \rightarrow +\infty} \frac{\mathcal{L}_X^*(t)}{t / \log (\log (\log (t)))} \leq \frac{e^2 \pi^2}{4}, \label{resultsdiel}
\end{eqnarray}
where $j_0$ is the smallest positive root of the Bessel function $J_0$. 

For a drifted Brownian environment (in this case the diffusion is transient), {the convergence in distribution of $\mathcal{L}_X^*(t)/t$ under $\mathbb{P}$ has been established by Andreoletti, Devulder and V\'echambre \cite{advech} in the sub-ballistic case $0 < \kappa < 1$ and by Devulder \cite{Devmaxloc} in the case $\kappa \geq 1$. Still in the case of a drifted Brownian environment, the almost sure behavior of $\mathcal{L}_X^*(t)$ has been studied via annealed methods in \cite{Devmaxloc}, where this behavior is totally characterized in the ballistic case $\kappa > 1$.} 
%\begin{align}
%\text{If} \ \kappa=1, \ \mathcal{L}^*_X(t) / t^{1/\kappa} & \overset{\mathcal{L}}{\underset{t \rightarrow + \infty}{\longrightarrow}} \mathcal{F}(1, 1/2), \label{cvloikappa=1} \\
%\text{if} \ \kappa>1, \ \mathcal{L}^*_X(t) / t^{1/\kappa} & \overset{\mathcal{L}}{\underset{t \rightarrow + \infty}{\longrightarrow}} \mathcal{F}(\kappa, 4 (\kappa^2 (\kappa - 1)/8)^{1/\kappa}), \label{cvloikappa>1}
%\end{align}
%where, for $\alpha, s>0$, $\mathcal{F}(\alpha, s)$ is the Fr\'echet distribution with parameters $\alpha$ and $s$, c'est-\`a-dire, la loi de fonction de repartition 
%\begin{eqnarray}
%\mathcal{F}(\alpha, s) \left ( [0, t] \right ) = e^{-(s/t)^{\alpha}}. \label{deffrechet}
%\end{eqnarray}
In this case, for any positive non-decreasing function $a$ we have
\begin{eqnarray}
\sum_{n=1}^{+\infty} \frac{1}{n a(n)} \left\{
\begin{aligned}
& < +\infty \\
& = +\infty \end{aligned} \right. 
\Leftrightarrow  \limsup_{t \rightarrow +\infty} \frac{\mathcal{L}^*_X(t)}{(t a(t))^{1/\kappa}} = \left\{
\begin{aligned}
& 0 \\
& +\infty \end{aligned} \right. 
\mathbb{P} \text{-a.s.}, \label{devlimsupkappa>1}
\end{eqnarray}
and 
\begin{eqnarray}
\mathbb{P} \text{-a.s.} \ \liminf_{t \rightarrow +\infty} \frac{\mathcal{L}^*_X(t)}{(t/ \log (\log(t)))^{1/\kappa}} = 4 (\kappa^2 (\kappa - 1)/8)^{1/\kappa}. \label{devliminfkappa>1} 
\end{eqnarray}

When $\kappa = 1$ he obtains
\[ \mathbb{P} \text{-a.s.} \ \liminf_{t \rightarrow +\infty} \frac{\mathcal{L}^*_X(t)}{t/ \log(t) \log (\log(t))} \leq 1/2. \]

In the sub-ballistic case $0 < \kappa < 1$, his method fails and only provides partial results for the almost sure behavior of the local time. More precisely he proves that the renormalization for the $\limsup$ is greater than $t$: 
\begin{eqnarray}
\mathbb{P} \text{-a.s.} \ \limsup_{t \rightarrow +\infty} \frac{\mathcal{L}_X^*(t)}{t} = +\infty, \label{devpartiellimsup} 
\end{eqnarray}
and that the renormalization for the $\liminf$ is at most $t / \log (\log(t))$ and greater than 

\noindent $t/ (\log(t))^{1/\kappa} (\log (\log(t)))^{(2/\kappa) + \epsilon}$ for any $\epsilon > 0$: 
\begin{align}
& \mathbb{P} \text{-a.s.} \ \liminf_{t \rightarrow +\infty} \frac{\mathcal{L}_X^*(t)}{t / \log (\log(t))} \leq C(\kappa), \label{devpartielliminf1} \\
\forall \epsilon > 0, \ & \mathbb{P} \text{-a.s.} \ \liminf_{t \rightarrow +\infty} \frac{\mathcal{L}^*_X(t)}{t/ (\log(t))^{1/\kappa} (\log (\log(t)))^{(2/\kappa) + \epsilon} } = +\infty, \label{devpartielliminf2}
\end{align}
where $C(\kappa)$ is a positive non explicit constant. The main motivation of the present paper is to provide new methods that allow to determine precisely the almost sure asymptotic behavior of the local time in the case $0 < \kappa < 1$ for general spectrally negative L\'evy environments. 

%Using different methods, we study the general case of a spectrally negative L\'evy environment and we characterizes the almost sure behavior of $\mathcal{L}_X^*(t)$ when $0 < \kappa < 1$ and $\kappa > 1$. 
For the (discrete) transient RWRE, the almost sure behavior of the supremum of the local time has been studied by Gantert and Shi \cite{GanShi}. {They obtain the behavior of the $\limsup$ in the two sub-cases $0 < \kappa \leq 1$ and $\kappa > 1$ but no result has been established so far for the $\liminf$.} 

{For the recurrent diffusion in L\'evy potentials, the convergence in distribution of $\mathcal{L}_X^*(t) /t$ under $\mathbb{P}$ has been established by Diel and Voisin \cite{Dielvois} in the case of a stable L\'evy environment. For the transient diffusion in a general spectrally negative L\'evy potential} $V$, the convergence in distribution of $\mathcal{L}_X^*(t) /t$ under $\mathbb{P}$ has been established by the author in \cite{caslevyvech} using two different methods: when $0 < \kappa < 1$ a path decomposition of the environment that provides an interesting renewal structure to study the diffusion is used, this method is inspired from Andreoletti and al. \cite{advech}, \cite{AndDev} which are themselves inspired from the work of Enriquez and al. \cite{EnSaZi} in the discrete case. When $\kappa > 1$ an equality in law between the local time and a generalized Ornstein-Uhlenbeck process that was introduced in \cite{Singh} is used. The main contribution of the present paper is in the case $0 < \kappa < 1$. {We make an optimized use of the renewal structure of the diffusion, working sometimes in a quenched setting, in order to relate the almost sure asymptotic behavior of $\mathcal{L}_X^*(t)$ with the asymptotic behavior of some functionals of the renewal structure (this requires to overcome the difficulties caused by the "last valley" as explained in Section \ref{sketch} below), and to establish precisely the asymptotic behavior of these functionals}. 
%and related it to the exponential functional of the environment conditioned to stay positive
We characterize the asymptotic almost sure behavior of $\mathcal{L}_X^*(t)$ when $0 < \kappa < 1$ and $\kappa > 1$. In particular, the restriction of our results to the case of a drifted Brownian potential with $0 < \kappa < 1$ considerably improves the results \eqref{devpartiellimsup}, \eqref{devpartielliminf1} and \eqref{devpartielliminf2} of \cite{Devmaxloc} by giving the exact renormalizations for the $\limsup$ and the $\liminf$, and even the exact value of the $\limsup$. We also get an explicit upper bound for the value of the $\liminf$. 

\subsection{Main results} \label{results}

We start with the case $0 < \kappa < 1$. In that case, the limit distribution of $\mathcal{L}_X^*(t)/t$ under $\mathbb{P}$ is given in Theorem 1.3 of \cite{caslevyvech} and it depends on exponential functionals of $V$ and its dual conditioned to stay positive. They are defined as follow: 
\[ I(V^{\uparrow}) := \int_0^{+ \infty} e^{- V^{\uparrow} (t)}dt \ \ \ \text{and} \ \ \ I(\hat V^{\uparrow}) := \int_0^{+ \infty} e^{- \hat V^{\uparrow} (t)}dt, \]
where $\hat V$ denotes the dual of $V$ (it is equal in law to $-V$), and where $V^{\uparrow}$ and $\hat V^{\uparrow}$ denote respectively $V$ and $\hat V$ conditioned to stay positive. In Subsection \ref{toutsurlenv} it is recalled how $V^{\uparrow}$ and $\hat V^{\uparrow}$ are defined rigorously. These functionals are studied by the author in \cite{foncexpovech} where it is proved in Theorems 1.1 and 1.13 that they are indeed finite and well-defined. Let $G_1$ and $G_2$ be two independent random variables with $G_1 \overset{\mathcal{L}}{=}  I(V^{\uparrow})$ and $G_2 \overset{\mathcal{L}}{=}  I(\hat V^{\uparrow})$. We define $\mathcal{R} := G_1 + G_2$. 
%(if $V$ if the $\kappa$-drifted Brownian motion, then note that $V^{\uparrow} \overset{\mathcal{L}}{=}  \hat V^{\uparrow}$, so in this case $\mathcal{R}$ is the sum of two independent copies of $I(V^{\uparrow})$). 

To study the $\limsup$ of $\mathcal{L}_X^*(t)$, we link the almost sur asymptotic behavior of $\mathcal{L}^*_X(t)$ with the left tail of $I(V^{\uparrow})$ (or of $\mathcal{R}$ depending on the case). Before stating our results, let us recall what is known about the left tail of $I(V^{\uparrow})$. 

In \cite{foncexpovech}, the left tail of $I(V^{\uparrow})$ is linked to the asymptotic behavior of $\Psi_{V}$. This asymptotic behavior is usually quantified thanks to two real numbers $\sigma$ and $\beta$: 
\begin{align*} \sigma & := \sup \left \{ \alpha \geq 0, \ \lim_{\lambda \rightarrow + \infty} \lambda^{- \alpha} \Psi_{V}(\lambda) = \infty \right \}, \\
\beta & := \inf \left \{ \alpha \geq 0, \ \lim_{\lambda \rightarrow + \infty} \lambda^{- \alpha} \Psi_{V}(\lambda) = 0 \right \}. \end{align*} 
If $\Psi_{V}$ has $\alpha$-regular variation for $\alpha \in [1, 2]$ (for example if $V$ is a drifted $\alpha$-stable L\'evy process with no positive jumps), we have $\sigma = \beta = \alpha$. Note that when $Q$, the Brownian component of $V$, is positive, then $\Psi_{V}$ has $2$-regular variation, and when $Q=0$, $1 \leq \sigma \leq  \beta \leq 2$. The asymptotic behavior of $\mathbb{P} (I(V^{\uparrow}) \leq x) $ as $x$ goes to $0$ is given by the following results from \cite{foncexpovech}: 

\begin{theo} \label{rappelsurqueues} [V\'echambre, \cite{foncexpovech}]

There is a positive constant $K_0$ (depending on $V$) such that for $x$ small enough 
\begin{eqnarray}
\mathbb{P} \left ( I(V^{\uparrow}) \leq x \right ) \leq e^{-K_0/x}. \label{majouniv}
\end{eqnarray}

More precisely we have
\begin{eqnarray}
\forall l < \frac1{\beta - 1}, \ \mathbb{P} \left ( I(V^{\uparrow}) \leq x \right ) \leq e^{-1/x^l}. \label{encadgene1}
\end{eqnarray}

\begin{eqnarray}
\text{If} \ \sigma > 1, \  \forall l > \frac1{\sigma - 1}, \ \mathbb{P} \left ( I(V^{\uparrow}) \leq x \right ) \geq e^{-1/x^l}. \label{encadgene2}
\end{eqnarray}

If there are two positive constants $c < C$ and $\alpha \in ]1, 2]$ such that $c \lambda^{\alpha} \leq \Psi_V(\lambda) \leq C \lambda^{\alpha}$ for $\lambda$ large enough, then for any $\gamma > 1$ we have, when $x$ is small enough, 
\begin{eqnarray}
\exp \left ( -\frac{\gamma \alpha^{\frac{\alpha}{\alpha-1}}}{(cx)^{\frac1{\alpha-1}}} \right ) \leq \mathbb{P} \left ( I(V^{\uparrow}) \leq x \right ) \leq \exp \left ( -\frac{\alpha - 1}{\gamma (C x)^{\frac1{\alpha-1}}} \right ). \label{encadreg}
\end{eqnarray}
If there is a positive constant $C$ and $\alpha \in ]1, 2]$ such that $\Psi_V(\lambda) \sim_{\lambda \rightarrow +\infty} C \lambda^{\alpha}$, then 
\begin{eqnarray}
- \log \left ( \mathbb{P} \left ( I(V^{\uparrow}) \leq x \right ) \right ) \underset{x \rightarrow 0}{\sim} \frac{\alpha - 1}{(Cx)^{\frac1{\alpha-1}}}. \label{exactereg}
\end{eqnarray}

\end{theo}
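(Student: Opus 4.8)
The plan is to exploit the principle that $I(V^{\uparrow})=\int_0^{\infty}e^{-V^{\uparrow}(t)}\,dt$ is small exactly when the conditioned process $V^{\uparrow}$ climbs to high levels quickly, so that all four displays reduce to path large deviations for $V^{\uparrow}$ and hence, up to polynomial Doob $h$-transform corrections, to those of the spectrally negative L\'evy process $V$ itself. The relevant rate function is $\Psi_V^{*}(v):=\sup_{\lambda\ge0}\bigl(\lambda v-\Psi_V(\lambda)\bigr)$, and the definitions of $\sigma$ and $\beta$ give, for every $\epsilon>0$, $\Psi_V^{*}(v)\gtrsim v^{\beta/(\beta-1)-\epsilon}$ and, when $\sigma>1$, $\Psi_V^{*}(v)\lesssim v^{\sigma/(\sigma-1)+\epsilon}$ as $v\to\infty$, with matching powers $v^{\alpha/(\alpha-1)}$ and explicit constants under the hypotheses $c\lambda^{\alpha}\le\Psi_V(\lambda)\le C\lambda^{\alpha}$ or $\Psi_V(\lambda)\sim C\lambda^{\alpha}$. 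Since $\beta/(\beta-1)-1=1/(\beta-1)$ and $\sigma/(\sigma-1)-1=1/(\sigma-1)$, this already explains why these exponents are the ones appearing in the statement.

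For the upper bounds \eqref{majouniv}, \eqref{encadgene1} and the upper half of \eqref{encadreg}: first I would note that $I(V^{\uparrow})\le x$ forces, for every $a\ge0$, that $\tau_a:=\inf\{t:V^{\uparrow}(t)\ge a\}$ (at which $V^{\uparrow}=a$ exactly, $V$ creeping upward by spectral negativity) satisfies $\tau_a\le e^{a}x$, because $\int_0^{\tau_a}e^{-V^{\uparrow}(t)}\,dt\ge\tau_a e^{-a}$. Fixing a subdivision $0=a_0<a_1<\cdots<a_K$ of an interval of length $\asymp\log(1/x)$ and applying the strong Markov property of $V^{\uparrow}$ at the $\tau_{a_k}$,
\[
\mathbb{P}\bigl(I(V^{\uparrow})\le x\bigr)\ \le\ \prod_{k=1}^{K}\mathbb{P}^{\uparrow}_{a_{k-1}}\!\bigl(\tau_{a_k}\le e^{a_k}x\bigr)\ \le\ \prod_{k=1}^{K}\mathrm{poly}(a_k)\,\mathbb{P}_0\!\Bigl(\sup\nolimits_{[0,\,e^{a_k}x]}V\ge a_k-a_{k-1}\Bigr),
\]
where the $h$-transform representation of $\mathbb{P}^{\uparrow}$ (with $h$ the pertinent renewal/scale function, monotone and at most exponential) produces only the polynomial prefactor. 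The exponential maximal inequality $\mathbb{P}_0(\sup_{[0,\delta]}V\ge\Delta)\le e^{-\lambda\Delta+\delta\Psi_V(\lambda)}$, optimised over $\lambda\ge\kappa$, bounds each factor by $e^{-\delta\Psi_V^{*}(\Delta/\delta)}$ (with the transform restricted to $\lambda\ge\kappa$, which agrees with $\Psi_V^{*}$ in the range that matters) up to a constant; summing the exponents, choosing the subdivision well (geometric steps tending slowly to $0$, so that $C^{K}=x^{-o(1)}$ is negligible against the main term) and inserting the lower bound on $\Psi_V^{*}$ gives $\sum_k e^{a_k}x\,\Psi_V^{*}\!\bigl(\tfrac{a_k-a_{k-1}}{e^{a_k}x}\bigr)\gtrsim x^{-l}$ for every $l<1/(\beta-1)$, which is \eqref{encadgene1}; the crude choice $K=1$, $a_1\asymp\log(1/x)$ already yields \eqref{majouniv}, and under the stronger hypotheses the same computation, carried out in the continuum limit, is designed to produce the sharp constant $\tfrac{\alpha-1}{\delta(Cx)^{1/(\alpha-1)}}$ of \eqref{encadreg}.

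For the lower bounds \eqref{encadgene2} and the lower half of \eqref{encadreg}, I would construct an explicit favourable event of probability $\ge e^{-1/x^{l}}$: choose a near-extremal increasing profile $g$ for $\min\int_0^{\infty}\Psi_V^{*}(g')\,dt$ under the constraint $\int_0^{\infty}e^{-g}\le x/2$ (so $g'(t)\asymp 1/t$ near $t\asymp x$ and $g$ reaches level $\asymp\log(1/x)$ around time $\asymp1$), and force $V^{\uparrow}$ to remain in a thin tube around $g$ on the relevant time window, after which $e^{-V^{\uparrow}(\cdot)}$ is already small and integrable and the leftover integral is $\le x/2$. The probability of this tube event is bounded below by a small-deviation (Mogulskii-type) estimate for $V$ together with the $h$-transform lower bound — here the conditioning to stay positive only helps — and optimising the tube width and $g$ should give $\exp\!\bigl(-\delta\!\int_0^{\infty}\Psi_V^{*}(g')\,dt\bigr)\ge e^{-1/x^{l}}$ for every $l>1/(\sigma-1)$ when $\sigma>1$, which is \eqref{encadgene2}, with the matching sharp constant $\tfrac{\delta\,\alpha^{\alpha/(\alpha-1)}}{(cx)^{1/(\alpha-1)}}$ under the stronger hypotheses; then \eqref{exactereg} follows by letting $\delta\downarrow1$ in the two matching bounds. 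The Chernoff/maximal and tube estimates for $V$ are routine, so I expect the main obstacles to be elsewhere: first, controlling the conditioning to stay positive uniformly along a growing sequence of levels so that it contributes only sub-exponentially, which rests on precise asymptotics of the scale/renewal function $h$ and genuinely uses spectral negativity (upward creeping, no overshoot); and second, matching, in the limit $\delta\downarrow1$, the small-deviation constant used in the lower bound with the Chernoff exponent used in the upper bound — reconciling these two constants is the real content of \eqref{encadreg} and \eqref{exactereg}.
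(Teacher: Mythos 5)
This theorem is not proved in the paper at all: it is imported verbatim from the companion work \cite{foncexpovech} (\eqref{majouniv} is Remark 1.7 there, \eqref{encadgene1}--\eqref{encadgene2} reformulate Theorem 1.4, \eqref{encadreg} is Theorem 1.2 and \eqref{exactereg} is Corollary 1.6), so there is no in-paper argument to match yours against. Your sketch is a reasonable path-large-deviation route, and the variational core is sound: minimizing $\int_0^{\infty}\Psi_V^{*}(g')\,dt$ under $\int_0^{\infty}e^{-g}\,dt=x$ with $\Psi_V(\lambda)=C\lambda^{\alpha}$ does give exactly the value $(\alpha-1)/(Cx)^{1/(\alpha-1)}$, so the Chernoff/hitting-time upper bound and the exponents $1/(\beta-1)$, $1/(\sigma-1)$ are targeted correctly. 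Note, however, that the sharp statements in \cite{foncexpovech} (and the analogous computation for $\mathcal{R}$ in the present paper) are obtained through Laplace-transform asymptotics of the exponential functional combined with De Bruijn's Tauberian theorem, not through a direct sample-path optimization, so your route is genuinely different from the one actually used.

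There are two concrete gaps. First, your final step for \eqref{exactereg} — ``letting $\delta\downarrow1$ in the two matching bounds'' — cannot work, because the two bounds you produce (and the two bounds in \eqref{encadreg}) do not match: your lower bound carries the constant $\alpha^{\alpha/(\alpha-1)}$, which is strictly larger than the sharp constant $\alpha-1$, so no limit in $\delta$ closes the gap. To get \eqref{exactereg} you would need a lower bound achieving the Legendre cost along the optimal profile (a genuine Mogulskii-type lower bound for a general spectrally negative L\'evy process, with the tube constraint and the $h$-transform), or the Tauberian route of \cite{foncexpovech}; as written, the deduction is simply false. Second, the treatment of the conditioning is too cavalier: $V^{\uparrow}$ is an $h$-transform through the scale function, which grows like $e^{\kappa y}$, so the per-factor correction in your product over levels is exponential in the increment, not ``poly$(a_k)$'' (it is only the telescoped product that is polynomial in $1/x$); moreover ``the conditioning to stay positive only helps'' is not literally available for the lower bound, since $\mathbb{P}^{\uparrow}$ is not an honest conditioning of $\mathbb{P}$ (the event of staying positive is null for $V$ drifting to $-\infty$), and the Radon--Nikodym density degenerates at the starting point $0$; controlling this is precisely the technical content you are deferring, so the sketch as it stands does not yet constitute a proof even of the non-sharp bounds.
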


%\begin{remarque} \label{reffoctexpo}
\eqref{majouniv} is Remark 1.7 of \cite{foncexpovech}, \eqref{encadgene1} and \eqref{encadgene2} are a reformulation of Theorem 1.4 of \cite{foncexpovech}, \eqref{encadreg} comes from Theorem 1.2 of \cite{foncexpovech}, and \eqref{exactereg} is Corollary 1.6 of \cite{foncexpovech}. 
%\end{remarque}

We can now state our results for the $\limsup$: 

\begin{theo} \label{limsupenfctdelefttail}

\begin{itemize}
\item Assume that $0 < \kappa < 1$, $V$ has unbounded variation, $V(1) \in L^p$ for some $p>1$ and $V$ possesses negative jumps. 
%we have two cases. 
\medbreak
%In the case where $V$ possesses negative jumps: 
If there is $\gamma > 1$ and $C > 0$ such that for $x$ small enough 
\begin{eqnarray}
\mathbb{P} \left ( I(V^{\uparrow}) \leq x \right ) \leq \exp \left ( -\frac{C}{x^{\frac1{\gamma-1}}} \right ), \label{condfinie}
\end{eqnarray}
then we have 
\begin{eqnarray}
\mathbb{P} \text{-a.s.} \ \limsup_{t \rightarrow +\infty} \frac{\mathcal{L}^*_X(t)}{t (\log (\log(t)))^{\gamma - 1}} \leq C^{1-\gamma}. \label{limsupfinie}
\end{eqnarray}
If there is $\gamma > 1$ and $C > 0$ such that for $x$ small enough 
\begin{eqnarray}
\mathbb{P} \left ( I(V^{\uparrow}) \leq x \right ) \geq \exp \left ( -\frac{C}{x^{\frac1{\gamma-1}}} \right ), \label{condfnonnulle}
\end{eqnarray}
then we have 
\begin{eqnarray}
\mathbb{P} \text{-a.s.} \ \limsup_{t \rightarrow +\infty} \frac{\mathcal{L}^*_X(t)}{t (\log (\log(t)))^{\gamma - 1}} \geq C^{1-\gamma}. \label{limsuppos}
\end{eqnarray}

\item Assume now that $V(t) = W_{\kappa}(t) := W(t) - \frac{\kappa}{2} t$ with $0 < \kappa < 1$, (i.e. $V$ is the $\kappa$-drifted Brownian motion), then the above implications (\eqref{condfinie} $\Rightarrow$ \eqref{limsupfinie} and \eqref{condfnonnulle} $\Rightarrow$ \eqref{limsuppos}) are still true with $I(V^{\uparrow})$ replaced by $\mathcal{R}$. 
\end{itemize}

\end{theo}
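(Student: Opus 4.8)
The plan is to exploit the renewal structure of the diffusion $X$ in the potential $V$ established in \cite{caslevyvech}. The key point is that, for large $t$, the quantity $\mathcal{L}^*_X(t)$ is essentially governed by the ``deepest valley'' of the potential that $X$ has visited up to time $t$, and the time spent in (and hence the local time accumulated at the bottom of) a valley of depth roughly $\log t$ is comparable to $e^{h} I$, where $h$ is the valley height and $I$ is an exponential functional of the (conditioned) environment associated to that valley. Concretely, I would first recall from \cite{caslevyvech} the decomposition of the trajectory into i.i.d.\ ``$h$-valleys'', so that the number of valleys crossed before time $t$ is of order $t/(\text{typical crossing time})$, and at each valley the normalized local time is distributed like $I(V^{\uparrow})$ (and in the brownian case like $\mathcal{R}=G_1+G_2$, accounting for both sides of the valley). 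The event $\{\mathcal{L}^*_X(t)\gtrsim t\,(\log\log t)^{\gamma-1}\}$ then translates, via this structure, into the event that one of the $\approx\log t$ ``large'' valleys encountered up to time $t$ has an abnormally small associated functional, of size $\approx (\log\log t)^{-(\gamma-1)}$.

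For the upper bound \eqref{condfinie} $\Rightarrow$ \eqref{limsupfinie}: along the subsequence $t_n=e^{e^n}$ (so that $\log\log t_n \sim n$), I would sum over the $O(e^n)$ potential valleys the probability $\mathbb{P}(I(V^{\uparrow})\le \epsilon/(n)^{\gamma-1})$, which by \eqref{condfinie} is at most $\exp(-C\epsilon^{-1/(\gamma-1)} n)$; choosing $\epsilon > C^{1-\gamma}$ makes the exponent beat $e^n$ and renders the series summable, so Borel--Cantelli gives the claim along $(t_n)$, and a standard monotonicity/interpolation argument (the local time is nondecreasing in $t$ and the denominator varies slowly) upgrades it to all $t$. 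For the lower bound \eqref{condfnonnulle} $\Rightarrow$ \eqref{limsuppos}: I would use the second Borel--Cantelli lemma on a suitably sparse subsequence where the relevant valley events are (asymptotically) independent — this independence being exactly what the i.i.d.\ renewal decomposition of \cite{caslevyvech} provides — with each event having probability at least $\exp(-C\epsilon^{-1/(\gamma-1)} n) \cdot (\text{polynomial correction})$, and the number of trials per scale being large enough (a power of $e^n$) that the sum of these probabilities diverges when $\epsilon < C^{1-\gamma}$. The brownian-motion bullet is then obtained by repeating the argument verbatim, the only change being that the valley functional is $\mathcal{R}$ rather than $I(V^{\uparrow})$, which is legitimate because in that case the path decomposition of \cite{caslevyvech} attaches to each valley an independent copy of $G_1+G_2$.

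The main obstacle will be controlling the error terms that relate $\mathcal{L}^*_X(t)$ to the idealized ``sum over i.i.d.\ valleys'' picture: one must show that (i) the crossing times of the non-extreme valleys, and the local time accumulated outside the single dominant valley, are negligible at the scale $t(\log\log t)^{\gamma-1}$ with overwhelming probability; (ii) the number of valleys with height in the critical window $[\log t - O(\log\log t), \log t]$ is indeed of the asserted order (this uses the exponential-functional tail estimates of Theorem \ref{rappelsurqueues}, together with the assumptions $V(1)\in L^p$ and unbounded variation, to rule out atypically deep or atypically flat valleys); and (iii) the approximations introduced by conditioning the environment to stay positive and by truncating the valleys at finite range do not distort the probabilities $\mathbb{P}(I(V^{\uparrow})\le x)$ by more than sub-exponential factors, so that the exact constant $C^{1-\gamma}$ survives. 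A secondary technical difficulty is organizing the second Borel--Cantelli argument so that the valley events are genuinely independent along the chosen subsequence while still being numerous enough; this should follow from the strong Markov property applied at the successive ``bottoms of valleys'' (regeneration times), but making the bookkeeping match the precise constant requires care.
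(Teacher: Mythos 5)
Your overall strategy (translate the limsup event into the occurrence of a valley whose exponential functional is abnormally small, then Borel--Cantelli along a subsequence, with independence for the lower bound coming from the renewal structure) is the same as the paper's, but the quantitative bookkeeping of your upper bound has a genuine gap. The estimate you propose, (number of valleys visited up to time $t$) $\times\ \mathbb{P}(I(V^{\uparrow})\le x)$, is not what the renewal structure delivers: in the construction of \cite{caslevyvech} the number of visited valleys is of order $e^{\kappa\phi(t)}$ with $\phi(t)=(\log\log t)^{\omega}$, $\omega>1$, not of order $\log t$, and with that count your union bound is not summable for any $\epsilon$, so it yields no constant at all. The correct bound (Proposition \ref{synthese}) carries \emph{no} valley-count factor, because the event also forces the selected valley to collect local time of order $t x_t$, which by \eqref{cvmeasure7.1} costs an extra factor $\approx e^{-\kappa\phi(t)}x_t^{-\kappa}$ per valley that exactly absorbs the number of valleys; obtaining the sharp constant $C^{1-\gamma}$ requires this compensation, together with the separate treatment of the last, incomplete valley through $R_1^t$ (Proposition \ref{approxdutl}, Lemma \ref{neglectlastvalley}) and the transfer from $R_1^t$ to $I(V^{\uparrow})$ (resp.\ $\mathcal{R}$) in Lemma \ref{approxder2cas}. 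In addition, the subsequence $t_n=e^{e^n}$ is too sparse for your ``monotonicity/interpolation'' step: between $t_n$ and $t_{n+1}$ the factor $t$ in the denominator grows by $e^{e^{n+1}-e^{n}}$, so bounding the supremum over $[t_n,t_{n+1}]$ by endpoint values loses an unbounded factor (only the $\log\log$ part varies slowly). The paper instead works along a geometric sequence $a^n$, where $\log\log t\sim\log n$, the per-scale probability becomes $\approx n^{-a^{1/(\gamma-1)}}$ (borderline summable), and the constant is recovered by letting $a\downarrow1$; the choice $\omega\in\,]1,\gamma[$ is what makes the residual error terms $e^{-c\phi(a^n)}$ summable.

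For the lower bound, ``asymptotic independence'' of the valley events is not sufficient for the second Borel--Cantelli lemma: one needs exact independence, which the paper manufactures by restricting the $n$-th event to the diffusion after the hitting time $H(v_n)$ and inside the spatial window $[u_n,u_{n+1}]$, and then disposing of the auxiliary events via Lemma \ref{lemprindep}. Moreover, divergence of the series requires a matching \emph{lower} bound of order $\approx 1/n$ on $\mathbb{P}(\mathcal{L}^*_X(t_n)\ge t_n x_{t_n})$ along $t_n=e^{n^a}$; this is the lower bound of Proposition \ref{synthese}, whose prefactors $x_t^{-\kappa}$ and $e^{-c(\log x_t)^2}$ (the latter coming from the left tail of $I(\hat V^{\uparrow})$ in the jump case) must be shown to be subpolynomial in $n$. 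Your sketch acknowledges a ``polynomial correction'' but provides no mechanism producing this two-sided comparison between the law of $\mathcal{L}^*_X(t)/t$ and the left tail of $I(V^{\uparrow})$ (in the paper, Propositions \ref{approxdutl} and \ref{queueloilimsupdiscret} combined with Lemma \ref{approxder2cas}), so the constant in \eqref{limsuppos} is not actually reached by the argument as written.
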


%\begin{remarque} \label{lackofsymmetry}
In the above theorem we had to distinguish the case where $V$ possesses negative jumps and the case where $V$ is a drifted Brownian motion. First, note that this is a true alternative: since $V$ is spectrally negative, the case where $V$ does not possess negative jumps is the case where $V$ does not possess jumps at all and is therefore a drifted Brownian motion. In this case we assumed for convenience that the gaussian component of $V$ is normalized to $1$. The difference between the two cases in the above theorem comes from the absence or presence of symmetry for the environment. When $V$ possesses negative jumps, $\hat V^{\uparrow}$ possesses positives jumps that may repulse it very fast from $0$, this is why the left tail of $I(V^{\uparrow})$ is thiner than the left tail of $I(\hat V^{\uparrow})$, as we can see from the results of \cite{foncexpovech} (comparing Remark 1.7 of \cite{foncexpovech} with the combination of Theorems 1.15 and 1.16 of \cite{foncexpovech}). As a consequence, only the left tail of $I(V^{\uparrow})$ is relevant in the left tail of $\mathcal{R}$. When $V$ is a drifted Brownian motion, a symmetry appears: $\hat V^{\uparrow}$ and $V^{\uparrow}$ are equal in law, $\mathcal{R}$ is then the sum of two independent random variables having the same law as $I(V^{\uparrow})$ and none of them can be neglected. 
%\end{remarque}

\begin{remarque} \label{limsupetinfcte}
It has to be noted that the $\limsup$ above is $\mathbb{P}$-almost surely equal to a constant belonging to $[0, +\infty]$ and that the inequalities \eqref{limsupfinie} and \eqref{limsuppos} are inequalities relative to this constant that the $\limsup$ equals $\mathbb{P}$-almost surely. The same will be true in all the results below: all the $\limsup$ and $\liminf$ considered are $\mathbb{P}$-almost surely equal to constants. This fact is justified in Subsection \ref{cteness}. 
\end{remarque}

%Thanks to Theorems 1.5 and 1.6 in \cite{foncexpovech}, the left tail of $I(V^{\uparrow})$ is linked to the asymptotic behavior of $\Psi_{V}$. Let us recall how this asymptotic behavior is usually quantified: 
%\begin{align*} \sigma & := \sup \left \{ \alpha \geq 0, \ \lim_{\lambda \rightarrow + \infty} \lambda^{- \alpha} \Psi_{V}(\lambda) = \infty \right \}, \\
%\beta & := \inf \left \{ \alpha \geq 0, \ \lim_{\lambda \rightarrow + \infty} \lambda^{- \alpha} \Psi_{V}(\lambda) = 0 \right \}. \end{align*} 
%If $\Psi_{V}$ has $\alpha$-regular variation for $\alpha \in [1, 2]$ (for example if $V$ is a drifted $\alpha$-stable L\'evy process with no positive jumps), we have $\sigma = \beta = \alpha$. Note that when $Q$, the Brownian component of $V$, is positive, then $\Psi_{V}$ has $2$-regular variation, and when $Q=0$ we have $1 \leq \sigma \leq  \beta \leq 2$. 
Putting together Theorem \ref{limsupenfctdelefttail} and what is known for the left tail of $I(V^{\uparrow})$ (Theorem \ref{rappelsurqueues}), we can state precise results for the $\limsup$:

\begin{theo} \label{limsupkappa<1}
If $0 < \kappa < 1$, $V$ has unbounded variation and $V(1) \in L^p$ for some $p>1$, then we have 
\begin{eqnarray}
\forall \beta' > \beta, \ \mathbb{P} \text{-a.s.} \ \limsup_{t \rightarrow +\infty} \frac{\mathcal{L}^*_X(t)}{t (\log (\log(t)))^{\beta' - 1}} =0, \label{thlimsupnulle1}
\end{eqnarray}
and
\begin{eqnarray}
\text{If} \ \sigma > 1, \ \forall \sigma' \in ]1, \sigma[, \ \mathbb{P} \text{-a.s.} \ \limsup_{t \rightarrow +\infty} \frac{\mathcal{L}^*_X(t)}{t (\log (\log(t)))^{\sigma' - 1}} =+\infty. \label{thlimsupnulle1}
\end{eqnarray}
\end{theo}

If we make further hypothesis on the regularity of the variation of $\Psi_V$ we can give the exact order of $\mathcal{L}^*_X(t)$: 
\begin{theo} \label{limsupkappa<1exact}
\begin{itemize}
\item Assume that $0 < \kappa < 1$, $V$ has unbounded variation, $V(1) \in L^p$ for some $p>1$ and $V$ possesses negative jumps. 
\medbreak
If there are two positive constants $c < C$ and $\alpha \in ]1, 2]$ such that $c \lambda^{\alpha} \leq \Psi_V(\lambda) \leq C \lambda^{\alpha}$ for $\lambda$ large enough, then we have 
\[ \mathbb{P} \text{-a.s.} \ \frac{c}{\alpha^{\alpha}} \leq \limsup_{t \rightarrow +\infty} \frac{\mathcal{L}^*_X(t)}{t (\log (\log(t)))^{\alpha - 1}} \leq \frac{C}{(\alpha - 1)^{\alpha - 1}}. \]
If, more precisely, there is a positive constant $C$ and $\alpha \in ]1, 2]$ such that $\Psi_V(\lambda) \sim C \lambda^{\alpha}$ for large $\lambda$, then we have  
\[ \mathbb{P} \text{-a.s.} \ \limsup_{t \rightarrow +\infty} \frac{\mathcal{L}^*_X(t)}{t (\log (\log(t)))^{\alpha - 1}} = \frac{C}{(\alpha - 1)^{\alpha - 1}}. \]
\item Assume now that $V = W_{\kappa}$, the $\kappa$-drifted Brownian motion, with $0 < \kappa < 1$, then we have 
\[ \mathbb{P} \text{-a.s.} \ \limsup_{t \rightarrow +\infty} \frac{\mathcal{L}^*_X(t)}{t (\log (\log(t)))} = \frac1{8}. \]
\end{itemize}
\end{theo}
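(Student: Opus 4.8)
The plan is to derive Theorem~\ref{limsupkappa<1exact} from Theorem~\ref{limsupenfctdelefttail} by plugging in the left-tail estimates of Theorem~\ref{rappelsurqueues}, after matching the exponent $\gamma$ of Theorem~\ref{limsupenfctdelefttail} with the regular-variation index $\alpha$ of $\Psi_V$, i.e. taking $\gamma-1=\alpha-1$. By Remark~\ref{limsupetinfcte} each $\limsup$ below is $\mathbb{P}$-a.s. constant, so it suffices to exhibit matching a.s. upper and lower bounds, and one may intersect a.s. events over a sequence $\delta_n\downarrow1$ and pass to the limit in the bounds obtained for each $\delta_n$. Note also that $c\lambda^\alpha\le\Psi_V(\lambda)$ with $\alpha>1$ forces $\Psi_V(\lambda)/\lambda\to\infty$, so $V$ has unbounded variation; together with the standing assumptions of the theorem this puts all hypotheses of Theorem~\ref{limsupenfctdelefttail} in force.

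For the case with negative jumps, suppose $c\lambda^\alpha\le\Psi_V(\lambda)\le C\lambda^\alpha$ for large $\lambda$. Fix $\delta>1$; by \eqref{encadreg}, for $x$ small enough,
\[ \mathbb{P}\bigl(I(V^\uparrow)\le x\bigr)\le\exp\Bigl(-\tfrac{\alpha-1}{\delta(Cx)^{1/(\alpha-1)}}\Bigr) \quad\text{and}\quad \mathbb{P}\bigl(I(V^\uparrow)\le x\bigr)\ge\exp\Bigl(-\tfrac{\delta\,\alpha^{\alpha/(\alpha-1)}}{(cx)^{1/(\alpha-1)}}\Bigr). \]
The first inequality is \eqref{condfinie} with $\gamma=\alpha$ and constant $(\alpha-1)/(\delta C^{1/(\alpha-1)})$, so \eqref{limsupfinie} gives $\limsup_t\mathcal{L}^*_X(t)/(t(\log\log t)^{\alpha-1})\le\delta^{\alpha-1}C/(\alpha-1)^{\alpha-1}$; the second is \eqref{condfnonnulle} with the same $\gamma$ and constant $\delta\alpha^{\alpha/(\alpha-1)}/c^{1/(\alpha-1)}$, so \eqref{limsuppos} gives $\limsup_t\mathcal{L}^*_X(t)/(t(\log\log t)^{\alpha-1})\ge c/(\delta^{\alpha-1}\alpha^\alpha)$. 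Letting $\delta\to1$ yields $c/\alpha^\alpha\le\limsup\le C/(\alpha-1)^{\alpha-1}$. If moreover $\Psi_V(\lambda)\sim C\lambda^\alpha$, then \eqref{exactereg} gives, for every $\delta>1$ and $x$ small, $\exp(-\delta(\alpha-1)(Cx)^{-1/(\alpha-1)})\le\mathbb{P}(I(V^\uparrow)\le x)\le\exp(-(\alpha-1)\delta^{-1}(Cx)^{-1/(\alpha-1)})$; feeding these into \eqref{limsupfinie} and \eqref{limsuppos} with $\gamma=\alpha$ and letting $\delta\to1$ collapses both bounds to $C/(\alpha-1)^{\alpha-1}$.

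For the drifted brownian case $V=W_\kappa$ we have $\Psi_V(\lambda)=\lambda^2/2-\kappa\lambda/2\sim\lambda^2/2$, so $\alpha=2$, $C=1/2$, and \eqref{exactereg} gives $-\log\mathbb{P}(I(V^\uparrow)\le x)\sim2/x$. Here Theorem~\ref{limsupenfctdelefttail} works with $\mathcal{R}=G_1+G_2$ instead of $I(V^\uparrow)$, and by the symmetry of $W_\kappa$ one has $\hat V^\uparrow\egloi V^\uparrow$, so $G_1$ and $G_2$ are two independent copies of $I(V^\uparrow)$. The step I would carry out is the left tail of their sum: $-\log\mathbb{P}(\mathcal{R}\le x)\sim8/x$. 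The lower bound is immediate from $\mathbb{P}(\mathcal{R}\le x)\ge\mathbb{P}(G_1\le x/2)^2$. For the upper bound, write $\mathbb{P}(\mathcal{R}\le x)=\int_0^x\mathbb{P}(G_2\le x-y)\,d\mathbb{P}(G_1\le y)$, partition $[0,x]$ into $O(1/\eta)$ subintervals of length $\eta x$, bound the contribution of the interval around $y_i$ by $\exp(-(1-o(1))(2/(x-y_i)+2/y_i))$, and use $\min_{0<y<x}(2/(x-y)+2/y)=8/x$ (attained at $y=x/2$), then let $\eta\to0$. Hence for every $\delta>1$ and $x$ small, $\exp(-8\delta/x)\le\mathbb{P}(\mathcal{R}\le x)\le\exp(-8\delta^{-1}/x)$, and Theorem~\ref{limsupenfctdelefttail} (brownian bullet, $\gamma=2$) gives $(8\delta)^{-1}\le\limsup_t\mathcal{L}^*_X(t)/(t\log\log t)\le\delta/8$; letting $\delta\to1$ gives $1/8$.

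Essentially everything is bookkeeping around the two quoted theorems; the only genuine computation is the left tail of $\mathcal{R}$ in the brownian case, and the only delicate point there is the matching upper bound $-\log\mathbb{P}(\mathcal{R}\le x)\ge(1-o(1))\,8/x$: the naive inclusion $\{\mathcal{R}\le x\}\subset\{G_1\le x\}\cap\{G_2\le x\}$ yields only the exponent $4/x$, so one really needs the partition/Laplace argument above (or a direct quotation, if the left tail of $\mathcal{R}$ is already recorded in \cite{foncexpovech}).
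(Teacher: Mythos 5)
Your first bullet is handled exactly as in the paper: plug the bounds \eqref{encadreg} (resp. the asymptotic \eqref{exactereg}) into Theorem \ref{limsupenfctdelefttail} with $\gamma=\alpha$, and your constant bookkeeping ($K^{1-\alpha}$ giving $c/\alpha^{\alpha}$, $C/(\alpha-1)^{\alpha-1}$, and the $\delta\to1$ limit along a countable sequence) is correct. Where you genuinely diverge is the drifted Brownian case. The paper gets the left tail of $\mathcal{R}$ by squaring the explicit Laplace transform of $I(W_{\kappa}^{\uparrow})$ (equation $(1.13)$ of \cite{foncexpovech}), obtaining $-\log\mathbb{E}[e^{-\lambda\mathcal{R}}]\sim 4\sqrt{2\lambda}$, and then invoking De Bruijn's Tauberian theorem to conclude $-\log\mathbb{P}(\mathcal{R}\leq x)\sim 8/x$. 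You instead start from \eqref{exactereg} applied to $W_{\kappa}$ (so $\alpha=2$, $C=1/2$, hence $-\log\mathbb{P}(I(W_{\kappa}^{\uparrow})\leq x)\sim 2/x$) and estimate the two-fold convolution directly: the easy inclusion gives the lower bound $\exp(-8/x)$ via $y=x/2$, and the partition of $[0,x]$ into $O(1/\eta)$ blocks together with $\min_{0<y<x}\bigl(2/(x-y)+2/y\bigr)=8/x$ gives the matching upper bound. This works; the only small imprecision is that on a block $[y_i,y_{i+1}]$ the valid bound is $\mathbb{P}(G_2\leq x-y_i)\,\mathbb{P}(G_1\leq y_{i+1})$, i.e. the exponent $2/(x-y_i)+2/y_{i+1}$ rather than $2/(x-y_i)+2/y_i$ (the latter is vacuous at $y_i=0$); with that correction each block contributes at most $\exp\bigl(-(1-o(1))\,8/((1+\eta)x)\bigr)$ and letting $\eta\to0$ after $x\to0$ gives the claim. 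Comparing the two routes: the paper's is shorter because the explicit transform $(1.13)$ is available and De Bruijn does the inversion in one stroke; yours avoids both the explicit transform and the Tauberian theorem, uses only the tail estimate already quoted in Theorem \ref{rappelsurqueues}, and amounts to the standard inf-convolution computation for left tails of sums, so it would extend to any situation where only the tail of $I(V^{\uparrow})$ (and not its transform) is known. The final step, feeding $-\log\mathbb{P}(\mathcal{R}\leq x)\sim 8/x$ into the Brownian bullet of Theorem \ref{limsupenfctdelefttail} with $\gamma=2$ to get the constant $1/8$, coincides with the paper.
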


%\begin{remarque} SI ON PEUT
%This Theorem contains the case where $V$ is a drifted $\alpha$-stable L\'evy process with no positive jumps. (AND THE DRIFT IS SMALL)
%\end{remarque}

\begin{remarque}
According to the combination of Theorem \ref{limsupenfctdelefttail} and \eqref{majouniv} we see that, if $0 < \kappa < 1$, $V$ has unbounded variation and $V(1) \in L^p$ for some $p>1$, then we always have
\[ \mathbb{P} \text{-a.s.} \ \limsup_{t \rightarrow +\infty} \frac{\mathcal{L}^*_X(t)}{t (\log (\log(t)))} < +\infty. \]
In other words, $t (\log (\log(t)))$ is the maximal possible renormalization for the $\limsup$. 
\end{remarque}

\begin{remarque} \label{diffavecdiscret}
As it was noticed by Shi \cite{Shi} and Diel \cite{Diel} for the recurrent case, we also notice a difference between the renormalization of the local time for the discrete transient RWRE with zero speed (given by Gantert and Shi \cite{GanShi}) and the renormalization of the local time that we give for the transient diffusion with zero speed. This difference can be explained as in the recurrent case: the valleys can potentially be much steeper in the continuous case with a potential having unbounded variation than in the discrete case, so the local maxima of the local time can potentially be higher in the first case. 
\end{remarque}

We see in the above two theorems that the renormalization of $\mathcal{L}^*_X(t)$ for the $\limsup$ depends directly on the asymptotic behavior of $\Psi_{V}$. In particular, Theorem \ref{limsupkappa<1exact} says that for a drifted $\alpha$-stable environment (with $\alpha > 1$, no positive jumps, and the drift is small negative), the renormalization of $\mathcal{L}^*_X(t)$ is $t (\log (\log(t)))^{\alpha - 1}$. We thus see that with general spectrally negative L\'evy environments, we have many possible behaviors for $\mathcal{L}^*_X(t)$, whereas with drifted Brownian environments the only possible renormalization for the $\limsup$ is $t (\log (\log(t)))$. This variety of behaviors is an important motivation to make the study for general spectrally negative L\'evy environments. 

{It is known that the behavior of the diffusion $X$ is related to the right tail of the exponential functional $\int_0^{+\infty} e^{V(s)} ds$, and such results also exist in the discrete case (see for example \cite{Enriquez200900}, \cite{enriquez2013}). A novelty in the present paper is to show, in Theorem \ref{limsupenfctdelefttail}, that the asymptotic almost sure behavior of $\mathcal{L}^*_X(t)$ is crucially related to the left tail of another exponential functional, $I(V^{\uparrow})$, which allows to obtain optimal results for the $\limsup$, in Theorem \ref{limsupkappa<1exact}, when $\Psi_V$ is nice. Even though the distribution of $I(V^{\uparrow})$ appeared in a complicated way in the limit distribution of $\mathcal{L}^*_X(t)/t$ in \cite{caslevyvech}, it is a surprising fact that the distribution of $I(V^{\uparrow})$ (more precisely its left tail) also totally determines the asymptotic behavior of the extremely large values of the local time.} 

Even if, for technical reasons, the above theorems do not apply when the environment $V$ has bounded variation, we can conjecture that the behavior of $\mathcal{L}^*_X(t)$ remains linked in the same way to the left tail of $I(V^{\uparrow})$ which is given by Remark 1.8 of \cite{foncexpovech}. This would imply
\begin{conj}
If $0 < \kappa < 1$ and $V$ has bounded variation, we have 
\[ \mathbb{P} \text{-a.s.} \ 0 < \limsup_{t \rightarrow +\infty} \frac{\mathcal{L}^*_X(t)}{t} < +\infty. \]
\end{conj}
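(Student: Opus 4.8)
The plan is to run the argument behind Theorem \ref{limsupenfctdelefttail} in the extreme regime where the left tail of $I(V^{\uparrow})$ is as thin as conceivable, namely where $I(V^{\uparrow})$ is bounded away from $0$. A spectrally negative L\'evy process of bounded variation that is not the opposite of a subordinator and drifts to $-\infty$ can be written $V(t) = \mathtt{d}\,t - \sigma(t)$ with drift $\mathtt{d} > 0$ and $\sigma$ a subordinator; the $h$-transform defining $V^{\uparrow}$ does not change this pathwise structure, so $V^{\uparrow}(t) \le \mathtt{d}\,t$ for all $t$, whence $I(V^{\uparrow}) = \int_0^{+\infty} e^{-V^{\uparrow}(t)}\,\mathrm{d}t \ge \int_0^{+\infty} e^{-\mathtt{d}\,t}\,\mathrm{d}t = 1/\mathtt{d}$, while $I(V^{\uparrow}) < +\infty$ $\mathbb{P}$-a.s.\ by Theorems~1.1 and~1.13 of \cite{foncexpovech}. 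Thus $\mathbb{P}(I(V^{\uparrow}) \le x) = 0$ for $x < 1/\mathtt{d}$ (this is Remark~1.8 of \cite{foncexpovech}); since $V$ necessarily has jumps here (the only jump-free spectrally negative case is the drifted Brownian motion, which has unbounded variation), we are in the ``negative jumps'' item of Theorem \ref{limsupenfctdelefttail}, so only the tail of $I(V^{\uparrow})$ matters, not that of $\mathcal{R}$. Heuristically the bounded variation case is the limit $\gamma \downarrow 1$ of that theorem, in which the renormalization $t(\log(\log(t)))^{\gamma-1}$ collapses to $t$.

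\emph{Finiteness of the $\limsup$.} I would reuse the path decomposition of the environment and the renewal structure of the diffusion from \cite{caslevyvech} that underlie the proof of \eqref{limsupfinie}: by time $t$ the transient, zero-speed diffusion has crossed a number $N(t)$ of ``valleys'' of depths $H_1,H_2,\dots$, and (schematically) $\mathcal{L}^*_X(t) \asymp \max_{i \le N(t)} e^{H_i}/\tilde I_i$ with $\tilde I_i$ (quasi-)independent copies of $I(V^{\uparrow})$, while $t \asymp \sum_{i \le N(t)} e^{H_i}\mathcal{R}_i$ with each $\mathcal{R}_i$ bounded below by a positive constant (it dominates a copy of $I(V^{\uparrow})$, hence $\mathcal{R}_i \ge 1/\mathtt{d}$). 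In the proof of \eqref{limsupfinie} the hypothesis \eqref{condfinie} serves only to bound $\max_{i\le N(t)} 1/\tilde I_i$ by a constant times $(\log N(t))^{\gamma-1}\asymp(\log\log t)^{\gamma-1}$; here the bound $1/\tilde I_i \le \mathtt{d}$ is deterministic and uniform, so $\mathcal{L}^*_X(t) \le \mathtt{d}\max_{i\le N(t)} e^{H_i} \le \mathtt{d}^{2}\sum_{i\le N(t)} e^{H_i}\mathcal{R}_i \asymp \mathtt{d}^{2}\,t$, giving $\limsup_{t\to+\infty}\mathcal{L}^*_X(t)/t < +\infty$ $\mathbb{P}$-a.s.

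\emph{Positivity of the $\limsup$.} For this I would adapt the second Borel--Cantelli argument behind \eqref{limsuppos}. Along a deterministic sequence of scales one seeks a ``record valley'' of depth $H_i$ which dominates the accumulated crossing time (say $e^{H_i} \ge 10\sum_{j<i} e^{H_j}\mathcal{R}_j$, an event occurring infinitely often since deep records with large gaps do and, by the law of large numbers, the preceding $\mathcal{R}_j$ do not inflate the partial sum) and which moreover satisfies $\mathcal{R}_i \le M$ for a fixed $M$ with $\mathbb{P}(\mathcal{R} \le M) > 0$. By the (quasi-)independence of distinct valleys there are $\mathbb{P}$-a.s.\ infinitely many such valleys; at the associated instant $t \asymp e^{H_i}\mathcal{R}_i \le M e^{H_i}$ one has $\mathcal{L}^*_X(t) \gtrsim e^{H_i}/\tilde I_i \ge e^{H_i}/M$ (a copy of $I(V^{\uparrow})$ sitting inside $\mathcal{R}_i$ is also $\le M$), hence $\mathcal{L}^*_X(t) \ge \delta\,t$ for a fixed $\delta = \delta(M) > 0$. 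This yields $\limsup_{t\to+\infty}\mathcal{L}^*_X(t)/t \ge \delta > 0$ $\mathbb{P}$-a.s., the constantness of this $\limsup$ being obtained as in Subsection~\ref{cteness}.

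\emph{Main obstacle.} The real difficulty -- and the reason this is stated only as a conjecture -- is that the path decomposition of the environment and the fine control on the geometry of valleys and on the time and local time spent in them developed in \cite{caslevyvech}, on which the proof of Theorem \ref{limsupenfctdelefttail} rests, are built for spectrally negative L\'evy potentials of \emph{unbounded} variation. In the bounded variation regime the underlying fluctuation theory is genuinely different: the potential can creep across and hit levels, the ascending and descending ladder structures change nature, and the conditioned processes $V^{\uparrow}$ and $\hat V^{\uparrow}$ -- hence the laws governing the ``$h$-valleys'', their crossing times and the local time accumulated in them -- must be re-examined (one should also check which of the integrability assumptions such as $V(1)\in L^p$ are still needed). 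Carrying the renewal construction and the accompanying moment estimates through this case, so that the schematic relations $\mathcal{L}^*_X(t) \asymp \max_i e^{H_i}/\tilde I_i$ and $t \asymp \sum_i e^{H_i}\mathcal{R}_i$ become rigorous, is where essentially all the work lies; once that is in place, the two displays above close the argument.
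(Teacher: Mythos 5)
The statement you address is left as a \emph{conjecture} in the paper: the authors give no proof, only the heuristic that the $\limsup$ should remain governed by the left tail of $I(V^{\uparrow})$, which by Remark 1.8 of \cite{foncexpovech} vanishes below a positive constant when $V$ has bounded variation. Your observation that $V^{\uparrow}(t)\leq d\,t$ pathwise (with $d$ the drift of the bounded-variation decomposition), hence $I(V^{\uparrow})\geq 1/d$ deterministically and a.s.\ finite, is correct and is essentially the authors' own motivation; and your two displayed arguments are a sensible way to turn the deterministic bound on $1/I(V^{\uparrow})$ into $0<\limsup_{t\to+\infty}\mathcal{L}^*_X(t)/t<+\infty$, \emph{granted} the renewal description of the diffusion in terms of valleys.

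That description, however, is exactly what is missing, and it cannot be borrowed: Theorem \ref{limsupenfctdelefttail}, Propositions \ref{approxdutl}, \ref{queueloilimsupdiscret} and \ref{synthese}, Facts \ref{loidesval}, \ref{bigevents} and \ref{queueiid}, and the results of \cite{caslevyvech} they rest on, are all proved under the standing hypothesis that $V$ has unbounded variation, and the theorem's statement does not cover the renormalization $t$, so there is no ``limit $\gamma\downarrow 1$'' to invoke. Moreover the hypothesis is not mere technical comfort: as recalled in Subsection \ref{toutsurlenv}, the process $\hat V^{\uparrow}$ issued from $0$ is defined (via Theorem 24 of \cite{Doney}) only when $V$ has unbounded variation, since otherwise $\hat V$ is not regular for $]0,+\infty[$. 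Consequently the law of the left side of a valley (Fact \ref{loidesval}), the variable $\mathcal{R}=G_1+G_2$, the \textit{iid} approximation $(e_jS_j^t, e_jS_j^tR_j^t)_{j\geq1}$ and the tail estimates \eqref{cvmeasure7.1}--\eqref{cvmeasure9.1} --- that is, every ingredient behind your schematic relations $\mathcal{L}^*_X(t)\asymp\max_{i}e^{H_i}/\tilde I_i$ and $t\asymp\sum_i e^{H_i}\mathcal{R}_i$ --- would have to be rebuilt with a different description of the valley bottoms (entrance law at the jump across the minimum, creeping across levels, modified ladder structure). You acknowledge this obstacle candidly, but it means the proposal is the paper's heuristic together with a conditional closing argument, not a proof: the central gap (extending the valley/renewal machinery to bounded-variation potentials) remains open, which is precisely why the paper states the result only as a conjecture.
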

If this conjecture is true, we would have, when the environment has bounded variation, the same renormalization as in the discrete transient case given by Theorem 1.1 of \cite{GanShi}. This would not be surprising since the discrete case gives rise to potentials of bounded variation. Moreover, if $V$ has bounded variation then it is known to be the difference of a deterministic positive drift and a subordinator. The valleys can then not be steeper than the deterministic drift so, according to Remark \ref{diffavecdiscret}, the expected renormalization of $\mathcal{L}^*_X(t)$ has to be the same as in the discrete case. 

For the $\liminf$, there is only one possible renormalization. Our result is as follows: 
\begin{theo} \label{liminfkappa<1}
If $0 < \kappa < 1$, $V$ has unbounded variation and $V(1) \in L^p$ for some $p>1$, then we have 
\begin{eqnarray}
\mathbb{P} \text{-a.s.} \ 0 < \liminf_{t \rightarrow +\infty} \frac{\mathcal{L}^*_X(t)}{t / \log (\log(t))} \leq \frac{1-\kappa}{\kappa (\mathbb{E} [I(V^{\uparrow})] + \mathbb{E} [I(\hat V^{\uparrow})])}. \label{thliminfencad}
\end{eqnarray}
\end{theo}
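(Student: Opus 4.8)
The plan is to connect the $\liminf$ behavior of $\mathcal{L}^*_X(t)$ with the renewal structure of the diffusion $X$ in the potential $V$. Recall that for a transient diffusion in a spectrally negative L\'evy environment with $0<\kappa<1$, one can decompose the trajectory of $X$ using the successive ``valleys'' of $V$, and the time spent by $X$ in the $j$-th valley is, up to corrections, comparable to an exponential functional of the (conditioned to stay positive) environment associated with that valley. In this regime the speed is zero but the clock $t$ is, roughly, a sum $\sum_{j \le n} e^{\kappa h_j}$ of independent heavy-tailed contributions governed by the valley depths $h_j$, while $\mathcal{L}^*_X(t)$ at such a time is dominated by the local time accumulated in the deepest valley visited so far, which is of order $e^{h_{j^*}} / (I(V^{\uparrow})_{j^*} + I(\hat V^{\uparrow})_{j^*})$ — that is, the number of ``returns'' to the bottom of the deepest valley divided by the width of that valley seen from its bottom. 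Dividing, $\mathcal{L}^*_X(t) \log\log t / t$ is, heuristically, governed by $(\log\log t)\, e^{h_{j^*}} / \big( (\text{width})\sum_j e^{\kappa h_j}\big)$.

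The first step I would carry out is to make rigorous the two-sided bracketing of $\mathcal{L}^*_X(t)$ in terms of the renewal quantities from \cite{caslevyvech}: there exist (up to small-probability exceptional events, controlled with Borel--Cantelli) inner and outer bounds for $\mathcal{L}^*_X(t)$ in terms of $\max_{j \le N_t} e^{h_j} \big/ \big(I(V^{\uparrow})_j + I(\hat V^{\uparrow})_j\big)$ where $N_t$ is the index of the valley containing $X(t)$, together with the identification $t \asymp \sum_{j \le N_t} e^{\kappa h_j} \big(I(V^{\uparrow})_j + I(\hat V^{\uparrow})_j\big)$ (the constants in $\asymp$ going to $1$ as the valleys get deep). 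Since the $h_j$ are i.i.d.\ with an exponential-type tail $\mathbb{P}(h_j > u) \sim c e^{-\kappa u}$ (so that $e^{\kappa h_j}$ is in the domain of attraction of a stable law of index $1$, hence $\sum_{j\le n} e^{\kappa h_j} \asymp n \log n$), the key is to understand, for the $\liminf$, the \emph{worst-case} ratio: the $\liminf$ is small exactly when at some large time $t$ the deepest valley among the first $N_t$ is not very deep relative to $N_t$, i.e.\ $\max_{j \le n} h_j \approx \frac1\kappa \log n$ (the typical value), while simultaneously $N_t$ is as large as possible for that $t$. A second-moment / concentration argument shows that typically $\sum_{j\le n} e^{\kappa h_j} \approx \frac{c}{\kappa}\, n \log n \cdot \big(\mathbb{E}[I(V^{\uparrow})]+\mathbb{E}[I(\hat V^{\uparrow})]\big)$ (the functionals being independent of and integrable against the depths), and along a subsequence where the maximal depth is exactly typical, $e^{h_{j^*}} \approx n^{1/\kappa}$, while the width of the deepest valley is $\approx I(V^{\uparrow})+I(\hat V^{\uparrow})$, of order $1$. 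Plugging $t \approx \frac{c}{\kappa} n \log n\,(\mathbb{E}[I(V^{\uparrow})]+\mathbb{E}[I(\hat V^{\uparrow})])$, hence $\log\log t \approx \log n$, gives $\mathcal{L}^*_X(t)\,\frac{\log\log t}{t} \lesssim \frac{n^{1/\kappa} \log n}{\frac{c}{\kappa} n\log n (\mathbb{E}[I(V^{\uparrow})]+\mathbb{E}[I(\hat V^{\uparrow})])} $; wait — this is not bounded, so in fact one must be more careful: the $\liminf$ is achieved at times $t$ chosen so that the \emph{last} valley $N_t$ has just been entered and the deepest valley is of depth only slightly above $\frac1\kappa \log N_t$, so $e^{h_{j^*}} \le N_t^{1/\kappa}(\log N_t)^{O(1)}$ but the contribution $e^{\kappa h_{j^*}} \le N_t (\log N_t)^{O(1)}$ to $t$ is negligible; the correct bound comes from optimizing $\max_j e^{h_j}/\sum_j e^{\kappa h_j}$ over the event that no valley is anomalously deep, which, after the change of variables $u_j = \kappa h_j$, reduces to the classical fact that for i.i.d.\ random variables in the domain of a $1$-stable law, $\max_{j\le n} e^{u_j} \big/ \sum_{j\le n} e^{u_j} \to 0$ but the rate at which it returns near its typical value $1/\log n$ gives precisely the constant $\frac{1-\kappa}{\kappa}$ through the tail index relation; I would carry this out via the explicit renewal computation already set up in \cite{caslevyvech}.

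More precisely, the upper bound $\liminf \le \frac{1-\kappa}{\kappa(\mathbb{E}[I(V^{\uparrow})]+\mathbb{E}[I(\hat V^{\uparrow})])}$ I would obtain by exhibiting a deterministic sequence $t_k \to \infty$ along which $\mathcal{L}^*_X(t_k)\,\frac{\log\log t_k}{t_k}$ is, with probability one eventually, at most $\frac{1-\kappa}{\kappa(\mathbb{E}[I(V^{\uparrow})]+\mathbb{E}[I(\hat V^{\uparrow})])}(1+\epsilon)$. For this I would: (i) choose $t_k$ so that $N_{t_k} =: n_k$ grows like $e^{e^k}$ (so $\log\log t_k \sim \log n_k \sim e^k$, using $t \asymp n\log n \cdot \text{const}$); (ii) use a first-Borel--Cantelli / union bound over $j \le n_k$ to show that with probability one, for $k$ large, every valley among the first $n_k$ satisfies $h_j \le \frac1\kappa\big(\log n_k + (1+\epsilon)\log\log n_k\big)$ except possibly for valleys so rare they are reached only much later — and crucially that at time $t_k$ the supremum of the local time in the current deepest valley is at most $\big(1+o(1)\big) e^{h_{j^*}}\big/\big(I(V^{\uparrow})_{j^*}+I(\hat V^{\uparrow})_{j^*}\big)$ with $h_{j^*}$ not exceeding this typical bound; (iii) use the strong law $\sum_{j\le n}e^{\kappa h_j}(I(V^{\uparrow})_j+I(\hat V^{\uparrow})_j) \sim \frac{c}{1-\kappa}\,n(\log n)\,(\mathbb{E}[I(V^{\uparrow})]+\mathbb{E}[I(\hat V^{\uparrow})])$ type estimate (or rather its $1$-stable fluctuation analogue — this is where the exact constant $\frac{1-\kappa}{\kappa}$, rather than $\frac1\kappa$, enters, through the relation between the scale of a $\kappa$-index... here $1$-index-truncated sum and its maximal term) to replace $t_k$ in the denominator; and (iv) combine. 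The strictly-positive lower bound $\liminf > 0$ is easier: it follows because at \emph{every} time $t$, the deepest valley visited has $e^{h_{j^*}} \gtrsim t / (\text{width} \cdot N_t \cdot \text{typical depth factor})$ and the local time there is $\gtrsim e^{h_{j^*}}/\text{width}$, so $\mathcal{L}^*_X(t) \gtrsim t/(N_t\, (\log N_t)\,\text{const}) \gtrsim (t/\log\log t)\cdot c$ — uniform control of $N_t$ from below by a deterministic function of $t$ (a first Borel--Cantelli estimate showing $N_t$ cannot be too large) closes this.

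The main obstacle I expect is step (ii)--(iii) above: controlling uniformly over the (random) index $N_t$ of the current valley, the joint behavior of the maximal valley depth $\max_{j\le N_t} h_j$ and the truncated-sum normalization of $\sum_{j \le N_t} e^{\kappa h_j}(I(V^{\uparrow})_j + I(\hat V^{\uparrow})_j)$, and extracting the \emph{exact} constant $\frac{1-\kappa}{\kappa}$ rather than merely $\frac1\kappa$ or an unspecified constant. The factor $1-\kappa$ (instead of $1$) is subtle: it reflects that when we look at the \emph{last} (freshly entered) valley at time $t$, the relevant event is not ``the $n$-th depth is typical'' but ``the running maximum over the first $n$ valleys is attained and equals roughly $\frac1\kappa\log n$, \emph{and} the sum $\sum_{j\le n}e^{\kappa h_j}$ conditioned on no term exceeding $n^{1}$ behaves like $\frac{1}{1-\kappa}\cdot(\text{main order})$'' — a standard but delicate extreme-value/stable-truncation computation that I would isolate as a separate lemma on i.i.d.\ sequences with $\mathbb{P}(h_1 > u) \sim c e^{-\kappa u}$. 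Everything else (the path decomposition, the localization of $X$ in valleys, the identification of the local time maximum with $e^{h}/(I(V^{\uparrow})+I(\hat V^{\uparrow}))$, and the Borel--Cantelli bookkeeping for the exceptional environment events) is available from \cite{caslevyvech} and \cite{foncexpovech} and would be invoked rather than reproved.
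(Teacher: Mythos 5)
Your overall Borel--Cantelli skeleton (bracket $\mathcal{L}^*_X(t)$ by renewal-structure functionals, run a union-bound along exponential subsequences for the lower bound, and use independent blocks of the trajectory for the upper bound) matches what the paper does. However, the heuristic that is supposed to produce the exact constant $\frac{1-\kappa}{\kappa \,\mathbb{E}[\mathcal{R}]}$ is wrong in a way that would not correct itself if you tried to make it rigorous, and the actual computation in the paper is structurally different.

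First, the tail indices. You assign to the $j$-th valley a local-time contribution $\asymp e^{h_j}$ (Pareto of index $\kappa$) and a time contribution $\asymp e^{\kappa h_j}$ (Pareto of index $1$), so that the clock $t\asymp\sum_j e^{\kappa h_j}\asymp n\log n$. This is not the scaling. In the renewal picture of \cite{caslevyvech} that underlies this theorem, the local time at the bottom of the $j$-th valley is $e_j S_j^t$ and the time spent is $e_j S_j^t R_j^t$, that is, \emph{time $=$ local time $\times$ exponential functional} $R_j^t\to \mathcal{R}$ in law, and the multiplier $R_j^t$ has a bounded distribution with finite exponential moments. Hence both contributions have the same Pareto tail of index $\kappa$, namely $\mathbb{P}(e_1 S_1^t R_1^t/t > y)\sim \mathcal{C}'\mathbb{E}[\mathcal{R}^\kappa]\, e^{-\kappa\phi(t)} y^{-\kappa}$ (this is \eqref{cvmeasure9.1}). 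There is no index-$1$ ``$n\log n$'' clock, and also the local time is $\asymp e^{h}$, \emph{not} $e^{h}/(I(V^{\uparrow})+I(\hat V^{\uparrow}))$: the exponential functional enters the time as a multiplicative factor, not the local time as a divisor.

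Second, where the constant really comes from. Because of the first point, the $\frac{1}{1-\kappa}$ is not, as you suggest in your final paragraph, the outcome of a ``$1$-stable truncated-sum / extreme-value'' computation on $e^{\kappa h_j}$ (a Pareto-$1$ truncated first moment gives a $\log$, not a $\frac{1}{1-\kappa}$). It is the truncated first moment of the Pareto-$\kappa$ time contribution, $\int_0^{1/y_t}\mathbb{P}(e_1S_1^t/t>x)\,dx \approx \frac{\mathcal{C}'e^{-\kappa\phi(t)}}{(1-\kappa)\,y_t^{1-\kappa}}$. Concretely, the paper proves a two-sided estimate on $\mathbb{P}\bigl(Y_1^{\natural,t}(Y_2^{-1,t}(b))\le 1/y_t\bigr)$ (Proposition \ref{queueloilimdiscret}) by conditioning on the number $T$ (geometric, parameter $p_t=\mathbb{P}(e_1S_1^t>t/y_t)$) of valleys visited before one with a large local-time contribution appears, and studying the Laplace transform of $\overline{\mathcal{H}}_1 = (e_1 S_1^t R_1^t\,|\,e_1 S_1^t\le t/y_t)$. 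Lemma \ref{etudelaplace} shows $1-\mathbb{E}[e^{-\lambda\overline{\mathcal{H}}_1/t}]\sim \lambda\,\frac{\mathcal{C}'\kappa\,\mathbb{E}[\mathcal{R}]}{(1-\kappa)\,e^{\kappa\phi(t)}y_t^{1-\kappa}}$; combining this with the geometric tail of $T$ yields the sharp exponent $u b(1-\kappa)y_t/(\kappa\mathbb{E}[\mathcal{R}])$ in the lower bound, which in turn produces the constant of the theorem once you set $x_t=\kappa\mathbb{E}[\mathcal{R}]\log\log t / (a^2(1-\kappa))$ in Proposition \ref{pretpourbcindepliminf}. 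The upper bound of the $\liminf$ then follows from the independent-block decomposition (your step (i)--(iv)), and the strict positivity follows from the \emph{other} side of Proposition \ref{pretpourbcindepliminf}, $\mathbb{P}(\mathcal{L}^*_X(t)\le t/x_t)\le e^{-\tilde L x_t}+e^{-c\phi(t)}$, rather than from a lower bound on the maximal valley depth as you propose. This two-sided repartition-function estimate for the functional $Y_1^{\natural,t}(Y_2^{-1,t}(\cdot))$ is the missing ingredient; without it, neither the value of the constant nor the positivity of the $\liminf$ can be obtained by the route you sketch.
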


Note that the expectations $\mathbb{E} [I(V^{\uparrow})]$ and $\mathbb{E} [I(\hat V^{\uparrow})]$ are finite and well defined since $I(V^{\uparrow})$ and $I(\hat V^{\uparrow})$ both admit some finite exponential moments according to Theorems 1.1 and 1.13 of \cite{foncexpovech}. 

\textbf{Example:} We consider $W_{\kappa}$, the $\kappa$-drifted Brownian motion ($W_{\kappa}(t) := W(t) - \frac{\kappa}{2} t$), then, the expression of the Laplace transform of $I(W_{\kappa}^{\uparrow})$ is given by equation $(1.12)$ of \cite{foncexpovech}. This expression allows to compute the moments of $I(W_{\kappa}^{\uparrow})$ and gives in particular $\mathbb{E} [I(W_{\kappa}^{\uparrow})] = 2/(1+\kappa)$. Moreover $W_{\kappa}^{\uparrow}$ and $\hat W_{\kappa}^{\uparrow}$ have the same law so $\mathbb{E} [I(W_{\kappa}^{\uparrow})] + \mathbb{E} [I(\hat W_{\kappa}^{\uparrow})] = 4/(1+\kappa)$. If we choose, as an environment, $V = W_{\kappa}$ (for $0 < \kappa < 1$), then the above upper bound for the $\liminf$ becomes $(1-\kappa^2)/4 \kappa$. Putting this in relation with the results of \cite{Devmaxloc}, we see that the application of Theorem \ref{liminfkappa<1} to the special case of a drifted Brownian environment improves \eqref{devpartielliminf2} and completes \eqref{devpartielliminf1} by proving that this renormalization is exact and by providing an explicit upper bound. 

The fact that we have many possible renormalizations for the $\limsup$, depending on the environment $V$, while only one possible for the $\liminf$, whatever is the environment $V$, might seem surprising, here is an heuristic explanation: In each valley, the time spent equals approximately the contribution to the local time multiplied by an exponential functional of the bottom of the valley (which is close to $\mathcal{R}$). The $\limsup$ involves large values of the local time at a fixed time, it is reached when the contribution to the local time in some valley is large while the time spent in the same valley is around a fixed value, this happens when the exponential functional of the bottom of the valley has a small value. The link between the $\limsup$ and the small values of an exponential functional is stated formally in Theorem \ref{limsupenfctdelefttail}. The $\liminf$ involves small values of the local time at a fixed time, it is reached when the contributions to the local time of the valleys are small while the sum of their contributions to the time spent is around a fixed value, this happens when the exponential functionals of the bottoms of some valleys are large. We see that the difference between $\limsup$ and $\liminf$ comes from the difference between the left and right tails of $\mathcal{R}$. The left tail is mainly the left tail of $I(V^{\uparrow})$ which depends on the asymptotic of $\Psi_V$, according to Theorem \ref{rappelsurqueues}, and $\Psi_V$ has many possible behaviors depending on the choice of $V$. On the other hand, the right tail is always exponential according to Theorems 1.1 and 1.13 of \cite{foncexpovech}. This explains the difference of behaviors between the $\limsup$ and the $\liminf$. 

We now treat the case $\kappa > 1$. In this case the limit distribution of $\mathcal{L}_X^*(t)/t^{1/\kappa}$ under $\mathbb{P}$ is given in Theorem 1.1 of \cite{caslevyvech}. The methods and the results are different from the case $0 < \kappa < 1$. For the almost behavior we have 

\begin{theo} \label{limsupkappa>1}
Let $f$ be a positive non-increasing function. When $\kappa > 1$, we have 
\begin{equation}
\int_1^{+\infty} \frac{(f(t))^{\kappa}}{t} dt \left\{
\begin{aligned}
& < +\infty \nonumber \\
& = +\infty \nonumber \end{aligned} \right. 
\Leftrightarrow  \limsup_{t \rightarrow +\infty} \frac{f(t) \mathcal{L}^*_X(t)}{t^{1/\kappa}} = \left\{
\begin{aligned}
& 0 \nonumber \\
& +\infty \nonumber \end{aligned} \right.
\mathbb{P} \text{-a.s.}
\end{equation}

\end{theo}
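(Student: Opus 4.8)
The plan is to reduce the statement to the asymptotics of the hitting times of the diffusion and then to an integral test for a suitable i.i.d.\ renewal sequence. When $\kappa>1$ the diffusion is ballistic, so let $H(r)=\inf\{t\ge 0:\ X(t)=r\}$ be the hitting time of level $r$. The key probabilistic input, available from \cite{Singh} and exploited in \cite{caslevyvech} through the generalized Ornstein--Uhlenbeck representation, is that the local time at a favourite site up to time $t$ is governed by the largest of roughly $t/\E[H(1)]$ (i.e.\ $\Theta(t)$, since $\E[H(1)]<\infty$ for $\kappa>1$) independent valley contributions, each of which is in the domain of attraction of a positive $\kappa$-stable law. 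Concretely, one shows there are constants $0<c_1\le c_2<\infty$ such that, writing $N_t$ for the number of valleys crossed before time $t$ (with $N_t/t\to 1/\E[H(1)]$ a.s.), one has $c_1\, M_{N_t}\le \mathcal{L}^*_X(t)\le c_2\, M_{N_t}$ where $M_n=\max_{j\le n}\xi_j$ and the $\xi_j$ are (asymptotically) i.i.d.\ with $\P(\xi_1>u)\sim C u^{-\kappa}$ as $u\to\infty$. I would first isolate this comparison as a lemma, citing the valley decomposition of \cite{caslevyvech} and the tail estimate for the relevant exponential functional / local time of a single valley.

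Granting the comparison, the theorem becomes a classical $\limsup$ law for the maximum of i.i.d.\ heavy-tailed variables along a sequence thinned at rate $1/\E[H(1)]$. Fix a positive non-increasing $f$. By monotonicity of $f$ and of $t\mapsto M_{N_t}$ it suffices to test along a geometric subsequence $t_k=\rho^k$; then $f(t)\mathcal{L}^*_X(t)/t^{1/\kappa}\to\infty$ (resp.\ $\to 0$) along all $t$ iff the event $\{\xi_j> \varepsilon\, t_k^{1/\kappa}/f(t_k)\ \text{for some } j\le N_{t_k}\}$ happens infinitely often (resp.\ finitely often) for every (resp.\ some) $\varepsilon>0$. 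Since $N_{t_k}\asymp t_k$ and $\P(\xi_1> u)\asymp u^{-\kappa}$, a union bound and independence (Borel--Cantelli in both directions, the second via the independence across the disjoint valley blocks) show this is equivalent to the convergence or divergence of
\[
\sum_{k\ge 1} t_k\,\Bigl(\frac{f(t_k)}{t_k^{1/\kappa}}\Bigr)^{\kappa}=\sum_{k\ge 1} (f(t_k))^{\kappa},
\]
and by the Cauchy condensation / integral comparison (using that $f^\kappa$ is non-increasing) this series converges iff $\int_1^{+\infty}(f(t))^{\kappa}/t\,dt<\infty$. Assembling the two directions gives the stated dichotomy, and the $0/+\infty$ dichotomy on the right-hand side (rather than a finite positive constant) is exactly the heavy-tail phenomenon: rescaling $\varepsilon$ moves the threshold without changing the convergence class.

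The main obstacle is the comparison lemma, i.e.\ making precise that $\mathcal{L}^*_X(t)$ is, up to bounded multiplicative constants, the running maximum of an (asymptotically) i.i.d.\ $\kappa$-stable-domain sequence indexed by the valley count. Two points need care. First, one must control the error terms in the valley decomposition uniformly enough that they do not contribute to the a.s.\ $\limsup$ — this is where the hypothesis $\kappa>1$ is essential, since it makes $\E[H(1)]$ finite and the number of valleys before time $t$ concentrate around $t/\E[H(1)]$ with fluctuations negligible on the relevant scale $t^{1/\kappa}$. Second, the blocks are only \emph{asymptotically} independent and identically distributed; I would handle this by a standard coupling/truncation argument, replacing the true valley contributions by genuinely i.i.d.\ random variables with the limiting tail and bounding the discrepancy, so that the Borel--Cantelli arguments apply verbatim. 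The rest — the geometric subsequence reduction, the union bound, and the integral test — is routine.
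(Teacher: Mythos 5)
Your route — valley decomposition, asymptotically i.i.d.\ heavy-tailed maxima $M_{N_t}$, geometric subsequence, Borel--Cantelli in both directions — is genuinely different from what the paper does, and the rest of the argument (geometric thinning, union bound, Cauchy condensation) is indeed routine once the comparison is granted. But the load-bearing step is exactly the comparison lemma, and as stated it has a real gap. You assert an almost-sure two-sided sandwich $c_1 M_{N_t}\le \mathcal{L}^*_X(t)\le c_2 M_{N_t}$ with $M_n=\max_{j\le n}\xi_j$ for an i.i.d.\ $\kappa$-heavy-tailed sequence; nothing of this form is available from \cite{caslevyvech} or \cite{Singh}. Worse, the valley decomposition of \cite{caslevyvech} that you invoke is built entirely for $0<\kappa<1$: the valleys have $t$-dependent height $h_t\approx\log t$, the contribution of a single valley then has infinite mean, and the auxiliary hypotheses (unbounded variation, $V(1)\in L^p$) are in force throughout Section~\ref{smallkappa}. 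None of this transfers for free to $\kappa>1$, where valleys would have to be of fixed height for $N_t\asymp t$ to hold, and the local time \emph{between} valleys, the dependence along the trajectory, and the a.s.\ (not just in-probability) character of the sandwich would all need to be controlled. Replacing the actual valley contributions by a genuinely i.i.d.\ sequence ``by a standard coupling/truncation argument'' is precisely the difficulty, not a routine step; the second Borel--Cantelli in particular requires true independence, and you do not indicate how to decouple the blocks.

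The paper avoids this entirely. For $\kappa>1$ it never uses the valley decomposition: instead it exploits the identity in law $\mathcal{M}_2(r)\overset{\mathcal{L}}{=}\sup_{[0,r]}Z$ between the supremum of the local time at a hitting time and the running supremum of the generalized Ornstein--Uhlenbeck process $Z$, together with the excursion-measure asymptotics $n(\sup\xi>h)\sim Q\,2^\kappa\Gamma(\kappa)\kappa^2 K/h^\kappa$ from \cite{Singh} and the two-sided estimate on $\mathbb{P}(\sup_{[0,r]}Z\le h)$ (Fact~\ref{supdeZ}). The deterministic time $t$ is converted into a hitting time via Fact~\ref{chgtvar}, and independence for the divergence part of Borel--Cantelli is obtained by spatial block decomposition of the trajectory (events on disjoint pieces of environment and of the diffusion between successive hitting times $H(\sqrt2 u_n)$), not by an i.i.d.\ coupling of valley contributions. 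That gives exact distributional control where your sketch only has an asserted a.s.\ sandwich. If you want to pursue your route you would need to (i) build a fixed-height valley decomposition valid for $\kappa>1$, (ii) establish the comparison as a high-probability event (not a.s.), and (iii) redo the independence argument by spatial splitting as in the paper; at that point you would essentially have reproduced a harder version of what the excursion-theoretic approach gives directly.
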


The above result is the analogue of Theorem 1.2 of \cite{GanShi} for the continuous case. In the special case where $V = W_{\kappa}$ (for $\kappa > 1$), our result coincides with \eqref{devlimsupkappa>1} (proved by Devulder in \cite{Devmaxloc}). Indeed, since $f$ is decreasing we have the easy equivalences
\begin{align}
\int_1^{+\infty} \frac{(f(t))^{\kappa}}{t} dt < + \infty \Leftrightarrow \sum_{n=1}^{+\infty} \frac{(f(n))^{\kappa}}{n} < + \infty \Leftrightarrow \sum_{n=1}^{+\infty} (f(2^n))^{\kappa} < + \infty. \label{serieint} 
\end{align}
The first equivalence shows that our result agrees with \eqref{devlimsupkappa>1} and the second equivalence allows to reformulate the integrability condition in a form that is convenient to prove Theorem \ref{limsupkappa>1}. 

\begin{remarque}
Comparing Theorems \ref{limsupkappa<1}, \ref{limsupkappa<1exact} and \ref{limsupkappa>1} we see that the renormalization for the $\limsup$ is larger in the transient sub-ballistic regime than in the transient ballistic regime, which is in accordance with intuition. However, it is surprising to see that the renormalization in the transient sub-ballistic regime is also greater than the renormalization in the recurrent case, given by \eqref{ltcuriosity}-\eqref{resultsdiel} which is  Theorem 1.1 of \cite{Diel}. Here is the heuristic explanation: In the recurrent case the diffusion is trapped in the bottom of a large valley while in the transient sub-ballistic regime the diffusion gets successively trapped in the bottom of many valleys, their bottoms being much more narrow. This explains that the large values of the local time have the tendency to be higher in the second case. 
\end{remarque}

For the $\liminf$, we provide an explicit value. Let us define the constants $K$ and $m$ similarly as in \cite{Singh}: 
\[ K := \mathbb{E} \left [ \left ( \int_0^{+\infty} e^{V(t)} dt \right )^{\kappa -1} \right ] \ \ \ \text{and} \ \ \ m := \frac{-2}{\Psi_V(1)} > 0. \]
We see that $m$ is properly defined when $\kappa > 1$. We have: 

\begin{theo} \label{liminfkappa>1}
When $\kappa > 1$, we have 
\[ \mathbb{P} \text{-a.s.} \ \liminf_{t \rightarrow +\infty} \frac{\mathcal{L}^*_X(t)}{(t/ \log (\log(t)))^{1/\kappa}} = 2 (\Gamma(\kappa) \kappa^2 K/m)^{1/\kappa}. \]

\end{theo}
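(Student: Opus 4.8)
\textbf{Proof plan for Theorem~\ref{liminfkappa>1}.} The plan is to exploit the same equality in law between $\mathcal{L}^*_X$ and a generalized Ornstein--Uhlenbeck process (from \cite{Singh}) that was used for the convergence in distribution in \cite{caslevyvech}, and to upgrade the distributional statement to an almost sure $\liminf$ by a standard Borel--Cantelli argument along a well-chosen subsequence. First I would fix the hitting times of the diffusion at the level of the valleys: write $\mathcal{L}^*_X(H(r))$ where $H(r)$ is the first hitting time of $r$, and use the branching/renewal structure to express $\mathcal{L}^*_X(H(r))$ (up to negligible corrections) as the maximum over the $O(r)$ valleys crossed before $r$ of i.i.d.\ (or $1$-dependent) quantities, each distributed like a multiple of $(\int e^{V})^{\,\cdot}$-type exponential functional; the relevant tail is the right tail of those functionals, governed by $K=\mathbb{E}[(\int_0^\infty e^{V(t)}dt)^{\kappa-1}]$ and the mean drift constant $m=-2/\Psi_V(1)$. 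The point is that $\mathcal{L}^*_X$ being \emph{small} corresponds to \emph{all} these valley contributions being small simultaneously, so the $\liminf$ is controlled by a product of tail probabilities rather than a single one — this is exactly why an explicit constant survives and why $t/\log\log t$ (rather than some power of $\log\log t$) is the right normalization, matching the heuristic given after Theorem~\ref{liminfkappa>1}.

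Concretely, the upper bound: I would show $\mathbb{P}(\mathcal{L}^*_X(t)\le (1-\e)\,c\,(t/\log\log t)^{1/\kappa})$ summed along $t_n=\exp(n/\log n)$ (or a similar geometric-type sequence) is summable, where $c=2(\Gamma(\kappa)\kappa^2K/m)^{1/\kappa}$; a first Borel--Cantelli gives $\liminf \le c$ a.s. after interpolating between consecutive $t_n$ using monotonicity of $\mathcal{L}^*_X$ in $t$ and the regularity of $H$. For the lower bound $\liminf\ge c$, I would establish independence-type estimates (using the renewal blocks and the mixing of the environment) to get a matching lower bound $\mathbb{P}(\mathcal{L}^*_X(t)\ge (1+\e)c(t/\log\log t)^{1/\kappa} \text{ for all large } t)$, i.e.\ a second Borel--Cantelli with a reverse (independence) input. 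The precise asymptotics of the tail $\mathbb{P}(\int_0^\infty e^{V(t)}dt > x)$ and of the moments $\mathbb{E}[(\int_0^\infty e^{V(t)}dt)^{\kappa-1}]=K$ enter through a Tauberian/Laplace-type computation that produces the constant $2(\Gamma(\kappa)\kappa^2 K/m)^{1/\kappa}$; here $\Gamma(\kappa)$ appears from integrating the gamma-type density of the limiting Ornstein--Uhlenbeck stationary law, $\kappa^2$ from the quadratic scaling of the local time at the bottom of a valley versus the functional, and $m$ from converting the hitting time $H(r)$ back to the clock $t$ via $H(r)\sim mr$.

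The main obstacle I expect is the \emph{uniformity in $t$}: the equality in law from \cite{Singh} is at a fixed $t$ (or fixed level $r$), whereas an almost sure $\liminf$ requires simultaneous control over a continuum of times, hence over a growing family of valleys with non-trivial dependence. Handling this means (i) discretizing time with a sequence dense enough that interpolation costs only $(1\pm o(1))$ but sparse enough to keep the union bound summable, and (ii) replacing the exact identity in law by sharp two-sided estimates valid uniformly over the relevant range, controlling the error from valleys that are ``atypical'' (too deep, too shallow, or carrying anomalous local time) via the tail bounds on exponential functionals from \cite{foncexpovech} and the quantitative renewal estimates developed earlier in the paper. A secondary technical point is confirming that the contributions of the ``inter-valley'' pieces and of the last, incomplete valley are genuinely negligible at the scale $(t/\log\log t)^{1/\kappa}$, which should follow from the fact that $\kappa>1$ makes the single-valley local-time tail light enough that the maximum over $O(r)$ valleys concentrates.
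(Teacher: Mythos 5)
Your overall route is the one the paper actually follows for $\kappa>1$: the identity $\mathcal{M}_2(r)\overset{\mathcal{L}}{=}\sup_{[0,r]}Z$ for the generalized Ornstein--Uhlenbeck process $Z$, Singh's excursion-tail asymptotic $n(\sup\xi>h)\sim Q2^{\kappa}\Gamma(\kappa)\kappa^2K/h^{\kappa}$ combined with the two-sided estimate \eqref{supz} of Fact \ref{supdeZ}, the replacement of deterministic times by hitting times $H(r/m\pm r^{\alpha})$ (Fact \ref{chgtvar}, which is where $m$ enters), negligibility of the local time on the negative half-line, and Borel--Cantelli along a subsequence together with independent spatial blocks $X^n=X(H(2v_n)+\cdot)$ for the half that needs independence. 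One cosmetic difference: the paper does not re-run the valley/renewal machinery of Section \ref{smallkappa} here; the ``product of tail probabilities'' you invoke is supplied directly by the excursion decomposition of $Z$ through \eqref{supz}, not by a maximum over $h_t$-valleys.

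The genuine problem is that your two Borel--Cantelli halves are attached to the wrong conclusions, and one of them, as stated, targets a false event. Writing $c:=2(\Gamma(\kappa)\kappa^2K/m)^{1/\kappa}$: summability of $\mathbb{P}\left(\mathcal{L}^*_X(t_n)\le(1-\varepsilon)c\,(t_n/\log\log t_n)^{1/\kappa}\right)$ gives, via the first Borel--Cantelli lemma and monotonicity of $\mathcal{L}^*_X$, the \emph{lower} bound $\liminf\ge(1-\varepsilon)c$, not ``$\liminf\le c$'' as you claim. The independence input is needed for the \emph{upper} bound $\liminf\le c$: one shows $\sum_n\mathbb{P}(\mathcal{C}_n)=\infty$ for block events on which the block's supremum of local time is at most $(1+a)c$ times the normalization, and the second Borel--Cantelli lemma yields that these \emph{small}-value events occur infinitely often (supplemented, as in the paper, by events of type $\mathcal{B}_n$, $\mathcal{D}_n$, $\mathcal{F}_n$ controlling the local time accumulated before the block, backtracking, and the hitting-time/deterministic-time comparison). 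Your stated goal for that half, $\mathbb{P}\left(\mathcal{L}^*_X(t)\ge(1+\varepsilon)c\,(t/\log\log t)^{1/\kappa}\ \text{for all large }t\right)$, cannot be the right target: that event is almost surely void (it would force $\liminf\ge(1+\varepsilon)c$, contradicting the theorem you are proving), and in any case the second Borel--Cantelli lemma produces ``infinitely often'', never ``for all large $t$''. Once the two directions are swapped back and the independence half is recast as an infinitely-often statement for small values over independent blocks, your plan coincides with the paper's proof.
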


\textbf{Example:} If we choose $V = W_{\kappa}$ (for $\kappa > 1$), then $K = 2^{\kappa - 1}/\Gamma(\kappa)$ (see Example 1.1 in \cite{Singh}) and $m = 4/(\kappa - 1)$. The above limit is then $4 (\kappa^2 (\kappa - 1)/8)^{1/\kappa}$. This coincides with \eqref{devliminfkappa>1} (proved by Devulder in \cite{Devmaxloc}). 

%\begin{remarque} \label{serieint}
%Since $f$ is decreasing we have the easy equivalence
%\[ \int_1^{+\infty} \frac{(f(t))^{\kappa}}{t} dt < + \infty \Leftrightarrow \sum_{n=1}^{+\infty} \frac{(f(n))^{\kappa}}{n} < + \infty \Leftrightarrow \sum_{n=1}^{+\infty} (f(2^n))^{\kappa} < + \infty \]
%\end{remarque}

For the $\liminf$ the behaviors of the different cases are in accordance with intuition: comparing Theorem 1.1 of \cite{Diel} with Theorems \ref{liminfkappa<1} and \ref{liminfkappa>1} we see that the renormalization for the $\liminf$ in the transient ballistic regime is smaller than the renormalization in the transient sub-ballistic regime which is in turn smaller than the renormalization in the recurrent case. 

The assumption that $V$ is spectrally negative is important in this work for several reasons: it allows the Laplace exponent $\Psi_V$ and its non-trivial zero $\kappa$ to be defined and the right tail of the exponential functional $\int_0^{+\infty} e^{V(u)} du$ to be explicit, $V^{\uparrow}$ is well defined, has infinite life-time, and is related to the law of ascending valleys in a simple way, the left tail of $I(V^{\uparrow})$, which is linked to the asymptotic behavior of $\mathcal{L}^*_X(t)$ when $0 < \kappa < 1$, is explicit and particularly interesting in the spectrally negative case, finally some estimates are easier to prove in the spectrally negative case. 

\subsection{Sketch of proofs and organisation of the paper} \label{sketch}

{The methods used in the present paper for the case $0 < \kappa < 1$ bear significant improvements compared with the methods used in \cite{advech} and \cite{caslevyvech}. Since the present paper studies the almost sure behavior, estimates need to be more precise which turns out to be a serious obstacle when one needs to deal with the last valley, that is, the valley where the diffusion is stuck at instant $t$. In general, what happens inside this last valley is not easy to quantify since for this valley we have to look at the contribution of the valley until time $t$ instead of the total contribution of the valley, this truncated peak of local time is difficult to estimate and is not independent from the contributions of the previous valleys. Moreover, a difficulty comes from the fact that this valley and what happens inside it is different from other valleys precisely because it is "a valley that is conditioned to trap the diffusion for a substantial amount of time". This last difficulty can be overcome by discretizing on the possible indices for the last valley but this requires to prove that precise estimates hold for a large number of valleys simultaneously. In order to get fine estimates we here extend the methods from \cite{advech} and \cite{caslevyvech} to a "more quenched" setting: in Section \ref{smallkappa} we define carefully a set $\mathcal{G}_{t}$ of good environments that has high probability and we establish in Fact \ref{fameuxe8} that for all good environments, an inequality satisfied by the truncated peak of local time in the $k^{th}$ valley corresponds with high quenched probability to an inequality for the random variable $R_k^t$ that represent the exponential functional of the bottom of the valley (more precisely, Fact \ref{fameuxe8} says that for good environments the quenched probability that the $k^{th}$ peak of local time satisfies some inequality while $R_k^t$ does not satisfy the corresponding inequality is small). Then, having such estimates available in the quenched setting allows to make a better use of the Markov property, which allows to get more independence and to optimize the use of the renewal structure for the diffusion. A surprising fact that is that, even though the contribution of the last valley appears in the limit distribution of $\mathcal{L}^*_X(t)/t$, the method just described allows us to prove Lemma \ref{neglectlastvalley} which shows that this contribution can be neglected when we deal with the extremely large values of the local time, and this is the reason why we can get optimal results for the $\limsup$. For the extremely small values, the method allows to control the influence of the last valley in the proof of Proposition \ref{approxdupetittl}, but the last valley cannot be totally neglected and stays in the way, this is one of the reasons why the results for the $\liminf$ are a little less precise. Another reason is that the behavior of the extremely small values of the local time is related to the right distribution tail of $\mathcal{R}$ but the later is not precisely known. 

Once the effect of the last valley has been removed from the study, as described above, we can prove Propositions \ref{approxdutl} and \ref{approxdupetittl}, which transforms the problem into the study of the extreme behaviors of functionals (of the renewal structure) that represent the highest peak of local time among all valleys before the last valley (the peak of local time in the last valley is not taken into account for $Y_1^{\natural, t} ( Y_2^{-1, t}(1) -)$, the functional involved in Proposition \ref{approxdutl}, but it is for $Y_1^{\natural, t} ( Y_2^{-1, t}(1) )$, the functional involved in Proposition \ref{approxdupetittl}). The extreme behaviors of these functionals are obtained in Propositions \ref{queueloilimsupdiscret} and \ref{queueloilimdiscret}. It can easily be seen from the results of \cite{caslevyvech} that these functionals converge in distribution to functionals of a $\kappa$-stable bivariate subordinator. However, the extreme behaviors of these functionals had not been studied before and cannot be identified trivially. Our first step to prove Propositions \ref{queueloilimsupdiscret} and \ref{queueloilimdiscret} is to reverse the problem: instead of considering the highest peak of local time until the sum of the contributions to the time spent by the diffusion in the successive valleys becomes greater than $t$, we consider the sum of the contributions to the time until the first peak of local time higher than some extremely large or small value. For example, for Proposition \ref{queueloilimsupdiscret}, we will need to study the event $\{ Y_1^{\natural, t} ( Y_2^{-1, t}(1) -) \geq y_t \}$ (for $y_t$ a large positive number depending on $t$) which represents the fact that, before the sum of the contributions to the time spent by the diffusion in the successive valleys becomes greater than $t$, there is a peak of local time that is greater than $y_t$. This event is equal to the event where, at the first peak of local time higher than $y_t$, the sum of the contributions to the time spent by the diffusion in the successive valleys is less than $t$. We thus need to deal with two quantities: 1) the sum of the contributions to the time spent by the diffusion in the successive valleys before $-$ and not including $-$ the valley of the first peak of local time higher than $y_t$, 2) the contribution to the time spent in the valley of the first peak of local time higher than $y_t$. For Proposition \ref{queueloilimdiscret}, only a quantity similar to 1) appears. In any case, the index of the first peak of local time higher than $y_t$ (or any other fixed value) follows a geometric distribution and the time contributions for 1) and 2) are obtained by taking "normal valleys" conditioned on the fact that their peak of local time is, respectively, smaller and higher than $y_t$, which allows to study their extreme behaviors either directly or via the asymptotic behavior of their Laplace transform. Results on the Laplace transform of these particular time contributions are established in Lemmas \ref{etudelaplace} and \ref{etudelaplacege}. In order to proves these lemmas (and, in the case of the $\limsup$, to understand directly the extreme behavior of the quantity 2)), we fully exploit the precise knowledge of the extreme behavior of the peak of local time in a normal valley, which was established in \cite{caslevyvech} as an intermediary result and is recalled in Fact \ref{queueiid} below.} 

The rest of the paper is organized as follows. 

\textit{In Section} \ref{smallkappa} we study the case $0 < \kappa < 1$. We first recall the decomposition of the environment into valleys from \cite{caslevyvech} and the behavior of the diffusion with respect to these valleys. In particular, we recall how the renewal structure of the diffusion allows to approximate the supremum of the local time and the time spent by the diffusion in the bottom of the valleys by an \textit{iid} sequence of $\mathbb{R}_+^2$-valued random variables. {We also define the set $\mathcal{G}_{t}$ of good environments and state Fact \ref{fameuxe8} mentioned above.} 
%which involves, in particular, random variables close to the exponential functional $I(V^{\uparrow})$. 

For the $\limsup$, we study the asymptotic of the probability $\mathbb{P} (\mathcal{L}^*_X(t)/t \geq x_t)$, where $x_t$ is a suitably chosen quantity that goes to infinity with $t$. {More precisely, we prove Lemma \ref{neglectlastvalley} which allows to neglect the effect of the last valley, and we can then prove Proposition \ref{approxdutl} which relates very precisely the asymptotic of the probability $\mathbb{P} (\mathcal{L}^*_X(t)/t \geq x_t)$} with the one of $\mathbb{P} ( Y_1^{\natural, t} ( Y_2^{-1, t}(1) - ) \geq x_t )$. 
%, where $Y_1^{\natural, t} ( Y_2^{-1, t}(1) - )$ is a functional of the above mentioned \textit{iid} sequence. 
In Proposition \ref{queueloilimsupdiscret} we link the asymptotic behavior of $\mathbb{P} ( Y_1^{\natural, t} ( Y_2^{-1, t}(1) - ) \geq x_t )$ with the left tail of $I(V^{\uparrow})$ (or $\mathcal{R}$ in the case of a drifted Brownian potential). The synthesis of Propositions \ref{approxdutl} and \ref{queueloilimsupdiscret} allows to compare the behavior of $\mathbb{P} (\mathcal{L}^*_X(t)/t \geq x_t)$ with the left tail of $I(V^{\uparrow})$ (or $\mathcal{R}$ in the case of a drifted Brownian potential) in Proposition \ref{synthese}. This proposition entails Theorem \ref{limsupenfctdelefttail} by the mean of the Borel-Cantelli Lemma and of the technical Lemma \ref{lemprindep}, which decomposes the trajectory of the diffusion into large independent parts in order to get the required independence to apply the Borel-Cantelli Lemma. The combination of Theorem \ref{limsupenfctdelefttail} with what is known (and recalled in Theorem \ref{rappelsurqueues}) for the left tail of $I(V^{\uparrow})$ easily yields Theorems \ref{limsupkappa<1} and \ref{limsupkappa<1exact}, which solves the problem for the $\limsup$. 

For the $\liminf$, we study the quantity $\mathbb{P} (\mathcal{L}^*_X(t) \leq t /x_t)$. In Proposition \ref{approxdupetittl} it is compared with $\mathbb{P} ( Y_1^{\natural, t} ( Y_2^{-1, t}(1) ) \leq 1/x_t)$. 
%, where $Y_1^{\natural, t} ( Y_2^{-1, t}(1) )$ is another functional of the $\mathbb{R}_+^2$-valued \textit{iid} sequence. 
Taking care of the last valley is done directly in the proof of Proposition \ref{approxdupetittl}, with the method described in the beginning of this subsection. In Lemmas \ref{etudelaplace} and \ref{etudelaplacege} we study the Laplace transform of a random variable involved in this functional, as explained above. This allows to give a lower and an upper bound for $\mathbb{P} ( Y_1^{\natural, t} ( Y_2^{-1, t}(1) ) \leq 1/x_t)$ in Proposition \ref{queueloilimdiscret}. The synthesis of Propositions \ref{approxdupetittl} and \ref{queueloilimdiscret} gives the asymptotic of $\mathbb{P} (\mathcal{L}^*_X(t) \leq t /x_t)$ in Proposition \ref{pretpourbcindepliminf}. This Proposition entails Theorem \ref{liminfkappa<1} by the mean of the Borel-Cantelli Lemma and Lemma \ref{lemprindep}. This solves the problem for the $\liminf$. 

\textit{In Section} \ref{bigkappa} we study the case $\kappa > 1$. In this case, the local time at $t$ can be approximated by the local time at a hitting time and the latter has the same law as the generalized Ornstein-Uhlenbeck process introduced in \cite{Singh}. Using what is known for the excursion measure of this process we prove Theorems \ref{limsupkappa>1} and \ref{liminfkappa>1}. 

\textit{In Section} \ref{toolbox} we justify some facts about $V$, $V^{\uparrow}$ and the diffusion in $V$ that are used along the paper. 

\subsection{Facts and notations} \label{factnot}

For $Y$ a process and $S$ a borelian set, we denote
\[ \tau(Y, S) := \inf \left \{ t \geq 0, \ Y(t) \in S \right \}, \ \ \ \mathcal{K}(Y, S) := \sup \left \{ t \geq 0, \ Y(t) \in S \right \}. \]
We shall only write $\tau(Y, x)$ instead of $\tau(Y, \{x\})$ and $\tau(Y, x+)$ instead of $\tau(Y, [x, +\infty [)$. Since $V$ has no positive jumps we see that each positive level is reached continuously (or not reached at all): $\forall x > 0, \ \tau(V, x+) = \tau(V, x)$ (which is possibly infinite). Moreover, the law of the supremum of $V$ is known, it is an exponential distribution with parameter $\kappa$ (see Corollary VII.2 in Bertoin \cite{Bertoin}). 

If $Y$ is Markovian and $x \in \mathbb{R}$ we denote $Y_x$ for the process $Y$ starting from $x$. For $Y_0$ we shall only write $Y$. 
%For any (possibly random) time $T > 0$, we write $Y^T$ for the process $Y$ shifted and centered at time $T$: $\forall s \geq 0, \ Y^T(s) := Y(T+s)-Y(T)$. 
When it exists we denote by $(\mathcal{L}_Y(t, x), t\geq0, x \in \mathbb{R})$ the version of the local time of $Y$ that is continuous in time and c\`ad-l\`ag in space and by $(\sigma_Y(t, x), t \geq 0, x \in \mathbb{R})$ the inverse of this local time: $\sigma_Y(t, x) := \inf \{ s \geq 0, \ \mathcal{L}_Y(s, x) > t \}$. 

Let $B$ be a Brownian motion starting at $0$ and independent from $V$. A diffusion in potential $V$ can be defined via the formula 
\begin{equation}
X(t) := A_V^{-1}(B(T_V^{-1}(t))), \label{exprediff1}
\end{equation}
where
\[ A_V(x) := \int_0^x e^{V(u)} du \text{ and for } 0 \leq s \leq \tau \left ( B, \int_0^{+\infty} e^{V(u)} du \right ), \ T_V(s) := \int_0^s e^{-2 V(A_V^{-1}(B(u)))} du. \]
{$A_V$ is the scale function of the diffusion $X$ (at fixed environment $V$). Note that, by the time-reversal property for L\'evy processes, the process $(-V(-(x-)), \ x \geq 0)$ has the same law as $(V(x), \ x \geq 0)$ so $V$ converges almost surely to $+\infty$ at $-\infty$. In particular we have $A_V(-\infty) = -\infty$ while $A_V(+\infty) < +\infty$. This implies that $X$ is almost surely transient toward $+\infty$. }

It is known that the local time of $X$ at $x$ until instant $t$ has the following expression:   
\begin{equation}
\mathcal{L}_X(t,x)= e^{- V(x)}\mathcal{L}_B(T_V^{-1}(t),A_V(x)). \label{expretl1}
\end{equation}
%We sometimes study some events for the diffusion under t
Recall the notation $\mathcal{L}^{*}_X(t)$ for the overall supremum of the local time until time $t$. We use the notation $\mathcal{L}_X^{*, +}$ for the supremum of the local time on the positive half-line: 
\[ \mathcal{L}_X^{*, +}(t) := \sup_{x \in [0, +\infty[} \mathcal{L}_X(t, x). \]
For the hitting times of $r \in \mathbb{R}$ by the diffusion $X$ we shall use the frequent notation $H(r)$ (instead of $\tau(X, r)$). 
%We denote by $H_{+}(r)$ (respectively $H_{-}(r)$) the total time spent by the diffusion in $[0, +\infty [$ (respectively $]-\infty, 0]$) before $H(r)$. 

$D(\mathbb{R}, \mathbb{R})$ is the space of c\`ad-l\`ag functions from $\mathbb{R}$ to $\mathbb{R}$. Let $P$ be the probability measure on $D(\mathbb{R}, \mathbb{R})$ inducing the law of $V$. For $v \in D(\mathbb{R}, \mathbb{R})$, the quenched probability measure $P^v$ is the probability measure (associated with the diffusion $X$) conditionally on $\{ V = v \}$. $\mathbb{P}$ represents the annealed probability measure, it is defined as $\mathbb{P}(.) := \int_{D(\mathbb{R}, \mathbb{R})} P^{v} (.) P(d v)$. $X$ is a Markovian process under $P^v$ but not under $\mathbb{P}$. Note that all the almost sure convergences stated in this Introduction are $\mathbb{P}$-almost sure convergences. For objects not related to the diffusion $X$ we also use the natural notation $\mathbb{P}$ for a probability. 

%$P^v (A)$ is the probability that the event $A$ is realized, conditionally on $\{ V = v \}$. The diffusion in environment $v$ is well defined thanks to formula ... (+ TOUT BIEN DEFINI) so it is easy to see that $P^v (A)$ is well defined. DEFINIR ANNEALED $\mathbb{P}$ (DIRE QU'ON UTILISE AUSSI CETTE NOTATION SUR UN AUTRE ESPACE PROBABILISE) $P$ MESURE SUR $D(\mathbb{R}, \mathbb{R})$
%DEFINIR LES NOTATI ONS, NOTAMMENT DE TEMPS D'ATTEINTE ET D'INVERSE DU TL
%SUP DU TEMPS LOCAL EN LES POSITIFS

%DEFINIR $A$ ET AUSSI $A^j(x)=\int_{\tilde m_j}^x e^{\tilde V^{(j)}(s)} d s$
If $Z$ is a random variable, its law is denoted by $\mathcal{L} ( Z)$ and if $A$ is an event of positive probability, $\mathcal{L} (Z | A)$ denotes the law of $Z$ conditionally to the event $A$. 

For $Z$ an increasing c\`ad-l\`ag process and $s \geq 0$, we put respectively $Z(s-)$, $Z^{\natural}(s)$ and $Z^{-1}(s)$ for respectively the left-limit of $Z$ at $s$, the largest jump of $Z$ before $s$ and the generalized inverse of $Z$ at $s$: 
\[ Z(s-) = \lim_{r \underset{<}{\rightarrow} s} Z(r), \ Z^{\natural}(s) := \sup_{0\leq r \leq s}(Z(r)-Z(r-)),\ Z^{-1}(s) := \inf\{u \geq 0,\ Z(u) > s\}. \]

For two quantities $a$ and $b$ depending on a parameter, $a \approx b$ means that $\log (a) \sim \log(b)$ when the parameter converges (generally to $0$ or infinity). 

%VOIR CE QUI DANS CASLEVY DOIT SE REMETTRE ICI

%POUR COND POS: SI BESOIN METTRE LE FACT MANQUANT (INVOQUE DANS CE QUI SUIT)
%PEUT ETRE METTRE CA DANS LA SOUS-SECTION "PROP DE V ET VUP"

%ON AURAIT PU FAIRE COMME CA MAIS CA PARAIT DIFFICIL
%\section{Technical lemmata}
%
%MONTRER AUSSI LE MEME CHOSE QUE LE LEMME SUIVANT POUR LES VRAIS TEMPS DE SORTIE
%
%\begin{lemme}
%
%There is a positive constant $\mathcal{C}$ such that when $y_t$ goes to $+\infty$ when $t$ goes to $+\infty$ we have
%
%\[ \mathbb{P} \left ( H(\tilde L_{N_t}) \geq y_t \right ) \underset{t \rightarrow +\infty}{\sim} \mathcal{C} y_t^{-\kappa} \]
%
%\end{lemme}
%
%\begin{proof}
%
%For $y_t > 1$ we have
%\begin{align*}
%\mathbb{P} \left ( Y_2^t \left ( Y_2^{-1, t}(a) \right ) \geq y_t \right ) & = mmm
%\end{align*}
%
%\end{proof}
%
%\begin{lemme}
%
%There is a positive constant $\mathcal{C}$ such that when $y_t$ goes to $+\infty$ when $t$ goes to $+\infty$ we have
%
%\[ \mathbb{P} \left ( Y_2^t \left ( Y_2^{-1, t}(a) \right ) \geq y_t \right ) \underset{t \rightarrow +\infty}{\sim} \mathcal{C} y_t^{-\kappa} \]
%
%\end{lemme}
%
%\begin{proof}
%
%For $y_t > 1$ we have
%\begin{align*}
%\mathbb{P} \left ( Y_2^t \left ( Y_2^{-1, t}(a) \right ) \geq y_t \right ) & = mmm
%\end{align*}
%
%\end{proof}

\section{Almost sure behavior when $0 < \kappa < 1$} \label{smallkappa}

In all this section we assume that the hypotheses of Theorems \ref{limsupkappa<1} and \ref{liminfkappa<1} are satisfied: $0 < \kappa < 1$, $V$ has unbounded variation and there exists $p>1$ such that $V (1) \in L^p$. The hypothesis of unbounded variation is necessary to approximate the law of the left part of a valley by the law of $\hat V^{\uparrow}$ and the hypothesis about moments for $V(1)$ allows to neglect the local time outside the bottoms of the valleys. For these reasons, many results of \cite{caslevyvech} (that are recalled in the next subsection) have been proved under these hypotheses. 

\subsection{Traps for the diffusion} \label{trapsdiff}

We now recall some definitions about valleys and describe how the diffusion gets trapped into successive valleys. The facts and lemmas stated in this subsection are more or less classical and there are all proved or justified in Subsection \ref{preuvedesfacts} (except Fact \ref{minimacoincide} which is readily Lemma 3.5 of \cite{caslevyvech}). 

We first recall the notion of $h$-extrema. For $h>0$, we say that $x\in\mathbb{R}$ is an $h$-minimum for $V$ if there exist $u<x<v$ such that $V(y) \wedge V(y-) \geq V(x) \wedge V(x-)$ for all $y\in[u,v]$, $V(u)\geq (V(x) \wedge V(x-))+h$ and $V(v-)\geq (V(x) \wedge V(x-))+h$. Moreover, $x$ is an $h$-maximum for $V$ if $x$ is an $h$-minimum for $-V$, and $x$ is an $h$-extremum for $V$ if it is an $h$-maximum or an $h$-minimum for $V$. 

Since $V$ is not a compound Poisson process, it is known (see Proposition VI.4, in \cite{Bertoin}) that it takes pairwise distinct values in its local extrema. Combining this with the fact that $V$ has almost surely c\`ad-l\`ag paths and drifts to $-\infty$ without being the opposite of a subordinator, we can check that the set of $h$-extrema is discrete, forms a sequence indexed by $\mathbb{Z}$, unbounded from below and above, and that the $h$-minima and $h$-maxima alternate. Let $\mathcal{V} \subset D (\mathbb{R}, \mathbb{R})$ be the set of the environments $v$ that satisfy the above properties and that are such that 
\[ v(x) \underset{x \rightarrow +\infty}{\longrightarrow} -\infty, \ \ \ v(x) \underset{x \rightarrow -\infty}{\longrightarrow} +\infty, \ \ \ \int_0^{+\infty} e^{v(x)} dx < +\infty. \]
Note that the path of $V$ belongs to $\mathcal{V}$ with probability $1$. 

We denote respectively by $(m_i,\ i \in \mathbb{Z})$ and $(M_i,\ i \in \mathbb{Z})$ the increasing sequences of $h$-minima and of $h$-maxima of $V$, such that $m_{0} \leq 0<m_1$ and $m_i<M_i<m_{i+1}$ for every  $i\in\mathbb{Z}$. An $h$-valley is the fragment of the trajectory of $V$ between two $h$-maxima. 
% PAS BESOIN D'EN DIRE PLUS PUISQUE CA NE SERT PLUS APRES, SAUF SI ON VEUT LA LOI DES VALLEES CLASSIQUES
%, translated at the $h$-minima between them: the $i^{th}$ classical $h$-valley is the process \[ \left ( V^{(i)}(x), \ M_{i-1} \leq x \leq M_i \right ) \ \ \ \textit{where} \ \ \ V^{(i)} := V(x)-V(m_i), \ \ \forall x\in\mathbb{R}. \]

The valleys are visited successively by the diffusion. For the size of the valleys to be well adapted with respect to the time scale, we have to make the size of the valleys grow with time $t$. We are thus interested in $h_t$-valleys where
\begin{eqnarray}
h_t := \log(t) - \phi(t), \ \text{with} \ \phi(t) := (\log ( \log (t) ))^{\omega}, \label{paramtaillevallees}
\end{eqnarray}
where $\omega > 1$ will be chosen later in accordance with some other parameters. We also define $N_t$, the index of the largest $h_t$-minima visited by $X$ until time $t$, 
\[ N_t :=\max \left\{ k\in\mathbb{N},\ \sup_{ 0 \leq s \leq t}X(s) \geq m_{k} \right \}. \]
We need deterministic bounds for the number of visited valleys. We define 
\[ n_t := \lfloor e^{\kappa (1+\delta) \phi(t)} \rfloor \ \ \ \text{and} \ \ \ \tilde n_t := \lfloor e^{\rho \phi(t)} \rfloor, \]
where $\delta > 0$ is small enough so that $(1+3\delta)\kappa<1$, $\rho \in ]0, \kappa/(1+\kappa)[$, and $\delta$ and $\rho$ are fixed once and for all in all the paper. The following lemma says that with high probability, $\tilde n_t \leq N_t \leq n_t$: 
\begin{lemme} \label{minmajnbvalleesvisit}

There is a positive constant $c$ such that for all $t$ large enough, 
\begin{align}
\mathbb{P} \left( N_t \geq n_t \right) & \leq e^{- c h_t} \leq e^{- c \phi(t)}, \label{majonbvalleesvisit} \\
\mathbb{P} \left( N_t \leq \tilde n_t \right) & \leq e^{- c \phi(t)}. \label{minonbvalleesvisit}
\end{align}

\end{lemme}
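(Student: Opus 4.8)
The plan is to control $N_t$ from above and below by comparing the positions of the $h_t$-minima $m_k$ with the time budget $t$, using the standard estimates for the diffusion in a Lévy potential together with the known law of the supremum of $V$.

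For the upper bound \eqref{majonbvalleesvisit}: the event $\{N_t \geq n_t\}$ means the diffusion has reached $m_{n_t}$ by time $t$, i.e.\ $H(m_{n_t}) \leq t$. The plan is to bound $H(m_{n_t})$ from below by the time needed to cross the $h_t$-valleys one by one. Each $h_t$-valley requires the diffusion to climb an $h_t$-high barrier, and the (quenched) expected hitting time of the top of such a barrier is of order $e^{h_t}$ times an exponential functional of the local shape of the potential; summing over the $n_t$ valleys and using that $n_t e^{h_t} \approx e^{\kappa(1+\delta)\phi(t)} \cdot t e^{-\phi(t)} = t e^{(\kappa(1+\delta)-1)\phi(t)}$... wait, this is not automatically $\gg t$, so instead I would argue more robustly: the number of $h_t$-maxima $M_i$ with $m_0 \le M_i$ and $M_i \le $ (position of $m_{n_t}$) that have ``barrier height'' close to the minimal threshold $h_t$ is, by the renewal/Poissonian structure of $h$-extrema, concentrated, and along the way the diffusion must spend, in at least a positive fraction of these valleys, a time comparable to $e^{h_t}$ (this uses that $\sup V$ over an excursion is exponential with parameter $\kappa$, so with probability bounded below a valley has depth at least $h_t + O(1)$). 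A second-moment or exponential-Markov estimate on the sum of these independent-ish contributions then gives $\mathbb{P}(H(m_{n_t}) \le t) \le e^{-ch_t}$. Since $h_t = \log t - \phi(t) \geq \phi(t)$ for large $t$, this also yields $e^{-c\phi(t)}$. I expect the bookkeeping of the ``independent-ish'' decomposition of the valleys (they overlap at shared maxima) to be the main technical nuisance here, but it is standard in this literature (cf.\ \cite{caslevyvech}); one isolates the even-indexed and odd-indexed valleys to get genuine independence.

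For the lower bound \eqref{minonbvalleesvisit}: the event $\{N_t \leq \tilde n_t\}$ means the diffusion has \emph{not} passed $m_{\tilde n_t + 1}$ by time $t$, i.e.\ $H(m_{\tilde n_t+1}) > t$. Here the plan is reversed: one shows that crossing the first $\tilde n_t$ valleys takes time $\ll t$ with high probability. The expected time to cross the $k$-th valley is of order $e^{h_t}$ times an exponential functional with exponential right tail (by Theorems 1.1 and 1.13 of \cite{foncexpovech}), so the total is stochastically dominated by $\tilde n_t \cdot e^{h_t} \cdot (\text{sum of } \tilde n_t \text{ i.i.d.\ light-tailed variables})$, which is of order $\tilde n_t e^{h_t} = e^{\rho\phi(t)} \cdot t e^{-\phi(t)} = t e^{(\rho - 1)\phi(t)} = o(t)$ since $\rho < 1$. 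A large-deviation bound (exponential Chebyshev, using the finite exponential moments of the exponential functionals) on the sum of the per-valley crossing times then shows $\mathbb{P}(H(m_{\tilde n_t+1}) > t) \le \mathbb{P}\big(\sum_{k\le \tilde n_t}(\text{crossing times}) > t\big) \le e^{-c\phi(t)}$, where the exponential rate $\phi(t)$ comes from the ratio $t / (\tilde n_t e^{h_t}) = e^{(1-\rho)\phi(t)}$ being exponentially large in $\phi(t)$. One also has to rule out that the diffusion wastes time backtracking to the left, but since $V \to +\infty$ at $-\infty$ the leftward excursions are negligible (again a known estimate, or one restricts attention to $\mathcal{L}_X^{*,+}$ and the positive half-line as set up in Subsection \ref{factnot}).

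The main obstacle in both parts is the same: converting the heuristic ``each valley costs $\approx e^{h_t}$ units of time'' into rigorous concentration, which requires (i) the per-valley cost to be expressed via exponential functionals of $V^{\uparrow}$ and $\hat V^{\uparrow}$ as in \cite{caslevyvech}, (ii) genuine independence, obtained by the even/odd splitting of valleys, and (iii) the light-tail bounds from \cite{foncexpovech} to run the exponential Markov inequality. Given that all three ingredients are available from the cited works and the choice $\rho \in ]0, \kappa/(1+\kappa)[$ is made precisely so that the arithmetic $\tilde n_t e^{h_t} = o(t)$ works with room to spare, the proof is essentially an assembly of known estimates; I would present it as such, citing \cite{caslevyvech} for the valley decomposition and \cite{foncexpovech} for the tail bounds, and deferring the fully detailed computation to Subsection \ref{preuvedesfacts}.
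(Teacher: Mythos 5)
There is a genuine gap, and it is the same one in both halves of your plan: you treat the crossing time of a single $h_t$-valley as ``$\approx e^{h_t}$ times a light-tailed variable'' and then want to run second-moment or exponential-Chebyshev concentration. In the regime $0<\kappa<1$ this picture is quantitatively wrong. The per-valley contributions $e_jS_j^tR_j^t$ are heavy-tailed with index $\kappa<1$: by \eqref{cvmeasure7.1}--\eqref{cvmeasure9.1} of Fact \ref{queueiid}, $\mathbb{P}(e_1S_1^tR_1^t/t>y)\approx \mathcal{C}'\,\mathbb{E}[\mathcal{R}^{\kappa}]\,e^{-\kappa\phi(t)}y^{-\kappa}$, so these variables have infinite mean and certainly no exponential moments (this is precisely why $(Y_1^t,Y_2^t)$ converges to a subordinator in Fact \ref{cvsub}); the exponential functionals of \cite{foncexpovech} with exponential tails are only the $R_j^t$ factor, not the $S_j^t$ factor. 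Consequently your large-deviation step for \eqref{minonbvalleesvisit} cannot be run, and your arithmetic ``$\tilde n_t e^{h_t}=o(t)$ since $\rho<1$'' is not the relevant condition: the total time is dominated by the largest single term, and what is needed is that $\tilde n_t$ be small enough that no single valley among the first $\tilde n_t$ traps the diffusion for time of order $t$. This is exactly where the standing choice $\rho<\kappa/(1+\kappa)$ (not merely $\rho<1$) enters. The paper's proof works on $\{V\in\mathcal{V}_t\}\cap\mathcal{E}^5_t\cap\mathcal{E}^7_t$ to replace the crossing times by the \textit{iid} sequence $e_iS_i^tR_i^t$, then uses the elementary inclusion $\{\sum_{i\le\tilde n_t}e_iS_i^tR_i^t\ge t/2\}\subset\{\max_{i\le\tilde n_t}e_iS_i^tR_i^t\ge t/2\tilde n_t\}$ together with \eqref{cvmeasure9.1}, which yields a bound of order $e^{-\rho'\phi(t)}$ with $\rho'=-(\rho+\kappa(\rho-1))>0$; no exponential moments are invoked because none exist.

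The same misconception undermines your sketch of \eqref{majonbvalleesvisit}. A ``positive fraction of the $n_t$ valleys each costing $\approx e^{h_t}$'' gives a total of order $n_te^{h_t}=t\,e^{(\kappa(1+\delta)-1)\phi(t)}$, which is $o(t)$ because $\delta$ was chosen with $(1+3\delta)\kappa<1$ --- so this route proves nothing, and upgrading the depth to $h_t+O(1)$ does not help since that still only gives $O(e^{h_t})\ll t$ per valley. The true mechanism is again the heavy tail: a valley traps the diffusion for time $\ge t$ when its depth exceeds $\log t=h_t+\phi(t)$, an event of probability $\approx e^{-\kappa\phi(t)}$ per valley, and $n_t=\lfloor e^{\kappa(1+\delta)\phi(t)}\rfloor$ is defined precisely so that $n_te^{-\kappa\phi(t)}\to\infty$, making it overwhelmingly likely that at least one of the first $n_t$ valleys stops the diffusion before time $t$. (In the paper this estimate is not reproved: \eqref{majonbvalleesvisit} is quoted directly as Lemma 4.16 of \cite{caslevyvech}.) To repair your proposal you would have to abandon the light-tail concentration step entirely and argue through the extreme values of the heavy-tailed crossing times, as the paper does.
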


We recall the definition of the standard valleys given in Section 3.3 of \cite{caslevyvech}. Their interest is mainly the fact that they are defined via successive stopping times, which make them convenient to use in the calculations, and also the fact that they take into consideration the descending phases between two $h_t$-minima. 

%VOIR SI ON FAIT AVEC $h$ OU $h_t$. 

Recall that $\delta > 0$ is small enough so that $(1+3\delta)\kappa<1$. Assume $t$ is large enough so that $e^{(1-\delta)\kappa h_t} \geq h_t$. We define  $\tilde \tau_0(h_t) = \tilde L_0 :=0$ and recursively for $i \geq 1$, 
\begin{eqnarray}
     \tilde L_i^{\sharp}
&:= &
    \inf\{x>\tilde L_{i-1},\ V(x)\leq V(\tilde L_{i-1})-e^{(1-\delta)\kappa h_t}   \},
\nonumber\\
    \tilde \tau_i(h_t)
&:= &
    \inf \big\{x \geq   \tilde L_i^{\sharp},\ V(x)-\inf\nolimits_{[\tilde L_i^{\sharp},x]}V = h_t\big\},
\nonumber\\
    \tilde{m}_i
&:= &
    \inf\big\{x \geq \tilde L_i^{\sharp},\ V(x)=\inf\nolimits_{[\tilde L_i^{\sharp},\tilde \tau_i(h_t)]} V \big\},
\nonumber\\
    \tilde L_i
&:= &
    \inf\{x>\tilde \tau_i(h_t),\ V(x) {- V(\tilde{m}_i)} \leq h_t/2 \}
\nonumber\\
&= &
    {\inf\{x>\tilde \tau_i(h_t),\ V(x) - V(\tilde \tau_i(h_t)) \leq -h_t/2 \},}
\nonumber\\
    \tilde \tau_i^-(a)
&:= &
    \sup \{x < \tilde m_i,\  V(x)-V(\tilde{m}_i) \geq a\}, \ \forall a \in [0, h_t],
\nonumber\\
    \tilde \tau_i^+(a)
&:= &
    \inf \{x > \tilde m_i,\  V(x)-V(\tilde{m}_i) = a\}, \ \forall a \in [0, h_t].
\nonumber
\end{eqnarray}

These random variables depend on $h_t$ and therefore on $t$, even if this does not appear in the notations. 
%We also introduce the equivalent of $V^{(i)}$ for the $\tilde m_i, i \in \mathbb{N}^*$ as follows: 
We also define
\[ \tilde V^{(i)}(x) := V(x)-V(\tilde m_i), \ \ \forall x\in\mathbb{R}. \]
We call $i^{th}$ \textit{standard valley} the re-centered truncated potential
$(\tilde V^{(i)} (x),\ \tilde L_{i-1} \leq x \leq \tilde L_{i} )$. The law of the bottom of these valleys is given in Fact \ref{loidesval} of Section \ref{toolbox}. 

Similarly as in \cite{caslevyvech} we define the \textit{deep bottoms} of the $j^{th}$ standard valleys to be the interval
\begin{eqnarray}
\mathcal{D}_j := [\tilde \tau_j^-((\phi(t))^2), \tilde \tau_j^+((\phi(t))^2)]. \label{defdj}
\end{eqnarray}

\begin{remarque} \label{iid}
The random times $\tilde L_i^{\sharp}$, $\tilde \tau_i(h_t)$, and $\tilde L_{i}$ are stopping times. As a consequence, the sequence $(\tilde V^{(i)} (x + \tilde m_i),\ \tilde L_{i-1} - \tilde m_i \leq x \leq \tilde L_{i} - \tilde m_i)_{i \geq 1}$ is \textit{iid}. 
\end{remarque}

%Our definitions take in consideration the absence of positive jumps for $V$, in particular $\tilde \tau_i(h) < + \infty$ and $V(\tilde \tau_i(h)-) = V(\tilde \tau_i(h))=V(\tilde m_i)+h$. We can see that the $\tilde m_i$, $i\in\mathbb{N}^*$, are $h$-minima. The next lemma, which is the analogue of Lemma 2.3 in Andreoletti, Devulder \cite{AndDev}, shows that, with high probability, the sequence $(\tilde m_i)_{i \geq 1}$ coincides with the sequence $(m_i)_{i \geq 1}$ for indices $i \leq n$ when $n$ does not grow too fast with $h$. 

We also have that the sequence $(\tilde m_i)_{i \geq 1}$ of the minima of the standard valleys coincides with the sequence $(m_i)_{i \geq 1}$ with high probability for a large number of indices. Let
\[ \mathcal{V}_t := \left \{ v \in \mathcal{V}, \ \forall i \in \{1, ..., n_t \}, \ m_i = \tilde{m}_i \right \}. \]
Note from the definition of $\mathcal{V}$ that the sequences $(\tilde m_i)_{i \geq 1}$ and $(m_i)_{i \geq 1}$ are always defined for any $v \in \mathcal{V}$ so in particular the event $\mathcal{V}_t$ is well defined. We have: 
%D'ABORD DEFINIR $\mathcal{V}$ QUI SERT EN PARTICULIER A CE QUE LES VALLEES TILDES SOIENT DEFINIES ET L'INTEGRALEDE L'ENV ENTRE 0 ET +L'INFINI DOIT ETRE FINIE

\begin{fact} \label{minimacoincide} (Lemma 3.5 of \cite{caslevyvech})

There is a positive constant $c$ such that for all $t$ large enough, 
\[ P \left ( V \in \mathcal{V}_t \right ) \geq 1 - e^{- c h_t}. \]
\end{fact}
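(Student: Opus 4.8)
\emph{Approach and reduction.} The plan is to reduce the statement to a single--valley estimate, using the renewal structure of the standard valleys. Precisely, it is enough to find $c'>0$ with $P\big(\exists\, i\in\{1,\dots,n_t\}:\ m_i\neq\tilde m_i\big)\le n_t\,e^{-c' h_t}$: since $\phi(t)=(\log\log t)^{\omega}=o(\log t)=o(h_t)$ and $n_t=\lfloor e^{\kappa(1+\delta)\phi(t)}\rfloor$, this gives $n_t e^{-c'h_t}=e^{\kappa(1+\delta)\phi(t)-c'h_t}\le e^{-(c'/2)h_t}$ for $t$ large, so $c:=c'/2$ works. By Remark \ref{iid} the recentered truncated valleys $(\tilde V^{(i)}(x+\tilde m_i),\ \tilde L_{i-1}-\tilde m_i\le x\le\tilde L_i-\tilde m_i)_{i\ge1}$ are \textit{iid} and tile $[0,+\infty)$; since every $h_t$--minimum of $V$ on $(0,+\infty)$ lies in this union, after a union bound over $i\le n_t$ it suffices to bound, for one generic valley, the probability of a local ``bad'' event $\mathcal{B}$ defined below, and to check $P(\mathcal{B})\le e^{-c'h_t}$.

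\emph{Step 1: every $\tilde m_i$ is an $h_t$--minimum.} For $t$ large enough that $e^{(1-\delta)\kappa h_t}\ge h_t$ (as already assumed in the construction), this is purely deterministic. Since $V$ is spectrally negative, $V(x)\wedge V(x-)=V(x)$ for all $x$, so one must produce $u<\tilde m_i<v$ with $V\ge V(\tilde m_i)$ on $[u,v]$ and $V(u),V(v-)\ge V(\tilde m_i)+h_t$. One takes $v=\tilde\tau_i(h_t)$: $\tilde m_i$ realizes $\inf_{[\tilde L_i^{\sharp},\tilde\tau_i(h_t)]}V$, so $V\ge V(\tilde m_i)$ on $[\tilde m_i,\tilde\tau_i(h_t)]$, and the level $h_t$ above the running infimum is reached continuously (no positive jumps), so $V(\tilde\tau_i(h_t))=V(\tilde m_i)+h_t$. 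One takes $u=\tilde L_{i-1}$ (with $\tilde L_0=0$): by definition of $\tilde L_i^{\sharp}$, $V(\tilde L_{i-1})\ge V(\tilde L_i^{\sharp})+e^{(1-\delta)\kappa h_t}\ge V(\tilde m_i)+h_t$, while on $[\tilde L_{i-1},\tilde L_i^{\sharp})$ the potential stays above $V(\tilde L_{i-1})-e^{(1-\delta)\kappa h_t}\ge V(\tilde m_i)$; hence $V\ge V(\tilde m_i)$ on all of $[\tilde L_{i-1},\tilde\tau_i(h_t)]$ and $\tilde L_{i-1}<\tilde L_i^{\sharp}\le\tilde m_i<\tilde\tau_i(h_t)$, which proves the claim.

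\emph{Step 2: the bad event and its probability.} Since every $\tilde m_i$ is an $h_t$--minimum and $\tilde m_1<\tilde m_2<\cdots$, these points all belong to the ordered list $0<m_1<m_2<\cdots$ of $h_t$--minima of $V$; thus $m_i=\tilde m_i$ for all $i\le n_t$ unless the construction \emph{misses} an $h_t$--minimum, i.e. some $h_t$--minimum of $V$ lies in $(0,\tilde m_1)$ or strictly between two consecutive $\tilde m_i$'s. Using only Step~1 one checks that no $h_t$--minimum can lie in an ascending phase $(\tilde m_{i-1},\tilde\tau_{i-1}(h_t)]$, so a missed minimum must lie in a descending phase --- a tail descent $(\tilde\tau_{i}(h_t),\tilde L_{i}]$, the ``hunt'' $[\tilde L_{i-1},\tilde L_i^{\sharp}]$ for the deep drop, or a final descent $(\tilde L_i^{\sharp},\tilde m_i)$ --- and it then forces $V$ to rise by $h_t$ above its running infimum somewhere in that phase. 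Let $\mathcal{B}_i$ (of constant probability $P(\mathcal{B})$, by the \textit{iid} structure) be the event that such a rise occurs inside valley $i$; then $\{V\notin\mathcal{V}_t\}\subseteq\bigcup_{i\le n_t}\mathcal{B}_i$. Now the probability that $V$ performs a single excursion of height $\ge h_t$ above its running infimum is of order $e^{-\kappa h_t}$, because the overall supremum of $V$ is exponential with parameter $\kappa$ (Corollary VII.2 in \cite{Bertoin}) and, by the strong Markov property, the excursion measure of excursions of height $>h_t$ is $\le Ce^{-\kappa h_t}$. In a tail or final descent the running infimum drops only by at most a power of $h_t$, with probability $\ge 1-e^{-\kappa h_t}$ say (here $V(1)\in L^p$ controls the overshoot of the relevant first passage), so there are polynomially many excursion attempts and this contribution is $\le h_t^{C}e^{-\kappa h_t}$. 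In the hunt the running infimum drops by $e^{(1-\delta)\kappa h_t}$ (again up to an overshoot controlled by $V(1)\in L^p$), so by excursion theory the expected number of height--$\ge h_t$ excursions during that phase is of order $e^{(1-\delta)\kappa h_t}\cdot e^{-\kappa h_t}=e^{-\delta\kappa h_t}$, whence this contribution is $\le e^{-\delta\kappa h_t}$ by Markov's inequality. Altogether $P(\mathcal{B})\le e^{-c'h_t}$ for any $c'<\delta\kappa$ and $t$ large, and the reduction step finishes.

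\emph{Main obstacle.} The factor $1-\delta<1$ built into the threshold $e^{(1-\delta)\kappa h_t}$ defining $\tilde L_i^{\sharp}$ is precisely what makes the hunt estimate work, and the hard part --- the step I expect to require the most care --- is making the heuristic ``$e^{(1-\delta)\kappa h_t}$ attempts, each of cost $e^{-\kappa h_t}$'' rigorous: this calls for a ladder--height / excursion decomposition of $V$ on $[\tilde L_{i-1},\tilde L_i^{\sharp}]$ via the fluctuation theory (scale functions) of spectrally negative L\'evy processes, together with the overshoot control afforded by $V(1)\in L^p$. One must also verify carefully that ``$m_j=\tilde m_j$ for all $j\le i$'' is genuinely implied by the local events $\mathcal{B}_j^{c}$ alone --- which is where the \textit{iid} renewal structure of Remark \ref{iid} enters --- and this bookkeeping is what makes the full argument somewhat lengthy. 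All of this is carried out in Lemma~3.5 of \cite{caslevyvech}, to which we refer for the details.
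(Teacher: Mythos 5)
The paper itself gives no proof of this fact: it is imported verbatim as Lemma 3.5 of \cite{caslevyvech} (as stated at the beginning of Subsection \ref{trapsdiff}), and your proposal ultimately does the same, explicitly deferring the key quantitative step to that lemma. Your surrounding sketch — each $\tilde m_i$ is deterministically an $h_t$-minimum once $e^{(1-\delta)\kappa h_t}\ge h_t$, a mismatch forces an extra excursion of height $h_t$ above the running infimum during a descending phase, costing roughly $e^{(1-\delta)\kappa h_t}\cdot e^{-\kappa h_t}=e^{-\delta \kappa h_t}$ per valley, which the union bound over $n_t=\lfloor e^{\kappa(1+\delta)\phi(t)}\rfloor$ valleys absorbs since $\phi(t)=o(h_t)$ — is a sound outline of how the cited lemma goes, so your treatment is essentially the same as the paper's.
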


%LOI DES VALLEES STANDARDS ? LEMME 3.7 ?

%VALLEES ET $n_t$

We define $X_{\tilde m_j} := X(. + \tilde m_j)$ which is, according to the Markov property, a diffusion in potential $V$ starting from $\tilde m_j$. We also define, for any $r \in \mathbb{R}$, $H_{X_{\tilde m_j}}(r)$ to be the hitting time of $r$ by $X_{\tilde m_j}$. When we deal with $X_{\tilde m_j}$ we often need the notation $A^j(x)=\int_{\tilde m_j}^x e^{\tilde V^{(j)}(s)} d s$. 

As in \cite{advech} and \cite{caslevyvech}, we approximate the distribution function of the renormalized local time by distribution functions of functionals of the sequence $(e_j S_j^t, e_j S_j^t R_j^t)_{j \geq 1}$ where 
\[ e_j := \mathcal{L}_X ( H(\tilde L_j), \tilde m_j) / A^j(\tilde L_j), \ \ \ S_j^t := \int_{\tilde \tau_j^+(h_t / 2)}^{\tilde L_j} e^{\tilde V^{(j)}(u)} du, \ \ \ R_j^t := \int_{\tilde \tau_j^-(h_t / 2)}^{\tilde \tau_j^+(h_t / 2)} e^{-\tilde V^{(j)}(u)} du. \]
In Section 4 of \cite{caslevyvech}, it is shown that $e_j$ follows an exponential distribution with parameter $1/2$ (since the distribution of $e_j$ does not depend on $t$ we omit the dependence in $t$ for $e_j$ in the notations) and that the random variables $e_j, S_j^t, R_j^t, \ j \geq 1$ are mutually independent. To simplify notations we define, as in \cite{advech} and \cite{caslevyvech}, the process of the renormalized sum of the contributions: 
\[ \forall s \geq 0, \ (Y_1^t, Y_2^t)(s) := \frac1{t} \sum_{j=1}^{\lfloor s e^{\kappa \phi(t)} \rfloor} (e_j S_j^t , e_j S_j^t R_j^t), \]
and the overshoots of $\sum_{i=1}^{.} e_i S_i^t R_i^t$: for any $a \geq 0$, let us define
\[ \mathcal{N}_a := \min \left \{ j \geq 0, \ \sum_{i=1}^{j} e_i S_i^t R_i^t > a \right \}. \]

We have

\begin{fact} \label{cvsub} (Proposition 4.2 of \cite{caslevyvech})

$(Y_1^t, Y_2^t)$ converges in distribution in $(D([0, +\infty[, \mathbb{R}^2), J_1)$ to a non-trivial bidimentional $\kappa$-stable subordinator $(\mathcal{Y}_1,\mathcal{Y}_2)$. 

\end{fact}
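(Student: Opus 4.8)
The plan is to prove convergence of the bidimensional process $(Y_1^t, Y_2^t)$ by identifying it, up to the deterministic time-change $s \mapsto \lfloor s e^{\kappa\phi(t)}\rfloor$ and the spatial scaling by $1/t$, as a row-sum process of a triangular array of \textit{iid} $\mathbb{R}^2$-valued random variables, and then invoking the classical functional limit theorem for such arrays (convergence to a L\'evy process in the $J_1$ topology, see e.g. Jacod--Shiryaev or Kallenberg). First I would record the \textit{iid} structure: by Remark \ref{iid}, the shifted standard valleys $(\tilde V^{(i)}(x + \tilde m_i))_i$ form an \textit{iid} sequence, and since $e_j$, $S_j^t$, $R_j^t$ are measurable with respect to the $j$-th valley (together with the independent exponential random variable driving $e_j$), the pairs $(e_j S_j^t, e_j S_j^t R_j^t)_{j\geq 1}$ are \textit{iid} for each fixed $t$. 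Thus for each $t$ we have a genuine random walk in $\mathbb{R}^2$, and $(Y_1^t,Y_2^t)(s)$ is its value at step $\lfloor s e^{\kappa\phi(t)}\rfloor$, renormalized by $1/t$.

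The core analytic step is then to verify the two ingredients of the array limit theorem. One must show that, as $t \to +\infty$, the number of summands $e^{\kappa\phi(t)}$ times the law of a single normalized increment $\frac1t(e_1 S_1^t, e_1 S_1^t R_1^t)$ converges vaguely on $\mathbb{R}^2\setminus\{0\}$ to a L\'evy measure $\nu$ on $[0,\infty)^2$ (with the right behaviour near the origin so that no truncation term survives, i.e. the increments are asymptotically summable — which is why the process will be a pure-jump subordinator with no drift and no Gaussian part), and that the centering can be taken to be zero. Concretely this reduces to: for test sets bounded away from $0$, compute the asymptotics of $e^{\kappa\phi(t)}\,\mathbb{P}\big(\frac1t e_1 S_1^t > a,\ \frac1t e_1 S_1^t R_1^t \in B\big)$. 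The key input is the law of the bottom of a standard valley (Fact \ref{loidesval}) and the known tail behaviour of the relevant exponential functionals: $S_1^t$ is, up to truncation at height $h_t$, essentially $e^{h_t/2}$ times an exponential functional of $V^{\uparrow}$-type, $R_1^t$ is essentially $e^{h_t/2}$ times an exponential functional of $\hat V^{\uparrow}$-type, so $e_1 S_1^t$ behaves like $e^{h_t}$ (a quantity of order $t e^{-\phi(t)}$) times a random variable whose tail, after multiplication by $e^{\kappa\phi(t)}$, produces a Pareto-type limit with index $\kappa$; this is precisely the mechanism already used in \cite{advech}, \cite{caslevyvech}. I would also check the \emph{non-triviality}: $\nu \neq 0$, so the limit subordinator is non-degenerate, and that $\nu$ charges configurations with both coordinates positive so the limit is genuinely bidimensional (the two coordinates are not a deterministic multiple of one another).

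I expect the main obstacle to be the tightness/vague-convergence bookkeeping near the origin, specifically controlling the contribution of the truncation at level $h_t$ and of the "descending phase" parts $[\tilde\tau_j^+(h_t/2), \tilde L_j]$ that are built into the standard-valley construction: one has to show these boundary effects vanish in the limit and do not spoil the summability condition that forces the absence of a linear drift term. This is where the moment hypothesis $V(1)\in L^p$ and the unbounded-variation assumption enter, exactly as flagged in the text before Subsection \ref{trapsdiff}. Verifying convergence in the $J_1$ (rather than a weaker $M_1$) topology is then automatic for row-sum processes of \textit{iid} triangular arrays once the one-dimensional marginal / vague convergence of the increment measures is in hand, so no separate argument is needed there. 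Since the statement is cited verbatim as Proposition 4.2 of \cite{caslevyvech}, in the present paper it suffices to invoke that reference; the above is the shape of the argument behind it.
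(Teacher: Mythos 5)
The paper gives no proof of this fact at all---it is quoted verbatim from Proposition 4.2 of \cite{caslevyvech}---so your deferral to that reference is exactly what the paper does, and your triangular-array sketch (\textit{iid} valley contributions plus vague convergence of $e^{\kappa\phi(t)}$ times the law of $\frac1t(e_1S_1^t, e_1S_1^tR_1^t)$ to a L\'evy measure, using the tail asymptotics recalled in Fact \ref{queueiid}) is the right shape of the argument behind that citation. One slip in your heuristic: $R_1^t$ carries no factor $e^{h_t/2}$; it is an $O(1)$ quantity converging in distribution to $\mathcal{R}$, which is precisely why the second coordinate's limiting tail involves the constant $\mathcal{C}'\,\mathbb{E}[\mathcal{R}^{\kappa}]$ rather than an extra scaling.
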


Let us introduce some events that occur with high probability. They describe the behavior of the diffusion and provide effective approximations for the time and the local time, in the study of the case $0 < \kappa < 1$. On these events, the diffusion leaves the valleys from the right and never goes back to a previous valley, the local time and the time spent by the diffusion are negligible, compared with $t$, outside the bottoms of the valleys, the supremum of the local time and the time spent by the diffusion in the bottom of the valleys are approximated by the \textit{iid} sequence $(e_j S_j^t, e_j S_j^t R_j^t)_{j \geq 1}$. 
\begin{align*}
\mathcal{E}^1_t & := \bigcap_{j=1}^{n_t} \left \{ H_{X_{\tilde m_j}}(\tilde L_j) < H_{X_{\tilde m_j}}(\tilde L_{j-1}), \ H_{X_{\tilde L_j}}(+\infty) < H_{X_{\tilde L_j}}(\tilde \tau_j(h_t)) \right \}, \\
\mathcal{E}^2_t & := \bigcap_{j=0}^{n_t - 1} \left \{ \sup_{y \in \mathbb{R}} \left ( \mathcal{L}_X (H(\tilde m_{j+1}),y) - \mathcal{L}_X (H(\tilde L_j),y) \right ) \leq t e^{(\kappa (1+3\delta )-1)\phi(t)} \right \}, \\
\mathcal{E}^3_t & := \bigcap_{j=1}^{n_t} \left \{ \sup_{y \in [\tilde{L}_{j-1}, \tilde{L}_j] \cap \overline{\mathcal{D}_j}} \left ( \mathcal{L}_X (H(\tilde L_j),y) - \mathcal{L}_X (H(\tilde m_{j}),y) \right ) \leq t e^{-2 \phi(t)} \right \}, \\
\mathcal{E}^4_t & := \bigcap_{j=1}^{n_t} \left \{ \sup_{y \in \mathcal{D}_j} \left ( \mathcal{L}_X(H(\tilde L_j), y) - \mathcal{L}_X(H(\tilde m_j), y) \right ) \leq (1+e^{-\tilde c h_t}) \mathcal{L}_X(H(\tilde L_j), \tilde m_j) \right \}, \\
%\mathcal{E}^4_t & := \bigcap_{j=1}^{n_t} \left \{ \left | \sup_{y \in \mathcal{D}_j} \mathcal{L}_X(t, y) - \mathcal{L}_X(t, \tilde m_j) \right | \leq ... \right \}, \\
\mathcal{E}^5_t & := \bigcap_{j=1}^{ n_t}\left\{0\leq H(\tilde m_{j})-\sum_{i=1}^{j-1}\left ( H(\tilde L_i)-H(\tilde m_i) \right ) \leq \frac{2t}{\log h_t} \right\}, \\
%\mathcal{E}^5_t(k) & := \left\{0\leq H(\tilde m_{k+1})- H(\tilde L_k) \leq t e^{-\phi(t) /2} \right\}, \ k \geq 1, \\
\mathcal{E}^6_t & := \bigcap_{j=1}^{n_t} \left \{ (1-e^{- \tilde c h_t}) e_j S_j^t \leq \mathcal{L}_X(\tilde m_j, H(\tilde{L}_j)) \leq (1+e^{- \tilde c h_t}) e_j S_j^t \right \}, \\
\mathcal{E}^7_t & := \bigcap_{j=1}^{n_t} \left \{ (1-e^{- \tilde c h_t}) e_j S_j^t R_j^t \leq H(\tilde L_j)-H(\tilde m_j) \leq (1+e^{- \tilde c h_t}) e_j S_j^t R_j^t \right \}. 
%\mathcal{E}^8_t & := \left \{ \frac{\sup_{y \in \mathcal{D}_1} \mathcal{L}_{X'}(t(1-z), y)}{t} \leq \frac{1-z}{R_1^t} (1+ ...) \right \}. 
\end{align*}
Here, $\tilde c$ is a fixed positive constant that has been chosen small enough (the constraints for the choice of $\tilde c$ will be specified in the proofs of Fact \ref{bigevents} and Fact \ref{fameuxe8}). Note that the above events depend both on the environment $V$ and on the Brownian motion driving the diffusion. 

\begin{fact} \label{bigevents}
There is a positive constant $L$ such that for all $t$ large enough, 
%\[ \sum_{i=1}^7 \mathbb{P} \left ( \overline{\mathcal{E}^i_t} \right ) \leq e^{-c \phi(t)} \ \ \ \text{and} \ \ \ \forall k \geq 1, \ \mathbb{P} ( \overline{\mathcal{E}^5_t(k)} ) \leq e^{-c \phi(t)}. \]
\begin{eqnarray}
\mathbb{P} \left ( \overline{\mathcal{E}^7_t} \right ) \leq e^{-L h_t}, \ \ \ \sum_{i=1}^7 \mathbb{P} \left ( \overline{\mathcal{E}^i_t} \right ) \leq e^{-L \phi(t)}. \label{bigeventsleq}
\end{eqnarray}
Fix $\eta \in ]0, 1[$. If $t$ is so large such that $(1-e^{- \tilde c h_t})^{-1} < (1+\eta)$ and $(1+e^{- \tilde c h_t})^{-1} (1 - 2/\log(h_t)) \geq (1-\eta)$, then
\begin{eqnarray}
\left \{ V \in \mathcal{V}_t \right \} \cap \left \{ N_t < n_t \right \} \cap \mathcal{E}^5_t \cap \mathcal{E}^7_t \subset \left \{ \mathcal{N}_{(1-\eta)t} \leq N_t \leq \mathcal{N}_{(1+\eta)t} \right \}. \label{bigeventsnbvalles}
\end{eqnarray}
%\[ \forall i \in \{ 1, ..., 7 \}, \ \mathbb{P} \left ( \overline{\mathcal{E}^i_t} \right ) \leq e^{-c \phi(t)}. \]

%\begin{align*}
%& \mathbb{P} \left ( \overline{\mathcal{E}^1_t} \right ) \leq e^{-c h_t}, \ \mathbb{P} \left ( \overline{\mathcal{E}^2_t} \right ) \leq e^{-c \phi(t)}, \ \mathbb{P} \left ( \overline{\mathcal{E}^3_t} \right )  \leq e^{-c \phi(t)}, \ \mathbb{P} \left ( \overline{\mathcal{E}^4_t} \right ) \leq e^{-c h_t}, \ \mathbb{P} \left ( \overline{\mathcal{E}^5_t} \right ) \leq e^{-c \phi(t)}, \\
%& \mathbb{P} \left ( \overline{\mathcal{E}^6_t} \right ) \leq e^{-c h_t}, \ \mathbb{P} \left ( \overline{\mathcal{E}^7_t} \right ) \leq e^{-c h_t} 
%\end{align*}

%\begin{align}
%\mathbb{P} \left ( \overline{\mathcal{E}^1_t} \right ) & \leq e^{-c h_t} \\
%\mathbb{P} \left ( \overline{\mathcal{E}^2_t} \right ) & \leq e^{-c \phi(t)} \\
%\mathbb{P} \left ( \overline{\mathcal{E}^3_t} \right ) & \leq e^{-c \phi(t)} \\
%\mathbb{P} \left ( \overline{\mathcal{E}^4_t} \right ) & \leq e^{-c h_t} \\
%\mathbb{P} \left ( \overline{\mathcal{E}^5_t} \right ) & \leq e^{-c \phi(t)} \\
%\mathbb{P} \left ( \overline{\mathcal{E}^6_t} \right ) & \leq e^{-c h_t} \\
%\mathbb{P} \left ( \overline{\mathcal{E}^7_t} \right ) & \leq e^{-c h_t} \\
%\end{align}

\end{fact}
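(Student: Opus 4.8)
The plan is to establish the two displays of the statement separately; the inclusion \eqref{bigeventsnbvalles} is a short deterministic computation that does not use \eqref{bigeventsleq}.

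\emph{The estimates \eqref{bigeventsleq}.} For each $i$, I would first bound $\mathbb{P}(\overline{\mathcal{E}^i_t})$ by a union bound over the (at most $n_t$) valleys entering the intersection, reducing matters to a single-valley estimate; this is legitimate because, by Remark \ref{iid}, the recentred standard valleys are \emph{iid}, so it suffices to control the relevant bad event on one generic valley. Since $n_t\approx e^{\kappa(1+\delta)\phi(t)}$ grows sub-polynomially in $t$ whereas each single-valley probability will be at most $e^{-c\phi(t)}$ (and at most $e^{-c h_t}$ for the event defining $\mathcal{E}^7_t$), the union bound costs only a negligible factor and produces the announced bound $e^{-L\phi(t)}$, resp. $e^{-L h_t}$ for $\mathcal{E}^7_t$. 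For the single-valley estimates I would reuse the tools of \cite{caslevyvech}, re-checked for the present choice \eqref{paramtaillevallees} of $h_t$: (i) the representations \eqref{exprediff1}--\eqref{expretl1} of $X$ and of $\mathcal{L}_X$ through a time-changed Brownian motion; (ii) the fact that the diffusion started at $x\in(a,b)$ reaches $a$ before $b$ with probability $\big(\int_x^{b}e^{V(u)}\,du\big)\big/\big(\int_a^{b}e^{V(u)}\,du\big)$, which makes the probabilities of the diffusion backtracking across the barriers built into the standard valleys super-exponentially small (those barriers have height at least $h_t/2$, and even $e^{(1-\delta)\kappa h_t}$ at the points $\tilde L_i^{\sharp}$), and hence gives $\mathcal{E}^1_t$; (iii) the law of the bottom of the valleys (Fact \ref{loidesval}), the moment hypothesis $V(1)\in L^p$, and the left-tail bounds for exponential functionals recalled in Theorem \ref{rappelsurqueues}, which control the time and the local time accumulated along the descending phases between consecutive bottoms and give $\mathcal{E}^2_t$, $\mathcal{E}^3_t$, $\mathcal{E}^5_t$; (iv) the first and second Ray--Knight theorems applied to the Brownian local time after the time change, which describe the local-time profile inside a valley and give $\mathcal{E}^4_t$, $\mathcal{E}^6_t$, $\mathcal{E}^7_t$, the multiplicative $1\pm e^{-\tilde c h_t}$ accuracy of $\mathcal{E}^7_t$ being what upgrades its failure bound to $e^{-L h_t}$.

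\emph{The inclusion \eqref{bigeventsnbvalles}.} Fix a realisation in $\{V\in\mathcal{V}_t\}\cap\{N_t<n_t\}\cap\mathcal{E}^5_t\cap\mathcal{E}^7_t$ and write $\Sigma_j:=\sum_{i=1}^{j}e_iS_i^tR_i^t$. Summing the two inequalities of $\mathcal{E}^7_t$ over $i\le j-1$ and combining them with the two inequalities of $\mathcal{E}^5_t$ gives, for every $j\le n_t$,
\[ (1-e^{-\tilde c h_t})\,\Sigma_{j-1}\ \le\ H(\tilde m_j)\ \le\ (1+e^{-\tilde c h_t})\,\Sigma_{j-1}+\frac{2t}{\log h_t}. \]
On $\{V\in\mathcal{V}_t\}\cap\{N_t<n_t\}$ one has $m_i=\tilde m_i>0$ for $1\le i\le N_t+1$, and since $X$ is continuous with $X(0)=0$, the definition of $N_t$ reads $H(\tilde m_{N_t})\le t<H(\tilde m_{N_t+1})$; here $N_t\ge1$, because the $j=1$ instance of the display forces $H(\tilde m_1)\le 2t/\log h_t<t$ (the second assumption on $t$ entails $2/\log h_t<1$). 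Taking $j=N_t$ in the left inequality yields $\Sigma_{N_t-1}\le t/(1-e^{-\tilde c h_t})<(1+\eta)t$ by the first assumption on $t$; as $\Sigma$ is non-decreasing this forces $\mathcal{N}_{(1+\eta)t}\ge N_t$. Taking $j=N_t+1$ (legitimate since $N_t+1\le n_t$) in the right inequality yields $t<(1+e^{-\tilde c h_t})\Sigma_{N_t}+2t/\log h_t$, whence $\Sigma_{N_t}>(1+e^{-\tilde c h_t})^{-1}(1-2/\log h_t)\,t\ge(1-\eta)t$ by the second assumption on $t$, which forces $\mathcal{N}_{(1-\eta)t}\le N_t$. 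Combining the two inclusions gives \eqref{bigeventsnbvalles}.

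\emph{Main obstacle.} The bulk of the work lies in \eqref{bigeventsleq}, and within it the local-time events $\mathcal{E}^3_t$ and $\mathcal{E}^4_t$ (controlling how much the time-changed Brownian local time can vary on the flat deep bottom $\mathcal{D}_j$ against how much it can accumulate on the rest of the valley) and $\mathcal{E}^5_t$ (the cumulative time spent in the descending phases being at most $2t/\log h_t$): these demand a delicate Ray--Knight analysis, and one must re-verify that the window $h_t=\log t-(\log\log t)^{\omega}$ and the free parameter $\omega>1$ still keep every per-valley error probability summable against the $n_t$ valleys. Once $\mathcal{E}^5_t$ and $\mathcal{E}^7_t$ are in hand, the inclusion \eqref{bigeventsnbvalles} is routine.
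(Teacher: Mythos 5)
Your proof of the inclusion \eqref{bigeventsnbvalles} is correct and complete, and it is actually more self-contained than the paper, which at this point simply quotes Lemma 4.11 of \cite{caslevyvech}. Summing the two-sided bounds of $\mathcal{E}^7_t$ and combining with $\mathcal{E}^5_t$ to get $(1-e^{-\tilde c h_t})\Sigma_{j-1}\le H(\tilde m_j)\le(1+e^{-\tilde c h_t})\Sigma_{j-1}+2t/\log h_t$ (with $\Sigma_j:=\sum_{i=1}^{j}e_iS_i^tR_i^t$), then evaluating at $j=N_t$ and $j=N_t+1$ using $H(\tilde m_{N_t})\le t<H(\tilde m_{N_t+1})$ on $\{V\in\mathcal{V}_t\}\cap\{N_t<n_t\}$, is exactly the right argument; your checks that $N_t\ge1$ and $N_t+1\le n_t$ keep all indices within the range where $\mathcal{E}^5_t$, $\mathcal{E}^7_t$ and $m_j=\tilde m_j$ apply, and the two hypotheses on $t$ are used precisely where needed.

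For the first display \eqref{bigeventsleq}, however, there is a genuine gap: what you give is a programme, not a proof — none of the seven estimates is actually established. The paper disposes of them by citing results of \cite{caslevyvech} that are already stated for the whole intersection over $j\le n_t$ (Lemmas 5.20--5.21 for $\mathcal{E}^1_t$, Fact 4.4 for $\mathcal{E}^2_t,\mathcal{E}^3_t$, Fact 4.3 for $\mathcal{E}^5_t$, Proposition 4.5 for $\mathcal{E}^6_t,\mathcal{E}^7_t$, the latter two with bounds $e^{-ch_t}$), and only the new event $\mathcal{E}^4_t$ is proved from scratch, via the scaling reduction to the Brownian local-time estimate \eqref{rayknight3} combined with \eqref{mainmilieu}. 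Moreover, the one quantitative step you do spell out is incorrect as written: $n_t\approx e^{\kappa(1+\delta)\phi(t)}$ and a per-valley bound $e^{-c\phi(t)}$ are exponentials of the \emph{same} order in $\phi(t)$, so the union bound $n_t\,e^{-c\phi(t)}$ is small only if $c>\kappa(1+\delta)$; it is not "a negligible factor" in general. What saves the valley-by-valley strategy for $\mathcal{E}^1_t,\mathcal{E}^4_t,\mathcal{E}^6_t,\mathcal{E}^7_t$ is that their per-valley bounds are in fact of order $e^{-ch_t}$ with $h_t\sim\log t\gg\phi(t)$, while for the events whose global bound is genuinely only $e^{-c\phi(t)}$ one must track the constants (or use the global statements of \cite{caslevyvech}) rather than invoke a generic small $c$. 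You flag this verification in your "main obstacle" paragraph, but flagging it does not supply it. A smaller point: the left-tail bounds of Theorem \ref{rappelsurqueues} are not the relevant tool for $\mathcal{E}^2_t,\mathcal{E}^3_t,\mathcal{E}^5_t$; those estimates rest on hitting-time and supremum bounds for $V$ (as in Lemmas \ref{tpsatteinthatv} and \ref{supdev}) together with the moment hypothesis $V(1)\in L^p$, following \cite{caslevyvech}.
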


%The following fact, which comes from \cite{advech}, prove the negligibility of these events: 
%
%\begin{fact} \label{fameuxe8}
%There is a positive constant $c$ such that for all $t$ large enough, 
%\[ \forall k \geq 1, \ z \in ... \ \ \ \mathbb{P} \left ( \mathcal{E}^8_t(v, k, z) \right ) \leq e^{-c h_t}, \ \ \ \mathbb{P} \left ( \mathcal{E}^9_t(v, k, z) \right ) \leq e^{-c h_t}. \]
%\end{fact}

Our proofs are based on the study of the asymptotics of the quantities $\mathbb{P} (\mathcal{L}^*_X(t) \geq t x_t)$ and $\mathbb{P} (\mathcal{L}^*_X(t) \leq t /x_t)$ where $x_t$ depends on $t$ and goes to infinity with $t$. More precisely we define $x_t$ such that
\begin{eqnarray}
x_t \underset{t \rightarrow +\infty}{\sim} D (\log (\log(t)))^{\mu - 1}, \label{defxtlim}
\end{eqnarray}
where $D > 0$ and $\mu \in ]1, 2]$. Precise choices of $x_t$ will be made later, they will all satisfy \eqref{defxtlim} for some $D > 0$ and $\mu \in ]1, 2]$. 

%IL FAUT FAIRE SUR $\mathcal{E}^9_t(v, k, z)$ LES CHANGEMENTS QU'ON A FAIT SUR $\mathcal{E}^8_t(k)$, NOTAMMENT LA SUPPRESSION DE $H(\tilde L_k) \geq t(1-z)$: ON NE PEUT PAS LA FAIRE DANS $\mathcal{E}^9_t(v, k, z)$ MAIS CE N'EST PAS GRAVE PARCE QUE DANS $\mathcal{E}^8_t(k)$ CA NE POSAIT PROBLEME QUE POUR LE TERME DE BORD
Fix $\epsilon$ small enough so that Fact \ref{3integrals} of Section \ref{toolbox} is satisfied. We define $\mathcal{G}_t$ to be the set of "good environments" in the following way ; $v \in \mathcal{V}_t$ belongs to $\mathcal{G}_t$ if it satisfies the following conditions: 
\begin{align}
\forall j \in \{1, ..., n_t \}, \ e^{- \epsilon h_t /4} \leq R_j^t = \int_{\tilde \tau_j^-(h_t / 2)}^{\tilde \tau_j^+(h_t / 2)} e^{-(v(u) - v(\tilde m_j))} du & \leq e^{h_t /8}, \label{invtl1.1.0} \\
\forall j \in \{1, ..., n_t \}, \ \left | A^j(\tilde \tau_j^-(h_t / 2)) \right | = \left | \int_{\tilde m_j}^{\tilde \tau_j^-(h_t / 2)} e^{v(u) - v(\tilde m_j)} du \right | & \leq e^{5 h_t /8}, \label{invtl1.1.00} \\
\forall j \in \{1, ..., n_t \}, \ A^j(\tilde \tau_j^+(h_t / 2)) = \int_{\tilde m_j}^{\tilde \tau_j^+(h_t / 2)} e^{v(u) - v(\tilde m_j)} du & \leq e^{5 h_t /8}, \label{invtl1.1.001} \\
\forall j \in \{1, ..., n_t \}, \ \int_{\tilde{L}_{j-1}}^{\tilde \tau_j^-(h_t / 2)} e^{-(v(u) - v(\tilde m_j))} du & \leq e^{-\epsilon h_t}, \label{invtl1.1.1} \\
\forall j \in \{1, ..., n_t \}, \ \int_{\tilde \tau_j^+(h_t / 2)}^{\tilde L_j} e^{-(v(u) - v(\tilde m_j))} du & \leq e^{-\epsilon h_t},  \label{invtl1.1.12} \\
%A^j(\tilde L_j) = \int_{\tilde m_j}^{\tilde L_j} e^{-(v(u) - v(\tilde m_j))} du \leq (1 + e^{-h_t / 7}) \int_{\tilde \tau_j^+(h_t / 2)}^{\tilde L_j} e^{-(v(u) - v(\tilde m_j))} du = (1 + e^{-h_t / 7}) & S_j. \label{inegs-a}
P^v ( \cup_{i=1}^7 \overline{\mathcal{E}^i_t} ) & \leq e^{-L \phi(t)/2}, \label{bonenv1}
\end{align}
%\begin{eqnarray}
%\mathcal{G}_t := \left \{ v \in \mathcal{V}_t, \ P^v ( \cup_{i=1}^7 \overline{\mathcal{E}^i_t} ) \leq e^{-c \phi(t)/2} \right \}, \label{bonenv1}
%\end{eqnarray}
where $P^v (.)$ is defined in Subsection \ref{factnot} and $L$ is the constant defined in Fact \ref{bigevents}. Note that, since $\mathcal{G}_t \subset \mathcal{V}_t$, we will often use the fact that $\tilde m_j = m_j$ for $v \in \mathcal{G}_t$ and $j \leq n_t$. 
%DIRE QU'ON A BESOIN D'ETRE SUR $\mathcal{V}_t$ POUR QUE TOUT SOIT DEFINI (ET DU COUP DANS LA PREUVE DU LEMME SUIVANT ON INTERSECTE AVEC $\mathcal{V}_t$. VERIFIER MAIS A PRIORI ON A JUSTE BESOIN D'INTERSECTER AVEC L'EVENEMENT DE PROBA 1 SUR LEQUEL LA SUITE DES VALLEES TILDES EST DEFINIE. IL SEMBLERAIT MEME QUE FACT \ref{minimacoincide} NE SERT A RIEN ICI (VERIFIER): IL DOIT SERVIR DANS L'ARTICLE PRECENDANT POUR PROUVER DES TRUCS QUI SONT DIRECTEMENT ADMIS ICI. EN FAIT CA SERT ICI POUR MINORER LE NOMBRE DE VALLEES

\begin{lemme} \label{measofgoodenv}

There is a positive constant $c$ such that for all $t$ large enough, 
\[ P \left ( V \in \mathcal{G}_{t} \right ) \geq 1 - e^{- c \phi(t)}. \]

\end{lemme}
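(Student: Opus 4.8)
The plan is to bound the complement $\{V \notin \mathcal{G}_t\}$ by the union of the failures of the six defining conditions \eqref{invtl1.1.0}--\eqref{bonenv1}, and to show each such failure has probability at most $e^{-c\phi(t)}$ for a suitable constant. Since $\mathcal{G}_t \subset \mathcal{V}_t$, I would first write
\[ P(V \notin \mathcal{G}_t) \leq P(V \notin \mathcal{V}_t) + \sum_{\text{conditions}} P\big(\text{that condition fails, } V \in \mathcal{V}_t\big), \]
and invoke Fact \ref{minimacoincide} to control $P(V \notin \mathcal{V}_t)$ by $e^{-ch_t} \leq e^{-c\phi(t)}$ (recall $h_t = \log t - \phi(t) \geq \phi(t)$ for large $t$). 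The remaining work is to dispatch each of the conditions \eqref{invtl1.1.0}--\eqref{bonenv1}, for which the structure is always the same: a union over $j \in \{1, \dots, n_t\}$ of events each concerning a single standard valley, with $n_t = \lfloor e^{\kappa(1+\delta)\phi(t)}\rfloor$. By the \emph{iid} property of Remark \ref{iid}, $P(\text{condition fails}) \leq n_t\, P(\text{single-valley event fails})$, so it suffices to show that each single-valley event has probability at most $e^{-c'h_t}$ for $c' > \kappa(1+\delta)$, which then beats the factor $n_t \approx e^{\kappa(1+\delta)\phi(t)}$ and leaves $e^{-c\phi(t)}$.

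For the integral conditions \eqref{invtl1.1.0}--\eqref{invtl1.1.12}, the key input is Fact \ref{loidesval} describing the law of the bottom of a standard valley (in terms of $V^{\uparrow}$ and $\hat V^{\uparrow}$ conditioned to stay positive) together with Fact \ref{3integrals} of Section \ref{toolbox}, which is exactly cited in the text as controlling these integrals once $\epsilon$ is chosen small enough. Concretely: the upper bounds $R_j^t \leq e^{h_t/8}$ and $|A^j(\tilde\tau_j^\pm(h_t/2))| \leq e^{5h_t/8}$ are crude deterministic-type estimates on exponential functionals of a process that stays within a band of height $h_t/2$ over an interval whose length has exponential tails; the lower bound $R_j^t \geq e^{-\epsilon h_t/4}$ and the smallness bounds \eqref{invtl1.1.1}--\eqref{invtl1.1.12} for the integrals over $[\tilde L_{j-1}, \tilde\tau_j^-(h_t/2)]$ and $[\tilde\tau_j^+(h_t/2), \tilde L_j]$ express that the potential is already substantially above its minimum on those outer pieces (a height at least $h_t/2$ is gained by the time of $\tilde\tau_j^\pm(h_t/2)$, so $e^{-(v(u)-v(\tilde m_j))} \leq e^{-h_t/2}$ there, and one integrates against a length with exponential moments). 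Each of these reduces to a large-deviation estimate for the height gained or the length traversed by $V^{\uparrow}$ or $\hat V^{\uparrow}$, which is precisely the content we are allowed to quote from Fact \ref{3integrals} and the moment assumption $V(1) \in L^p$.

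For the last condition \eqref{bonenv1}, I would use Fact \ref{bigevents}: it gives $\mathbb{P}(\cup_{i=1}^7 \overline{\mathcal{E}^i_t}) \leq e^{-L\phi(t)}$, where $\mathbb{P}(\cdot) = \int P^v(\cdot)\,P(dv)$. A Markov (Chebyshev) inequality on the random variable $v \mapsto P^v(\cup_{i=1}^7 \overline{\mathcal{E}^i_t}) \in [0,1]$ then yields
\[ P\big(P^v(\cup_{i=1}^7 \overline{\mathcal{E}^i_t}) > e^{-L\phi(t)/2}\big) \leq \frac{\mathbb{E}[P^v(\cup_{i=1}^7 \overline{\mathcal{E}^i_t})]}{e^{-L\phi(t)/2}} = \frac{\mathbb{P}(\cup_{i=1}^7 \overline{\mathcal{E}^i_t})}{e^{-L\phi(t)/2}} \leq e^{-L\phi(t)/2}, \]
which is exactly the bound \eqref{bonenv1} fails with probability $\leq e^{-L\phi(t)/2}$. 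Combining all pieces and taking $c$ to be the minimum of the constants produced gives the claim. The main obstacle I anticipate is not conceptual but bookkeeping: one must check that the exponents in the single-valley large-deviation bounds genuinely exceed $\kappa(1+\delta)$ (so the union bound over $n_t$ valleys survives), which forces the specific numerical choices of the thresholds $h_t/8$, $5h_t/8$, $\epsilon h_t$, $\epsilon h_t/4$ in the definition of $\mathcal{G}_t$ — these constants are tuned precisely so that Fact \ref{3integrals} delivers a decay rate faster than $e^{-\kappa(1+\delta)h_t}$, and verifying this compatibility is the delicate step.
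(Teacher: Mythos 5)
Your overall architecture (union bound over the conditions, Fact \ref{minimacoincide} for $\mathcal{V}_t$, Fact \ref{3integrals} for the conditions \eqref{invtl1.1.00}--\eqref{invtl1.1.12}, and the Markov-inequality argument turning $\mathbb{P}(\cup_{i=1}^7\overline{\mathcal{E}^i_t})\leq e^{-L\phi(t)}$ into \eqref{bonenv1} with probability $1-e^{-L\phi(t)/2}$) is exactly what the paper does, and that last step you reproduce verbatim. But there is a genuine gap in your treatment of \eqref{invtl1.1.0}: neither bound on $R_j^t$ is contained in Fact \ref{3integrals}, and the heuristics you offer do not replace it. The lower bound $R_j^t\geq e^{-\epsilon h_t/4}$ is \emph{not} of the type ``the potential is already $h_t/2$ above its minimum there'' --- $R_j^t$ is precisely the integral over the bottom of the valley, where the potential is near its minimum --- it is a left-tail estimate for an exponential functional of the (conditioned) potential started at its minimum, i.e.\ a statement that $V^{\uparrow}$ (and $\hat V^{\uparrow}$) does not escape from $0$ too fast. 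The paper obtains it by combining Lemma \ref{approxder2cas} (applied with $z_t=e^{h_t^{1/3}}$), which transfers the question to the left tail of $I(V^{\uparrow})$ (or $\mathcal{R}$), with the universal bound \eqref{majouniv} of Theorem \ref{rappelsurqueues}; none of the inputs you cite delivers this. Similarly, the upper bound $R_j^t\leq e^{h_t/8}$ is not a ``crude deterministic-type estimate'' available from Fact \ref{loidesval} alone: one needs either a quantitative control of the length $\tilde\tau_j^+(h_t/2)-\tilde\tau_j^-(h_t/2)$ together with a lower barrier for the potential on that interval (which you neither state nor cite), or, as the paper does, the uniform exponential moment \eqref{cvrlapl} of $R_1^t$ from Fact \ref{queueiid} plus a Chernoff bound, giving $P(R_j^t\geq e^{h_t/8})\leq e^{-\lambda_0 e^{h_t/8}/2}$.

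A secondary, smaller point: your criterion ``each single-valley event must have probability at most $e^{-c'h_t}$ with $c'>\kappa(1+\delta)$'' conflates $h_t$ and $\phi(t)$. Since $n_t\approx e^{\kappa(1+\delta)\phi(t)}$ while $h_t\sim\log t\gg\phi(t)=(\log\log t)^{\omega}$, any fixed $c'>0$ suffices to beat the union over $n_t$ valleys; this matters because the paper's lower-tail bound for $R_j^t$ in the Brownian case only decays like $e^{-\delta\kappa h_t/3}$, which fails your stated criterion but is perfectly adequate. So the ``delicate tuning'' you flag is not where the real work lies; the real work is the $R_j^t$ estimates described above.
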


We need that, as in Lemma 5.3 of \cite{advech}, an inequality for the local time in the bottom of the $k^{th}$ valley is related to an inequality for the random variable $R_k^t$. Since we deal with unlikely inequalities for the local time, we have to prove the negligibility of the event where an inequality is satisfied for the local time, but not the corresponding inequality for $R_k^t$. Recall the notations $X_{\tilde m_j}$ and $H_{X_{\tilde m_j}}(.)$ defined above. For any fixed environment $v \in \mathcal{G}_t$, $k \in \{1, ..., n_t \}$, and $z \in [0, 1]$ we define 
\begin{align*}
\mathcal{E}^8_t(v, k, z) := & \left \{ (1-z) \frac{(1+e^{- \tilde c h_t})}{(1-e^{- \tilde c h_t})} < x_t R_k^t, \ \sup_{\mathcal{D}_k} \mathcal{L}_{X_{\tilde m_k}}(t(1-z), .) \geq t x_t, H_{X_{\tilde m_k}}(\tilde L_k) < H_{X_{\tilde m_k}}(\tilde L_{k-1}) \right \}, \\
\mathcal{E}^9_t(v, k, z) := & \left \{ R_k^t / x_t < (1-e^{- \tilde c h_t})(1-z), \ \mathcal{L}_{X_{\tilde m_k}}(t(1-z), \tilde m_k) \leq t / x_t, \right. \\
& \left. H_{X_{\tilde m_k}}(\tilde L_k) \geq t(1-z), H_{X_{\tilde m_k}}(\tilde L_k) < H_{X_{\tilde m_k}}(\tilde L_{k-1}) \right \}, 
\end{align*}
where $\tilde c$ is the same as in the definitions of $\mathcal{E}^4_t, \mathcal{E}^6_t$ and $\mathcal{E}^7_t$. Note that the inequalitiy $(1-z) (1+e^{- \tilde c h_t})/(1-e^{- \tilde c h_t}) < x_t R_k^t$ (resp. $R_k^t / x_t < (1-e^{- \tilde c h_t})(1-z)$) only depends on the environment $v$. We consider $\mathcal{E}^8_t(v, k, z)$ (resp. $\mathcal{E}^9_t(v, k, z)$) to equal $\emptyset$ when $v$ is such that this equality is not satisfied. In the rest of the paper, we use the same convention when we consider events, at fixed environment, that partially depend on the environment. 
%(OU BIEN ON MET AILLEURS CETTE INEGALITE. ) 
%PRECESIER QUE CE SONT DES EVENEMENTS POUR LA DIFFUSION COMMENCANT EN $\tilde m_k$ (POUR LA DIFFUSION SHIFETEE AU TEMPS D'ATTEINTE)

\begin{fact} \label{fameuxe8}
There is a positive constant $c$ such that for $t$ large enough and any fixed environment $v \in \mathcal{G}_t$ we have
%\begin{align}
%\sup_{z \in [0, 4/\log(h_t)]} \ & P^v \left ( \cup_{k=1}^{n_t} \mathcal{E}^8_t(v, k, z) \right ) \leq e^{-c \phi(t)/2} \label{majoe8} \\
%\sup_{z \in [0, 1]} \ & P^v \left ( \cup_{k=1}^{n_t} \mathcal{E}^9_t(v, k, z) \right ) \leq e^{-c \phi(t)/2}. \label{majoe9}
%\end{align}
\begin{align}
P^v \left ( \cup_{k=1}^{n_t} \{ N_t \geq k, \ H(\tilde m_{k})/t \leq 1 - 4/\log(h_t) \} \cap \mathcal{E}^8_t(v, k, H(\tilde m_{k})/t) \right ) & \leq e^{-c \phi(t)}, \label{majoe8} \\
P^v \left ( \cup_{k=1}^{n_t} \mathcal{E}^8_t(v, k, 1 - 4/\log(h_t)) \right ) & \leq e^{-c \phi(t)}, \label{majoe8bis} \\
P^v \left ( \cup_{k=1}^{n_t} \{ N_t \geq k \} \cap \mathcal{E}^9_t(v, k, H(\tilde m_{k})/t) \right ) & \leq e^{-c \phi(t)}. \label{majoe9}
\end{align}

\end{fact}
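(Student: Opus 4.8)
\emph{Proof proposal.} After fixing a good environment $v\in\mathcal G_t$, each of \eqref{majoe8}, \eqref{majoe8bis}, \eqref{majoe9} asks that the unlikely events $\mathcal E^8_t$ and $\mathcal E^9_t$ contribute at most $e^{-c\phi(t)}$. My plan is, valley by valley, to rewrite the defining inequalities of $\mathcal E^8_t(v,k,z)$ and $\mathcal E^9_t(v,k,z)$ in terms of a single Brownian local time, and then to exhibit an explicit event $\mathcal A_k$ with $P^v_{\tilde m_k}(\mathcal A_k^c)\le e^{-ch_t}$ on which these events are \emph{empty} for every $z\in[0,1]$; summing over $k\le n_t$ and using $n_t=e^{\kappa(1+\delta)\phi(t)}$ together with $h_t\ge\phi(t)$ then yields all three estimates. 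The Brownian picture: restart the diffusion at $\tilde m_k$ --- legitimate by the strong Markov property at $H(\tilde m_k)$, which is how the random parameter $z=H(\tilde m_k)/t$ enters --- and write $X_{\tilde m_k}(s)=\tilde m_k+(A^k)^{-1}(B(T_k^{-1}(s)))$ with $B$ a Brownian motion from $0$; then \eqref{expretl1} and the occupation time formula give $\mathcal L_{X_{\tilde m_k}}(s,y)=e^{-\tilde V^{(k)}(y)}\mathcal L_B(T_k^{-1}(s),A^k(y))$ and $T_k(u)=\int_{\tilde L_{k-1}}^{\tilde L_k}e^{-\tilde V^{(k)}(y)}\mathcal L_B(u,A^k(y))\,dy$ until $B$ leaves $(A^k(\tilde L_{k-1}),b_k)$, $b_k:=A^k(\tilde L_k)$. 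On the event $\{H_{X_{\tilde m_k}}(\tilde L_k)<H_{X_{\tilde m_k}}(\tilde L_{k-1})\}$ contained in both $\mathcal E^8_t$ and $\mathcal E^9_t$, the relevant time parameter $u:=T_k^{-1}(t(1-z))$ satisfies $u\le\tau_{b_k}(B)$ and $\mathcal L_B(\tau_{b_k}(B),0)=e_kb_k$.

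The core is a deterministic reduction. Let $\mathcal A_k$ gather: (i) $\sup_{a\in[A^k(\tilde\tau_k^-(h_t/2)),A^k(\tilde\tau_k^+(h_t/2))]}|\mathcal L_B(u,a)-\mathcal L_B(u,0)|\le t^{1-c_0}$ for all $u\le\tau_{b_k}(B)$, where $c_0>0$ is a fixed constant depending on $\epsilon$ (note this interval contains $0$ and, by \eqref{invtl1.1.00}--\eqref{invtl1.1.001}, has length $\le 2e^{5h_t/8}$); (ii) $\sup_a\mathcal L_B(\tau_{b_k}(B),a)\le e^{\epsilon h_t/8}\,t$; (iii) $b_k\le te^{\phi(t)}$ and $e_k\le e^{\epsilon h_t/16}$. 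On $\mathcal A_k$, using $0\le e^{-\tilde V^{(k)}}\le 1$ on $\mathcal D_k\subset[\tilde\tau_k^-(h_t/2),\tilde\tau_k^+(h_t/2)]$ and keeping in $T_k(u)$ only the $[\tilde\tau_k^-(h_t/2),\tilde\tau_k^+(h_t/2)]$-part, (i) gives simultaneously $\sup_{\mathcal D_k}\mathcal L_{X_{\tilde m_k}}(t(1-z),\cdot)\le\mathcal L_B(u,0)+t^{1-c_0}$ and $\mathcal L_B(u,0)\le t(1-z)/R_k^t+t^{1-c_0}$; feeding in the defining inequality $x_tR_k^t>(1-z)(1+e^{-\tilde c h_t})/(1-e^{-\tilde c h_t})$ of $\mathcal E^8_t$ then yields $\sup_{\mathcal D_k}\mathcal L_{X_{\tilde m_k}}(t(1-z),\cdot)<tx_t(1-e^{-\tilde c h_t})+2t^{1-c_0}<tx_t$ as soon as $\tilde c<c_0$, contradicting the requirement $\ge tx_t$ of $\mathcal E^8_t$; hence $\mathcal E^8_t(v,k,z)\cap\mathcal A_k=\emptyset$. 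For $\mathcal E^9_t$ one bounds $T_k(u)$ from above: its central part is $\le R_k^t(\mathcal L_B(u,0)+t^{1-c_0})$ and its two slope parts are $\le 2e^{-\epsilon h_t}\sup_a\mathcal L_B(\tau_{b_k}(B),a)$ by \eqref{invtl1.1.1}--\eqref{invtl1.1.12}; combining with (ii)--(iii), $R_k^t\le e^{h_t/8}$ and $R_k^t\ge e^{-\epsilon h_t/4}$ (from $v\in\mathcal G_t$, \eqref{invtl1.1.0}) --- the latter, with $R_k^t<(1-e^{-\tilde c h_t})(1-z)x_t$, forcing $t(1-z)\ge te^{-\epsilon h_t/4}$ --- one gets $R_k^t\mathcal L_B(u,0)\ge t(1-z)(1-o(1))$, so $\mathcal L_B(u,0)>t/x_t$, contradicting $\mathcal L_{X_{\tilde m_k}}(t(1-z),\tilde m_k)\le t/x_t$; hence $\mathcal E^9_t(v,k,z)\cap\mathcal A_k=\emptyset$, again provided $\tilde c$ is small enough relative to $\epsilon$. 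This is the step fixing how small $\tilde c$ must be taken, and it crucially uses all the bounds defining $\mathcal G_t$.

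It remains to show $P^v_{\tilde m_k}(\mathcal A_k^c)\le e^{-ch_t}$, uniformly in $k\le n_t$. Estimate (i) is a tail bound for the spatial oscillation of Brownian local time over an interval of length $\le 2e^{5h_t/8}$: by Tanaka's formula $\mathcal L_B(u,a)-\mathcal L_B(u,0)$ equals a bounded-variation term of order $|a|$ plus a continuous martingale with quadratic variation $\le 4|a|\sup_r\mathcal L_B(u,r)$, so, after bounding $\sup_r\mathcal L_B(u,r)$ by a polynomial in $t$ on the event (ii), a Doob/BDG maximal inequality in the time variable together with a discretisation in the space variable (using the joint continuity of $\mathcal L_B$) gives the bound with some $c_0>0$ depending on $\epsilon$; (ii) follows from the first Ray--Knight theorem (which realises $a\mapsto\mathcal L_B(\tau_{b_k}(B),a)$ as a concatenation of squared Bessel processes of dimensions $2$ and $0$) and the Gaussian/Doob tails of their running maxima, together with (iii); (iii) is elementary ($e_k$ is exponential, and $b_k\le te^{\phi(t)}$ holds with probability $\ge 1-e^{-ch_t}$ by the toolbox bounds on the width and depth of a valley and the exponential right tail of the exponential functionals of the environment). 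Finally: in \eqref{majoe8bis} the parameter $z=1-4/\log(h_t)$ is deterministic, so $P^v\bigl(\cup_{k\le n_t}\mathcal E^8_t(v,k,z)\bigr)\le n_te^{-ch_t}\le e^{-c'\phi(t)}$; in \eqref{majoe8} and \eqref{majoe9} one conditions on $\mathcal F_{H(\tilde m_k)}$, observes that $\{N_t\ge k\}$, $\{H(\tilde m_k)/t\le 1-4/\log(h_t)\}$ and the inequalities on $R_k^t$ occurring in $\mathcal E^8_t,\mathcal E^9_t$ are $\mathcal F_{H(\tilde m_k)}$-measurable, and applies the reduction above to the freshly restarted diffusion --- valid uniformly in the now frozen $z\in[0,1]$ --- to obtain the same bound after summing over $k$. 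The restriction $H(\tilde m_k)/t\le 1-4/\log(h_t)$ in \eqref{majoe8} (with its boundary value treated in \eqref{majoe8bis}) secures a remaining time $\ge 4t/\log(h_t)$, keeping $\mathcal L_B(u,0)$ of order $t$ so that the additive error $t^{1-c_0}$ of (i) is negligible; $\mathcal E^9_t$ needs no such restriction because $R_k^t\ge e^{-\epsilon h_t/4}$ and $R_k^t<(1-e^{-\tilde c h_t})(1-z)x_t$ already force $t(1-z)$ to be a positive power of $t$.

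The main obstacle is carrying out the two Brownian-local-time estimates (i) and (ii) with error terms compatible with the scales $e^{-\tilde c h_t}$, $e^{-\epsilon h_t}$ imposed by $\mathcal G_t$, while the time horizon $t(1-z)$, the endpoint $b_k$ and the valley geometry are all random; and, hand in hand with this, choosing the single constant $\tilde c$ (small relative to $\epsilon$, hence to $\kappa$) that closes every inequality of the deterministic reduction at once.
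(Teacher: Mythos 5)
There is a genuine gap, and it comes in two places. First, your whole Brownian machinery is anchored at $\tau(B,b_k)$ with $b_k:=A^k(\tilde L_k)$, and conditions (ii)--(iii) of your event $\mathcal A_k$ (on which (i) also relies, since you bound $\sup_r\mathcal L_B(u,r)$ via (ii)) require $b_k\le t e^{\phi(t)}$. But under $P^v$ the quantity $b_k$ is \emph{deterministic}: it is a functional of the fixed environment $v$, so the claim that it holds ``with probability $\ge 1-e^{-ch_t}$'' is an annealed statement that has no meaning here, and nothing in the definition of $\mathcal G_t$ (conditions \eqref{invtl1.1.0}--\eqref{bonenv1}) bounds $A^k(\tilde L_k)$ or $S_k^t$ from above --- these have heavy-tailed laws and are simply not constrained on $\mathcal G_t$. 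For an admissible $v$ with huge $b_k$, your condition (ii) fails with probability of constant order (since $\mathcal L_B(\tau(B,b_k),0)=e_kb_k$ with $e_k$ exponential), so $P^v(\mathcal A_k^c)\le e^{-ch_t}$ cannot be obtained uniformly over $v\in\mathcal G_t$. The paper's proof avoids exactly this by rescaling $B$ by the target local-time level ($tZ$, resp. $t/x_t$) and working with the inverse local time $\sigma_B(\cdot,0)$ and the estimates of Fact \ref{tlmb} (second Ray--Knight), which involve no reference to $b_k$ at all.

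Second, your deterministic reduction assumes $u=T_k^{-1}(t(1-z))\le\tau(B,b_k)$, i.e. that the diffusion has not hit $\tilde L_k$ before time $t(1-z)$. For $\mathcal E^8_t$ this is not part of the event: the diffusion may exit at $\tilde L_k$ early, return to $\mathcal D_k$, and build the large local time afterwards, and for a fixed $v\in\mathcal G_t$ the quenched probability of such a return is not controlled at the scale $e^{-ch_t}$ by any purely within-valley Brownian argument --- it is precisely what the events $\mathcal E^1_t$ and $\mathcal E^4_t$ (resp. $\mathcal E^7_t$ for $\mathcal E^9_t$) encode, and the only uniform control available on $\mathcal G_t$ is the condition \eqref{bonenv1}, which gives $e^{-L\phi(t)/2}$, not $e^{-ch_t}$. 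This is why the Fact is stated with the bound $e^{-c\phi(t)}$: the paper splits according to whether the relevant inverse local time $\sigma_{X_{\tilde m_k}}(\cdot,\tilde m_k)$ comes before or after $H_{X_{\tilde m_k}}(\tilde L_k)$, treats the first case by the scaled Brownian estimates, and absorbs the second case into $\overline{\mathcal E^1_t}\cup\overline{\mathcal E^4_t}$ (resp. $\overline{\mathcal E^7_t}$) via \eqref{bonenv1}. Your proposal never invokes \eqref{bonenv1} or these events, so this whole regime is left uncovered; to repair the argument you would have to reintroduce that dichotomy (and replace the $\tau(B,b_k)$-anchored estimates by ones cut at an inverse local time level of order $t e^{\epsilon h_t/4}$, say), which is essentially the paper's route.
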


Note that in the above fact, $v$ is fixed in $\mathcal{G}_t \subset \mathcal{V}_t$ so $N_t \geq k$ implies $H(\tilde m_{k})/t \leq 1$. The quantities in the above fact are thus well defined. 
%ATTENTION ATTENTION ATTENTION: Comme on a $N_t$ il faut aussi que les vall\'ees coincident. Par exemple, remplacer $\{ N_t < n_t \}$ par $\{ N_t < n_t \} \cap \mathcal{V}$. 

We know from Fact \ref{bigevents} that the contributions to the local time and to the time spent of the successive valleys are approximated by the \textit{iid} sequence $(e_j S_j^t, e_j S_j^t R_j^t)_{j \geq 1}$. Since we deal with extreme values of the local time, we need to know the right tails of the distributions of $e_j S_j^t$ and of $e_j S_j^t R_j^t$. We also need informations about the extreme values of $R_j^t$. Recall the definition of $\mathcal{R}$ in the beginning of Section \ref{results}. We have the following fact from \cite{caslevyvech}: 

\begin{fact} \label{queueiid}
%Assume that the hypotheses of Theorems \ref{limsupkappa<1} and \ref{liminfkappa<1} are satisfied. 
Fix $\eta \in ]0, 1/3[$ and let $\mathcal{C}'$ be the constant in Lemma 4.16 of \cite{caslevyvech}. We have
\begin{align}
\underset{t \rightarrow +\infty}{\lim} & \sup_{x \in \left [ e^{-(1-2 \eta)\phi(t)}, + \infty \right [} \ \left | x^{\kappa} e^{\kappa \phi(t)} \mathbb{P} \left ( e_1 S_1^t /t > x \right ) - \mathcal{C}' \right | & = 0, \label{cvmeasure7.1} \\
\underset{t \rightarrow +\infty}{\lim} & \sup_{y \in \left [ e^{-(1-3\eta)\phi(t)}, + \infty \right [} \ \left | y^{\kappa} e^{\kappa \phi(t)} \mathbb{P} \left ( e_1 S_1^t R_1^t /t > y \right ) - \mathcal{C}' \mathbb{E}\left [ \mathcal{R}^{\kappa} \right ] \right | & = 0. \label{cvmeasure9.1}
\end{align}

$( R_1^t)_{t > 0}$ converges in distribution to $\mathcal{R}$ and there exists a positive $\lambda_0$ such that
\begin{eqnarray}
\forall \lambda < \lambda_0, \ \ \ \mathbb{E} \left [ e^{\lambda R_1^t} \right ] \underset{t \rightarrow +\infty}{\longrightarrow}  \mathbb{E} \left [ e^{\lambda \mathcal{R}} \right ], \label{cvrlapl}
\end{eqnarray}
where the above quantities are all finite. This entails the convergence of the moments of any positive order of $R_1^t$ to those of $\mathcal{R}$ when $t$ goes to infinity. 
\end{fact}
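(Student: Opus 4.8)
The statement gathers four assertions, and the plan is to obtain each of them by combining the valley analysis of \cite{caslevyvech} with the integrability of $I(V^{\uparrow})$ and $I(\hat V^{\uparrow})$ established in \cite{foncexpovech}. The pieces are: (i) the convergence in distribution of $R_1^t$ to $\mathcal{R}$, together with convergence of Laplace transforms near $0$ and of all positive moments; (ii) the uniform Pareto-type tail \eqref{cvmeasure7.1} for $e_1 S_1^t/t$; and (iii) the uniform Pareto-type tail \eqref{cvmeasure9.1} for $e_1 S_1^t R_1^t/t$. Throughout I would use the mutual independence of $e_j,\, S_j^t,\, R_j^t$ recalled above and the fact that $e_j$ is exponentially distributed with parameter $1/2$.

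For (i): Fact~\ref{loidesval} describes the law of the re-centered bottom of the first standard valley, and as $h_t \to +\infty$ this law converges to the two-sided conditioned environment obtained by gluing $\hat V^{\uparrow}$ to the left of the origin and $V^{\uparrow}$ to the right, the two sides becoming independent in the limit. Splitting $R_1^t = \int_{\tilde \tau_1^-(h_t/2)}^{\tilde m_1} e^{-\tilde V^{(1)}(u)}\,du + \int_{\tilde m_1}^{\tilde \tau_1^+(h_t/2)} e^{-\tilde V^{(1)}(u)}\,du$ at the minimum, changing variables, and using that both $V^{\uparrow}$ and $\hat V^{\uparrow}$ drift to $+\infty$ (so that the cut-off at level $h_t/2$ and the boundedness of the integration interval become immaterial in the limit), the two terms converge jointly to independent copies of $I(\hat V^{\uparrow})$ and $I(V^{\uparrow})$, whence $R_1^t$ converges in distribution to $\mathcal{R}$. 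To upgrade this to \eqref{cvrlapl} one needs a uniform bound $\sup_t \E[e^{\lambda' R_1^t}] < +\infty$ for some $\lambda' > \lambda$; this is precisely where Theorems~1.1 and 1.13 of \cite{foncexpovech} (finite exponential moments of $I(V^{\uparrow})$ and $I(\hat V^{\uparrow})$) come in, together with the control of the valley-bottom law by those two conditioned processes furnished by Fact~\ref{loidesval}. Convergence of all positive moments of $R_1^t$ to those of $\mathcal{R}$ then follows at once.

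For (ii)--(iii): the key point is that $e^{-h_t}S_1^t$ has a Pareto-type tail of exponent $\kappa$. Indeed $S_1^t = \int_{\tilde \tau_1^+(h_t/2)}^{\tilde L_1} e^{\tilde V^{(1)}(u)}\,du$ is dominated by the highest point of the ascending part of the first standard valley, whose height above $\tilde m_1$ is $h_t$ plus an overshoot which, by the ladder structure of $V$ and the exponential law with parameter $\kappa$ of its supremum (Corollary~VII.2 in \cite{Bertoin}), is itself exponentially distributed with parameter $\kappa$; exponentiating produces a tail proportional to $z^{-\kappa}$ for $e^{-h_t}S_1^t$, the constant absorbing the local shape of the potential near that maximum. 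Since $h_t = \log t - \phi(t)$ gives $e_1 S_1^t/t = e_1\,(e^{-h_t}S_1^t)\,e^{-\phi(t)}$, and multiplying by the light-tailed $e_1$ leaves the exponent unchanged (changing only the multiplicative constant), one is led to $x^{\kappa}e^{\kappa\phi(t)}\,\P(e_1 S_1^t/t > x) \to \mathcal{C}'$, i.e.\ \eqref{cvmeasure7.1}, with $\mathcal{C}'$ the constant of Lemma~4.15 of \cite{caslevyvech}. For \eqref{cvmeasure9.1} one factors $e_1 S_1^t R_1^t/t = (e_1 S_1^t/t)\,R_1^t$ with the two factors independent and invokes a Breiman-type lemma: the first factor has a tail of exponent $\kappa < 1$ while, by (i), $R_1^t$ has moments of some order $\kappa + \varepsilon$ that are bounded uniformly in $t$ and converge to those of $\mathcal{R}$, so the product inherits the tail of the first factor multiplied by $\E[\mathcal{R}^{\kappa}]$, which gives \eqref{cvmeasure9.1}.

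I expect the genuine obstacle to be the \emph{uniformity} in \eqref{cvmeasure7.1} and \eqref{cvmeasure9.1} over the ranges $x \ge e^{-(1-2\eta)\phi(t)}$ and $y \ge e^{-(1-3\eta)\phi(t)}$: since these lower endpoints themselves tend to $0$ as $t \to +\infty$, one must control the relative error in the Pareto approximation --- and, for \eqref{cvmeasure9.1}, the error in the Breiman step as well --- uniformly over an unbounded set whose left edge still degenerates. A soft argument yields only the limit for each fixed $x$ or $y$; the uniform version requires the quantitative truncation estimates of \cite{caslevyvech}, which is why Fact~\ref{queueiid} is quoted from there rather than re-derived in full here.
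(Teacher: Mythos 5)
The paper's own proof of this fact is a bare citation: \eqref{cvmeasure7.1} and \eqref{cvmeasure9.1} are quoted from Lemma~4.15 of \cite{caslevyvech}, and \eqref{cvrlapl} comes from Proposition~3.8 of \cite{caslevyvech} applied with $h = h_t$. Your heuristic sketch of the underlying mechanisms (exponential overshoot giving the Pareto tail of $e^{-h_t}S_1^t$, a Breiman-type product argument for \eqref{cvmeasure9.1}, tightness of $R_1^t$ via the exponential moments of $I(V^{\uparrow})$ and $I(\hat V^{\uparrow})$) is consistent with those sources, and you correctly conclude that the crucial uniformity over $x, y$ ranges shrinking to $0$ must be imported from \cite{caslevyvech} rather than re-derived, which is precisely what the paper does.
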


Finally, let us state a general lemma about the diffusion $X$: 

\begin{lemme} \label{infsupdiff}

Let $q$ be the constant in Theorem 1.4 of \cite{caslevyvech}. There are positive constants $c, K$ such that for all $r$ and $t$ large enough, 
\begin{align}
\mathbb{P} \left ( \sup_{[0, t]} X \geq 2 t^{\kappa} e^{\kappa \delta (\log (\log (t)))^{\omega}} /q \right ) & \leq e^{-c h_t}, \label{majodiff} \\
\mathbb{P} \left ( X(t) \leq t^{\kappa} e^{(\rho - \kappa) (\log (\log (t)))^{\omega}} /2q \right ) & \leq e^{-c \phi(t)}, \label{minondiff} \\
\mathbb{P} \left ( \inf_{[0, +\infty[} X \leq -r \right ) & \leq 3 r^{-1}, \label{minoinfndiff} \\
\mathbb{P} \left ( \sup_{]-\infty, 0]} \mathcal{L}_X(+\infty, .) > r \right ) & \leq K r^{-\kappa/(2+\kappa)}. \label{queuetlneg}
\end{align}

\end{lemme}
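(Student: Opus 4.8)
Lemma \ref{infsupdiff} has four inequalities to prove. The plan is to use the representation \eqref{exprediff1} of the diffusion via a Brownian motion time-changed through the scale function $A_V$, which reduces everything to classical estimates on Brownian motion combined with tail bounds on exponential functionals of the L\'evy environment $V$ already available in \cite{caslevyvech} and \cite{foncexpovech}. Throughout, write $B$ for the driving Brownian motion, $A_V(x)=\int_0^x e^{V(u)}\,du$ and recall $X(t)=A_V^{-1}(B(T_V^{-1}(t)))$.

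\begin{proof}[Plan for \eqref{majodiff} and \eqref{minondiff}]
For the upper bound \eqref{majodiff} I would observe that $\sup_{[0,t]}X \geq a$ forces the diffusion to reach level $a$, i.e.\ $H(a)\leq t$, and then estimate $H(a)$ from below. Using the fact that $X$ is transient to the right, $H(a)$ is comparable (up to the quenched exponential factors coming from the local time at the scale-function level, as in Lemma 5.1 or Lemma 4.something of \cite{caslevyvech}) to $\int_0^a e^{V(x)}\,dx$ times a local-time factor of the Brownian motion; combining the known rough estimate $\P(\sup_{[0,t]}X \geq a)$ with $h_t$-valley heights and the Laplace-exponent zero $\kappa$, the event $\sup_{[0,t]}X\geq 2t^\kappa e^{\kappa\delta\phi(t)}/q$ becomes an atypically fast excursion which, by the definition of $q$ in Theorem 1.4 of \cite{caslevyvech} (which already controls $\sup_{[0,t]}X/t^\kappa$), has probability at most $e^{-c h_t}$. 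The lower bound \eqref{minondiff} is dual: $X(t)\leq b$ means the diffusion has not yet escaped to the right of $b$, which costs it to have spent all of $[0,t]$ inside $[\inf X, b]$; the time spent there is, via the same decomposition, an exponential functional bounded above by $e^{V}$ on a small range, and the probability that this exceeds $t$ is again exponentially small in $\phi(t)$ by the valley-height bookkeeping \eqref{paramtaillevallees} and Lemma \ref{minmajnbvalleesvisit} (fewer than $\tilde n_t$ valleys visited). In both cases the clean way is to quote the quenched bounds on $\sup_{[0,t]} X$ and $X(t)$ that were established in \cite{caslevyvech} and simply track constants.
\end{proof}

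\begin{proof}[Plan for \eqref{minoinfndiff} and \eqref{queuetlneg}]
These two concern the behaviour of $X$ on the negative half-line, where $V$ drifts to $+\infty$, so the diffusion is pushed back and only makes a bounded excursion to the left. For \eqref{minoinfndiff}: by \eqref{exprediff1}, $\inf_{[0,+\infty[}X \leq -r$ iff the Brownian motion $B$, before hitting $\int_0^{+\infty}e^{V}$, goes below $A_V(-r)=-\int_{-r}^0 e^{V(u)}\,du$. Conditionally on $V$, the probability that a Brownian motion started at $0$ and killed at an independent positive level $A_V(+\infty)$ ever goes below the negative level $A_V(-r)$ is exactly $A_V(+\infty)/(A_V(+\infty)-A_V(-r))$ (gambler's ruin / scale function of BM), which is $\leq |A_V(-r)|^{-1}A_V(+\infty)$. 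Then integrate over $V$: one needs $\E[A_V(+\infty)/|A_V(-r)|]\leq 3/r$, which follows from independence of the increments of $V$ on $]-\infty,0]$ and $[0,+\infty[$ together with $\E[|A_V(-r)|^{-1}]\lesssim 1/r$ — the latter because $|A_V(-r)|=\int_{-r}^0 e^{V(u)}du \geq \int_{-1}^0 e^{V(u)}du \cdot$(something growing in $r$) since $V\to+\infty$ at $-\infty$; more precisely $\hat V$ restricted to $[0,r]$ drifts to $-\infty$ under the dual, so $|A_V(-r)|$ is a positive exponential functional that grows and whose reciprocal has expectation $O(1/r)$. For \eqref{queuetlneg}: $\inf_{]-\infty,0]}\mathcal{L}_X(+\infty,\cdot)>r$ means the final local time stays above $r$ at every negative site; by \eqref{expretl1}, $\mathcal{L}_X(+\infty,x)=e^{-V(x)}\mathcal{L}_B(T_V^{-1}(+\infty),A_V(x))$ and the relevant local time of $B$ at level $A_V(0)=0$ is an exponential-type variable; requiring $\mathcal{L}_X(+\infty,x)>r$ for $x$ slightly negative forces the Brownian local time at a nearby level to be large, whose probability decays like $r^{-\kappa/(2+\kappa)}$ after integrating the environment — the exponent $\kappa/(2+\kappa)$ is the one governing the right tail of the relevant exponential functional, and should be read off from the known tail estimates in \cite{caslevyvech}.
\end{proof}

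\textbf{Main obstacle.} The delicate point is \eqref{queuetlneg}: unlike the other three, which are either direct gambler's-ruin computations \eqref{minoinfndiff} or reductions to already-proven quenched bounds \eqref{majodiff}, \eqref{minondiff}, this one requires identifying the precise polynomial exponent $\kappa/(2+\kappa)$ for the right tail of $\inf_{]-\infty,0]}\mathcal{L}_X(+\infty,\cdot)$. One must argue that for this infimum to exceed $r$, a single Brownian local time at level $0$ must be of order $r$ (the $e^{-V(x)}$ factor can only help on a set of small environment-probability), and then combine the $\P(\mathcal{L}_B>r)\sim e^{-cr}$-type decay with the environment cost of making $A_V$ flat near $0$, the competition of the two producing a polynomial rather than exponential rate; getting the constant $3$ and the exact exponent will need care, and is presumably where the corresponding lemma in \cite{caslevyvech} is invoked.
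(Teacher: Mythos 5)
Your plan for \eqref{minoinfndiff} contains a genuine gap. After the (correct) gambler's-ruin formula, you propose to integrate the quenched bound by factorizing $\E\left[A_V(+\infty)/|A_V(-r)|\right]=\E\left[A_V(+\infty)\right]\E\left[|A_V(-r)|^{-1}\right]$ and getting $\leq 3/r$. But this lemma lives in the regime $0<\kappa<1$, where $\int_0^{+\infty}e^{V(u)}du$ has the Pareto-type tail $\mathcal{C}x^{-\kappa}$ (Lemma \ref{foncexpov}) and therefore infinite expectation; the product of expectations is $+\infty$ and the step collapses (and the bound $\E[|A_V(-r)|^{-1}]\leq C/r$ would itself need an argument). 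The paper never takes the expectation of the ratio: it bounds the quenched probability by $2r^{-1}$ on a high-probability environment event, namely $\int_0^{+\infty}e^{V}\leq e^{\sqrt r}$ (Lemma \ref{foncexpov}) and $\int_{-r}^{-r/2}e^{V}\geq \tfrac{r}{2}e^{\sqrt r}$ (time reversal plus Lemma \ref{supdev}), bounds the quenched probability by $1$ off this event, and adds the stretched-exponentially small exceptional probabilities; this is exactly what produces $3r^{-1}$. Your argument can be repaired the same way, but as written it fails.

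For \eqref{majodiff} and \eqref{minondiff} your sketch defers to ``quenched bounds established in \cite{caslevyvech}'', but Theorem 1.4 there is a distributional limit theorem for $X(t)/t^{\kappa}$ (it only serves to define $q$); it does not give tails at rate $e^{-ch_t}$ or $e^{-c\phi(t)}$, so ``tracking constants'' is not enough. The paper's proof is a renewal argument: on $\{V\in\mathcal{V}_t\}$ and the good events, $\sup_{[0,t]}X\leq \tilde L_{N_t+1}$ and $X(t)\geq \tilde L_{N_t-1}$, the number of visited valleys is squeezed between $\tilde n_t$ and $n_t$ (Lemma \ref{minmajnbvalleesvisit}, \eqref{majonbvalleesvisit}--\eqref{minonbvalleesvisit}), and the inter-valley distances $\tilde L_j-\tilde L_{j-1}$ are approximated by an \textit{iid} sequence $D_j^t$ of exponentials with parameter $e^{-\kappa h_t}q$ (Proposition \ref{contribdist}, a new ingredient of this paper that your outline does not identify); a Chernoff bound for sums of $n_t$ (resp.\ $\tilde n_t$) such exponentials then yields the stated rates because $n_t,\tilde n_t\gg h_t$. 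Your ``atypically fast excursion'' heuristic does not supply this quantitative mechanism. Finally, \eqref{queuetlneg} is, in the paper, simply a citation of Lemma 5.18 of \cite{caslevyvech}, as you guessed; your proposed reconstruction of the exponent $\kappa/(2+\kappa)$ is too imprecise to stand in for it, but no new proof is needed there.
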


\subsection{Decomposition of the diffusion into independent parts} \label{decompindeppart}

A necessary point for our proofs is to give a decomposition of the trajectory of $X$ that makes independence appear in order to apply the Borel-Cantelli Lemma. Let us fix $a > 1$ and define the sequences $t_n := e^{n^a}$, $u_n := e^{\kappa (n^a - 2 a n^{a-1}/3)}$, $v_n := e^{\kappa (n^a - a n^{a-1}/3)}$. Let $X^n := X(H(v_n) + .)$, the diffusion shifted by the hitting time of $v_n$ and $T_n := \min \{ t_n, \tau (X^n, u_n), \tau (X^n, u_{n+1}) \}$. Note that from the Markov property for $X$ at time $H(v_n)$ and the stationarity of the increments of $V$, $X^n - v_n$ is equal in law to $X$ under the annealed probability $\mathbb{P}$. Let $n_0$ be large enough so that $u_n \leq v_n \leq u_{n+1}$ for all $n \geq n_0$. Clearly the sequence of processes $(X^n (t), \ 0 \leq t \leq T_n)_{n \geq n_0}$ is independent. We define the events
%\[ \mathcal{C}^1_n := \left \{ \inf_{[H(v_n), T_n]} X > u_n \right \}, \ \ \ \mathcal{C}^2_n := \left \{ \sup_{[H(v_n), T_n]} X < u_{n+1} \right \} \]
\[ \mathcal{C}_n := \left \{ T_n = t_n \right \} \ \ \ \text{and} \ \ \ \mathcal{D}_n := \left \{ H(v_n) < t_n/n \right \}. \]
The idea is that, since the sequence $(X^n (t), \ 0 \leq t \leq T_n)_{n \geq n_0}$ is independent, intersecting a sequence $(\mathcal{B}_n)_{n \geq n_0}$ of interesting events (where each event $\mathcal{B}_n$ only depends on $(X^n (t), \ 0 \leq t \leq t_n)$) with $\mathcal{C}_n$ will result in $(\mathcal{B}_n \cap \mathcal{C}_n)_{n \geq n_0}$, a sequence of independent events. Since $X^n$ is $X$ shifted by $H(v_n)$, the event $\mathcal{D}_n$ is useful to neglect the time shift when dealing with the renormalization of the local time. We will need the following lemma: 
\begin{lemme} \label{lemprindep}
\begin{eqnarray}
\sum_{n \geq 1} \mathbb{P} \left ( \overline{\mathcal{C}_n} \right ) < +\infty, \label{inegindep3}
\end{eqnarray}
and
\begin{eqnarray}
\sum_{n \geq 1} \mathbb{P} \left ( \overline{\mathcal{D}_n} \right ) < +\infty. \label{inegindep4}
\end{eqnarray}
\end{lemme}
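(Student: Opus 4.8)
The plan is to estimate $\mathbb{P}(\overline{\mathcal{C}_n})$ and $\mathbb{P}(\overline{\mathcal{D}_n})$ separately and show both series converge because $\phi$-type corrections in the exponents beat the $n^{a-1}$ losses. Recall $t_n = e^{n^a}$, $u_n = e^{\kappa(n^a - 2an^{a-1}/3)}$, $v_n = e^{\kappa(n^a - an^{a-1}/3)}$, and $T_n = \min\{t_n, \tau(X^n, u_n), \tau(X^n, u_{n+1})\}$, where $X^n - v_n$ has the law of $X$ under $\mathbb{P}$. The event $\overline{\mathcal{C}_n}$ means $T_n < t_n$, i.e.\ $X^n$ hits $u_n$ or $u_{n+1}$ before time $t_n$. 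Since $X^n$ starts from $v_n$ with $u_n \le v_n \le u_{n+1}$, the event $\{\tau(X^n, u_{n+1}) < t_n\}$ is controlled by the supremum bound \eqref{majodiff}: writing $X^n = v_n + X$ in law, $\{\tau(X^n, u_{n+1}) \le t_n\} = \{\sup_{[0,t_n]} X \ge u_{n+1} - v_n\}$, and since $u_{n+1} - v_n$ is of order $e^{\kappa n^a}$ times a factor that dominates $t_n^\kappa e^{\kappa\delta(\log\log t_n)^\omega}$ for large $n$ (because the correction $e^{-\kappa a n^{a-1}/3}$ in $v_n$, compared with $e^{\kappa\delta(\log\log t_n)^\omega} = e^{\kappa\delta(\log(n^a))^\omega}$, still leaves $u_{n+1} - v_n$ much larger once one checks the exponents), \eqref{majodiff} gives $\mathbb{P}(\sup_{[0,t_n]} X \ge u_{n+1}-v_n) \le e^{-c h_{t_n}} = e^{-c(n^a - (\log(n^a))^\omega)}$, which is summable.

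The event $\{\tau(X^n, u_n) < t_n\}$ requires the diffusion to go \emph{backwards} from $v_n$ down to $u_n$; here I would use the transience estimate, namely that $X$ drifts to $+\infty$ and $\inf_{[0,+\infty[} X$ has the tail bound \eqref{minoinfndiff}. More precisely, $\{\tau(X^n, u_n) < +\infty\}$ forces $\inf_{[0,+\infty[}(X^n - v_n) \le u_n - v_n$, so $\mathbb{P}(\tau(X^n,u_n) < t_n) \le \mathbb{P}(\inf_{[0,+\infty[} X \le u_n - v_n) \le 3/(v_n - u_n)$. Now $v_n - u_n = e^{\kappa n^a}(e^{-\kappa a n^{a-1}/3} - e^{-2\kappa a n^{a-1}/3})$, which is of order $e^{\kappa n^a} e^{-\kappa a n^{a-1}/3}$, so $3/(v_n-u_n)$ decays like $e^{-\kappa n^a + \kappa a n^{a-1}/3}$ and is summable. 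Combining, $\mathbb{P}(\overline{\mathcal{C}_n}) \le \mathbb{P}(\tau(X^n,u_n) < t_n) + \mathbb{P}(\tau(X^n, u_{n+1}) < t_n)$ is summable, giving \eqref{inegindep3}.

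For \eqref{inegindep4}, $\overline{\mathcal{D}_n} = \{H(v_n) \ge t_n/n\}$. The natural route is that $H(v_n) < t_n/n$ fails only if the diffusion is abnormally slow in reaching level $v_n$ by time $t_n/n$; equivalently $\sup_{[0, t_n/n]} X < v_n$. Using \eqref{minondiff} with $t$ replaced by $t_n/n$: $\mathbb{P}(X(t_n/n) \le (t_n/n)^\kappa e^{(\rho-\kappa)(\log\log(t_n/n))^\omega}/2q) \le e^{-c\phi(t_n/n)}$, and since $\sup_{[0,t_n/n]} X \ge X(t_n/n)$, it suffices to check $v_n \le (t_n/n)^\kappa e^{(\rho-\kappa)(\log\log(t_n/n))^\omega}/2q$ for large $n$. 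Here $(t_n/n)^\kappa = e^{\kappa n^a}n^{-\kappa}$ while $v_n = e^{\kappa n^a}e^{-\kappa a n^{a-1}/3}$, so the inequality reduces to comparing $n^{-\kappa}e^{(\rho-\kappa)(\log(n^a) - \log n)^\omega}/2q$ with $e^{-\kappa a n^{a-1}/3}$; since $\rho < \kappa$ the exponential factor on the left decays only polynomially-in-$\log$, far slower than the right side's decay, so for large $n$ the bound $v_n \le (t_n/n)^\kappa e^{(\rho-\kappa)(\log\log(t_n/n))^\omega}/2q$ indeed holds, and $\mathbb{P}(\overline{\mathcal{D}_n}) \le e^{-c\phi(t_n/n)} = e^{-c(\log(n^a - \log n))^\omega}$ summable (since $\omega > 1$ makes $(\log n)^\omega$ summable).

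\textbf{Main obstacle.} The routine part is invoking \eqref{majodiff}, \eqref{minondiff}, \eqref{minoinfndiff}; the delicate bookkeeping is verifying the exponent comparisons — in particular that $u_{n+1} - v_n$ and $v_n - u_n$ are genuinely large enough relative to the thresholds in \eqref{majodiff} and to $1/(\text{decay in }\eqref{minoinfndiff})$, and that $v_n$ sits below the threshold in \eqref{minondiff} applied at time $t_n/n$. All of these hinge on the fact that the $n^{a-1}$-order corrections in the definitions of $u_n, v_n$ were chosen precisely to dominate the $(\log t_n)^\omega = (n^a)^\omega$... actually $(\log\log t_n)^\omega = (\log(n^a))^\omega \approx (\log n)^\omega$-order corrections coming from $h_t$ and $\phi(t)$; I would make this comparison explicit once and reuse it. There is no conceptual difficulty, only the need to keep the three competing scales — $n^a$, $n^{a-1}$, and $(\log n)^\omega$ — correctly ordered.
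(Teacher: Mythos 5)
Your proposal is correct and follows essentially the same route as the paper: split $\overline{\mathcal{C}_n}$ into the events $\{\tau(X^n,u_{n+1})<t_n\}$ and $\{\tau(X^n,u_n)<+\infty\}$, reduce to $X$ via the Markov property at $H(v_n)$, and apply \eqref{majodiff} and \eqref{minoinfndiff} after checking $u_{n+1}-v_n$ and $v_n-u_n$ dominate the relevant thresholds, then bound $\mathbb{P}(\overline{\mathcal{D}_n})$ by $\mathbb{P}(X(t_n/n)\le v_n)$ and apply \eqref{minondiff} after checking $v_n$ lies below the threshold at time $t_n/n$, exactly as in the paper. The only blemish is the harmless slip writing $(\log(n^a)-\log n)^{\omega}$ for $(\log(n^a-\log n))^{\omega}$; both are of order $(\log n)^{\omega}$, so the scale comparison and the summability conclusions are unaffected.
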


\begin{proof}
Here again, let $q$ be the constant in Theorem 1.4 of \cite{caslevyvech}. First, notice from the definitions of $t_n$, $u_n$ and $v_n$ that for all $n$ large enough, 
\begin{eqnarray}
2 t_n^{\kappa} e^{\kappa \delta (\log (\log (t_n)))^{\omega}} /q < u_{n+1} - v_n, \label{inegindep1}
\end{eqnarray}
and
\begin{eqnarray}
(t_n/n)^{\kappa} e^{(\rho - \kappa) (\log (\log (t_n/n)))^{\omega}} /2q > v_n. \label{inegindep2}
\end{eqnarray}
From the definition of $\mathcal{C}_n$ and the Markov property applied to $X$ at time $H(v_n)$ we have
\begin{align*}
\mathbb{P} \left ( \overline{\mathcal{C}_n} \right ) & \leq \mathbb{P} \left ( \tau (X^n, u_{n+1}) < t_n \right ) + \mathbb{P} \left ( \tau (X^n, u_{n}) < +\infty \right ) \\
& \leq \mathbb{P} \left ( \tau (X, u_{n+1} - v_n) < t_n \right ) + \mathbb{P} \left ( \tau (X, u_{n} - v_n) < +\infty \right ) \\
& \leq \mathbb{P} \left ( \sup_{[0, t_n]} X \geq u_{n+1} - v_n \right ) + \mathbb{P} \left ( \inf_{[0, +\infty[} X \leq u_n - v_n) \right ) \\
& \leq e^{- c h_{t_n}} + 3/(v_n - u_n)
\end{align*}
where $c$ is the constant in Lemma \ref{infsupdiff}. The last inequality is true for $n$ large enough and comes, for the first term, from the combination of \eqref{inegindep1} and \eqref{majodiff}, and, for the second term, from \eqref{minoinfndiff}. Recall that $e^{-h_{t_n}} \approx e^{-\log(t_n)} = e^{- n^a}$. We thus deduce \eqref{inegindep3}. 

From the definition of $\mathcal{D}_n$, \eqref{inegindep2} and \eqref{minondiff} we have for all $n$ large enough, 
\[ \mathbb{P} \left ( \overline{\mathcal{D}_n} \right ) \leq \mathbb{P} \left ( X(t_n /n) \leq v_n \right ) \leq \mathbb{P} \left ( X(t_n /n) \leq (t_n/n)^{\kappa} e^{(\rho - \kappa) (\log (\log (t_n/n)))^{\omega}} /2q \right ) \leq e^{-c \phi(t_n/n)}, \]
where $c$ is the constant in Lemma \ref{infsupdiff}. Since $e^{-c \phi(t_n/n)} = e^{- c (\log ( \log (t_n / n) ))^{\omega}} \approx e^{- c a^{\omega} (\log ( n ))^{\omega}}$ we deduce \eqref{inegindep4}. 

\end{proof}

\subsection{The $\limsup$}

We study the asymptotic of the quantity $\mathbb{P} (\mathcal{L}^*_X(t)/t \geq x_t)$. Recall that $x_t$ is defined in \eqref{defxtlim} where $D > 0$ and $\mu \in ]1, 2]$ are fixed constants. In all this subsection the parameter $\omega$ in \eqref{paramtaillevallees} is fixed in $]1, \mu[$. We have: 

\begin{prop} \label{approxdutl}
There is a positive constant $c$ such that for all $a > 1$ and $t$ large enough we have, 
\begin{align*}
\mathbb{P} \left ( Y_1^{\natural, t} \left ( Y_2^{-1, t}(1/a) - \right ) \geq a \ x_t \right ) - e^{-c \phi(t)} & \leq \mathbb{P} \left ( \mathcal{L}^*_X(t) \geq t x_t \right ) \\
& \leq \mathbb{P} \left ( Y_1^{\natural, t} \left ( Y_2^{-1, t}(a) - \right ) \geq x_t / a \right ) + \mathbb{P} \left ( R_1^t \leq \frac{a}{x_t} \right ) + e^{-c \phi(t)}. 
\end{align*}
%\[ \mathbb{P} \left ( Y_1^{\natural, t} \left ( Y_2^{-1, t}(1/a) - \right ) \geq x_t \right ) - ... \leq \mathbb{P} \left ( \mathcal{L}^*_X(t) \geq t x_t \right ), \]
%and 
%\[ \mathbb{P} \left ( \mathcal{L}^*_X(t) \geq t x_t \right ) \leq \mathbb{P} \left ( Y_1^{\natural, t} \left ( Y_2^{-1, t}(a) - \right ) \geq x_t \right ) + ... + ..., \]
%where $\tilde x_t \underset{t \rightarrow +\infty}{\sim} \hat x_t \underset{t \rightarrow +\infty}{\sim} x_t$. GARDER $x_t$ CAR LE $a$ ABSORBE LA CORRECTION
\end{prop}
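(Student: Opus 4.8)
The plan is to chain together the events from Facts \ref{bigevents} and \ref{fameuxe8} together with Lemma \ref{measofgoodenv} to reduce the statement $\{\mathcal{L}^*_X(t) \geq t x_t\}$, up to events of probability $e^{-c\phi(t)}$, to a statement about the supremum of the local time in the bottoms of the standard valleys, and then to rewrite the latter in terms of the \textit{iid} sequence $(e_j S_j^t, e_j S_j^t R_j^t)_{j\geq 1}$. The first step is to condition on a good environment $v \in \mathcal{G}_t$ and to work under $P^v$; by Lemma \ref{measofgoodenv} and \eqref{bonenv1}, we may assume $V \in \mathcal{G}_t$ and that all the events $\mathcal{E}^i_t$, $1\le i\le 7$, hold, at the cost of $e^{-c\phi(t)}$. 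On this event, by $\mathcal{E}^2_t$ and $\mathcal{E}^3_t$ the local time accumulated outside the deep bottoms $\mathcal{D}_j$ is at most $t(e^{(\kappa(1+3\delta)-1)\phi(t)} + e^{-2\phi(t)})$, which is $o(tx_t)$ since $x_t \approx (\log\log t)^{\mu-1}$ grows only polylogarithmically; hence $\mathcal{L}^*_X(t) \geq tx_t$ is, up to the negligible additive correction, equivalent to $\sup_j \sup_{\mathcal{D}_j} \mathcal{L}_X(t,\cdot) \geq t x_t(1\pm o(1))$, where the sup over $j$ runs over the visited valleys, i.e. $j \leq N_t$, which by Lemma \ref{minmajnbvalleesvisit} we may take to satisfy $\tilde n_t \le N_t \le n_t$.

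For the \textbf{upper bound}, I would split according to which valley $k \le N_t$ realizes the large local time. Inside valley $k$ the diffusion has spent time $t - H(\tilde m_k)$ after reaching $\tilde m_k$, so $\sup_{\mathcal{D}_k}\mathcal{L}_{X_{\tilde m_k}}(t(1-z),\cdot) \geq tx_t(1-o(1))$ with $z := H(\tilde m_k)/t \in [0,1]$ (here I use $\mathcal{E}^5_t$ to control $H(\tilde m_k)$ by the partial sums of the $H(\tilde L_i)-H(\tilde m_i)$, and hence by $Y_2^t$). Fact \ref{fameuxe8}, namely \eqref{majoe8} together with \eqref{majoe8bis} for the residual case $H(\tilde m_k)/t > 1 - 4/\log h_t$, then says that up to $e^{-c\phi(t)}$ this forces the companion inequality $x_t R_k^t \geq (1-z)(1+e^{-\tilde c h_t})/(1-e^{-\tilde c h_t})$ \emph{and}, via $\mathcal{E}^4_t$ and $\mathcal{E}^6_t$, that $\sup_{\mathcal{D}_k}(\mathcal{L}_X(H(\tilde L_k),\cdot)-\mathcal{L}_X(H(\tilde m_k),\cdot)) \le (1+e^{-\tilde c h_t})e_k S_k^t$, so $x_t(1-o(1)) \le e_k S_k^t/t \cdot(1+o(1))$. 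Translating $z$ and the partial sums into the processes $Y_1^t, Y_2^t$: the index $k$ is the first index at which $\sum_{i=1}^{k-1} e_i S_i^t R_i^t / t \le 1$ fails to leave room, i.e. essentially $k = \mathcal{N}_t$ (using \eqref{bigeventsnbvalles}), so $Y_2^{-1,t}(a)$ for a slack factor $a>1$ dominates the relevant horizon, and $e_k S_k^t/t$ is a jump of $Y_1^t$ before that horizon, hence bounded by $Y_1^{\natural,t}(Y_2^{-1,t}(a)-)$; the leftover term $\mathbb{P}(R_1^t \le a/x_t)$ absorbs the case where the constraint $x_t R_k^t \gtrsim (1-z)$ is vacuous because $z$ is close to $1$. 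For the \textbf{lower bound}, I would run the same chain in reverse: if $Y_1^{\natural,t}(Y_2^{-1,t}(1/a)-) \ge a x_t$ then some valley $k$ with $\sum_{i<k} e_i S_i^t R_i^t/t$ still below $1/a$ has $e_k S_k^t/t \ge a x_t$, and on $\mathcal{E}^5_t \cap \mathcal{E}^6_t \cap \mathcal{E}^7_t$ the diffusion is still inside valley $k$ at time $t$ with local time at $\tilde m_k$ at least $(1-e^{-\tilde c h_t}) e_k S_k^t \ge tx_t$, giving $\mathcal{L}^*_X(t) \ge tx_t$ up to $e^{-c\phi(t)}$.

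The main obstacle, I expect, is the bookkeeping at the \emph{boundary of the last valley}: when $H(\tilde m_k)$ is within $O(t/\log h_t)$ of $t$, the "remaining time" $t(1-z)$ is too small for the usual comparison of the local time with $e_k S_k^t$ to be accurate, and one has no control of the form "$R_k^t$ large" to invoke. This is exactly why the statement carries the extra term $\mathbb{P}(R_1^t \le a/x_t)$ and why Fact \ref{fameuxe8} is split into \eqref{majoe8} (the bulk, $z \le 1-4/\log h_t$) and \eqref{majoe8bis} (the residual window). Handling this cleanly requires tracking the slack parameter $a$ through both the overshoot estimate \eqref{bigeventsnbvalles} and the $o(1)$ errors coming from $e^{-\tilde c h_t}$ and $1/\log h_t$, and checking that with $\omega \in {]1,\mu[}$ all these corrections are indeed $o(x_t)$ relative to $x_t \asymp (\log\log t)^{\mu-1}$; the rest is a routine, if lengthy, union bound over $j \le n_t$ using that each $\mathbb{P}(\overline{\mathcal{E}^i_t}) \le e^{-L\phi(t)}$ and $n_t = \lfloor e^{\kappa(1+\delta)\phi(t)} \rfloor$ is only stretched-exponential in $\phi(t)$, so absorbing a polynomial-in-$n_t$ number of them still leaves $e^{-c\phi(t)}$.
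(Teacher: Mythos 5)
Your plan correctly identifies the skeleton of the paper's argument: reduce to the deep bottoms $\mathcal{D}_j$ via $\mathcal{E}^2_t, \mathcal{E}^3_t$, use $\mathcal{E}^4_t, \mathcal{E}^6_t$ to compare the local time to $e_j S_j^t$, invoke Fact~\ref{fameuxe8} to couple a large last-valley local time with the companion constraint $R_k^t \lesssim 1/x_t$, and read off the functional of $(Y_1^t,Y_2^t)$ through \eqref{bigeventsnbvalles}. You also correctly locate the origin of the term $\mathbb{P}(R_1^t \le a/x_t)$ in the partially-crossed last valley; the paper isolates exactly this analysis in Lemma~\ref{neglectlastvalley}, and you are essentially sketching that lemma inline.

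The genuine gap is in your closing claim that the sum over $k \le n_t$ is absorbed by a ``routine union bound''. It is not. The events one sums over $k$ in the last-valley analysis --- roughly $\{R_k^t \le (1+o(1))/x_t,\ e_k S_k^t/t \ge x_t/4,\ H(\tilde m_k)/t \le 1\}$ --- individually have probability of order $\mathbb{P}(R_1^t \le 1/x_t)\, \mathbb{P}(e_1 S_1^t/t \ge x_t/4)\,\mathbb{P}(H(\tilde m_k)/t\le 1)$, with $\mathbb{P}(e_1 S_1^t/t \ge x_t/4)\approx e^{-\kappa\phi(t)}x_t^{-\kappa}$ by \eqref{cvmeasure7.1}, and there are $n_t=\lfloor e^{\kappa(1+\delta)\phi(t)}\rfloor$ of them. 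A crude union bound returns $e^{\delta\kappa\phi(t)}x_t^{-\kappa}\,\mathbb{P}(R_1^t\le 1/x_t)$, which \emph{dominates} the target $\mathbb{P}(R_1^t\le a/x_t)$ rather than being absorbed into it. What rescues the argument in the paper is a renewal decoupling: $(e_k,S_k^t,R_k^t)$ is made independent of $H(\tilde L_{k-1})$ by two applications of the Markov property (first for $X$ at $H(\tilde m_k)$ under $P^v$, then for $V$ at the stopping time $\tilde L_{k-1}$ when integrating over $v$), and the effective number of visited valleys is controlled not by $n_t$ but by the compensated sum $\sum_k\mathbb{P}(H(\tilde L_{k-1})/t\le 1)\le Ce^{\kappa\phi(t)}$ of Lemma~\ref{cvntlp}, which exactly cancels the $e^{-\kappa\phi(t)}$ from the tail of $e_1S_1^t$. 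Without this decoupling/renewal step the claimed upper bound does not follow. (A minor slip as well: in the lower bound, valley $k$ contributes the full $e_kS_k^t$ because it has already been crossed before time $t$ and, on $\mathcal{E}^1_t$, the diffusion never returns --- not because the diffusion is ``still inside'' it at time $t$.)
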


Note that the functional of $(Y_1^t, Y_2^t)$ involved in this proposition is $Y_1^{\natural, t} ( Y_2^{-1, t}(.)-)$, which represents the supremum of the local time before, and not including, the last valley (see the discussion after Theorem 1.1 of \cite{caslevyvech}). Even though, as we see in Proposition 4.1 of \cite{caslevyvech}, the distribution function of $\mathcal{L}^*_X(t)/t$ involves a complex functional of $(Y_1^t, Y_2^t)$ that represents the last valley (together with the functional $Y_1^{\natural, t} ( Y_2^{-1, t}(.)-)$ that represents the previous valleys), Proposition \ref{approxdutl} says that the right tail of this distribution function does not involves the last valley. Before proving this proposition we prove some lemmas. 

\begin{lemme} \label{cvntlp}

There is a positive constant $C$ such that for $t$ large enough, 
\[ \sum_{k=1}^{n_t} \mathbb{P} \left ( H(\tilde m_k)/t \leq 1 \right ) \leq C e^{\kappa \phi(t)}. \]

\end{lemme}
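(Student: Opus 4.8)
\textbf{Proof plan for Lemma \ref{cvntlp}.}

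The plan is to control the sum $\sum_{k=1}^{n_t} \mathbb{P}(H(\tilde m_k)/t \leq 1)$ by using the approximation of the hitting time $H(\tilde m_k)$ by the \textit{iid} sequence $(e_j S_j^t R_j^t)_{j \geq 1}$. The key observation is that, on the high-probability event $\mathcal{E}^5_t \cap \mathcal{E}^7_t$, the event $\{H(\tilde m_k)/t \leq 1\}$ forces $\sum_{i=1}^{k-1} e_i S_i^t R_i^t$ to be no larger than roughly $t(1 + o(1))$, up to the additive error $2t/\log h_t$ coming from $\mathcal{E}^5_t$ and the multiplicative errors $(1 \pm e^{-\tilde c h_t})$ coming from $\mathcal{E}^7_t$. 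In other words, for $t$ large, $\{H(\tilde m_k)/t \leq 1\} \cap \mathcal{E}^5_t \cap \mathcal{E}^7_t \cap \{N_t < n_t\}$ is contained in $\{\sum_{i=1}^{k-1} e_i S_i^t R_i^t \leq 2t\}$, hence in $\{\mathcal{N}_{2t} \geq k\}$ with the notation $\mathcal{N}_a := \min\{j \geq 0 : \sum_{i=1}^{j} e_i S_i^t R_i^t > a\}$ introduced before Fact \ref{cvsub}. On the complementary event we simply bound the probability by $\mathbb{P}(\overline{\mathcal{E}^5_t}) + \mathbb{P}(\overline{\mathcal{E}^7_t}) + \mathbb{P}(N_t \geq n_t)$, which by Fact \ref{bigevents} and Lemma \ref{minmajnbvalleesvisit} is at most $e^{-L'\phi(t)}$ for some $L' > 0$; summed over the $n_t \leq e^{\kappa(1+\delta)\phi(t)}$ values of $k$ this contributes at most $n_t e^{-L'\phi(t)}$, which is negligible in front of $e^{\kappa\phi(t)}$ provided the parameters are chosen so that $\kappa(1+\delta)\phi(t) - L'\phi(t) \leq \kappa \phi(t)$ eventually; alternatively one argues more crudely that this term is $o(e^{\kappa\phi(t)})$ using that $\phi(t) \to \infty$, or one replaces $2t$ by a constant multiple and absorbs.

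The main term is then $\sum_{k=1}^{n_t} \mathbb{P}(\mathcal{N}_{2t} \geq k) \leq \mathbb{E}[\mathcal{N}_{2t}]$, since $\sum_{k \geq 1} \mathbb{P}(\mathcal{N}_{2t} \geq k) = \mathbb{E}[\mathcal{N}_{2t}]$ (the sum over $k \leq n_t$ is only smaller). So it suffices to show $\mathbb{E}[\mathcal{N}_{2t}] \leq C e^{\kappa \phi(t)}$ for $t$ large. Now $\mathcal{N}_{2t}$ is a first-passage counting functional of the \textit{iid} nonnegative sequence $(e_i S_i^t R_i^t)_{i \geq 1}$: it equals $1 + \#\{$renewals of the sum before level $2t\}$. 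By Wald's identity, or directly by the renewal-theoretic bound for the overshoot counting function, $\mathbb{E}[\mathcal{N}_{2t}] = 1 + \mathbb{E}[\sum_{i=1}^{\mathcal{N}_{2t}-1} 1] $ and one has the elementary two-sided control $\mathbb{E}[\mathcal{N}_{2t}] \cdot \mathbb{E}[e_1 S_1^t R_1^t \wedge 2t] \le 2t + \mathbb{E}[(\text{overshoot})]$; but since here the summands have heavy ($\kappa$-stable-like) tails with $\kappa < 1$, the expectation $\mathbb{E}[e_1 S_1^t R_1^t]$ is typically infinite, so one should rather argue directly. The cleanest route: for any fixed level $a$, $\mathbb{E}[\mathcal{N}_a] = \sum_{k \geq 0} \mathbb{P}(\sum_{i=1}^k e_i S_i^t R_i^t \leq a)$, and each such probability is $\le \mathbb{P}(\text{each of the }k\text{ summands} \le a)$... no — rather, use that by independence and Markov's inequality applied to a truncated/negative-exponent moment, $\mathbb{P}(\sum_{i=1}^k e_i S_i^t R_i^t \leq a) \leq e^{a} (\mathbb{E}[e^{-e_1 S_1^t R_1^t}])^k$ is not useful since the base is close to $1$; instead exploit the known tail \eqref{cvmeasure9.1}: $\mathbb{P}(e_1 S_1^t R_1^t/t > y) \sim \mathcal{C}' \mathbb{E}[\mathcal{R}^\kappa] y^{-\kappa} e^{-\kappa\phi(t)}$, which is precisely the statement that, after the time-change $j \mapsto j e^{-\kappa\phi(t)}$, the partial-sum process $Y_2^t$ converges to the subordinator $\mathcal{Y}_2$ (Fact \ref{cvsub}).

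Concretely, I would argue as follows. Write $\mathcal{N}_{2t} = e^{\kappa\phi(t)} \cdot (\text{a functional of } Y_2^t)$: indeed $\mathcal{N}_{2t}/e^{\kappa\phi(t)} \to Y_2^{-1}(2)$ in distribution by Fact \ref{cvsub} and the continuity of the inverse of a subordinator (at a fixed level, a.s.). Since $\mathcal{Y}_2$ is a non-trivial subordinator without drift (a $\kappa$-stable-type subordinator), $\mathbb{E}[\mathcal{Y}_2^{-1}(2)] < \infty$ — this is a standard fact for stable subordinators with index $\kappa \in (0,1)$, where the inverse is a Mittag-Leffler-type variable with all moments finite. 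Combined with uniform integrability of $\mathcal{N}_{2t}/e^{\kappa\phi(t)}$ — which follows from the uniform tail estimate \eqref{cvmeasure9.1} (the tails of $e_1 S_1^t R_1^t/t$ are uniformly, in $t$, comparable to $\mathcal{C}'\mathbb{E}[\mathcal{R}^\kappa] y^{-\kappa} e^{-\kappa\phi(t)}$, which gives a uniform bound $\mathbb{E}[(\mathcal{N}_{2t}/e^{\kappa\phi(t)})^p] \leq C_p$ for $p < $ something $> 1$, e.g. via a standard renewal-theoretic moment bound for heavy-tailed renewals) — we get $\limsup_t \mathbb{E}[\mathcal{N}_{2t}]/e^{\kappa\phi(t)} = \mathbb{E}[\mathcal{Y}_2^{-1}(2)] < \infty$, hence the desired bound with $C := 2\mathbb{E}[\mathcal{Y}_2^{-1}(2)]$ say, for $t$ large.

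\textbf{Main obstacle.} The delicate point is the uniform-integrability / moment bound for $\mathcal{N}_{2t}/e^{\kappa\phi(t)}$ uniformly in $t$. A soft convergence-in-distribution argument gives only $\liminf$-type control of the sum, not the needed upper bound on $\mathbb{E}[\mathcal{N}_{2t}]$. One must either invoke a quantitative renewal bound: for nonnegative \textit{iid} $(\xi_i)$ with $\mathbb{P}(\xi_1 > y) \geq c\, p\, y^{-\kappa}$ on a suitable range (with $p = e^{-\kappa\phi(t)}$), the first-passage time over level $a = 2t$ satisfies $\mathbb{E}[\#\{k : \sum_{i\le k}\xi_i \le a\}] \leq C\, a^\kappa / (c p \cdot t^\kappa) = C' e^{\kappa\phi(t)}$ — this kind of estimate is exactly what underlies \eqref{cvmeasure9.1} and can be extracted by a first-moment computation $\mathbb{E}[\mathcal{N}_{2t}] = \sum_{k\ge 0}\mathbb{P}(S_k \le 2t)$ together with the lower tail bound $\mathbb{P}(S_k \le 2t) \le \mathbb{P}(\min_{i\le k}\xi_i \le 2t)^{?}$... more robustly, via the bound $\mathbb{P}(S_k \le 2t) \le \big(1 - \mathbb{P}(\xi_1 > 2t)\big)^k \le \exp(-k\, c\, p\, (2t)^{-\kappa} t^\kappa)$, which sums to $O(1/(c p)) = O(e^{\kappa\phi(t)})$. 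This last computation, using only the uniform lower tail bound contained in \eqref{cvmeasure9.1}, is in fact the cleanest self-contained proof and avoids the subordinator limit altogether; I would present it this way. The remaining routine work is bookkeeping the error events $\overline{\mathcal{E}^5_t}, \overline{\mathcal{E}^7_t}, \{N_t \geq n_t\}$ and checking the summed bound $n_t \cdot e^{-L\phi(t)} = o(e^{\kappa\phi(t)})$, which holds once $\omega$ and $\delta$ are fixed since both are powers of $\phi(t) = (\log\log t)^\omega \to \infty$ and the exponential rates compare favorably after possibly shrinking $\delta$.
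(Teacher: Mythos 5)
Your core argument is sound and is essentially the paper's: the paper also reduces, via $\mathcal{E}^7_t$ and the a.s. inequality $H(\tilde m_k)\geq\sum_{j=1}^{k-1}\bigl(H(\tilde L_j)-H(\tilde m_j)\bigr)$, to a probability for the \textit{iid} sums $\sum_{j<k}e_jS_j^tR_j^t$, and then sums a geometric series whose ratio deficit is of order $e^{-\kappa\phi(t)}$. The only real difference is how that geometric bound is produced: the paper uses the Chernoff/Laplace bound $\mathbb{P}(\sum_{j<k}e_jS_j^tR_j^t/t\leq c)\leq e^{c'}\bigl(\mathbb{E}[e^{-e_1S_1^tR_1^t/t}]\bigr)^{k-1}$ and lower-bounds $1-\mathbb{E}[e^{-e_1S_1^tR_1^t/t}]\gtrsim e^{-\kappa\phi(t)}$ by integrating \eqref{cvmeasure9.1}, whereas your final route uses the inclusion $\{\sum_{i\leq k}\xi_i\leq 2t\}\subset\bigcap_{i\leq k}\{\xi_i\leq 2t\}$ and \eqref{cvmeasure9.1} at the single point $y=2$; both give $\sum_k(\cdots)^{k}\leq C e^{\kappa\phi(t)}$, and yours is if anything slightly more elementary. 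Your first idea (subordinator limit plus uniform integrability of $\mathcal{N}_{2t}/e^{\kappa\phi(t)}$) would indeed be hard to make rigorous, and you correctly discarded it.

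The one place the write-up would fail as stated is the error-term bookkeeping. You intersect with $\mathcal{E}^5_t$ and $\{N_t<n_t\}$ and then claim $n_t\cdot\bigl(\mathbb{P}(\overline{\mathcal{E}^5_t})+\mathbb{P}(\overline{\mathcal{E}^7_t})+\mathbb{P}(N_t\geq n_t)\bigr)=o(e^{\kappa\phi(t)})$ ``provided the parameters are chosen so that\dots'' or ``after possibly shrinking $\delta$''. That move is not available: $\delta$, $\omega$ and the constant $L$ of Fact \ref{bigevents} are fixed once and for all, and with only $\mathbb{P}(\overline{\mathcal{E}^5_t})\leq e^{-L\phi(t)}$ the product $n_t e^{-L\phi(t)}=e^{(\kappa(1+\delta)-L)\phi(t)}$ need not be $O(e^{\kappa\phi(t)})$ when $L<\kappa\delta$. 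The repair is already in your toolkit: you do not need $\mathcal{E}^5_t$ or $N_t$ at all, since $H(\tilde m_k)\geq\sum_{i=1}^{k-1}\bigl(H(\tilde L_i)-H(\tilde m_i)\bigr)$ holds almost surely (the part of $\mathcal{E}^5_t$ you invoke is the trivial direction), so only $\overline{\mathcal{E}^7_t}$ enters the error; and \eqref{bigeventsleq} gives the stronger bound $\mathbb{P}(\overline{\mathcal{E}^7_t})\leq e^{-Lh_t}$ precisely for this purpose, whence $n_t\,\mathbb{P}(\overline{\mathcal{E}^7_t})\leq e^{\kappa(1+\delta)\phi(t)-Lh_t}\to0$ because $h_t\sim\log t\gg\phi(t)$. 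This is exactly how the paper disposes of the bad event, and with that substitution your proof is complete.
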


\begin{proof}
Since for all $k \geq 1$ we have almost surely $H(\tilde m_k) \geq \sum_{j=1}^{k-1} H(\tilde L_j)-H(\tilde m_j)$, we deduce that 
\begin{align*}
\sum_{k=1}^{n_t} \mathbb{P} \left ( H(\tilde m_k)/t \leq 1 \right ) & \leq \sum_{k=1}^{n_t} \mathbb{P} \left ( \sum_{j=1}^{k-1} H(\tilde L_j)-H(\tilde m_j)/t \leq 1 \right ) \\
& \leq \sum_{k=1}^{n_t} \mathbb{P} \left ( \sum_{j=1}^{k-1} (1-e^{- \tilde c h_t}) e_j S_j^t R_j^t/t \leq 1 \right ) + n_t \mathbb{P} \left ( \overline{\mathcal{E}^7_t} \right ) \\
& \leq e^{(1-e^{- \tilde c h_t})^{-1}} \sum_{k=1}^{n_t} \mathbb{E} \left [ e^{-\sum_{j=1}^{k-1} e_j S_j^t R_j^t/t} \right ] + n_t e^{- L h_t} \\
& \leq e^{(1-e^{- \tilde c h_t})^{-1}} \sum_{k=1}^{n_t} \left ( \mathbb{E} \left [ e^{- e_1 S_1^t R_1^t/t} \right ] \right )^{k-1} + e^{(1+\delta) \phi(t) - L h_t}. 
\end{align*}
In the above we have used  the definition of $\mathcal{E}^7_t$, Markov's inequality, \eqref{bigeventsleq}, the fact that the sequence $(e_j S_j^t R_j^t)_{j \geq 1}$ is \textit{iid} and the definition of $n_t$. For $t$ large enough we thus get
\begin{align}
\sum_{k=1}^{n_t} \mathbb{P} \left ( H(\tilde m_k)/t \leq 1 \right ) & \leq e^{2} \sum_{k=1}^{+\infty} \left ( \mathbb{E} \left [ e^{- e_1 S_1^t R_1^t/t} \right ] \right )^{k-1} + e^{- L h_t /2} \nonumber \\
& = \frac{e^{2}}{1 - \mathbb{E} \left [ e^{- e_1 S_1^t R_1^t/t} \right ]} + e^{- L h_t /2}. \label{cvntlp1}
\end{align}
Then, 
\begin{align*}
1 - \mathbb{E} \left [ e^{- e_1 S_1^t R_1^t/t} \right ] & = \int_0^{+\infty} e^{-u} \mathbb{P} \left ( e_1 S_1^t R_1^t/t > u \right ) du \\
& \geq \int_{e^{- \phi(t) /2}}^{+\infty} e^{-u} \mathbb{P} \left ( e_1 S_1^t R_1^t/t > u \right ) du \\
& = e^{-\kappa \phi(t)} \int_{e^{- \phi(t)/2}}^{+\infty} u^{-\kappa} e^{-u} \left ( u^{\kappa} e^{\kappa \phi(t)} \mathbb{P} \left ( e_1 S_1^t R_1^t/t > u \right ) \right ) du. 
\end{align*}
We can now use \eqref{cvmeasure9.1} with $\eta = 1/6$ and we get that for all $t$ large enough 
%\[ 1 - \mathbb{E} \left [ e^{- e_1 S_1^t R_1^t/t} \right ] \geq \frac{\mathcal{C'} \mathbb{E} [\mathcal{R}^{\kappa}]}{2} e^{-\kappa \phi(t)} \int_{e^{- \phi(t) /2}}^{+\infty} u^{-\kappa} e^{-u} du \underset{t \rightarrow +\infty}{\sim} \frac{\mathcal{C'} \mathbb{E} [\mathcal{R}^{\kappa}]}{2} e^{-\kappa \phi(t)} \int_{0}^{+\infty} u^{-\kappa} e^{-u} du. \]
\[ 1 - \mathbb{E} \left [ e^{- e_1 S_1^t R_1^t/t} \right ] \geq \frac{\mathcal{C'} \mathbb{E} [\mathcal{R}^{\kappa}]}{2} e^{-\kappa \phi(t)} \int_{e^{- \phi(t) /2}}^{+\infty} u^{-\kappa} e^{-u} du \underset{t \rightarrow +\infty}{\sim} \frac{\mathcal{C'} \mathbb{E} [\mathcal{R}^{\kappa}] \Gamma(1-\kappa)}{2} e^{-\kappa \phi(t)}. \]
Putting into \eqref{cvntlp1}, we get the result for $t$ large enough. 

\end{proof}

We now link the asymptotic of $\mathbb{P} ( R_1^t \leq a/x_t )$ with the left tail of $I(V^{\uparrow})$. We have to make a distinction between the case where $V$ possesses negative jumps and the case where $V$ possesses no negative jumps, that is, $V$ is the $\kappa$-drifted Brownian motion. $R_1^t$ is an exponential functional of the bottom of the first valley. In the first case, due to the jumps, the left side of the bottom of the valley can be neglected, so only the right side matters. In the second case, both sides have the same law, so both have to be taken in consideration in the left tail of $R_1^t$. 

\begin{lemme} \label{approxder2cas}
Let $z_t$ go to infinity with $t$ and satisfying $(\log(z_t))^2 << h_t$. 

\begin{itemize}
\item Assume that $V$ possesses negative jumps, then there is a positive constant $c$ such that for any $a > 1$ and $t$ large enough, 
\begin{eqnarray}
e^{ -c  \left ( \log ( (1 - 1/ a  ) / z_t )\right )^2} \mathbb{P} \left ( I(V^{\uparrow}) \leq 1/ a   z_t \right )\leq \mathbb{P} \left ( R_1^t \leq 1/z_t \right ) \leq 2 \mathbb{P} \left ( I(V^{\uparrow}) \leq a   /z_t \right ). \label{approxder2caslevy}
\end{eqnarray}
\item Assume now that $V = W_{\kappa}$, the $\kappa$-drifted Brownian motion, then for $t$ large enough, 
\begin{eqnarray}
\mathbb{P} \left ( \mathcal{R} \leq 1/z_t \right ) - 2 e^{- \delta \kappa h_t /3} \leq \mathbb{P} \left ( R_1^t \leq 1/z_t \right ) \leq 2 \mathbb{P} \left ( \mathcal{R} \leq a   /z_t \right ) + 2 e^{- \delta \kappa h_t /3}, \label{approxder2casmdb}
\end{eqnarray}
where $\mathcal{R}$ is defined in the beginning of Section \ref{results}. 
\end{itemize}

\end{lemme}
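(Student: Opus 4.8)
\textbf{Proof plan for Lemma \ref{approxder2cas}.}

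The plan is to compare $R_1^t = \int_{\tilde\tau_1^-(h_t/2)}^{\tilde\tau_1^+(h_t/2)} e^{-\tilde V^{(1)}(u)}\,du$ with the infinite exponential functional $I(V^\uparrow)$ (resp. $\mathcal R = I(V^\uparrow)+I(\hat V^\uparrow)$) by using the description of the law of the bottom of the first standard valley recalled in Fact \ref{loidesval} of Section \ref{toolbox}. The bottom of the valley, re-centered at its minimum $\tilde m_1$, decomposes into a left part (between $\tilde\tau_1^-(h_t/2)$ and $\tilde m_1$) and a right part (between $\tilde m_1$ and $\tilde\tau_1^+(h_t/2)$), and up to the truncation at height $h_t/2$ these two pieces are, in law, the beginnings of $\hat V^\uparrow$ and $V^\uparrow$ respectively (this is exactly the kind of approximation for which the unbounded variation hypothesis was imposed, as noted at the start of Section \ref{smallkappa}). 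So $R_1^t$ is, up to controlled errors, $\int_0^{\tau(\hat V^\uparrow, h_t/2)} e^{-\hat V^\uparrow(u)}\,du + \int_0^{\tau(V^\uparrow, h_t/2)} e^{-V^\uparrow(u)}\,du$, and the first summand converges to $I(\hat V^\uparrow)$, the second to $I(V^\uparrow)$.

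For the case with negative jumps (inequality \eqref{approxder2caslevy}), the key asymmetry is that $\hat V^\uparrow$ has positive jumps which push it away from $0$ very fast, so $I(\hat V^\uparrow)$ has a much thinner left tail than $I(V^\uparrow)$ (this is the comparison of Remark 1.7 with Proposition 1.15 + Theorem 1.16 of \cite{foncexpovech} alluded to after Theorem \ref{limsupenfctdelefttail}). For the upper bound I would write $\{R_1^t \le 1/z_t\}$ as being contained, up to the event that the right part already accounts for most of the integral, in $\{\int_0^{\tau(V^\uparrow,h_t/2)} e^{-V^\uparrow} \le a/z_t\}$; then I remove the truncation (the tail contribution $\int_{\tau(V^\uparrow,h_t/2)}^\infty e^{-V^\uparrow}$ is super-polynomially small in $z_t$ because $(\log z_t)^2 \ll h_t$ and $V^\uparrow$ is above $h_t/2$ there, using an estimate of the type of Fact \ref{3integrals}), and bound the result by $2\,\mathbb P(I(V^\uparrow)\le a/z_t)$, the factor $2$ absorbing the discarded error events. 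For the lower bound, I would restrict to the event that the left (dual) part is tiny, say $I(\hat V^\uparrow)\le (1/z_t)(1/a - \text{truncation error})$, which by the thin left tail of $I(\hat V^\uparrow)$ has probability at least $\exp(-c(\log((1-1/a)/z_t))^2)$ — this is where the Gaussian-type left tail bound for $I(\hat V^\uparrow)$ (of the form $\exp(-c(\log(1/x))^2)$) produces the prefactor in \eqref{approxder2caslevy} — and on that event $R_1^t \le 1/z_t$ is essentially equivalent to $I(V^\uparrow) \le 1/(a z_t)$, up to truncation and the matching of laws.

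For the drifted Brownian case (inequality \eqref{approxder2casmdb}), $V=W_\kappa$ has no jumps, so $\hat V^\uparrow$ and $V^\uparrow$ have the same law and $R_1^t$ approximates $\mathcal R$ directly; neither half can be neglected. Here the only errors are (i) the truncation of each half at height $h_t/2$, which costs at most $e^{-\delta\kappa h_t/3}$ in probability after using the exponential concentration of $\sup W_\kappa^\uparrow$-type quantities (the $e^{-\delta\kappa h_t/3}$ terms come from standard estimates on the drifted Brownian potential, essentially exponential bounds on how long $W_\kappa^\uparrow$ stays below $h_t/2$), and (ii) the $J_1$-distance between the truncated valley bottom and the genuine pieces of $W_\kappa^\uparrow, \hat W_\kappa^\uparrow$, which is handled exactly as in \cite{caslevyvech}. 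The factor $2$ and the shift by $a>1$ in the upper bound again absorb these errors; the lower bound needs no extra slack because $R_1^t$ dominates the truncated version of $\mathcal R$, so $\mathbb P(R_1^t\le 1/z_t)\ge \mathbb P(\text{truncated }\mathcal R \le 1/z_t) - (\text{truncation error}) \ge \mathbb P(\mathcal R\le 1/z_t) - 2e^{-\delta\kappa h_t/3}$.

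The main obstacle I expect is the lower bound in \eqref{approxder2caslevy}: one must carefully couple $R_1^t$ with $I(V^\uparrow)$ \emph{and simultaneously} force the dual part to be negligible, quantifying exactly how the left tail of $I(\hat V^\uparrow)$ degrades the estimate, and then show that on the favorable event the inequality $R_1^t \le 1/z_t$ is genuinely implied by $I(V^\uparrow) \le 1/(a z_t)$ despite the truncation at $h_t/2$ and the fact that the law identification is only approximate for finite $h_t$. Controlling the interplay of these three sources of error (dual part, truncation, approximate law) so that they combine into the single clean prefactor $e^{-c(\log((1-1/a)/z_t))^2}$ is the delicate point; everything else is a routine application of Fact \ref{loidesval}, Fact \ref{3integrals}, and the left-tail estimates from \cite{foncexpovech}.
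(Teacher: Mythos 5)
Your plan follows essentially the same route as the paper's proof: use Fact \ref{loidesval} to identify (up to a total variation error) the two independent halves of the valley bottom with truncated $\hat V^{\uparrow}$- and $V^{\uparrow}$-functionals, drop the dual half for the upper bound in the jump case, pay for it in the lower bound through the $\exp(-c(\log(\cdot))^2)$ left tail of $I(\hat V^{\uparrow})$, remove the truncation via the smallness of $\int_{\tau(V^{\uparrow},h_t/2)}^{+\infty} e^{-V^{\uparrow}}$ (Lemma \ref{fonctronque}) combined with $(\log z_t)^2 \ll h_t$, and keep both halves by symmetry when $V=W_{\kappa}$. Two minor corrections: the threshold for the dual half should be $(1-1/a)/z_t$ (not $(1/a-\cdot)/z_t$, which would not sum to $1/z_t$ for $a$ close to $1$), and the $2e^{-\delta\kappa h_t/3}$ terms come from the total variation bound \eqref{vartot} of Fact \ref{loidesval} (not from truncation nor a $J_1$ estimate), the truncation costing nothing in the lower bounds since the truncated functionals are dominated by $I(V^{\uparrow})$ and $I(\hat V^{\uparrow})$.
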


\begin{proof}

We first assume that $V$ possesses negative jumps. Recall that $R_1^t = \int_{\tilde{\tau}_1^-(h_t/2)}^{ \tilde{\tau}_1^+(h_t/2)} e^{-\tilde V^{(1)}(u)}  \textnormal{d}u$ so, using the equality in law between $(\tilde V^{(i)}(\tilde m_i+x),\  0\leq x \leq \tilde \tau_i(h)- \tilde m_i)$ and $(V^{\uparrow}(x), \ 0 \leq x \leq \tau(V^{\uparrow}, h))$ given by Fact \ref{loidesval}, we get
\[ R_1^t \geq \int_{\tilde m_1}^{ \tilde{\tau}_1^+(h_t/2)} e^{-\tilde V^{(1)}(u)}  \textnormal{d}u \overset{\mathcal{L}}{=} \int_0^{\tau(V^{\uparrow}, h_t/2)} e^{- V^{\uparrow}(u)} du, \]
so
\begin{eqnarray}
\mathbb{P} \left ( R_1^t \leq 1/z_t \right ) \leq \mathbb{P} \left ( \int_0^{\tau(V^{\uparrow}, h_t/2)} e^{- V^{\uparrow}(u)} du \leq 1/z_t \right ). \label{approxr1}
\end{eqnarray}

According to Lemma \ref{fonctronque}, for some positive constant $c$ and $t$ large enough, $(1-e^{-ch_t}) \times \mathbb{P} \left (  \int_0^{\tau(V^{\uparrow}, h_t/2)} e^{- V^{\uparrow}(u)} du \leq 1/z_t \right )$ is less than
\begin{align}
\mathbb{P} \left ( \int_0^{\tau(V^{\uparrow}, h_t/2)} e^{- V^{\uparrow}(u)} du \leq 1/z_t \right ) & \times \mathbb{P} \left ( \int_{\tau(V^{\uparrow}, h_t/2)}^{+\infty} e^{- V^{\uparrow}(u)} du \leq e^{-h_t /4} \right ) \nonumber \\
& \leq \mathbb{P} \left ( I(V^{\uparrow}) \leq 1/z_t + e^{-h_t /4} \right ). \label{approxr1.1}
\end{align}

Combining with \eqref{approxr1} we get that for $t$ large enough, 
\[ \mathbb{P} \left ( R_1^t \leq 1/z_t \right ) \leq 2 \mathbb{P} \left ( I(V^{\uparrow}) \leq 1/z_t + e^{-h_t /4} \right ) \leq 2 \mathbb{P} \left ( I(V^{\uparrow}) \leq a   /z_t \right ), \]
%\begin{eqnarray}
%\mathbb{P} \left ( R_1^t \leq 1/z_t \right ) \leq 2 \mathbb{P} \left ( I(V^{\uparrow}) \leq 1/z_t + e^{-h_t /4} \right ) \leq 2 \mathbb{P} \left ( I(V^{\uparrow}) \leq a   /z_t \right ), \label{approxr2}
%\end{eqnarray}
because $e^{-h_t /4} \leq (a   -1)/z_t$ for large $t$ (thanks to the hypothesis $(\log(z_t))^2 << h_t$). This is the asserted upper bound in \eqref{approxder2caslevy}. 
%\leq \int_0^{+\infty} e^{- V^{\uparrow}(x)} dx = I(V^{\uparrow})

On the other hand, using the independence between the left and right parts of the valleys (given by Fact \ref{loidesval}), we get
\begin{align}
\mathbb{P} \left ( R_1^t \leq 1/z_t \right ) \geq \mathbb{P} \left ( \int_{\tilde m_1}^{ \tilde{\tau}_1^+(h_t/2)} e^{-\tilde V^{(1)}(u)}  \textnormal{d}u \leq 1 / a   z_t \right ) \times \mathbb{P} \left ( \int_{\tilde{\tau}_1^-(h_t/2)}^{\tilde m_1} e^{-\tilde V^{(1)}(u)}  \textnormal{d}u \leq (1 - 1/ a  ) / z_t \right ). \label{approxr3}
\end{align}

From Fact \ref{loidesval}, we get that the first factor equals 
\[ \mathbb{P} \left ( \int_0^{\tau(V^{\uparrow}, h_t/2)} e^{- V^{\uparrow}(u)} du \leq 1 / a   z_t \right ) \geq  \mathbb{P} \left ( I(V^{\uparrow}) \leq 1 / a   z_t \right ), \]
while the second factor is more than
\begin{align*}
& \mathbb{E} \left [ \frac{c_{h_t}}{1-e^{-\kappa \hat{V}^{\uparrow}(\tau(\hat{V}^{\uparrow}, h_t+))}} ; \int_{0}^{\tau (\hat{V}^{\uparrow}, h_t/2+)} e^{-\hat{V}^{\uparrow}(u)}  \textnormal{d}u \leq (1 - 1/ a  ) / z_t \right ] - 2 e^{- \delta \kappa h_t /3} \\
\geq & c_{h_t} \mathbb{P} \left ( \int_{0}^{+\infty} e^{-\hat{V}^{\uparrow}(u)}  \textnormal{d}u \leq (1 - 1/ a  ) / z_t \right ) - 2 e^{- \delta \kappa h_t /3}. 
\end{align*}

Then, $c_{h_t} \geq c_1$ when $h_t \geq 1$ so, putting in \eqref{approxr3}, we get that for $t$ large enough, 
\begin{eqnarray}
\mathbb{P} \left ( R_1^t \leq 1 / z_t \right ) \geq \mathbb{P} \left ( I(V^{\uparrow}) \leq 1 / a   z_t \right ) \times \left( c_1 \mathbb{P} \left ( I(\hat{V}^{\uparrow}) \leq (1 - 1/ a  ) / z_t \right ) - 2 e^{- \delta \kappa h_t /3} \right ). \label{approxr4}
\end{eqnarray}

According to the combination of the first point of Theorem 1.15 of \cite{foncexpovech} with the second point of Theorem 1.16 of \cite{foncexpovech}, there is a positive constant $c$ such that for $t$ large enough, 
\[ \mathbb{P} \left ( I(\hat{V}^{\uparrow}) \leq (1 - 1/ a  ) / z_t \right ) \geq e^{ -c  \left ( \log ( (1 - 1/ a  ) / z_t )\right )^2}. \]
Thanks to the hypothesis $(\log(z_t))^2 << h_t$ we deduce that, for $c$ decreased a little, the second factor in the right hand side of \eqref{approxr4} is more than $e^{ -c  \left ( \log ( (1 - 1/ a  ) / z_t )\right )^2}$. This yields the lower bound in \eqref{approxder2caslevy}. 
%so in particular, when $t$ is large enough, the second factor in the right hand side of \eqref{approxr4} is more than $1/z_t$, so
%\[ \mathbb{P} \left ( R_1^t \leq 1 / z_t \right ) \geq \frac{1}{z_t} \mathbb{P} \left ( I(V^{\uparrow}) \leq 1 / a   z_t \right ). \]
%Combining with \eqref{approxr2} we get \eqref{approxder2caslevy}. 

We now consider the case where $V$ is the $\kappa$-drifted Brownian motion $W_{\kappa}$. Let $Z_1$ and $Z_2$ be two independent versions of the process $W_{\kappa}^{\uparrow}$. Since $W_{\kappa}$ has no jumps, the density of the process $P^{(2)}$ in Fact \ref{loidesval} is almost surely constant so $P^{(2)}$ is equal in law to $(\hat{W_{\kappa}}^{\uparrow}(x), \ 0 \leq x \leq \tau(\hat{W_{\kappa}}^{\uparrow}, h_t+)) = (W_{\kappa}^{\uparrow}(x), \ 0 \leq x \leq \tau(W_{\kappa}^{\uparrow}, h_t))$ (the last equality comes from the fact that $\hat{W_{\kappa}}^{\uparrow} = W_{\kappa}^{\uparrow}$ and $W_{\kappa}$ is continuous). Combining this with \eqref{vartot} and the equality in law between $(\tilde V^{(i)}(\tilde m_i+x),\  0\leq x \leq \tilde \tau_i(h)- \tilde m_i)$ and $(V^{\uparrow}(x), \ 0 \leq x \leq \tau(V^{\uparrow}, h))$ (both are from Fact \ref{loidesval}), we get
\begin{align}
\mathbb{P} \left ( R_1^t \leq 1/z_t \right ) & \leq \mathbb{P} \left ( \int_0^{\tau(Z_1, h_t/2)} e^{- Z_1(u)} du + \int_0^{\tau(Z_2, h_t/2)} e^{- Z_2(u)} du \leq 1/z_t \right ) + 2 e^{- \delta \kappa h_t /3}, \label{approxr5} \\
\mathbb{P} \left ( R_1^t \leq 1/z_t \right ) & \geq \mathbb{P} \left ( \int_0^{\tau(Z_1, h_t/2)} e^{- Z_1(u)} du + \int_0^{\tau(Z_2, h_t/2)} e^{- Z_2(u)} du \leq 1/z_t \right ) - 2 e^{- \delta \kappa h_t /3}. \label{approxr6}
\end{align}
Reasoning as in \eqref{approxr1.1} we get for some positive constant $c$: 
\begin{align*}
(1-e^{-ch_t})^2 & \mathbb{P} \left ( \int_0^{\tau(Z_1, h_t/2)} e^{- Z_1(u)} du + \int_0^{\tau(Z_2, h_t/2)} e^{- Z_2(u)} du \leq 1/z_t \right ) \\
\leq & \mathbb{P} \left ( I(Z_1) + I(Z_2) \leq 1/z_t + 2 e^{-h_t /4} \right ). 
\end{align*}
Combining with \eqref{approxr5} we get that for $t$ large enough, 
\[ \mathbb{P} \left ( R_1^t \leq 1/z_t \right ) \leq 2 \mathbb{P} \left ( I(Z_1) + I(Z_2) \leq 1/z_t + 2 e^{-h_t /4} \right ) + 2 e^{- \delta \kappa h_t /3} \leq 2 \mathbb{P} \left ( \mathcal{R} \leq a   /z_t \right ) + 2 e^{- \delta \kappa h_t /3}, \]
because $2 e^{-h_t /4} \leq (a   -1)/z_t$ for large $t$ and because, form the definitions of $\mathcal{R}$, $Z_1$ and $Z_2$ we have $\mathcal{R} \overset{\mathcal{L}}{=}  I(Z_1) + I(Z_2)$. The above inequality is the asserted upper bound in \eqref{approxder2casmdb}. 

On the other hand, we have trivially 
\[ \mathbb{P} \left ( \int_0^{\tau(Z_1, h_t/2)} e^{- Z_1(u)} du + \int_0^{\tau(Z_2, h_t/2)} e^{- Z_2(u)} du \leq 1/z_t \right ) \geq \mathbb{P} \left ( I(Z_1) + I(Z_2) \leq 1/z_t \right ), \]
and combining with \eqref{approxr6} we get that for $t$ large enough, 
\[ \mathbb{P} \left ( R_1^t \leq 1/z_t \right ) \geq \mathbb{P} \left ( I(Z_1) + I(Z_2) \leq 1/z_t \right ) - 2 e^{- \delta \kappa h_t /3} = \mathbb{P} \left ( \mathcal{R} \leq 1/z_t \right ) - 2 e^{- \delta \kappa h_t /3}. \]
This yields the lower bound in \eqref{approxder2casmdb}. 

\end{proof}

The next lemma {is crucial for our analysis and shows that, surprisingly, the contribution of the last valley can be neglected when we deal with the extremely large values of the local time, even though the contribution of the last valley appears in the limit distribution of $\mathcal{L}^*_X(t)/t$. The removing of the last valley allowed by this lemma is a key point in proving (in Proposition \ref{approxdutl}) that $\mathbb{P} (\mathcal{L}^*_X(t)/t \geq x_t)$ is very closely related to simpler quantities of the type of $\mathbb{P} ( Y_1^{\natural, t} ( Y_2^{-1, t}(1) - ) \geq x_t )$, for which we will be able to describe the precise asymptotic (in Proposition \ref{queueloilimsupdiscret}). Therefore, having this lemma is the reason why we get precise results for the $\limsup$, and unfortunately a counterpart does not hold for the $\liminf$, as we will see in Section \ref{preuveliminf}, which is why we get slightly less precise results for the $\liminf$.} Recall the definition of $\mathcal{D}_j$ in \eqref{defdj}. We have: 

\begin{lemme} \label{neglectlastvalley}

There are positive constants $C$ and $c$ such that for all $u > 1$ and $t$ large enough, 
\[ \mathbb{P} \left ( \sup_{y \in \mathcal{D}_{N_t}} \left ( \mathcal{L}_X (t,y) - \mathcal{L}_X (H(\tilde m_{N_t}),y) \right ) \geq t x_t \right ) \leq \frac{C}{x_t^{\kappa}} \mathbb{P} \left ( R_1^t \leq \frac{u}{x_t} \right ) + e^{-c \phi(t)}. \]

\end{lemme}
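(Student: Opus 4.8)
The plan is to exploit two facts about the last valley. First, on the events of Fact~\ref{bigevents} the supremum of the local time that the diffusion accumulates in $\mathcal{D}_{N_t}$ after reaching $\tilde m_{N_t}$ cannot exceed the total local time $\approx e_{N_t}S_{N_t}^t$ ever accumulated near $\tilde m_{N_t}$, so the event considered forces $e_{N_t}S_{N_t}^t\gtrsim tx_t$. Second, Fact~\ref{fameuxe8} shows that, up to a negligible event, such a large local time at the last valley forces the corresponding exponential functional $R_{N_t}^t$ to be small, in fact $R_{N_t}^t<u/x_t$ once $t$ is large (using $u>1$). Combining ``$e_{N_t}S_{N_t}^t$ large'' and ``$R_{N_t}^t$ small'' with the mutual independence of the sequence $(e_j,S_j^t,R_j^t)_{j\geq 1}$ and a renewal estimate in the spirit of Lemma~\ref{cvntlp} then produces an extra factor $x_t^{-\kappa}\to 0$, which absorbs the combinatorial cost of not knowing which valley is the last one.

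Concretely, I would first restrict to $v\in\mathcal{G}_t$ via Lemma~\ref{measofgoodenv}, and decompose the event according to $N_t=k$ for $1\le k\le n_t$, the event $\{N_t\ge n_t\}$ being negligible by Lemma~\ref{minmajnbvalleesvisit}. On $\{N_t=k\}$ one has $H(\tilde m_k)\le t$, and the strong Markov property at $H(\tilde m_k)$ identifies $\mathcal{L}_X(t,\cdot)-\mathcal{L}_X(H(\tilde m_k),\cdot)$ with $\mathcal{L}_{X_{\tilde m_k}}(t(1-H(\tilde m_k)/t),\cdot)$, where $X_{\tilde m_k}$ is independent of $\mathcal{F}_{H(\tilde m_k)}$. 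On $\mathcal{E}^1_t$ one has $H_{X_{\tilde m_k}}(\tilde L_k)<H_{X_{\tilde m_k}}(\tilde L_{k-1})$ and, since $\mathcal{D}_k$ lies to the left of $\tilde\tau_k(h_t)$, the diffusion $X_{\tilde m_k}$ leaves $\mathcal{D}_k$ for good by time $H_{X_{\tilde m_k}}(\tilde L_k)$; together with $\mathcal{E}^4_t$ and $\mathcal{E}^6_t$ this yields $\sup_{\mathcal{D}_k}(\mathcal{L}_X(t,\cdot)-\mathcal{L}_X(H(\tilde m_k),\cdot))\le (1+e^{-\tilde c h_t})^2 e_kS_k^t$, hence $e_kS_k^t\ge tx_t/2$ for $t$ large on the event considered.

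Next, setting $z_k:=H(\tilde m_k)/t$, I would apply Fact~\ref{fameuxe8}: when $z_k\le 1-4/\log(h_t)$, the event $\{N_t=k\}\cap\mathcal{E}^1_t\cap\{\sup_{\mathcal{D}_k}\mathcal{L}_{X_{\tilde m_k}}(t(1-z_k),\cdot)\ge tx_t\}$ is, outside the $P^v$-negligible event of \eqref{majoe8}, contained in $\{x_tR_k^t\le (1-z_k)(1+e^{-\tilde c h_t})/(1-e^{-\tilde c h_t})\}$; when $z_k>1-4/\log(h_t)$ one uses $\mathcal{L}_{X_{\tilde m_k}}(t(1-z_k),\cdot)\le \mathcal{L}_{X_{\tilde m_k}}(4t/\log(h_t),\cdot)$ together with \eqref{majoe8bis} to get, outside a negligible event, $x_tR_k^t\le (4/\log(h_t))(1+e^{-\tilde c h_t})/(1-e^{-\tilde c h_t})$. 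In both cases $R_k^t<u/x_t$ for $t$ large, because $u>1$ and these ratios tend to $1$. Gathering these inclusions, using \eqref{bonenv1} and the almost sure bound $H(\tilde m_k)\ge\sum_{j=1}^{k-1}(H(\tilde L_j)-H(\tilde m_j))$ (so that $\{N_t=k\}\cap\mathcal{E}^7_t\subset\{T_{k-1}\le t'\}$ with $T_{k-1}:=\sum_{j=1}^{k-1}e_jS_j^tR_j^t$ and $t':=t/(1-e^{-\tilde c h_t})$), one reduces, after integrating over $v\in\mathcal{G}_t$ and adding negligible terms of order $e^{-c\phi(t)}$, to bounding $\sum_{k=1}^{n_t}\mathbb{P}(T_{k-1}\le t',\ e_kS_k^t\ge tx_t/2,\ R_k^t<u/x_t)$.

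Finally, by mutual independence of $(e_j)_j$, $(S_j^t)_j$, $(R_j^t)_j$ this sum equals $\mathbb{P}(e_1S_1^t\ge tx_t/2)\,\mathbb{P}(R_1^t<u/x_t)\sum_{k=0}^{n_t-1}\mathbb{P}(T_k\le t')$. The Markov-inequality computation of Lemma~\ref{cvntlp} (with $t'\sim t$ in place of $t$, using \eqref{cvmeasure9.1}) gives $\sum_{k=0}^{n_t-1}\mathbb{P}(T_k\le t')\le C e^{\kappa\phi(t)}$, while \eqref{cvmeasure7.1} gives $\mathbb{P}(e_1S_1^t\ge tx_t/2)\le C' x_t^{-\kappa}e^{-\kappa\phi(t)}$; hence the whole sum is $\le C'' x_t^{-\kappa}\mathbb{P}(R_1^t\le u/x_t)\le \mathbb{P}(R_1^t\le u/x_t)$ for $t$ large since $x_t\to\infty$, which together with the negligible contributions proves the lemma. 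I expect the main obstacle to be the second and third paragraphs: one must reduce the event, via the strong Markov property at $H(\tilde m_k)$, to an event that simultaneously matches the hypotheses of Fact~\ref{fameuxe8} (to extract ``$R_k^t$ small'') and lets one invoke $\mathcal{E}^1_t,\mathcal{E}^4_t,\mathcal{E}^6_t$ (to extract ``$e_kS_k^t\gtrsim tx_t$''); once both estimates are in hand the remaining computation is routine and parallels Lemma~\ref{cvntlp}.
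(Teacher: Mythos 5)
Your proof is correct, and its skeleton is the one the paper uses: restrict to $v\in\mathcal{G}_t$, decompose over the index $k$ of the last valley, apply the strong Markov property at $H(\tilde m_k)$, invoke Fact~\ref{fameuxe8} (both \eqref{majoe8} for $z_k\le 1-4/\log(h_t)$ and \eqref{majoe8bis} for late entry) to force $R_k^t$ small, and then beat the renewal factor $\sum_k\mathbb{P}(T_{k-1}\le t')\le Ce^{\kappa\phi(t)}$ (the computation of Lemma~\ref{cvntlp}) by an extra decaying factor, using the mutual independence of $(e_j,S_j^t,R_j^t)_{j\ge1}$. The genuine difference is where the decaying factor comes from. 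The paper extracts ``$e_kS_k^t\gtrsim tx_t$'' only in the regime $z\le 1-4/\log(h_t)$, from the time constraint $H_{X_{\tilde m_k}}(\tilde m_{k+1})\ge t(1-z)$ combined with $\mathcal{E}^5_t,\mathcal{E}^7_t$ and the $R_k^t$ bound; in the late-entry regime $z>1-4/\log(h_t)$ this is unavailable, so the paper instead uses the much stronger smallness $R_k^t\le 8/(x_t\log(h_t))$ and controls $e^{\kappa\phi(t)}\mathbb{P}\bigl(R_1^t\le 8/(x_t\log(h_t))\bigr)$ through Lemma~\ref{approxder2cas} and the universal left-tail bound \eqref{majouniv}, which is exactly where the choice $\omega\in]1,\mu[$ (so that $(\log(\log t))^{\mu}\gg\phi(t)$) is needed. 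You instead obtain $e_kS_k^t\ge tx_t/2$ uniformly in $z_k$ directly from $\mathcal{E}^1_t$ (no return to $\mathcal{D}_k$ after $H(\tilde L_k)$, since $\tilde\tau_k^+((\phi(t))^2)\le\tilde\tau_k(h_t)<\tilde L_k$) together with $\mathcal{E}^4_t$ and $\mathcal{E}^6_t$, so both regimes collapse into the single sum $\sum_k\mathbb{P}(T_{k-1}\le t')\,\mathbb{P}(e_1S_1^t\ge tx_t/2)\,\mathbb{P}(R_1^t<u/x_t)\le C''x_t^{-\kappa}\,\mathbb{P}(R_1^t\le u/x_t)$, and the left-tail input on $I(V^{\uparrow})$ and the constraint $\omega<\mu$ are not needed for this lemma. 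What the paper's route buys is that it only needs the $R_k^t$--local-time comparison of Fact~\ref{fameuxe8} in the last valley (no appeal to $\mathcal{E}^4_t,\mathcal{E}^6_t$ there); what yours buys is a uniform treatment of the two regimes and independence of the parameter restriction, at the cost of invoking a couple more of the high-probability events. Both are complete and yield the stated bound since $1+e^{-\tilde ch_t/2}$, respectively $(1+e^{-\tilde ch_t})/(1-e^{-\tilde ch_t})$, is eventually below $u>1$ and $x_t\to\infty$.
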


\begin{proof}

We fix $v \in \mathcal{V}$, a realization of the environment. Let us define
\begin{align*}
\mathcal{E}_t(v, k, z) := & \left \{ \sup_{y \in \mathcal{D}_k} \mathcal{L}_{X_{\tilde m_k}}(t(1-z), y) \geq t x_t, \right. \\
& \left. H_{X_{\tilde m_k}}(\tilde m_{k+1}) \geq t(1-z), H_{X_{\tilde m_k}}(\tilde L_k) < H_{X_{\tilde m_k}}(\tilde L_{k-1}) \right \}. 
\end{align*}
%and $\nu_t(v, k, z) := P^v(\mathcal{E}_t(v, k, z))$. 
We have 
%\begin{align}
%& P^v \left ( \sup_{\mathcal{D}_{N_t}} \left ( \mathcal{L}_X (t,.) - \mathcal{L}_X (H(\tilde m_{N_t}),.) \right ) \geq t x_t, \ N_t < n_t, \mathcal{E}^1_t \right ) \nonumber \\
%\leq & \sum_{k=1}^{n_t} \int_0^1 \nu_t(v, k, z) \times P^v \left ( H(\tilde m_k)/t \in dz \right ). \label{yeswecan2.1}
%\end{align}
\begin{align}
& P^v \left ( \sup_{y \in \mathcal{D}_{N_t}} \left ( \mathcal{L}_X (t,y) - \mathcal{L}_X (H(\tilde m_{N_t}),y) \right ) \geq t x_t, \ N_t < n_t, \mathcal{E}^1_t \right ) \nonumber \\
\leq & \mathds{1}_{v \in \mathcal{G}_t} \sum_{k=1}^{n_t} \int_0^1 P^v \left ( \mathcal{E}_t(v, k, z), \ H(\tilde m_k)/t \in dz \right ) + \mathds{1}_{v \in \overline{\mathcal{G}_t}} \nonumber \\
\leq & \mathds{1}_{v \in \mathcal{G}_t} \sum_{k=1}^{n_t} \int_0^{1-4/\log(h_t)} P^v \left ( \mathcal{E}_t(v, k, z), \ H(\tilde m_k)/t \in dz \right ) \nonumber \\ 
+ & \mathds{1}_{v \in \mathcal{G}_t} \sum_{k=1}^{n_t} \int_{1-4/\log(h_t)}^1 P^v \left ( \mathcal{E}_t(v, k, z), \ H(\tilde m_k)/t \in dz \right ) + \mathds{1}_{v \in \overline{\mathcal{G}_t}}. \label{yeswecan2.1}
\end{align}

The fact that the sum stops at $n_t$ comes from $N_t < n_t$ together with the fact that $v \in \mathcal{G}_t \subset \mathcal{V}_t$. From the definitions of $\mathcal{E}_t(v, k, z)$, $\mathcal{E}^8_t(v, k, z)$, $\mathcal{E}^5_t$ and $\mathcal{E}^7_t$ we have, for $v \in \mathcal{G}_t$, 
%From the definition of $\mathcal{E}^8_t$, the fact that $H(\tilde m_{k+1}) - H(\tilde m_k) \geq t(1-z), H(\tilde m_{k+1}) - H(\tilde L_k) \leq t e^{-\phi(t) /2}$ and the definition of $\mathcal{E}^7_t$, we get
\begin{align*}
\mathcal{E}_t(v, k, z) \subset & \left \{ \frac{1-z}{R_k^t} \frac{(1+e^{- \tilde c h_t})}{(1-e^{- \tilde c h_t})} \geq x_t, \ (1+e^{- \tilde c h_t}) e_k S_k^t R_k^t \geq t \left (1-z - 2/\log(h_t) \right ) \right \} \\
& \cup \mathcal{E}^8_t(v, k, z) \cup \overline{\mathcal{E}^5_t} \cup \overline{\mathcal{E}^7_t}. 
\end{align*}
When $z \leq 1-4/\log(h_t)$ we have, on the big event in the right hand side, 
\begin{align*}
t(1-z)/2 \leq t(1-z - 2/\log(h_t)) & \leq (1+e^{- \tilde c h_t}) e_k S_k^t R_k^t \\
& \leq (1+e^{- \tilde c h_t})^2 (1- e^{- \tilde c h_t})^{-1} (1-z) e_k S_k^t / x_t \\
& \leq 2 (1-z) e_k S_k^t / x_t, 
\end{align*}
for $t$ large enough. Moreover, $(1-z)(1+e^{- \tilde c h_t})(1-e^{- \tilde c h_t})^{-1} \leq 1+e^{- \tilde c h_t / 2}$ for $t$ large enough. As a consequence, for $z \in [0, 1-4/\log(h_t)]$ and $t$ large enough, 
\[ \mathcal{E}_t(v, k, z) \subset \left \{ R_k^t \leq (1+e^{- \tilde c h_t/2}) / x_t, \ e_k S_k^t /t \geq x_t / 4 \right \} \cup \mathcal{E}^8_t(v, k, z) \cup \overline{\mathcal{E}^5_t} \cup \overline{\mathcal{E}^7_t}. \]
Note also that the sum over $k$ in \eqref{yeswecan2.1} corresponds to disjoint events (so it is actually the probability of a union of events). We thus get that $\mathds{1}_{v \in \mathcal{G}_t} \sum_{k=1}^{n_t} \int_0^{1-4/\log(h_t)} P^v ( \mathcal{E}_t(v, k, z), \ H(\tilde m_k)/t \in dz )$ is less than
\begin{align*}
& \sum_{k=1}^{n_t} \int_0^{1-4/\log(h_t)} P^v \left ( R_k^t \leq (1+e^{- \tilde c h_t/2}) / x_t, \ e_k S_k^t /t \geq x_t / 4, \ H(\tilde m_k)/t \in dz \right ) \\
+ & \mathds{1}_{v \in \mathcal{G}_t} P^v \left ( \overline{\mathcal{E}^5_t} \cup \overline{\mathcal{E}^7_t} \cup \cup_{k=1}^{n_t} \{ N_t \geq k, \ H(\tilde m_{k})/t \leq 1 - 4/\log(h_t) \} \cap \mathcal{E}^8_t(v, k, H(\tilde m_{k})/t) \right ). 
\end{align*}
Now, recall that $(S_k^t, R_k^t)$ only depends on $v$ and that, $v$ being fixed, $e_k$ belongs to the $\sigma$-field $\sigma \left ( X(t), t \geq H(\tilde m_k) \right )$. In other words, it only depends on the diffusion after time $H(\tilde m_k)$. On the other hand, $H(\tilde m_{k})$ is measurable with respect to the $\sigma$-field $\sigma \left ( X(t), 0 \leq t \leq H(\tilde m_{k}) \right )$. From the Markov property applied to 
%$V$ at $\tilde L_{k-1}$ and applied to 
$X$ at $H(\tilde m_k)$, we get that $H(\tilde m_k)$ is independent from $(e_k, S_k^t, R_k^t)$ so the above is less than
%Note that the first event in the right hand side does not depend on $z$. We thus get that $\sum_{k=1}^{n_t} \int_0^{1-4/\log(h_t)} P^v ( \mathcal{E}_t(v, k, z), \ H(\tilde m_k)/t \in dz )$ is less than
\begin{align*}
& \sum_{k=1}^{n_t} P^v \left ( R_k^t \leq (1+e^{- \tilde c h_t/2}) / x_t, \ e_k S_k^t /t \geq x_t / 4 \right ) \times P^v \left ( H(\tilde m_k)/t \leq 1 \right ) \\
+ & \mathds{1}_{v \in \mathcal{G}_t} P^v \left ( \overline{\mathcal{E}^5_t} \cup \overline{\mathcal{E}^7_t} \cup \cup_{k=1}^{n_t} \{ N_t \geq k, \ H(\tilde m_{k})/t \leq 1 - 4/\log(h_t) \} \cap \mathcal{E}^8_t(v, k, H(\tilde m_{k})/t) \right ) \\
\leq & \sum_{k=1}^{n_t} P^v \left ( R_k^t \leq (1+e^{- \tilde c h_t/2}) / x_t, \ e_k S_k^t /t \geq x_t / 4 \right ) \times P^v \left ( H(\tilde L_{k-1})/t \leq 1 \right ) + e^{-c \phi(t)}. 
\end{align*}
%\begin{align*}
%& \sum_{k=1}^{n_t} P^v \left ( \frac{1-z}{R_k^t} (1+ e^{- \tilde c h_t /2}) \geq x_t, \ e_k S_k^t /t \geq \frac{x_t}{2(1+e^{- \tilde c h_t /2})} \right ) \times P^v \left ( H(\tilde m_k)/t \leq 1 \right ) \\
%& + P^v ( \overline{\mathcal{E}^5_t} \cup \overline{\mathcal{E}^7_t} \cup \cup_{k=1}^{n_t} \mathcal{E}^8_t(v, k, z)) \\
%\leq & \sum_{k=1}^{n_t} P^v \left ( \frac{1-z}{R_k^t} (1+ e^{- \tilde c h_t /2}) \geq x_t, \ e_k S_k^t /t \geq \frac{x_t}{2(1+e^{- \tilde c h_t /2})} \right ) \times P^v \left ( H(\tilde L_{k-1})/t \leq 1 \right ) \\
%& + e^{-c \phi(t)/2}, 
%\end{align*}
In the above, $c$ is a positive constant and we have used the fact that $\tilde L_{k-1} \leq \tilde m_k$ for the first term and the fact that $v \in \mathcal{G}_t$ together with \eqref{bonenv1} and \eqref{majoe8} for the second term. In conclusion we get 
\begin{align}
& \mathds{1}_{v \in \mathcal{G}_t} \sum_{k=1}^{n_t} \int_0^{1-4/\log(h_t)} P^v ( \mathcal{E}_t(v, k, z), \ H(\tilde m_k)/t \in dz ) \nonumber \\
\leq & \sum_{k=1}^{n_t} P^v \left ( R_k^t \leq (1+e^{- \tilde c h_t/2}) / x_t, \ e_k S_k^t /t \geq x_t / 4 \right ) \times P^v \left ( H(\tilde L_{k-1})/t \leq 1 \right ) + e^{-c \phi(t)}. \label{inegquenched}
\end{align}
Now, note that the first factor in the above product only depends on $(\tilde v^{(k)} (x),\ \tilde L_{k-1} \leq x \leq \tilde L_{k} )$ (that is, only on $v$ shifted at $\tilde L_{k-1}$) while the second factor depends on $(v (x),\ x \leq \tilde L_{k-1} )$ (that is, on $v$ before $\tilde L_{k-1}$). $\tilde L_{k-1}$ is a stopping time for the L\'evy process $V$ of which $v$ is a fixed possible path. As a consequence, when we integrate \eqref{inegquenched} with respect to $v$ over $D(\mathbb{R}, \mathbb{R})$ equipped with the probability mesure $P$, we get that the two factors are independent so $E [\mathds{1}_{V \in \mathcal{G}_t} \sum_{k=1}^{n_t} \int_0^{1-4/\log(h_t)} P^V ( \mathcal{E}_t(V, k, z), \ H(\tilde m_k)/t \in dz ) ]$, is less than
\begin{align*}
& \sum_{k=1}^{n_t} \mathbb{P} \left ( R_k^t \leq (1+e^{- \tilde c h_t/2}) / x_t, \ e_k S_k^t /t \geq x_t / 4 \right ) \mathbb{P} \left ( H(\tilde L_{k-1})/t \leq 1 \right ) + e^{-c \phi(t)} \\ 
%+ P(V \notin \mathcal{G}_t) \\
\leq & \sum_{k=1}^{n_t} \mathbb{P} \left ( R_k^t \leq (1+e^{- \tilde c h_t/2}) / x_t \right ) \mathbb{P} \left ( e_k S_k^t /t \geq x_t / 4 \right ) \mathbb{P} \left ( H(\tilde L_{k-1})/t \leq 1 \right ) + e^{-c \phi(t)}. 
\end{align*}
%\begin{align*}
%\nu_t(v, k, z) & \leq P^v \left ( \frac{1-z}{R_k^t} (1+ ...) \geq x_t, \ e_k S_k^t /t \geq \frac{x_t}{2(1+...)} \right ) +\mathbb{P} \left ( \overline{\mathcal{E}^7_t} \right ) + \mathbb{P} \left ( \overline{\mathcal{E}^8_t} \right ) \\
%& \leq \mathbb{P} \left ( R_k^t \leq \frac{1+ ...}{x_t} \right ) \mathbb{P} \left ( e_k S_k^t /t \geq \frac{x_t}{2(1+...)} \right ) + e^{-c h_t}, 
%\end{align*}
For the equality we have used the independence between $R_k^t$ and $e_k S_k^t$. 
%and Lemma \ref{measofgoodenv}, and the constant $c$ has been suitably decreased. 
Since the sequence $(e_k, S_k^t, R_k^t)_{k \geq 1}$ is \textit{iid}, we deduce that $E [\mathds{1}_{V \in \mathcal{G}_t}\sum_{k=1}^{n_t} \int_0^{1-4/\log(h_t)} P^V ( \mathcal{E}_t(V, k, z), \ H(\tilde m_k)/t \in dz ) ]$ is less than
%\begin{align}
%& \mathbb{P} \left ( R_1^t \leq \frac{1+ \epsilon_t}{x_t} \right ) \left [ \mathbb{P} \left ( e_1 S_1^t /t \geq \frac{x_t}{2(1+\epsilon_t)(1+\epsilon_t)} \right ) \sum_{k=1}^{n_t} \mathbb{P} \left ( H(\tilde m_k)/t \leq 1 \right ) \right ] \nonumber \\
%+ & \mathbb{P} \left ( R_1^t \leq \frac{1+ \epsilon_t}{x_t} \right ) \left [ \sum_{k=1}^{n_t} \int_{1-2 \tilde v_t /t}^1 \mathbb{P} \left ( H(\tilde m_k)/t \in dz \right ) \right ] + n_t e^{-c h_t}. \label{yeswecan2.2}
%\end{align}
\[ \mathbb{P} \left ( R_1^t \leq \frac{1+ e^{- \tilde c h_t/2}}{x_t} \right ) \times \mathbb{P} \left ( e_1 S_1^t /t \geq x_t / 4 \right ) \sum_{k=1}^{n_t} \mathbb{P} \left ( H(\tilde L_{k-1})/t \leq 1 \right ) + e^{-c \phi(t)}. \]
Using \eqref{cvmeasure7.1} and Lemma \ref{cvntlp} to bound respectively the second and third factor, we get the existence of a positive constant $C$ such that for $t$ large enough, 
\begin{align}
E \left [ \mathds{1}_{V \in \mathcal{G}_t} \sum_{k=1}^{n_t} \int_0^{1-4/\log(h_t)} P^V ( \mathcal{E}_t(V, k, z), \ H(\tilde m_k)/t \in dz ) \right ] \leq \frac{C}{x_t^{\kappa}} \mathbb{P} \left ( R_1^t \leq \frac{1+ e^{- \tilde c h_t/2}}{x_t} \right ) + e^{-c \phi(t)}. \label{yeswecan2.3}
\end{align}

%From Fubini, the term $\mathbb{P} ( e_1 S_1^t /t \geq \frac{x_t}{2(1+\epsilon_t)(1+\epsilon_t)} ) \sum_{k=1}^{n_t} \mathbb{P} ( H(\tilde m_k)/t \leq 1 )$ is less than
%\[ e^{\kappa \phi(t)} \mathbb{P} \left ( e_1 S_1^t /t \geq \frac{x_t}{2(1+\epsilon_t)(1+\epsilon_t)} \right ) \times e^{-\kappa \phi(t)} \mathbb{E} \left [ N_t \right ] \ \underset{t \rightarrow +\infty}{\sim} \ \frac{\mathcal{C}}{x_t^{\kappa}}, \]
%where $\mathcal{C}$ is some positive constant, and the equivalence comes using Lemma 4.2 of A-D-V for the first factor and and Lemma \ref{cvnt}. We have thus obtained that 
%\begin{eqnarray}
%\sum_{k=1}^{n_t} \int_0^{1-2\tilde v_t /t} \nu_t(v, k, z) \times \mathbb{P} ( H(\tilde m_k)/t \in dz ) = \mathcal{O} \left ( \mathbb{P} \left ( R_1^t \leq \frac1{x_t} \right ) \right ). \label{yeswecan2.3}
%\end{eqnarray}

%For any $\eta > 0$, we have
%\begin{align*}
%\limsup_{t \rightarrow +\infty} \sum_{k=1}^{n_t} \int_{1-2 \tilde v_t /t}^1 \mathbb{P} \left ( H(\tilde m_k)/t \in dz \right ) & \leq \limsup_{t \rightarrow +\infty} \sum_{k=1}^{n_t} \int_{1-\eta}^1 \mathbb{P} \left ( H(\tilde m_k)/t \in dz \right ), \\
%& = \limsup_{t \rightarrow +\infty} \mathbb{E} \left [ \sum_{k=1}^{n_t} \mathds{1}_{H(\tilde m_k)/t \in [1-\eta, 1]} \right ], \\
%\end{align*}

It remains to study $E [\mathds{1}_{V \in \mathcal{G}_t} \sum_{k=1}^{n_t} \int_{1-4/\log(h_t)}^1 P^V ( \mathcal{E}_t(V, k, z), \ H(\tilde m_k)/t \in dz ) ]$. For $v \in \mathcal{G}_t$ and $z \in [1-4/\log(h_t), 1]$ we have, using the definitions of $\mathcal{E}_t(v, k, z)$ and $\mathcal{E}^8_t(v, k, z)$: 
\begin{align*}
\mathcal{E}_t(v, k, z) & \subset \left \{ \sup_{y \in \mathcal{D}_k} \mathcal{L}_{X_{\tilde m_k}}(4/\log(h_t), y) \geq t x_t, \ H_{X_{\tilde m_k}}(\tilde L_k) < H_{X_{\tilde m_k}}(\tilde L_{k-1}) \right \} \\
& \subset \left \{ \frac{4}{\log(h_t)} \frac{(1+e^{- \tilde c h_t})}{(1-e^{- \tilde c h_t})} \geq x_t R_k^t \right \} \cup \mathcal{E}^8_t(v, k, 1-4/\log(h_t)) \\
& \subset \left \{ 8/x_t \log(h_t) \geq R_k^t \right \} \cup \mathcal{E}^8_t(v, k, 1-4/\log(h_t)), 
\end{align*}
where, in the last inclusion, we used the fact that $(1+e^{- \tilde c h_t})/(1-e^{- \tilde c h_t}) \leq 2$ for large $t$. 
%We can prove, reasoning as in the proof of $(5.79)$ of A-D-V, that 
%\[ \mathbb{P} \left ( \frac{\sup_{y \in \mathcal{D}_k} \mathcal{L}_{X'}(4 t /\log(h_t), y)}{t} \leq \frac{4/\log(h_t)}{R_k^t} (1+ \epsilon_t) \right ) \geq 1 - e^{-c' h_t}, \]
%where $\epsilon_t := ...$ and $c'$ is some positive constant. 
%Since the event $\{ 8/x_t \log(h_t) \geq R_k^t \}$ does not depend on $z$ 
Recall that the sum over $k$ corresponds to disjoint events, we thus get that \\ $\mathds{1}_{v \in \mathcal{G}_t} \sum_{k=1}^{n_t} \int_{1-4/\log(h_t)}^1 P^v ( \mathcal{E}_t(v, k, z), \ H(\tilde m_k)/t \in dz )$ is less than
\begin{align*}
& \sum_{k=1}^{n_t} P^v \left ( R_k^t \leq 8/x_t \log(h_t), \ H(\tilde m_k)/t \in dz \right ) + \mathds{1}_{v \in \mathcal{G}_t} P^v ( \cup_{k=1}^{n_t} \mathcal{E}^8_t(v, k, 1-4/\log(h_t))) \\
\leq & \sum_{k=1}^{n_t} \mathds{1}_{R_k^t \leq 8/x_t \log(h_t)} \times P^v \left ( H(\tilde m_k)/t \leq 1 \right ) + \mathds{1}_{v \in \mathcal{G}_t} P^v ( \cup_{k=1}^{n_t} \mathcal{E}^8_t(v, k, 1-4/\log(h_t))) \\
\leq & \sum_{k=1}^{n_t} \mathds{1}_{R_k^t \leq 8/x_t \log(h_t)} \times P^v \left ( H(\tilde L_{k-1})/t \leq 1 \right ) + e^{-c \phi(t)}. 
\end{align*}
In the above, for the second inequality we have used the fact that $R_k^t$ only depends on $v$, and for the third we have used the fact that $\tilde L_{k-1} \leq \tilde m_k$ for the first term and \eqref{majoe8bis} for the second term. 
Here again, the first factor in the above product only depends on $(\tilde v^{(k)} (x),\ \tilde L_{k-1} \leq x \leq \tilde L_{k} )$ while the second factor depends on $(v (x),\ x \leq \tilde L_{k-1} )$. As a consequence, when we integrate the above inequality with respect to $v$ over $D(\mathbb{R}, \mathbb{R})$ equipped with the probability mesure $P$, we get that the two factors are independent so $E [\mathds{1}_{V \in \mathcal{G}_t} \sum_{k=1}^{n_t} \int_{1-4/\log(h_t)}^1 P^V ( \mathcal{E}_t(V, k, z), \ H(\tilde m_k)/t \in dz ) ]$ is less than
\begin{align*}
& \sum_{k=1}^{n_t} \mathbb{P} \left ( R_k^t \leq 8/x_t \log(h_t) \right ) \mathbb{P} \left ( H(\tilde L_{k-1})/t \leq 1 \right ) + e^{-c \phi(t)} \\
%+ P(V \notin \mathcal{G}_t) \\
= & \mathbb{P} \left ( R_1^t \leq 8/x_t \log(h_t) \right ) \sum_{k=1}^{n_t} \mathbb{P} \left ( H(\tilde L_{k-1})/t \leq 1 \right ) + e^{-c \phi(t)}, 
\end{align*}
where we have used the fact that the sequence $(R_k^t)_{k \geq 1}$ is \textit{iid}. 
%and Lemma \ref{measofgoodenv}, and where the constant $c$ has been suitably decreased. 
Note that, from the definitions of $x_t$ and $h_t$ in respectively \eqref{defxtlim} and \eqref{paramtaillevallees}, we have  $x_t \log(h_t) \sim K (\log (\log (t)))^{\mu}$. Therefore, using Lemmas \ref{approxder2cas} (with $z_t = x_t \log(h_t)/8$, $a = 2$) and \ref{cvntlp} to bound respectively the first and second factor we get that for $t$ large enough the above is less than
\[ C e^{\kappa \phi(t)} \mathbb{P} \left ( I(V^{\uparrow}) \leq 1 / K' (\log (\log (t)))^{\mu} \right ) + C e^{\kappa \phi(t) - \delta \kappa h_t /3} + e^{-c \phi(t)}, \]
for some positive constants $C$ and $K'$. The term $C e^{\kappa \phi(t) - \delta \kappa h_t /3}$ appears when $V = W_{\kappa}$ (because of \eqref{approxder2casmdb}) and it is not necessary otherwise, note that this term is ultimately less than $e^{-c \phi(t)}$. Combining with \eqref{majouniv} we get
\[ E \left [ \mathds{1}_{V \in \mathcal{G}_t} \sum_{k=1}^{n_t} \int_{1-4/\log(h_t)}^1 P^V ( \mathcal{E}_t(V, k, z), \ H(\tilde m_k)/t \in dz ) \right ] \leq C e^{\kappa \phi(t) - K'' (\log (\log (t)))^{\mu}} + 2 e^{-c \phi(t)}, \]
where $K''$ is a positive constant. Since we have chosen $\omega \in ]1, \mu[$ in \eqref{paramtaillevallees} we have $(\log (\log (t)))^{\mu} >> \phi(t)$ so for $t$ large enough the above inequality yields
\begin{eqnarray}
E \left [ \mathds{1}_{V \in \mathcal{G}_t} \sum_{k=1}^{n_t} \int_{1-4/\log(h_t)}^1 P^V ( \mathcal{E}_t(V, k, z), \ H(\tilde m_k)/t \in dz ) \right ] \leq e^{-c \phi(t)}, \label{yeswecan2.5}
\end{eqnarray}
where the constant $c$ has been suitably decreased. 
%so for $t$ large enough. 
%\begin{eqnarray}
%\sum_{k=1}^{n_t} \int_{1-2e^{-\phi(t) /2}}^1 \nu_t(v, k, z) \times \mathbb{P} ( H(\tilde m_k)/t \in dz ) \leq 2 n_t e^{-c' h_t} \leq e^{-c' h_t/2}. \label{yeswecan2.4}
%\end{eqnarray}
Now putting \eqref{yeswecan2.3} and \eqref{yeswecan2.5} into \eqref{yeswecan2.1} we get for some constant $c$ and $t$ large enough: 
\begin{align*}
\mathbb{P} \left ( \sup_{y \in \mathcal{D}_{N_t}} \left ( \mathcal{L}_X (t,y) - \mathcal{L}_X (H(\tilde m_{N_t}),y) \right ) \geq t x_t \right ) & \leq \frac{C}{x_t^{\kappa}} \mathbb{P} \left ( R_1^t \leq \frac{u}{x_t} \right ) + e^{-c \phi(t)} \\
& + \mathbb{P} \left ( N_t \geq n_t \right ) + \mathbb{P} \left ( \overline{\mathcal{E}^1_t} \right ) + P(V \notin \mathcal{G}_t). 
\end{align*}
%\[ \mathbb{P} \left ( \mathcal{L}_X(t, \tilde m_{N_t}) \geq t x_t, \ \mathcal{A}_t \right ) = \mathcal{O} \left ( \mathbb{P} \left ( R_1^t \leq \frac1{x_t} \right ) \right ) \]
Using \eqref{majonbvalleesvisit}, \eqref{bigeventsleq}, and Lemma \ref{measofgoodenv} we get the result for a suitably chosen constant $c$ and $t$ large enough. 

\end{proof}

\begin{proof} of Proposition \ref{approxdutl}

\textit{Upper bound}
\begin{align}
\mathbb{P} \left ( \mathcal{L}^*_X(t) \geq t x_t \right ) & \leq \mathbb{P} \left ( \mathcal{L}^*_X(t) \geq t x_t, \ V \in \mathcal{V}_t, \ N_t < n_t, \ \mathcal{E}^1_t, \mathcal{E}^2_t \right ) \nonumber \\
& + \mathbb{P} \left ( V \notin \mathcal{V}_t \right ) + \mathbb{P} \left ( N_t \geq n_t \right ) + \mathbb{P} \left ( \overline{\mathcal{E}^1_t} \right ) + \mathbb{P} \left ( \overline{\mathcal{E}^2_t} \right ). \label{lsub0}
\end{align}
The event $\{ N_t < n_t \} \cap \mathcal{E}^1_t$ ensures that for $j \leq N_t-1 < n_t$, $\tilde m_{j}$ is no longer reached after $H(\tilde L_{j})$ and $\tilde{L}_{j}$ is no longer reached after $H(\tilde m_{j+1})$, and the event $\{ V \in \mathcal{V}_t, \ N_t < n_t \}$ ensures that $t$ is larger than $H(\tilde m_{N_t}) \geq H(\tilde L_{N_t-1})$ and less than $H(\tilde m_{N_t+1})$. Therefore, for each $j \in \{1, ..., N_t - 1\}$ we have 
\begin{align*}
\forall x \in [\tilde{L}_{j-1}, \tilde{m}_j], \ \mathcal{L}_X (t,x) = \mathcal{L}_X (H(\tilde L_j),x) & = \left ( \mathcal{L}_X (H(\tilde L_j),x) - \mathcal{L}_X (H(\tilde m_{j}),x) \right ) \\
& + \left ( \mathcal{L}_X (H(\tilde m_{j}),x) - \mathcal{L}_X (H(\tilde L_{j-1}),x) \right ), \\
\forall x \in [\tilde{m}_j, \tilde{L}_{j}], \ \mathcal{L}_X (t,x) = \mathcal{L}_X (H(\tilde m_{j+1}),x) & = \left ( \mathcal{L}_X (H(\tilde m_{j+1}),x) - \mathcal{L}_X (H(\tilde L_{j}),x) \right ) \\
& + \left ( \mathcal{L}_X (H(\tilde L_{j}),x) - \mathcal{L}_X (H(\tilde m_{j}),x) \right ), \\
\forall x \leq \tilde L_0 =0, \ \mathcal{L}_X (t,x) = \mathcal{L}_X (H(\tilde m_{1}),x) & = \left ( \mathcal{L}_X (H(\tilde m_{1}),x) - \mathcal{L}_X (H(\tilde L_{0}),x) \right ). 
%\forall x \geq \tilde{L}_{N_t -1}, \ \mathcal{L}_X (t,x) \leq \left ( \mathcal{L}_X (t,x) - \mathcal{L}_X (H(\tilde m_{N_t}),x) \right ) %& + \left ( \mathcal{L}_X (H(\tilde m_{N_t}),x) - \mathcal{L}_X (H(\tilde L_{N_t -1}),x) \right ). 
\end{align*}
Moreover, 
\begin{align*}
\forall x \in [\tilde{L}_{N_t-1}, \tilde{L}_{N_t}], \ \mathcal{L}_X (t,x) & = \left ( \mathcal{L}_X (t,x) - \mathcal{L}_X (H(\tilde m_{N_t}),x) \right ) \\
& + \left ( \mathcal{L}_X (H(\tilde m_{N_t}),x) - \mathcal{L}_X (H(\tilde L_{N_t-1}),x) \right ), \\
\forall x \geq \tilde{L}_{N_t}, \ \mathcal{L}_X (t,x) & \leq \left ( \mathcal{L}_X (H(\tilde m_{N_t+1}),x) - \mathcal{L}_X (H(\tilde L_{N_t}),x) \right ). 
\end{align*}
The event $\{ N_t < n_t \} \cap \mathcal{E}^2_t$ ensures that the local time does not grow too much between $H(\tilde L_{j-1})$ and $H(\tilde m_{j})$ for $j \in \{1, ..., N_t + 1 \}$: $\sup_{y \in \mathbb{R}} ( \mathcal{L}_X (H(\tilde m_{j}),y) - \mathcal{L}_X (H(\tilde L_{j-1}),y) ) \leq t e^{(\kappa (1+3\delta )-1)\phi(t)}$. 
%The event $\{ V \in \mathcal{V}_t, \ N_t < n_t \}$ ensures that at time $t$ the diffusion is trapped in one of the first $n_t$ standard valleys. 
We thus see that, on $\{ V \in \mathcal{V}_t, \ N_t < n_t \} \cap \mathcal{E}^1_t \cap \mathcal{E}^2_t$, we have 
\begin{align*}
\sup_{y \in \mathbb{R}} \mathcal{L}_X(t, y) & \leq \max \left \{ \sup_{y \in [\tilde{L}_{N_t-1}, \tilde{L}_{N_t}]} \left ( \mathcal{L}_X (t,y) - \mathcal{L}_X (H(\tilde m_{N_t}),y) \right ), \right . \\ 
& \left . \sup_{1 \leq j \leq N_t - 1} \sup_{y \in [\tilde{L}_{j-1}, \tilde{L}_j]} \left ( \mathcal{L}_X (H(\tilde L_j),y) - \mathcal{L}_X (H(\tilde m_{j}),y) \right ) \right \} + t e^{(\kappa (1+3\delta )-1)\phi(t)}. 
\end{align*}
The term $\mathbb{P} ( \mathcal{L}^*_X(t) \geq t x_t, \ V \in \mathcal{V}_t, \ N_t < n_t, \ \mathcal{E}^1_t, \mathcal{E}^2_t )$ in the right hand side of \eqref{lsub0} is therefore less than
\begin{align*}
& \mathbb{P} \left ( \sup_{y \in [\tilde{L}_{N_t-1}, \tilde{L}_{N_t}]} \left ( \mathcal{L}_X (t,y) - \mathcal{L}_X (H(\tilde m_{N_t}),y) \right ) \vee \sup_{1 \leq j \leq N_t - 1} \sup_{y \in [\tilde{L}_{j-1}, \tilde{L}_j]} \left ( \mathcal{L}_X (H(\tilde L_j),y) - \mathcal{L}_X (H(\tilde m_{j}),y) \right ) \geq t \tilde x_t^1, \right . \\
& \left . V \in \mathcal{V}_t, \ N_t < n_t \right ), 
\end{align*}
where $\tilde x_t^1 := x_t - e^{(\kappa (1+3\delta )-1)\phi(t)} \sim x_t$. Then, since $\tilde x_t^1$ converge to $+\infty$, we have $\tilde x_t^1 \geq e^{-2 \phi(t)}$ for $t$ large enough. Using the definition of $\mathcal{E}^3_t$, we get that for such large $t$ the above is less than 
\begin{align*}
& \mathbb{P} \left ( \sup_{y \in \mathcal{D}_{N_t}} \left ( \mathcal{L}_X (t,y) - \mathcal{L}_X (H(\tilde m_{N_t}),y) \right ) \vee \sup_{1 \leq j \leq N_t - 1} \sup_{y \in \mathcal{D}_j} \left ( \mathcal{L}_X (H(\tilde L_j),y) - \mathcal{L}_X (H(\tilde m_{j}),y) \right ) \geq t \tilde x_t^1, \right . \\
& \left . V \in \mathcal{V}_t, \ N_t < n_t \right ) + \mathbb{P} \left ( \overline{\mathcal{E}^3_t} \right ), 
\end{align*}
where $\mathcal{D}_j$ is defined in \eqref{defdj}. Putting all this together, we see that for $t$ large enough, $\mathbb{P} ( \mathcal{L}^*_X(t) \geq t x_t )$ is less than
\begin{align}
& \mathbb{P} \left ( \sup_{1 \leq j \leq N_t - 1} \sup_{y \in \mathcal{D}_j} \left ( \mathcal{L}_X (H(\tilde L_j),y) - \mathcal{L}_X (H(\tilde m_{j}),y) \right ) \geq t \tilde x_t^1, \ V \in \mathcal{V}_t, \ N_t < n_t \right ) \nonumber \\
+ & \mathbb{P} \left ( \sup_{y \in \mathcal{D}_{N_t}} \left ( \mathcal{L}_X (t,y) - \mathcal{L}_X (H(\tilde m_{N_t}),y) \right ) \geq t \tilde x_t^1 \right ) \nonumber \\
+ & \mathbb{P} \left ( V \notin \mathcal{V}_t \right ) + \mathbb{P} \left ( N_t \geq n_t \right ) + \mathbb{P} \left ( \overline{\mathcal{E}^1_t} \right ) + \mathbb{P} \left ( \overline{\mathcal{E}^2_t} \right ) + \mathbb{P} \left ( \overline{\mathcal{E}^3_t} \right ). \label{lsub1}
\end{align}
We now deal with the first term. 
%since the other terms are controlled by respectively Lemma \ref{neglectlastvalley} and Fact \ref{bigevents}. 
Using the definition of $\mathcal{E}^4_t$ we get that the first term of \eqref{lsub1} is less than
\[ \mathbb{P} \left ( \sup_{1 \leq j \leq N_t - 1} \mathcal{L}_X (H(\tilde L_j), \tilde m_j) \geq t \tilde x_t^2, \ V \in \mathcal{V}_t, \ N_t < n_t \right ) + \mathbb{P} \left ( \overline{\mathcal{E}^4_t} \right ), \]
where $\tilde x_t^2 := (1+e^{-\tilde c h_t})^{-1} \ \tilde x_t^1 \sim x_t$. Now using the definition of $\mathcal{E}^6_t$, the above is less than
\[ \mathbb{P} \left ( \sup_{1 \leq j \leq N_t - 1} e_j S_j^t \geq \tilde x_t, \ V \in \mathcal{V}_t, \ N_t < n_t \right ) + \mathbb{P} \left ( \overline{\mathcal{E}^4_t} \right ) + \mathbb{P} \left ( \overline{\mathcal{E}^6_t} \right ), \]
where $\tilde x_t := (1+e^{- \tilde c h_t})^{-1} \ \tilde x_t^2 \sim x_t$. Using \eqref{bigeventsnbvalles} with $\eta = a-1$, the above is less than 
\begin{align*}
& \mathbb{P} \left ( \sup_{1 \leq j \leq \mathcal{N}_{at} - 1} e_j S_j^t \geq \tilde x_t \right ) + \mathbb{P} \left ( \overline{\mathcal{E}^4_t} \right ) + \mathbb{P} \left ( \overline{\mathcal{E}^6_t} \right ) + \mathbb{P} \left ( \overline{\mathcal{E}^5_t} \right ) + \mathbb{P} \left ( \overline{\mathcal{E}^7_t} \right ), \\
= & \mathbb{P} \left ( Y_1^{\natural, t} \left ( Y_2^{-1, t} (a) - \right ) \geq \tilde x_t \right ) + \mathbb{P} \left ( \overline{\mathcal{E}^4_t} \right ) + \mathbb{P} \left ( \overline{\mathcal{E}^6_t} \right ) + \mathbb{P} \left ( \overline{\mathcal{E}^5_t} \right ) + \mathbb{P} \left ( \overline{\mathcal{E}^7_t} \right ). 
\end{align*}

Combining with \eqref{lsub1} and the fact that $\tilde x_t^1 \geq \tilde x_t$, we get that $\mathbb{P} ( \mathcal{L}^*_X(t) \geq t x_t )$ is less than
\begin{align*}
& \mathbb{P} \left ( Y_1^{\natural, t} \left ( Y_2^{-1, t}(a) - \right ) \geq \tilde x_t \right ) + \mathbb{P} \left ( \sup_{\mathcal{D}_{N_t}} \left ( \mathcal{L}_X (t,.) - \mathcal{L}_X (H(\tilde m_{N_t}),.) \right ) \geq t \tilde x_t \right ) + \mathbb{P} \left ( V \notin \mathcal{V}_t \right ) \\
+ & \mathbb{P} \left ( N_t \geq n_t \right ) + \mathbb{P} \left ( \overline{\mathcal{E}^1_t} \right ) + \mathbb{P} \left ( \overline{\mathcal{E}^2_t} \right ) + \mathbb{P} \left ( \overline{\mathcal{E}^3_t} \right ) + \mathbb{P} \left ( \overline{\mathcal{E}^4_t} \right ) + \mathbb{P} \left ( \overline{\mathcal{E}^5_t} \right ) + \mathbb{P} \left ( \overline{\mathcal{E}^6_t} \right ) + \mathbb{P} \left ( \overline{\mathcal{E}^7_t} \right ). 
\end{align*}
Applying Lemma \ref{neglectlastvalley} with $u = \sqrt a$ (and $x_t$ replaced by $\tilde x_t$ which does not change anything since $\tilde x_t$ also satisfies \eqref{defxtlim}), Fact \ref{minimacoincide}, \eqref{majonbvalleesvisit} and \eqref{bigeventsleq}, we deduce the existence of a positive constant $c$ such that for $t$ large enough, 
\[ \mathbb{P} \left ( \mathcal{L}^*_X(t) \geq t x_t \right ) \leq \mathbb{P} \left ( Y_1^{\natural, t} \left ( Y_2^{-1, t}(a) - \right ) \geq \tilde x_t \right ) + \mathbb{P} \left ( R_1^t \leq \frac{\sqrt{a}}{\tilde x_t} \right ) + e^{-c \phi(t)}. \]
Since $\tilde x_t \sim x_t$ and $a > 1$ we get the upper bound when $t$ is large enough.

\textit{Lower bound}
\begin{align*}
\mathbb{P} \left ( \mathcal{L}^*_X(t) \geq t x_t \right ) & \geq \mathbb{P} \left ( \sup_{1 \leq j \leq N_t - 1} \mathcal{L}_X(t, \tilde m_j) \geq t x_t \right ) \\
& \geq \mathbb{P} \left ( \sup_{1 \leq j \leq N_t - 1} \mathcal{L}_X(H(\tilde L_j), \tilde m_j) \geq t x_t, \ V \in \mathcal{V}_t, \ N_t < n_t, \ \mathcal{E}^1_t \right ) 
\end{align*}
%\begin{align*}
%\mathbb{P} \left ( \mathcal{L}^*_X(t) \geq t x_t \right ) & \geq \mathbb{P} \left ( \mathcal{L}^*_X(t) \geq t x_t, \ \mathcal{A}_t \right ) \\
%& \geq \mathbb{P} \left ( \sup_{1 \leq j \leq N_t - 1} \mathcal{L}_X(H(\tilde L_j), \tilde m_j) \geq t x_t, \ \mathcal{A}_t \right ) 
%\end{align*}
because, on $\{ V \in \mathcal{V}_t \} \cap \{ N_t < n_t \} \cap \mathcal{E}^1_t$, $\tilde m_j$ (for $j < N_t)$ is no longer reached between times $H(\tilde L_j)$ and $t$, 
\[ \geq \mathbb{P} \left ( \sup_{1 \leq j \leq N_t - 1} e_j S_j^t \geq t \hat x_t, \ V \in \mathcal{V}_t, \ N_t < n_t, \ \mathcal{E}^1_t, \ \mathcal{E}^6_t \right ) \]
%\begin{eqnarray}
%\geq \mathbb{P} \left ( \sup_{1 \leq j \leq N_t - 1} e_j S_j^t \geq t \hat x^1_t, \ N_t < n_t, \ \mathcal{E}^1_t, \ \mathcal{E}^6_t \right ), \label{yeswecan3}
%\end{eqnarray}
because of the definition of $\mathcal{E}^6_t$ and where $\hat x_t := (1-e^{- \tilde c h_t})^{-1} \ x_t \sim x_t$, 
\[ \geq \mathbb{P} \left ( \sup_{1 \leq j \leq \mathcal{N}_{t/a} - 1} e_j S_j^t \geq t \hat x_t, \ V \in \mathcal{V}_t, \ N_t < n_t, \ \mathcal{E}^1_t, \ \mathcal{E}^6_t, \ \mathcal{E}^5_t, \ \mathcal{E}^7_t \right ) \]
where we used \eqref{bigeventsnbvalles} with $\eta = 1 - a^{-1}$, 
\[ \geq \mathbb{P} \left ( Y_1^{\natural, t} \left ( Y_2^{-1, t}(1/a) - \right ) \geq \hat x_t \right ) - \left ( \mathbb{P} \left ( V \notin \mathcal{V}_t \right ) + \mathbb{P} \left ( N_t \geq n_t \right ) + \mathbb{P} \left ( \overline{\mathcal{E}^1_t} \right ) + \mathbb{P} \left ( \overline{\mathcal{E}^5_t} \right ) + \mathbb{P} \left ( \overline{\mathcal{E}^6_t} \right ) + \mathbb{P} \left ( \overline{\mathcal{E}^7_t} \right ) \right ). \]
Applying Fact \ref{minimacoincide}, \eqref{majonbvalleesvisit} and \eqref{bigeventsleq} we get the lower bound since $\hat x_t \sim x_t$ and $a > 1$. 

%Now, still on $\mathcal{A}_t$, we have, for $k \leq n_t$,
%\[ H(\tilde m_k) \leq \tilde{v}_t + \sum_{i=1}^{k-1} U_i \ \text{ and } \ \left | \sum_{i=1}^{k-1} U_i - \sum_{i=1}^{k-1} \mathcal{H}_i \right | \leq \epsilon_t \sum_{i=1}^{k-1} \mathcal{H}_i. \]
%Now, according to the above expression, $k \leq \mathcal{N}_{t(1-\epsilon)}$ implies $\sum_{i=1}^{k-1} U_i \leq t(1-\epsilon)(1+\epsilon_t)$ which implies $H(\tilde m_k) \leq t(1-\epsilon)(1+\epsilon_t) - \tilde{v}_t \leq t$ for $t$ large enough. We have thus proved that, on $\mathcal{A}_t$ and for $t$ large enough, $k \leq \mathcal{N}_{t(1-\epsilon)}$ implies $k \leq N_t$. Putting this into \eqref{yeswecan3} we get
%\begin{align*}
%\mathbb{P} \left ( \mathcal{L}^*_X(t) \geq t x_t \right ) & \geq \mathbb{P} \left ( \sup_{1 \leq j \leq \mathcal{N}_{(1-\epsilon)t} - 1} e_j S_j^t \geq t \hat x_t \right ) \\
%& = \mathbb{P} \left ( Y_1^{\natural, t} \left ( Y_2^{-1, t}(1 - \eta) - \right ) \geq \hat x_t \right )
%\end{align*}

\end{proof}

%\begin{lemme} \label{cvnt}
%
%Let $\mathcal{U}$ denote the potential measure of $\mathcal{Y}_2$, then
%\[ e^{-\kappa \phi(t)} \mathbb{E} \left [ N_t \right ] \underset{t \rightarrow +\infty}{\longrightarrow} \mathcal{U} ([0, 1]). \]
%
%\end{lemme}

\begin{prop} \label{queueloilimsupdiscret}
Let $y_t$ go to infinity with $t$. For any $b > 0$ and $u \in ]0, 1[$ there is a positive constant $C$ (depending on $b$ and $u$) such that for all $t$ large enough, 
\[ C \ \mathbb{P} \left ( R_1^t \leq u b/y_t \right ) / y_t^{\kappa} \leq \mathbb{P} \left ( Y_1^{\natural, t} \left ( Y_2^{-1, t}(b) - \right ) \geq y_t \right ) \leq \mathbb{P} \left ( R_1^t \leq b/y_t \right ). \]

\end{prop}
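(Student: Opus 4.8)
The plan is first to express the quantity $\mathbb{P}(Y_1^{\natural, t}(Y_2^{-1, t}(b)-)\geq y_t)$ purely in terms of the \textit{iid} triples $(e_j, S_j^t, R_j^t)_{j\geq 1}$ and the overshoot index $\mathcal{N}_{bt}$. Since $Y_2^t$ is a step process whose $j$-th jump, of height $e_j S_j^t R_j^t/t$, occurs at $s=je^{-\kappa\phi(t)}$, one has $Y_2^{-1, t}(b)=\mathcal{N}_{bt}\,e^{-\kappa\phi(t)}$, and the jumps of $Y_1^t$ strictly before that time are exactly the $e_j S_j^t/t$ with $1\leq j\leq\mathcal{N}_{bt}-1$; hence $\mathbb{P}(Y_1^{\natural, t}(Y_2^{-1, t}(b)-)\geq y_t)=\mathbb{P}(\exists\,1\leq j\leq\mathcal{N}_{bt}-1:\ e_j S_j^t\geq t y_t)$. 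Everything below uses the mutual independence of the $e_j, S_j^t, R_j^t$, the stationarity of the triples, and the elementary fact that $j\leq\mathcal{N}_{bt}-1$ forces $\sum_{i\leq j}e_i S_i^t R_i^t\leq bt$, so in particular $e_j S_j^t R_j^t\leq bt$.

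For the upper bound I would condition on the first large jump, $J:=\min\{j\geq 1:\ e_j S_j^t\geq t y_t\}$. On $\{J=k\}\cap\{k\leq\mathcal{N}_{bt}-1\}$ one has $e_i S_i^t<t y_t$ for $i<k$, $e_k S_k^t\geq t y_t$, and $e_k S_k^t R_k^t\leq bt$; the last two give $R_k^t\leq b/y_t$. By independence across valleys and between $e_k S_k^t$ and $R_k^t$, this event has probability at most $(1-p)^{k-1}p\,q$, where $p:=\mathbb{P}(e_1 S_1^t\geq t y_t)$ and $q:=\mathbb{P}(R_1^t\leq b/y_t)$. Summing the geometric series gives $\sum_{k\geq 1}(1-p)^{k-1}p\,q=q$, which is exactly $\mathbb{P}(R_1^t\leq b/y_t)$: the rarity of a single large jump cancels the number of attempts.

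For the lower bound, fix $\lambda\in(1,1/u)$ and call valley $j$ \emph{good} if $e_j S_j^t/t\in[y_t,\lambda y_t)$ and $R_j^t\leq ub/y_t$, so that a good valley contributes strictly less than $\lambda u\,b<b$ to $Y_2^t$. By \eqref{cvmeasure7.1} (applicable since $y_t\to\infty$) and independence, a fixed valley is good with probability at least $c_1\,y_t^{-\kappa}e^{-\kappa\phi(t)}\,\mathbb{P}(R_1^t\leq ub/y_t)$ for some $c_1>0$ and $t$ large. Using Fact \ref{cvsub} together with $\mathcal{Y}_2(s)\to 0$ as $s\downarrow 0$, I pick a constant $s_0>0$ (depending only on $b,u$) with $\mathbb{P}(Y_2^t(s_0)<(1-\lambda u)b)\geq 1/2$ for $t$ large, and set $M:=\lfloor s_0 e^{\kappa\phi(t)}\rfloor$. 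For $1\leq j\leq M$ let $B_j$ be the event that valley $j$ is good and $\sum_{i\leq M,\,i\neq j}e_i S_i^t R_i^t/t<(1-\lambda u)b$; on $B_j$ the full partial sum $\sum_{i\leq M}e_i S_i^t R_i^t$ is $<bt$, so $\mathcal{N}_{bt}>M\geq j$ and hence $B_j\subset\{\max_{1\leq i\leq\mathcal{N}_{bt}-1}e_i S_i^t/t\geq y_t\}$. By independence, $\mathbb{P}(B_j)\geq\tfrac12 c_1\,y_t^{-\kappa}e^{-\kappa\phi(t)}\,\mathbb{P}(R_1^t\leq ub/y_t)$, so $\sum_{j\leq M}\mathbb{P}(B_j)\geq c_2\,y_t^{-\kappa}\,\mathbb{P}(R_1^t\leq ub/y_t)$ with $c_2>0$, the factor $e^{\kappa\phi(t)}$ coming from $M$ cancelling the one in the tail.

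Finally I would pass from $\sum_j\mathbb{P}(B_j)$ to $\mathbb{P}(\bigcup_j B_j)$ by Bonferroni: for $j\neq k$, $\mathbb{P}(B_j\cap B_k)\leq\mathbb{P}(j\text{ good})\,\mathbb{P}(k\text{ good})\leq\bigl(c_3\,y_t^{-\kappa}e^{-\kappa\phi(t)}\,\mathbb{P}(R_1^t\leq ub/y_t)\bigr)^2$, so $\sum_{j<k}\mathbb{P}(B_j\cap B_k)\leq\tfrac12 s_0^2 c_3^2\,y_t^{-2\kappa}\,\mathbb{P}(R_1^t\leq ub/y_t)^2$, which is negligible compared with $\sum_j\mathbb{P}(B_j)$ because the ratio is of order $y_t^{-\kappa}\,\mathbb{P}(R_1^t\leq ub/y_t)\to 0$. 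Hence $\mathbb{P}(\bigcup_j B_j)\geq\tfrac12\sum_j\mathbb{P}(B_j)\geq\tfrac{c_2}{2}\,y_t^{-\kappa}\,\mathbb{P}(R_1^t\leq ub/y_t)$ for $t$ large, i.e.\ the claimed lower bound with $C=c_2/2$. The main obstacle is precisely this lower bound: one must cancel the factor $e^{\kappa\phi(t)}$ twice over — gaining it from the $\approx e^{\kappa\phi(t)}$ valleys visited before $\mathcal{N}_{bt}$ and losing it in the heavy-tailed probability \eqref{cvmeasure7.1} of a single large contribution $e_1 S_1^t/t\geq y_t$ — while keeping the rare events $B_j$ uncorrelated enough for a second-moment argument, which is why one truncates at the deterministic level $M$ calibrated through the weak limit $\mathcal{Y}_2$.
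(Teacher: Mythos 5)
Your proposal is correct, and the two halves compare differently with the paper. The upper bound is essentially the paper's argument in disguise: the paper writes the event as $\{k^t(y_t)<\mathcal{N}_{tb}\}$, notes that the valley at the first exceedance has the law of $(e_1S_1^t,R_1^t)$ conditioned on $e_1S_1^t\geq ty_t$, and uses independence of $e_1S_1^t$ and $R_1^t$; your geometric-series summation over $\{J=k\}$ is exactly that computation made explicit. The lower bound, however, is a genuinely different route. The paper also works at the first exceedance index: it forces the exceeding valley to contribute at most $tb(1-\eta)$ (via $R\leq ub/y_t$ and $e_1S_1^t\leq ty_t(1-\eta)/u$, whose conditional probability tends to $1-(u/(1-\eta))^{\kappa}>0$ by \eqref{cvmeasure7.1}) and the preceding sum to be $<tb\eta$, handling the latter through a geometric variable $T$ with parameter $p_t$, stochastic domination of the conditioned summands, Fact \ref{cvsub} at time $1$, and $\mathbb{P}(T\leq e^{\kappa\phi(t)})\sim\mathcal{C}'/y_t^{\kappa}$, which is where the factor $y_t^{-\kappa}$ comes from. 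You instead run a second-moment (Bonferroni) argument over a deterministic block of $M\approx s_0e^{\kappa\phi(t)}$ valleys, each "good" with probability $\asymp y_t^{-\kappa}e^{-\kappa\phi(t)}\mathbb{P}(R_1^t\leq ub/y_t)$, so that $M$ cancels $e^{-\kappa\phi(t)}$ and the cross terms are negligible because goodness is rare; the accumulated time of the block is controlled by Fact \ref{cvsub} at a small time $s_0$. Your version avoids any conditioning on a random index and avoids the stochastic-domination step, and by taking $s_0$ small it only needs $\mathcal{Y}_2(s)\to0$ as $s\downarrow0$ rather than positivity of $\mathbb{P}(\mathcal{Y}_2(1)<b\eta)$; the paper's version is more direct and tracks slightly more explicit constants. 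The only points you gloss over are routine: the interval estimate $\mathbb{P}(e_1S_1^t/t\in[y_t,\lambda y_t))\geq c\,y_t^{-\kappa}e^{-\kappa\phi(t)}$ requires using the uniformity in \eqref{cvmeasure7.1} at both endpoints, and the choice of $s_0$ uses convergence of the one-dimensional marginal $Y_2^t(s_0)$ (no fixed discontinuities for the limiting subordinator) together with the portmanteau inequality for the open set; both are easily supplied.
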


\begin{proof}

For any $z > 0$, let $k^t(z)$ be the first index $k \geq 1$ such that $e_k S_k^t /t \geq z$. We have
\begin{align}
\left \{ Y_1^{\natural, t} \left ( Y_2^{-1, t}(b) - \right ) \geq y_t \right \} = \left \{ k^t(y_t) < \mathcal{N}_{t b} \right \} = \left \{ \sum_{i=1}^{k^t(y_t)} e_i S_i^t R_i^t \leq tb \right \} \subset \left \{ e_{k^t(y_t)} S_{k^t(y_t)}^t R_{k^t(y_t)}^t \leq tb \right \}, \label{queueloilimsupdiscret1}
\end{align}
and $e_{k^t(y_t)} S_{k^t(y_t)}^t R_{k^t(y_t)}^t$ has the same law as $e_1 S_1^t R_1^t$ conditionally on $e_1 S_1^t \geq t y_t$ so, using \eqref{queueloilimsupdiscret1} and the independence between $e_1 S_1^t$ and $R_1^t$: 
\[ \mathbb{P} \left ( Y_1^{\natural, t} \left ( Y_2^{-1, t}(b) - \right ) \geq y_t \right ) \leq \mathbb{P} \left ( e_1 S_1^t R_1^t \leq tb | e_1 S_1^t \geq t y_t \right ) \leq \mathbb{P} \left ( R_1^t \leq b/y_t \right ), \]
which proves the upper bound. For the lower bound, we fix $\eta < 1-u$. Note that according to \eqref{queueloilimsupdiscret1},
\[ \left \{ e_{k^t(y_t)} S_{k^t(y_t)}^t R_{k^t(y_t)}^t \leq tb(1-\eta) \right \} \cap \left \{ \sum_{i=1}^{k^t(y_t)-1} e_i S_i^t R_i^t < tb\eta \right \} \subset \left \{ Y_1^{\natural, t} \left ( Y_2^{-1, t}(b) - \right ) \geq y_t \right \}, \]
and both events on the left-hand-side are independent so 
\begin{align}
\mathbb{P} \left ( Y_1^{\natural, t} \left ( Y_2^{-1, t}(b) - \right ) \geq y_t \right ) \geq \mathbb{P} \left ( e_1 S_1^t R_1^t \leq tb(1-\eta) | e_1 S_1^t \geq t y_t \right ) \times \mathbb{P} \left ( \sum_{i=1}^{k^t(y_t)-1} e_i S_i^t R_i^t < tb\eta \right ). \label{queueloilimsupdiscret2}
\end{align}
We first deal with the second factor. Let $(Z_{i})_{i \geq 1}$ be \textit{iid} random variables such that $\mathcal{L}(Z_{1}) = \mathcal{L} (e_1 S_1^t R_1^t | e_1 S_1^t \leq t y_t)$ and $T$ be a geometric random variable with parameter $\mathbb{P}(e_1 S_1^t \geq t y_t)$, independent from the sequences $(Z_{i})_{i \geq 1}$ and $(e_i S_i^t R_i^t)_{i \geq 1}$. We then have
\[ \mathbb{P} \left ( \sum_{i=1}^{k^t(y_t)-1} e_i S_i^t R_i^t < tb\eta \right ) = \mathbb{P} \left ( \sum_{i=1}^{T-1} Z_i < tb\eta \right ) \geq \mathbb{P} \left ( \sum_{i=1}^{T-1} e_i S_i^t R_i^t < tb\eta \right ), \]
because the random variable $e_i S_i^t R_i^t$ is stochastically greater than the random variable $Z_i$. Then, 
\[ \mathbb{P} \left ( \sum_{i=1}^{T-1} e_i S_i^t R_i^t < tb\eta \right ) \geq \mathbb{P} \left ( T \leq e^{\kappa \phi(t)} \right ) \times \mathbb{P} \left ( \sum_{1 \leq i \leq e^{\kappa \phi(t)}} e_i S_i^t R_i^t < tb\eta \right ). \]
On the first hand we have
\[ \mathbb{P} \left ( \sum_{1 \leq i \leq e^{\kappa \phi(t)}} e_i S_i^t R_i^t < tb\eta \right ) = \mathbb{P} \left ( Y_2^{t}(1) < b \eta \right ) \underset{t \rightarrow +\infty}{\longrightarrow} \mathbb{P} \left ( \mathcal{Y}_2(1) < b \eta \right ) > 0, \]
where we used Fact \ref{cvsub} and where $\mathcal{Y}_2$ is as in there, the second component of the limit process $(\mathcal{Y}_1, \mathcal{Y}_2)$. On the second hand
\[ \mathbb{P} \left ( T \leq e^{\kappa \phi(t)} \right ) = 1 - \left ( 1 - \mathbb{P}(e_1 S_1^t \geq t y_t) \right )^{\lfloor e^{\kappa \phi(t)} \rfloor} = 1 - e^{\lfloor e^{\kappa \phi(t)} \rfloor \ln (1 - \mathbb{P}(e_1 S_1^t \geq t y_t))}. \]
Using \eqref{cvmeasure7.1} we get $\mathbb{P} \left ( T \leq e^{\kappa \phi(t)} \right ) \underset{t \rightarrow +\infty}{\sim} \mathcal{C}' / y_t^{\kappa}$, where $\mathcal{C}' $ is the constant in Fact \ref{queueiid}. Putting all this together, we get the existence of a positive constant $c_1 > 0$ such that for $t$ large enough, 
\begin{eqnarray}
\mathbb{P} \left ( \sum_{i=1}^{k^t(y_t)-1} e_i S_i^t R_i^t < tb\eta \right ) \geq c_1 / y_t^{\kappa}. \label{queueloilimsupdiscret3}
\end{eqnarray}

We now study the first factor in the right hand side of \eqref{queueloilimsupdiscret2}. From the independence of the two factors $e_1 S_1^t$ and $R_1^t$ in $e_1 S_1^t R_1^t$ we have
\[ \mathbb{P} \left ( e_1 S_1^t R_1^t \leq tb(1-\eta) | e_1 S_1^t \geq t y_t \right ) \geq \mathbb{P} \left ( R_1^t \leq bu /y_t \right ) \times \mathbb{P} \left ( e_1 S_1^t \leq t y_t (1-\eta)/ u | e_1 S_1^t \geq t y_t \right ) \]
and
\begin{align*}
\mathbb{P} \left ( e_1 S_1^t \leq t y_t (1-\eta)/ u | e_1 S_1^t \geq t y_t \right ) & = \frac{\mathbb{P} \left ( t y_t \leq e_1 S_1^t \leq t y_t (1-\eta)/ u \right )}{\mathbb{P} \left ( e_1 S_1^t \geq t y_t \right )} \\
& = 1 - \frac{\mathbb{P} \left ( e_1 S_1^t > t y_t (1-\eta)/ u \right )}{\mathbb{P} \left ( e_1 S_1^t \geq t y_t \right )} \\
& \underset{t \rightarrow +\infty}{\longrightarrow} 1 - (u / (1-\eta))^{\kappa} > 0, 
\end{align*}
where the limit comes from \eqref{cvmeasure7.1}, because $y_t$ goes to infinity. We thus get the existence of a positive constant $c_2 > 0$ such that for $t$ large enough, 
\begin{eqnarray}
\mathbb{P} \left ( e_1 S_1^t R_1^t \leq tb(1-\eta) | e_1 S_1^t \geq t y_t \right ) \geq c_2 \mathbb{P} \left ( R_1^t \leq bu /y_t \right ). \label{queueloilimsupdiscret4}
\end{eqnarray}

Putting \eqref{queueloilimsupdiscret3} and \eqref{queueloilimsupdiscret4} in \eqref{queueloilimsupdiscret2} we get the lower bound. 

\end{proof}

%The combination of Propositions \ref{approxdutl} and \ref{queueloilimdiscret} tells us that the study of $\mathbb{P} ( \mathcal{L}^*_X(t) \geq t x_t )$ is reduced to the study of 

Fix $\theta > 1$. We apply Proposition \ref{approxdutl} (the upper bound with $a=\theta^{1/3}$ and the lower bound with $a=\theta^{1/4}$), Proposition \ref{queueloilimsupdiscret} (the upper bound applied with $b=\theta^{1/3}$, $y_t = \theta^{-1/3} x_t$ and the lower bound with $b = \theta^{-1/4}$, $y_t = \theta^{1/4} x_t$, $u = \theta^{-1/4}$)  and Lemma \ref{approxder2cas} (the upper bounds of \eqref{approxder2caslevy} and \eqref{approxder2casmdb} applied with $a= \theta^{1/3}$, $z_t = \theta^{-2/3} x_t$, and the lower bounds of these same expressions applied with $a=\theta^{1/4}$, $z_t = \theta^{3/4} x_t$). We get: 

\begin{prop} \label{synthese}

Fix $\theta > 1$. If $V$ possesses negative jumps, there are positive constants $\tilde C$ and $c$ such that for $t$ large enough, 
\begin{align}
\frac{e^{ -c  \left ( \log (x_t) \right )^2}}{\tilde C x_t^{\kappa}} \mathbb{P} \left ( I(V^{\uparrow}) \leq 1 / \theta x_t \right ) - e^{-c \phi(t)} \leq \mathbb{P} \left ( \mathcal{L}^*_X(t) \geq t x_t \right ) \leq \tilde C \ \mathbb{P} \left ( I(V^{\uparrow}) \leq \theta / x_t \right ) + e^{-c \phi(t)}. \label{synthesecasjump}
\end{align}
If $V := W_{\kappa}$, the $\kappa$-drifted Brownian motion, there are positive constants $C$ and $c$ such that for $t$ large enough, 
\begin{eqnarray}
\frac1{\tilde C x_t^{\kappa}} \mathbb{P} \left ( \mathcal{R} \leq 1/\theta x_t \right ) - e^{-c \phi(t)} \leq \mathbb{P} \left ( \mathcal{L}^*_X(t) \geq t x_t \right ) \leq \tilde C \ \mathbb{P} \left ( \mathcal{R} \leq \theta /x_t \right ) + e^{-c \phi(t)}. \label{synthesecasmdb}
\end{eqnarray}

%\begin{eqnarray}
%\mathbb{P} \left ( \mathcal{L}^*_X(t) \geq t x_t \right ) \leq 2 \mathbb{P} \left ( I(V^{\uparrow}) \leq (1+\eta) / \tilde x_t \right ) + ..., \label{synthese1}
%\end{eqnarray}
%
%\begin{eqnarray}
%\mathbb{P} \left ( \mathcal{L}^*_X(t) \geq t x_t \right ) \geq \frac{c}{x_t^{1+\kappa}} \mathbb{P} \left ( I(V^{\uparrow}) \leq (1-\eta) \hat x_t \right ) - ... \label{synthese2}
%\end{eqnarray}
\end{prop}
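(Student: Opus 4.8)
The plan is to obtain Proposition~\ref{synthese} by purely chaining Proposition~\ref{approxdutl}, Proposition~\ref{queueloilimsupdiscret} and Lemma~\ref{approxder2cas} along the parameter choices announced just above the statement; no new probabilistic input is required, only careful bookkeeping of the multiplicative constants $\theta^{\pm 1/3}$, $\theta^{\pm 1/4}$ and of the various exponentially small error terms.

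For the upper bound (first assuming $V$ has negative jumps), I would start from the upper bound of Proposition~\ref{approxdutl} with $a=\theta^{1/3}$, which bounds $\mathbb{P}(\mathcal{L}^*_X(t)\geq t x_t)$ by $\mathbb{P}(Y_1^{\natural,t}(Y_2^{-1,t}(\theta^{1/3})-)\geq \theta^{-1/3}x_t)+\mathbb{P}(R_1^t\leq \theta^{1/3}/x_t)+e^{-c\phi(t)}$. Applying the upper bound of Proposition~\ref{queueloilimsupdiscret} with $b=\theta^{1/3}$, $y_t=\theta^{-1/3}x_t$ replaces the first term by $\mathbb{P}(R_1^t\leq \theta^{2/3}/x_t)$, which also dominates the middle term since $\theta>1$; one is thus left with $2\,\mathbb{P}(R_1^t\leq \theta^{2/3}/x_t)+e^{-c\phi(t)}$. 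Finally the upper bound of \eqref{approxder2caslevy} with $z_t=\theta^{-2/3}x_t$ and $a=\theta^{1/3}$ turns $\mathbb{P}(R_1^t\leq \theta^{2/3}/x_t)$ into $2\,\mathbb{P}(I(V^{\uparrow})\leq \theta/x_t)$; here one checks the hypothesis $(\log z_t)^2\ll h_t$, which is immediate from \eqref{defxtlim} and \eqref{paramtaillevallees} since $\log z_t$ is of order $\log\log\log t$ while $h_t$ is of order $\log t$. This gives the upper bound of \eqref{synthesecasjump} with $\tilde C=4$. When $V=W_{\kappa}$ one uses instead the upper bound of \eqref{approxder2casmdb}: it produces $\mathcal{R}$ in place of $I(V^{\uparrow})$ and an extra term $e^{-\delta\kappa h_t/3}$, absorbed into $e^{-c\phi(t)}$ because $h_t\gg\phi(t)$, which yields the upper bound of \eqref{synthesecasmdb}.

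For the lower bound, symmetrically, I would start from the lower bound of Proposition~\ref{approxdutl} with $a=\theta^{1/4}$, i.e. $\mathbb{P}(\mathcal{L}^*_X(t)\geq t x_t)\geq \mathbb{P}(Y_1^{\natural,t}(Y_2^{-1,t}(\theta^{-1/4})-)\geq \theta^{1/4}x_t)-e^{-c\phi(t)}$; then apply the lower bound of Proposition~\ref{queueloilimsupdiscret} with $b=\theta^{-1/4}$, $y_t=\theta^{1/4}x_t$, $u=\theta^{-1/4}\in]0,1[$, which produces the prefactor $1/y_t^{\kappa}\sim\theta^{-\kappa/4}x_t^{-\kappa}$ and the tail $\mathbb{P}(R_1^t\leq \theta^{-3/4}/x_t)$; and finally apply the lower bound of \eqref{approxder2caslevy} with $z_t=\theta^{3/4}x_t$, $a=\theta^{1/4}$, which replaces that tail by $e^{-c(\log((1-\theta^{-1/4})\theta^{-3/4}/x_t))^2}\,\mathbb{P}(I(V^{\uparrow})\leq 1/\theta x_t)$. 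Since $x_t\to\infty$, the quadratic-log factor is bounded below by $e^{-c(\log x_t)^2}$ for large $t$ (after possibly enlarging $c$); collecting constants gives the lower bound of \eqref{synthesecasjump}. In the drifted brownian case the lower bound of \eqref{approxder2casmdb} carries no quadratic-log correction — this reflects exactly the symmetry that $\hat W_{\kappa}^{\uparrow}$ and $W_{\kappa}^{\uparrow}$ have the same law, discussed after Theorem~\ref{limsupenfctdelefttail} — so one obtains directly the cleaner lower bound of \eqref{synthesecasmdb}, the extra $e^{-\delta\kappa h_t/3}$ term being again absorbed (after multiplication by $x_t^{-\kappa}$ it stays $\ll e^{-c\phi(t)}$).

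I expect the only real work to be organisational rather than conceptual: verifying that the three parameter families compose to exactly $\theta^{\pm1}$ inside the final $I(V^{\uparrow})$ (resp.\ $\mathcal{R}$) probabilities and to $x_t^{-\kappa}$ in the prefactor, checking that all hypotheses of the cited results hold for the chosen $z_t,y_t$ (in particular $z_t\to\infty$ with $(\log z_t)^2\ll h_t$, which follows from \eqref{defxtlim}), and consolidating the error terms $e^{-c\phi(t)}$, $e^{-\delta\kappa h_t/3}$, $e^{-c(\log x_t)^2}$ into a single pair $(\tilde C,c)$ — noting that whenever $\mathbb{P}(I(V^{\uparrow})\leq 1/\theta x_t)$ (or the analogous $\mathcal{R}$-probability) happens to be smaller than $e^{-c\phi(t)}$ the claimed lower bound is simply negative, so the one-constant formulation is harmless. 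No step should present a genuine obstacle.
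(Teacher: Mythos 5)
Your proposal is correct and is precisely the paper's own argument: the paper proves Proposition \ref{synthese} exactly by chaining Proposition \ref{approxdutl}, Proposition \ref{queueloilimsupdiscret} and Lemma \ref{approxder2cas} with the same parameter choices ($a=\theta^{1/3}$, $b=\theta^{1/3}$, $y_t=\theta^{-1/3}x_t$, $z_t=\theta^{-2/3}x_t$ for the upper bound and $a=\theta^{1/4}$, $b=u=\theta^{-1/4}$, $y_t=\theta^{1/4}x_t$, $z_t=\theta^{3/4}x_t$ for the lower bound). Your bookkeeping of the constants, the $(\log z_t)^2\ll h_t$ check, the absorption of $e^{-\delta\kappa h_t/3}$ into $e^{-c\phi(t)}$, and the enlargement of $c$ in the $e^{-c(\log x_t)^2}$ factor all match what the paper intends.
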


We can now link the asymptotic behavior of the local time with the left tail of $I(V^{\uparrow})$: 

%\begin{prop} \label{limsupenfctdelefttail}
%If there is $\gamma > 1$ and $C > 0$ such that for $x$ small enough we have
%\begin{eqnarray}
%\mathbb{P} \left ( I(V^{\uparrow}) \leq x \right ) \leq \exp \left ( -\frac{C}{x^{\frac1{\gamma-1}}} \right ), \label{condfinie}
%\end{eqnarray}
%then
%\begin{eqnarray}
%\limsup_{t \rightarrow +\infty} \frac{\mathcal{L}^*_X(t)}{t (\log (\log(t)))^{\gamma - 1}} \leq C^{1-\gamma}. \label{limsupfinie}
%\end{eqnarray}
%If there is $\gamma > 1$ and $C > 0$ such that for $x$ small enough we have
%\begin{eqnarray}
%\mathbb{P} \left ( I(V^{\uparrow}) \leq x \right ) \geq \exp \left ( -\frac{C}{x^{\frac1{\gamma-1}}} \right ) \label{condfnonnulle}
%\end{eqnarray}
%then
%\begin{eqnarray}
%\limsup_{t \rightarrow +\infty} \frac{\mathcal{L}^*_X(t)}{t (\log (\log(t)))^{\gamma - 1}} \geq C^{1-\gamma}. \label{limsuppos}
%\end{eqnarray}
%\end{prop}

\begin{proof} of Theorem \ref{limsupenfctdelefttail}

First, we assume that $V$ possesses negative jumps. 

Let us assume that \eqref{condfinie} is satisfied with some constants $\gamma > 1$ and $C > 0$. We now prove \eqref{limsupfinie}. Let $a > 1$ and define the events
\[ \mathcal{A}_n := \left \{ \sup_{t \in [a^n, a^{n+1}]} \frac{\mathcal{L}^*_X(t)}{t (\log (\log(t)))^{\gamma - 1}}  \geq a^3 C^{1-\gamma} \right \}. \]

We define $x_t := C^{1-\gamma} a^{2} (\log (\log(t/a)))^{\gamma-1}$. Note that such a choice of $x_t$ satisfies \eqref{defxtlim} with $\mu = \gamma$ and $D = C^{1-\gamma}$. From the increase of $\mathcal{L}^*_X(.)$, \eqref{synthesecasjump} (the upper bound applied with $t = a^{n+1}$, $\theta = a$) and \eqref{condfinie} we have, for $n$ large enough, 
\begin{align*}
\mathbb{P} \left ( \mathcal{A}_n \right ) & \leq \mathbb{P} \left ( \mathcal{L}^*_X(a^{n+1}) \geq C^{1-\gamma} a^{n+3} (\log (\log(a^n)))^{\gamma-1} \right ) \nonumber \\
& \leq \tilde C \ \mathbb{P} \left ( I(V^{\uparrow}) \leq 1 / a C^{1-\gamma} (\log (\log(a^n)))^{\gamma-1} \right ) + e^{-c \phi(a^{n+1})} \\
& \leq \tilde C \ \exp \left ( -a^{\frac1{\gamma-1}} \log (\log(a^n)) \right ) + e^{-c \phi(a^{n+1})} \\
& = \tilde C \ (\log(a))^{-a^{\frac1{\gamma-1}}} n^{-a^{\frac1{\gamma-1}}} + e^{-c \phi(a^{n+1})}. 
\end{align*}
Since $e^{-c \phi(a^{n+1})} = e^{-c (\log \log(a^{n+1}))^{\omega}} \leq n^{-2}$ for $n$ large enough, the above is the general term of a converging series so, using the Borel-Cantelli lemma, we deduce that $\mathbb{P}$-almost surely, 
\[ \limsup_{t \rightarrow +\infty} \frac{\mathcal{L}^*_X(t)}{t (\log (\log(t)))^{\gamma - 1}} \leq a^3 C^{1-\gamma}, \]
and letting $a$ go to $1$ we get \eqref{limsupfinie}. 

Now, let us assume that \eqref{condfnonnulle} is satisfied with some constants $\gamma > 1$ and $C > 0$. We now prove \eqref{limsuppos}. Let $a>1$ and let $t_n$, $u_n$, $v_n$ (defined from this $a$) and $X^n$ be as in Subsection \ref{decompindeppart} (recall that $X^n - v_n$ is equal in law to $X$ under the annealed probability $\mathbb{P}$). We define
\[ \mathcal{B}_n := \left \{ \frac{\mathcal{L}^{*}_{X^n} (t_n)}{t_n (\log (\log(t_n)))^{\gamma - 1}} \geq \frac{C^{1-\gamma}}{a^{3(\gamma - 1)}} \right \}. \]
We also define $\mathcal{C}_n$ and $\mathcal{D}_n$ to be as in Lemma \ref{lemprindep} and $\mathcal{E}_n := \mathcal{B}_n \cap \mathcal{C}_n$. We define $x_t := C^{1-\gamma} (\log (\log(t)))^{\gamma - 1} / a^{3(\gamma - 1)}$. Note that such a choice of $x_t$ satisfies \eqref{defxtlim} with $\mu = \gamma$ and $D = C^{1-\gamma} / a^{3(\gamma - 1)}$. According to \eqref{synthesecasjump} (the lower bound applied with $t=t_n$, $\theta = a^{\gamma - 1}$), the definition of $t_n$, \eqref{condfnonnulle} and the fact that $n$ is large we have, for some positive constants $c_a$, $K_a$ and $n$ large enough, 
\begin{align*}
\mathbb{P} \left ( \mathcal{B}_n \right ) & \geq K_a e^{ -c_a  \left ( \log (\log (\log (t_n))) \right )^2} \ \mathbb{P} \left ( I(V^{\uparrow}) \leq a^{2(\gamma - 1)} / C^{1-\gamma} (\log (\log(t_n)))^{\gamma - 1} \right )/ (\log (\log(t_n)))^{\kappa(\gamma - 1)} - e^{-c \phi(t_n)} \\
& = K_a e^{ -c_a  \left ( \log (a \log(n)) \right )^2} \ \mathbb{P} \left ( I(V^{\uparrow}) \leq a^{\gamma - 1} / C^{1-\gamma} (\log (n))^{\gamma - 1} \right )/ (a \log (n))^{\kappa(\gamma - 1)} - e^{-c \phi(t_n)} \\
& \geq K_a e^{ -c_a  \left ( \log (a \log(n)) \right )^2} \ e^{- (\log (n)) / a }/ (a \log (n))^{\kappa(\gamma - 1)} - e^{-c \phi(t_n)} \\
& = K_a \ \exp \left ( -c_a  \left ( \log (a \log(n)) \right )^2 - \log (n) /a + \kappa(\gamma - 1) \log \left (a \log (n)) \right ) \right ) - e^{-c \phi(t_n)} \\
& \geq K_a \ \exp \left ( - \log (n) \right ) - e^{-c \phi(t_n)} \\
& = K_a \ n^{-1} - e^{-c \phi(t_n)}. 
\end{align*}
%\begin{align*}
%\mathbb{P} \left ( \mathcal{B}_n \right ) & \geq c a^{2 \kappa(\gamma - 1)} \ \mathbb{P} \left ( I(V^{\uparrow}) \leq a^{2(\gamma - 1)} / C^{1-\gamma} (a \log (n))^{\gamma - 1} \right )/ C^{\kappa(1-\gamma)} (a \log (n))^{\kappa(\gamma - 1)} - e^{-c \phi(t_n)} \\
%& \geq c a^{2 \kappa(\gamma - 1)} \ \exp \left ( - a \log (n) /a^2 \right )/ C^{\kappa(1-\gamma)} (a \log (n))^{\kappa(\gamma - 1)} - e^{-c \phi(t_n)} \\
%& = c a^{2 \kappa(\gamma - 1)} \ n^{-1/a} / C^{\kappa(1-\gamma)} (a \log(n))^{\kappa(\gamma - 1)} - e^{-c \phi(t_n)}
%\end{align*}
Since $e^{-c \phi(t_n)} = e^{-c (\log \log(t_n))^{\omega}} \leq n^{-2}$ for $n$ large enough, we get
\begin{eqnarray}
\sum_{n \geq 1} \mathbb{P} \left ( \mathcal{B}_n \right ) = +\infty. \label{serieinflimsup}
\end{eqnarray}

Then, the combination of \eqref{serieinflimsup} and \eqref{inegindep3} yields
\begin{eqnarray}
\sum_{n \geq 1} \mathbb{P} \left ( \mathcal{E}_n \right ) \geq \sum_{n \geq 1} \mathbb{P} \left ( \mathcal{B}_n \right ) - \sum_{n \geq 1} \mathbb{P} \left ( \overline{\mathcal{C}_n} \right ) = +\infty. \label{sommeenkappa<1}
\end{eqnarray}

Note that each event $\mathcal{E}_n$ belongs to the $\sigma$-field $\sigma ( V(s) - V(u_n), u_n \leq s \leq u_{n+1}, \ X(t), H(v_n) \leq t \leq H(v_n) + T_n)$, in other words, it only depends on the diffusion between times $H(v_n)$ and $H(v_n) + T_n$ and on the environment between positions $u_n$ and $u_{n+1}$. From the Markov property and the independence of the increments of the environment, we get that the events $(\mathcal{E}_n)_{n \geq 1}$ are independent. Combining this independence with \eqref{sommeenkappa<1} and the Borel-Cantelli Lemma we get that $\mathbb{P}$-almost surely, the event $\mathcal{E}_n$ is realized infinitely many often. For $n$ such that this event is realized we have
\begin{eqnarray}
\frac{\mathcal{L}^{*}_{X} (H(v_n) + t_n)}{t_n (\log (\log(t_n)))^{\gamma - 1}} \geq \frac{\mathcal{L}^{*}_{X^n} (t_n)}{t_n (\log (\log(t_n)))^{\gamma - 1}} \geq \frac{C^{1-\gamma}}{a^{3(\gamma - 1)}}. \label{equivh(vn)1}
\end{eqnarray}
According to \eqref{inegindep4} ans the Borel-Cantelli Lemma we have $\mathbb{P}$-almost surely
\[ H(v_n) + t_n \underset{n \rightarrow +\infty}{\sim} t_n, \]
so combining with \eqref{equivh(vn)1} we deduce that $\mathbb{P}$-almost surely, 
\[ \limsup_{t \rightarrow +\infty} \frac{\mathcal{L}^*_X(t)}{t (\log (\log(t)))^{\gamma - 1}} \geq \frac{C^{1-\gamma}}{a^{3(\gamma - 1)}}, \]
and letting $a$ go to $1$ we get \eqref{limsuppos}. 

If $V = W_{\kappa}$, the $\kappa$-drifted Brownian motion with $0 < \kappa < 1$, we proceed the same proof, only replacing $I(V^{\uparrow})$ by $\mathcal{R}$ and \eqref{synthesecasjump} by \eqref{synthesecasmdb}. We thus get that the same result is stil true for $V = W_{\kappa}$, but with $\mathcal{R}$ instead of $I(V^{\uparrow})$.

\end{proof}

The other theorems for the $\limsup$ are now easy to prove. 

\begin{proof} of Theorems \ref{limsupkappa<1} and \ref{limsupkappa<1exact}

In the case where $V$ possesses negative jumps, Theorem \ref{limsupkappa<1} is a direct consequence of the combination of Theorem \ref{limsupenfctdelefttail}, \eqref{encadgene1} and \eqref{encadgene2}. Similarly, the first point of Theorem \ref{limsupkappa<1exact} is obtained from the combination of Theorem \ref{limsupenfctdelefttail} and \eqref{encadreg}. The second point of Theorem \ref{limsupkappa<1exact} is obtained from the combination of Theorem \ref{limsupenfctdelefttail} and \eqref{exactereg}. 

%, we note that, according to the first point $t \log (\log (t))$ is the right renormalization but to get the exact limit
In the case where $V = W_{\kappa}$, the $\kappa$-drifted Brownian motion with $0 < \kappa < 1$, we only need to prove the last point of Theorem \ref{limsupkappa<1exact}, and this requires to determine exactly the left tail of $\mathcal{R}$. This variable is equal in law to the sum of two independent random variables having the same law as $I(W_k^{\uparrow})$. We thus have
\[ - \log \left ( \mathbb{E} [e^{-\lambda \mathcal{R}}] \right ) = - \log \left ( \left ( \mathbb{E} [e^{-\lambda I(W_k^{\uparrow})}] \right )^2 \right ) = - 2 \log \left ( \mathbb{E} [e^{-\lambda I(W_k^{\uparrow})}] \right ) \underset{\lambda \rightarrow + \infty}{\sim} 4 \sqrt{2 \lambda}, \]
where the equivalent comes from $(1.13)$ of \cite{foncexpovech}. Using this together with De Bruijn's Theorem (see Theorem 4.12.9 in \cite{regvar}) we get
\[ - \log \left ( \mathbb{P} \left ( \mathcal{R} \leq x \right ) \right ) \underset{x \rightarrow 0}{\sim} \frac{8}{x}. \]
The last point of Theorem \ref{limsupkappa<1exact} follows from this combined with Theorem \ref{limsupenfctdelefttail}. 
\end{proof}

\subsection{The $\liminf$} \label{preuveliminf}

%\[ \mathbb{P} \left ( \mathcal{L}^*_X(t) \leq t / x_t \right ) \geq \mathbb{P} \left ( \mathcal{L}^*_X(t) \leq t / x_t, \ \mathcal{A}_t \right ) \geq \mathbb{P} \left ( \sup_{1 \leq j \leq N_t} \sup_{y \in \mathcal{D}_j} \mathcal{L}_X(H(\tilde L_j), y) \leq t / x_t, \ \mathcal{A}_t \right ), \]
%and, thanks to \eqref{yeswecan2.11}, this is more than
%\[ \mathbb{P} \left ( \sup_{1 \leq j \leq N_t} \mathcal{L}_X(H(\tilde L_j), \tilde m_j) \leq t / \tilde x_t, \ \mathcal{A}_t \right ), \]
%where $\tilde x_t := ... x_t \underset{t \rightarrow +\infty}{\sim} x_t$, 
%\[ \geq \mathbb{P} \left ( \sup_{1 \leq j \leq N_t} e_j S_j^t \leq t / \hat x_t, \ \mathcal{A}_t \right ), \]
%%\begin{eqnarray}
%%\geq \mathbb{P} \left ( \sup_{1 \leq j \leq N_t} e_j S_j^t \leq t / \hat x_t, \ \mathcal{A}_t \right ), \label{minoliminf1}
%%\end{eqnarray}
%where $\hat x_t := (1 + \epsilon_t) \tilde x_t \underset{t \rightarrow +\infty}{\sim} x_t$, because on $\mathcal{A}_t$ we have both $N_t \leq n_t$ and $\forall j \leq n_t, | \mathcal{L}_X(H(\tilde L_j), \tilde m_j) - e_j S_j^t | \leq \epsilon_t e_j S_j^t$. Now, using the fact that on $\mathcal{A}_t$ and for $t$ large enough \eqref{yeswecan4} is true, we deduce that for $t$ large enough, 
%\begin{eqnarray}
%\mathbb{P} \left ( \mathcal{L}^*_X(t) \leq t / x_t \right ) \geq \mathbb{P} \left ( \sup_{1 \leq j \leq \mathcal{N}_{(1+\epsilon)t}} e_j S_j^t \leq t / \hat x_t \right ) = \mathbb{P} \left ( Y_1^{\natural, t} \left ( Y_2^{-1, t}(1 + \epsilon) \right ) \leq 1/ \hat x_t \right ). \label{minoliminf1}
%\end{eqnarray}

For the $\liminf$ we study the asymptotic of the quantity $\mathbb{P} (\mathcal{L}^*_X(t)/t \leq 1/x_t)$. Recall that $x_t$ is defined in \eqref{defxtlim} where $D > 0$ and $\mu \in ]1, 2]$ are fixed constants. In all this subsection we take $\omega := 2$ for the parameter in \eqref{paramtaillevallees}. 

\begin{prop} \label{approxdupetittl}
Recall the $\lambda_0$ defined in Fact \ref{queueiid}. There is a positive constant $c$ such that for all $a > 1$ and $t$ large enough we have, 
\begin{align*}
\mathbb{P} \left ( Y_1^{\natural, t} \left ( Y_2^{-1, t}(a) \right ) \leq 1/a x_t \right ) - e^{-c \phi(t)} & \leq \mathbb{P} \left( \mathcal{L}^*_X(t) \leq t /x_t \right ) \\
& \leq 2 \mathbb{P} \left ( Y_1^{\natural, t} \left ( Y_2^{-1, t}(1/4) \right ) \leq 2/x_t \right ) + e^{- \lambda_0 x_t /8} + e^{-c \phi(t)}. 
\end{align*}
\end{prop}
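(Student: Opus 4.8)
The plan is to transpose, from large to small values of $\mathcal{L}^*_X(t)$, the scheme used for Proposition~\ref{approxdutl}. I work on the intersection of the high-probability events $\{V\in\mathcal{V}_t\}$, $\{N_t<n_t\}$, $\mathcal{E}^1_t,\dots,\mathcal{E}^7_t$ and, after conditioning on the environment, $\{V\in\mathcal{G}_t\}$, whose complements have total probability $\le e^{-c\phi(t)}$ by Lemma~\ref{minmajnbvalleesvisit}, Fact~\ref{minimacoincide}, Fact~\ref{bigevents} and Lemma~\ref{measofgoodenv}. On this intersection $\mathcal{L}^*_X(t)$ is squeezed between functionals of the \textit{iid} sequence $(e_jS_j^t)_{j\ge1}$: by $\mathcal{E}^2_t$ (and $\mathcal{E}^1_t$, which forbids returns to an earlier valley and keeps the diffusion on $[0,+\infty[$ after $H(\tilde m_1)$) the local time accumulated before $H(\tilde m_1)$, in particular on $]-\infty,0[$, is $\le te^{(\kappa(1+3\delta)-1)\phi(t)}=o(t/x_t)$ since $x_t$ is poly-logarithmic; by $\mathcal{E}^3_t$ the local time outside the deep bottoms $\mathcal{D}_j$ is $o(t/x_t)$; by $\mathcal{E}^4_t$ it differs on $\mathcal{D}_j$ from $\mathcal{L}_X(H(\tilde L_j),\tilde m_j)$ by at most $(1+e^{-\tilde ch_t})\mathcal{L}_X(H(\tilde L_j),\tilde m_j)$; by $\mathcal{E}^6_t$, $\mathcal{L}_X(H(\tilde L_j),\tilde m_j)\in[(1-e^{-\tilde ch_t})e_jS_j^t,(1+e^{-\tilde ch_t})e_jS_j^t]$; and by $\mathcal{E}^1_t$, $\mathcal{L}_X(t,\tilde m_j)=\mathcal{L}_X(H(\tilde L_j),\tilde m_j)$ for $j\le N_t-1$. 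Hence, off an event of probability $e^{-c\phi(t)}$,
\[
(1-e^{-\tilde ch_t})\sup_{1\le j\le N_t-1}e_jS_j^t\ \le\ \mathcal{L}^*_X(t)\ \le\ (1+e^{-\tilde ch_t})^2\sup_{1\le j\le N_t}e_jS_j^t+te^{(\kappa(1+3\delta)-1)\phi(t)}.
\]

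\textbf{Lower bound.} I assume $Y_1^{\natural, t}(Y_2^{-1, t}(a))=t^{-1}\sup_{1\le j\le\mathcal{N}_{at}}e_jS_j^t\le 1/(ax_t)$. Since this probability is non-increasing in $a$ it suffices to treat $a\in\,]1,2[$, and then \eqref{bigeventsnbvalles} with $\eta=a-1$ gives $N_t\le\mathcal{N}_{at}$ on the good events, hence $\sup_{1\le j\le N_t}e_jS_j^t\le t/(ax_t)$. Feeding this into the right-hand squeeze above and using $(1+e^{-\tilde ch_t})^2/a\to 1/a<1$ and $x_te^{(\kappa(1+3\delta)-1)\phi(t)}\to0$ yields $\mathcal{L}^*_X(t)\le t/x_t$ on the good events; subtracting the probabilities of the complements gives the left-hand inequality of the proposition.

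\textbf{Upper bound.} I assume $\mathcal{L}^*_X(t)\le t/x_t$. The left-hand squeeze gives $\sup_{1\le j\le N_t-1}e_jS_j^t\le(1-e^{-\tilde ch_t})^{-1}t/x_t\le 2t/x_t$ for $t$ large, while \eqref{bigeventsnbvalles} with $\eta=3/4$ gives $\mathcal{N}_{t/4}\le N_t$ on the good events. If $\mathcal{N}_{t/4}\le N_t-1$ we already get $Y_1^{\natural, t}(Y_2^{-1, t}(1/4))\le 2/x_t$; otherwise $\mathcal{N}_{t/4}=N_t$ and either $e_{N_t}S_{N_t}^t\le 2t/x_t$ (again $Y_1^{\natural, t}(Y_2^{-1, t}(1/4))\le 2/x_t$) or we land on the boundary event
\[
\mathcal{B}_t:=\{\mathcal{L}^*_X(t)\le t/x_t\}\cap\{\mathcal{N}_{t/4}=N_t\}\cap\{e_{N_t}S_{N_t}^t>2t/x_t\}\cap(\text{good events}).
\]
So everything reduces to the estimate $\mathbb{P}(\mathcal{B}_t)\le e^{-\lambda_0x_t/8}+e^{-c\phi(t)}$; granting it, collecting the three cases and the complements of the good events yields the upper inequality (the numerical constants being tuned along the way, which is also where the factor $2$ comes from).

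\textbf{The core estimate and the main obstacle.} To bound $\mathbb{P}(\mathcal{B}_t)$, note first that on $\mathcal{B}_t$ valley $N_t$ cannot be completed by time $t$: otherwise $\mathcal{L}_X(t,\tilde m_{N_t})=\mathcal{L}_X(H(\tilde L_{N_t}),\tilde m_{N_t})\in[(1-e^{-\tilde ch_t})e_{N_t}S_{N_t}^t,(1+e^{-\tilde ch_t})e_{N_t}S_{N_t}^t]$ by $\mathcal{E}^1_t$ and $\mathcal{E}^6_t$, forcing $e_{N_t}S_{N_t}^t\le 2t/x_t$, a contradiction. Thus $H(\tilde L_{N_t})>t$ and the diffusion stays in valley $N_t$ throughout $[H(\tilde m_{N_t}),t]$; moreover, since $\mathcal{N}_{t/4}=N_t$ gives $\sum_{i=1}^{N_t-1}e_iS_i^tR_i^t\le t/4$, $\mathcal{E}^5_t$ and $\mathcal{E}^7_t$ yield $H(\tilde m_{N_t})\le(1+e^{-\tilde ch_t})\tfrac t4+\tfrac{2t}{\log h_t}\le\tfrac{3t}{4}$ for $t$ large, so the diffusion spends time $\ge t/4$ in valley $N_t$. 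A standard occupation-time-versus-local-time estimate (the one underlying $\mathcal{E}^7_t$) equates that time, up to a constant factor, with $\mathcal{L}_X(t,\tilde m_{N_t})\times\int e^{-\tilde V^{(N_t)}}$ over the visited range, and on $\mathcal{G}_t$ this integral is at most a constant times $R_{N_t}^t$ (the part to the right of $\tilde\tau^+_{N_t}(h_t/2)$ contributing $\le e^{-\epsilon h_t}\le R_{N_t}^t$); together with $\mathcal{L}_X(t,\tilde m_{N_t})\le t/x_t$ this forces $R_{N_t}^t\ge x_t/8$. Writing $\{H(\tilde m_k)\le t\}\subset\{\sum_{i=1}^{k-1}(1-e^{-\tilde ch_t})e_iS_i^tR_i^t\le t\}\cup\overline{\mathcal{E}^7_t}$, where the event on the right is independent of $(e_k,S_k^t,R_k^t)$, and using the mutual independence of $e_k,S_k^t,R_k^t$ and the \textit{iid} property,
\[
\mathbb{P}(\mathcal{B}_t)\le\mathbb{P}(R_1^t\ge x_t/8)\,\mathbb{P}(e_1S_1^t/t>2/x_t)\sum_{k=1}^{n_t}\mathbb{P}\!\Big(\sum_{i=1}^{k-1}(1-e^{-\tilde ch_t})e_iS_i^tR_i^t\le t\Big)+e^{-c\phi(t)}.
\]
By \eqref{cvmeasure7.1} the middle factor is $\le Cx_t^\kappa e^{-\kappa\phi(t)}$ (legitimate since $2/x_t\ge e^{-(1-2\eta)\phi(t)}$ for $t$ large) and, by the argument of Lemma~\ref{cvntlp}, the sum is $\le Ce^{\kappa\phi(t)}$, so the two exponential factors cancel and $\mathbb{P}(\mathcal{B}_t)\le C'x_t^\kappa\,\mathbb{P}(R_1^t\ge x_t/8)+e^{-c\phi(t)}$; the exponential moment $\mathbb{E}[e^{\lambda R_1^t}]<\infty$ for $\lambda<\lambda_0$ from Fact~\ref{queueiid} then bounds this by $e^{-\lambda_0x_t/8}+e^{-c\phi(t)}$ for $t$ large. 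The hard part of the whole proof is exactly this last step: the geometric claim that a long sojourn of the diffusion in valley $N_t$ with a small local time at its bottom is possible only if $R_{N_t}^t\gtrsim x_t$, and the cancellation of the $\approx e^{\kappa\phi(t)}$ number of visited valleys against the probability $\approx x_t^{-\kappa}e^{-\kappa\phi(t)}$ that a single valley carries local time $\gtrsim t/x_t$; everything else is bookkeeping of the $(1\pm e^{-\tilde ch_t})$-distortions and of the substitution $N_t\leftrightarrow\mathcal{N}_{(1\pm\eta)t}$ via \eqref{bigeventsnbvalles}.
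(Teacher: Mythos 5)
Your skeleton is essentially the paper's: squeeze $\mathcal{L}^*_X(t)$ between functionals of $(e_jS_j^t)_j$ on the good events, trade $N_t$ for $\mathcal{N}_{(1\pm\eta)t}$ via \eqref{bigeventsnbvalles}, and reduce the upper bound to a boundary event where the last, uncompleted valley carries $e_{N_t}S^t_{N_t}>2t/x_t$, killed in the end by the exponential tail of $R_1^t$ through \eqref{cvrlapl}, with the $e^{\kappa\phi(t)}$ count of valleys cancelling the $x_t^{\kappa}e^{-\kappa\phi(t)}$ tail of $e_1S_1^t$ as in Lemma~\ref{cvntlp}. The genuine gap sits exactly where you locate the ``hard part'': the implication ``the diffusion spends time $\geq t/4$ in the uncompleted valley $N_t$ while $\mathcal{L}_X(t,\tilde m_{N_t})\leq t/x_t$, hence $R^t_{N_t}\gtrsim x_t$''. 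You attribute this to ``the estimate underlying $\mathcal{E}^7_t$'', but $\mathcal{E}^7_t$ compares the \emph{complete} crossing time $H(\tilde L_k)-H(\tilde m_k)$, evaluated at the stopping time $H(\tilde L_k)$, with $e_kS_k^tR_k^t$; here $H(\tilde L_{N_t})>t$, and what is needed is a comparison of the occupation time up to the intermediate time $t$ with $\mathcal{L}_X(t,\tilde m_{N_t})\,R^t_{N_t}$, valid uniformly over the random index $k=N_t\leq n_t$ and over the random elapsed fraction $H(\tilde m_k)/t$, with conditional (given $v\in\mathcal{G}_t$) failure probability $e^{-c\phi(t)}$. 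This is precisely the content of the event $\mathcal{E}^9_t$ and of estimate \eqref{majoe9} in Fact~\ref{fameuxe8}, whose proof time-changes by the local time at $\tilde m_k$, distinguishes the cases $\sigma_{X_{\tilde m_k}}(t/x_t,\tilde m_k)<H_{X_{\tilde m_k}}(\tilde L_k)$ and $>H_{X_{\tilde m_k}}(\tilde L_k)$, and uses Brownian scaling, the Ray--Knight estimates and the good-environment inequalities; the paper's own proof of the proposition runs through exactly this fact after conditioning on the environment, decomposing over $H(\tilde m_{N_t})/t\in dz$ and applying the Markov property at $H(\tilde m_k)$. As written, your ``standard occupation-time-versus-local-time estimate'' is unproved and does not follow from $\mathcal{E}^7_t$; you should either invoke \eqref{majoe9} or reproduce that argument.

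Two quantitative points. First, with your threshold $R^t_{N_t}\geq x_t/8$, Chernoff from \eqref{cvrlapl} only yields $\mathbb{P}(R_1^t\geq x_t/8)\leq C e^{-\lambda x_t/8}$ for some $\lambda<\lambda_0$, and after multiplication by your $x_t^{\kappa}$ prefactor this is not $\leq e^{-\lambda_0 x_t/8}$; you need a threshold with a definite margin above $x_t/8$ (the geometry in fact gives something close to $x_t/4$, which is what the paper uses), or a weaker exponent in the conclusion. Second, the $x_t^{\kappa}$ prefactor in your factorization is avoidable: on your boundary event $N_t=k^t(2/x_t)$, an index determined by $(e_jS_j^t)_j$ alone, and since $(R_j^t)_j$ is \textit{iid} and independent of that sequence, $\mathbb{P}(\mathcal{B}_t)\leq \mathbb{P}\bigl(R^t_{k^t(2/x_t)}\geq x_t/4\bigr)+e^{-c\phi(t)}=\mathbb{P}(R_1^t\geq x_t/4)+e^{-c\phi(t)}$, which is how the paper reaches the stated $e^{-\lambda_0 x_t/8}$.
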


Here, the functional of $(Y_1^t, Y_2^t)$ involved is $Y_1^{\natural, t} ( Y_2^{-1, t}(.))$ which represents the supremum of the local time after leaving the last valley. 

\begin{proof}

\textit{Lower bound}

For the lower bound we first have to prove that, if the local time at the bottom of the valleys is small, then the overall supremum of the local time is small. For this our argument starts similarly as in the proof of the lower bound of Proposition 4.1 in \cite{caslevyvech}, by using the localization in the bottom of the valleys of the main contributions of the local time. 

From the definition of $N_t$, we have $H(\tilde m_{N_t +1}) \geq t$ on $\{ V \in \mathcal{V}_t, \ N_t < n_t \}$ so
\begin{eqnarray}
\mathbb{P} \left ( \mathcal{L}^*_X(t) \leq t /x_t \right ) \geq \mathbb{P} \left ( \mathcal{L}^*_X( H(\tilde m_{N_t + 1})) \leq t /x_t, \ V \in \mathcal{V}_t, \ N_t < n_t, \ \mathcal{E}^1_t, \mathcal{E}^2_t \right ). \label{lb1}
\end{eqnarray}
The event $\{ N_t < n_t \} \cap \mathcal{E}^1_t$ ensures that for $j \leq N_t < n_t$, $\tilde m_{j}$ is no longer reached after $H(\tilde L_{j})$ and $\tilde{L}_{j}$ is no longer reached after $H(\tilde m_{j+1})$ so that for each $j \in \{1, ..., N_t \}$ we have 
\begin{align*}
\forall x \in [\tilde{L}_{j-1}, \tilde{m}_j], \ \mathcal{L}_X (H(\tilde m_{N_t + 1}),x) = \mathcal{L}_X (H(\tilde L_j),x) & = \left ( \mathcal{L}_X (H(\tilde L_j),x) - \mathcal{L}_X (H(\tilde m_{j}),x) \right ) \\
& + \left ( \mathcal{L}_X (H(\tilde m_{j}),x) - \mathcal{L}_X (H(\tilde L_{j-1}),x) \right ), \\
\forall x \in [\tilde{m}_j, \tilde{L}_{j}], \ \mathcal{L}_X (H(\tilde m_{N_t + 1}),x) = \mathcal{L}_X (H(\tilde m_{j+1}),x) & = \left ( \mathcal{L}_X (H(\tilde m_{j+1}),x) - \mathcal{L}_X (H(\tilde L_{j}),x) \right ) \\
& + \left ( \mathcal{L}_X (H(\tilde L_{j}),x) - \mathcal{L}_X (H(\tilde m_{j}),x) \right ), \\
\forall x \leq \tilde L_0 =0, \ \mathcal{L}_X (H(\tilde m_{N_t + 1}),x) = \mathcal{L}_X (H(\tilde m_{1}),x) & = \left ( \mathcal{L}_X (H(\tilde m_{1}),x) - \mathcal{L}_X (H(\tilde L_{0}),x) \right ). 
%\forall x \geq \tilde{L}_{N_t -1}, \ \mathcal{L}_X (t,x) \leq \left ( \mathcal{L}_X (t,x) - \mathcal{L}_X (H(\tilde m_{N_t}),x) \right ) %& + \left ( \mathcal{L}_X (H(\tilde m_{N_t}),x) - \mathcal{L}_X (H(\tilde L_{N_t -1}),x) \right ). 
\end{align*}
Moreover, 
\[ \forall x \in [\tilde{L}_{N_t}, \tilde{m}_{N_t +1}], \ \mathcal{L}_X (H(\tilde m_{N_t + 1}),x) = \left ( \mathcal{L}_X (H(\tilde m_{N_t+1}),x) - \mathcal{L}_X (H(\tilde L_{N_t}),x) \right ). \]
The event $\{ N_t < n_t \} \cap \mathcal{E}^2_t$ ensures that the local time does not grow too much between $H(\tilde L_{j-1})$ and $H(\tilde m_{j})$ for $j \in \{1, ..., N_t + 1 \}$: $\sup_{y \in \mathbb{R}} ( \mathcal{L}_X (H(\tilde m_{j}),y) - \mathcal{L}_X (H(\tilde L_{j-1}),y) ) \leq t e^{(\kappa (1+3\delta )-1)\phi(t)}$. 
%The event $\{ V \in \mathcal{V}_t, \ N_t < n_t \}$ ensures that at time $t$ the diffusion is trapped in one of the first $n_t$ standard valleys. 
We thus see that, on $\{ V \in \mathcal{V}_t, \ N_t < n_t \} \cap \mathcal{E}^1_t \cap \mathcal{E}^2_t$, we have 
\[ \sup_{y \in \mathbb{R}} \mathcal{L}_X(H(\tilde m_{N_t + 1}), y) \leq \sup_{1 \leq j \leq N_t} \sup_{y \in [\tilde{L}_{j-1}, \tilde{L}_j]} \left ( \mathcal{L}_X (H(\tilde L_j),y) - \mathcal{L}_X (H(\tilde m_{j}),y) \right ) + t e^{(\kappa (1+3\delta )-1)\phi(t)}. \]
The right hand side of \eqref{lb1} is therefore more than
\[ \mathbb{P} \left ( \sup_{1 \leq j \leq N_t} \sup_{y \in [\tilde{L}_{j-1}, \tilde{L}_j]} \left ( \mathcal{L}_X (H(\tilde L_j),y) - \mathcal{L}_X (H(\tilde m_{j}),y) \right ) \leq t /\tilde x_t^1, \ V \in \mathcal{V}_t, \ N_t < n_t, \ \mathcal{E}^1_t, \mathcal{E}^2_t \right ), \]
where $\tilde x_t^1 := 1/((1/x_t) - e^{(\kappa (1+3\delta )-1)\phi(t)})$. Then, since $\tilde x_t^1 \sim x_t$, we have $1/\tilde x_t^1 \geq e^{-2 \phi(t)}$ for $t$ large enough. Using the definition of $\mathcal{E}^3_t$, we get that for such large $t$ the above is more than 
\[ \mathbb{P} \left ( \sup_{1 \leq j \leq N_t} \sup_{y \in \mathcal{D}_j} \left ( \mathcal{L}_X (H(\tilde L_j),y) - \mathcal{L}_X (H(\tilde m_{j}),y) \right ) \leq t / \tilde x_t^1, \ V \in \mathcal{V}_t, \ N_t < n_t, \ \mathcal{E}^1_t, \mathcal{E}^2_t, \mathcal{E}^3_t \right ), \]
where $\mathcal{D}_j$ is defined in \eqref{defdj}. From the definition of $\mathcal{E}^4_t$ the above is more than
\[ \mathbb{P} \left ( \sup_{1 \leq j \leq N_t} \mathcal{L}_X (H(\tilde L_j), \tilde m_j) \leq t /\tilde x_t^2, \ V \in \mathcal{V}_t, \ N_t < n_t, \ \mathcal{E}^1_t, \mathcal{E}^2_t, \mathcal{E}^3_t, \mathcal{E}^4_t \right ), \]
where $\tilde x_t^2 := (1+e^{-\tilde c h_t}) \tilde x_t^1 \sim x_t$. Now using the definition of $\mathcal{E}^6_t$, the above is more than
\[ \mathbb{P} \left ( \sup_{1 \leq j \leq N_t} e_j S_j^t \leq 1/\tilde x_t, \ V \in \mathcal{V}_t, \ N_t < n_t, \ \mathcal{E}^1_t, \mathcal{E}^2_t, \mathcal{E}^3_t, \mathcal{E}^4_t, \mathcal{E}^6_t \right ), \]
where $\tilde x_t := (1+e^{- \tilde c h_t}) \tilde x_t^2 \sim x_t$. Let $a > 1$. Using \eqref{bigeventsnbvalles} with $\eta = a-1$, the above is more than
\begin{align*}
& \mathbb{P} \left ( \sup_{1 \leq j \leq \mathcal{N}_{at}} e_j S_j^t \leq 1/\tilde x_t, \ V \in \mathcal{V}_t, \ N_t < n_t, \ \mathcal{E}^1_t, \mathcal{E}^2_t, \mathcal{E}^3_t, \mathcal{E}^4_t, \mathcal{E}^5_t, \mathcal{E}^6_t, \mathcal{E}^7_t \right ) \\
\geq & \mathbb{P} \left ( Y_1^{\natural, t} \left ( Y_2^{-1, t}(a) \right ) \leq 1/\tilde  x_t \right ) \\
- & \left ( \mathbb{P} \left ( V \notin \mathcal{V}_t \right ) + \mathbb{P} \left ( N_t \geq n_t \right ) + \mathbb{P} (\overline{\mathcal{E}^1_t}) + \mathbb{P} (\overline{\mathcal{E}^2_t}) + \mathbb{P} (\overline{\mathcal{E}^3_t}) + \mathbb{P} (\overline{\mathcal{E}^4_t}) + \mathbb{P} (\overline{\mathcal{E}^5_t}) + \mathbb{P} (\overline{\mathcal{E}^6_t}) + \mathbb{P} (\overline{\mathcal{E}^7_t}) \right ), 
\end{align*}
where we used the definition of $(Y_1^t, Y_2^t)$. Applying Fact \ref{minimacoincide}, \eqref{majonbvalleesvisit} and \eqref{bigeventsleq} we get the asserted lower bound for a suitably chosen constant $c$ and $t$ large enough, since $\tilde x_t \sim x_t$ and $a>1$. 

\textit{Upper bound}
\begin{align}
\mathbb{P} \left ( \mathcal{L}^*_X(t) \leq t / x_t \right ) & \leq \mathbb{P} \left ( \sup_{1 \leq j \leq N_t} \mathcal{L}_X(t, \tilde m_j) \leq t / x_t, \ V \in \mathcal{V}_t, \ N_t < n_t \right ) + \mathbb{P} \left ( V \notin \mathcal{V}_t \right ) + \mathbb{P} \left ( N_t \geq n_t \right ) \nonumber \\
& \leq \mathbb{P} \left ( \mathcal{L}_X(t, \tilde m_{N_t}) \leq t / x_t, \ \sup_{1 \leq j \leq N_t - 1} \mathcal{L}_X(H(\tilde L_j), \tilde m_j) \leq t / x_t, \ V \in \mathcal{V}_t, \ N_t < n_t, \ \mathcal{E}^1_t \right ) \nonumber \\
& + \mathbb{P} \left ( \overline{\mathcal{E}^1_t} \right ) + \mathbb{P} \left ( V \notin \mathcal{V}_t \right ) + \mathbb{P} \left ( N_t \geq n_t \right ), \label{lastestim0}
\end{align}
because, on $\{ V \in \mathcal{V}_t, \ N_t < n_t \} \cap \mathcal{E}^1_t$, $\tilde m_j$ (for $j < N_t)$ is no longer reached between times $H(\tilde L_j)$ and $t$. 
%Lemma \ref{neglectlastvalley} QU'EST CE QUE CA VIENT FAIRE LA ???
We fix $v \in \mathcal{G}_t$, a realization of the environment. Let us define
\[ \mathcal{E}_t(v, k, z) := \left \{ \mathcal{L}_{X_{\tilde m_k}}(t(1-z), \tilde m_k) \leq t /x_t, \ H_{X_{\tilde m_k}}(\tilde m_{k+1}) \geq t(1-z), \ H_{X_{\tilde m_k}}(\tilde L_k) < H_{X_{\tilde m_k}}(\tilde L_{k-1}) \right \}, \]
and $\nu_t(v, k, z) := P^v(\mathcal{E}_t(v, k, z))$. 
%\begin{align*}
%\nu_t(v, k, z) := P^v & \left ( \mathcal{L}_{X(H(\tilde m_k) + .)}(t(1-z), \tilde m_k) \leq t /x_t, \ H(\tilde m_{k+1}) - H(\tilde m_k) \geq t(1-z), \right. \\
%& \left. H_{X_{\tilde m_k}}(\tilde L_k) < H_{X_{\tilde m_k}}(\tilde L_{k-1}) \right ). 
%\end{align*}
The event $\mathcal{E}_t(v, k, z)$ belongs to the $\sigma$-field $\sigma \left ( X(t), t \geq H(\tilde m_k) \right )$.
In other words, it only depends on the diffusion after time $H(\tilde m_k)$. On the other hand, $H(\tilde m_{k})$ and $\sup_{1 \leq j \leq k - 1} \mathcal{L}_X(H(\tilde L_j), \tilde m_j)$ are measurable with respect to the $\sigma$-field $\sigma \left ( X(t), 0 \leq t \leq H(\tilde m_{k}) \right )$. From the Markov property applied to $X$ at $H(\tilde m_k)$, we get that $H(\tilde m_k)$ is independent from the event $\mathcal{E}_t(v, k, z)$. As a consequence, $P^v ( \mathcal{L}_X(t, \tilde m_{N_t}) \leq t / x_t, \ \sup_{1 \leq j \leq N_t - 1} \mathcal{L}_X(H(\tilde L_j), \tilde m_j) \leq t / x_t, \ N_t < n_t, \ \mathcal{E}^1_t ) $ is less than
%The event in $\nu_t(v, k, z)$ belongs to the $\sigma$-field 
%\[ \sigma \left ( V(s) - V(\tilde L_{k-1}), s \geq \tilde L_{k-1}, \ X(t), H(\tilde m_k) \leq t \leq \min (H_{X_{\tilde m_k}}(\tilde L_k), H_{X_{\tilde m_k}}(\tilde L_{k-1})) \right ). \]
%In other words, it only depends on the diffusion between times $H(\tilde m_k)$ and 
%
%\noindent $\min (H_{X_{\tilde m_k}}(\tilde L_k), H_{X_{\tilde m_k}}(\tilde L_{k-1}))$ and on the environment after the stopping time $\tilde L_{k-1}$. 
%
%On the other hand, $H(\tilde m_k)$ is measurable with respect to the $\sigma$-field 
%\[ \sigma \left ( V(s), 0 \leq s \leq \tilde m_k, \ X(t), 0 \leq t \leq H(\tilde m_k) \right ). \]
%From the Markov property applied to $V$ at $\tilde L_{k-1}$ and applied to $X$ at $H(\tilde m_k)$, we get that $H(\tilde m_k)$ is independent from the event in $\nu_t(v, k, z)$ so the first term of the right hand side of \eqref{lastestim0} is less than
\begin{eqnarray}
\sum_{k=1}^{n_t} \int_0^1 \nu_t(v, k, z) \times P^v \left ( \sup_{1 \leq j \leq k - 1} \mathcal{L}_X(H(\tilde L_j), \tilde m_j) \leq t / x_t, \ H(\tilde m_k)/ t \in dz \right ). \label{lastestim1}
\end{eqnarray}
%The first term is less than
%\[ \sum_{k=1}^{n_t} \int_0^1 \mathbb{P} \left ( \mathcal{L}_X(t(1-z), \tilde m_k) \leq t / x_t, \ H(\tilde m_{k+1}) - H(\tilde m_{k}) > \geq t(1-z), ...\right ) \times \mathbb{P} \left ( \sup_{1 \leq j \leq k - 1} \mathcal{L}_X(H(\tilde L_j), \tilde m_j) \leq t / x_t, \ H(\tilde m_k)/ t \in dz \right ) \]
%BREF, DEFINIR UN $\nu_t(v, k, z)$. 
%From the definition of $\mathcal{E}^9_t$, the fact that $H(\tilde m_{k+1}) - H(\tilde m_k) \geq t(1-z)$, the definition of $\mathcal{E}^5_t(k)$ and the definition of $\mathcal{E}^7_t$, we get that $\nu_t(v, k, z)$ is less than
The fact that the sum stops at $n_t$ comes from $N_t < n_t$ together with the fact that $v \in \mathcal{G}_t \subset \mathcal{V}_t$. From the definition of $\mathcal{E}^9_t(v, k, z)$ we get 
%\[ P^v \left ( \frac{1-z}{R_k^t} (1- ...) \leq \frac1{x_t}, \ H(\tilde m_{k+1}) - H(\tilde m_k) \geq t(1-z), \ H_{X_{\tilde m_k}}(\tilde L_k) < H_{X_{\tilde m_k}}(\tilde L_{k-1}) \right ) + \mathbb{P} \left ( ... \right ) \]
\begin{eqnarray}
\mathcal{E}_t(v, k, z) \subset \left \{ \frac{1-z}{R_k^t} (1- e^{- \tilde c h_t}) \leq \frac1{x_t}, \ H_{X_{\tilde m_k}}(\tilde m_{k+1}) \geq t(1-z) \right \} \cup \mathcal{E}^9_t(v, k, z). \label{lastestim1.1}
\end{eqnarray}
%\[ P^v \left ( \frac{1-z}{R_k^t} (1- ...) \leq \frac1{x_t}, \ (1+e^{- \tilde c h_t}) e_k S_k^t R_k^t \geq t \left (1-z - e^{-\phi(t) /2} \right ) \right ) + ... \]
We deduce that \eqref{lastestim1} is less than 
\begin{align*}
& \sum_{k=1}^{n_t} \int_0^1 P^v \left ( \frac{1-z}{R_k^t} (1- e^{- \tilde c h_t}) \leq \frac1{x_t}, \ H_{X_{\tilde m_k}}(\tilde m_{k+1}) \geq t(1-z) \right ) \\
& \times P^v \left ( \sup_{1 \leq j \leq k - 1} \mathcal{L}_X(H(\tilde L_j), \tilde m_j) \leq t / x_t, \ H(\tilde m_k)/ t \in dz \right ) \\
+ & \sum_{k=1}^{n_t} \int_0^1 P^v \left ( \mathcal{E}^9_t(v, k, z) \right ) \times P^v \left ( \sup_{1 \leq j \leq k - 1} \mathcal{L}_X(H(\tilde L_j), \tilde m_j) \leq t / x_t, \ H(\tilde m_k)/ t \in dz \right ). 
\end{align*}
$v$ being fixed, the events in the right hand side of \eqref{lastestim1.1} are independent from the $\sigma$-field $\sigma ( X(t), 0 \leq t \leq H(\tilde m_k) )$. Using this independence and the fact that the sums over $k$ above corresponds to disjoint events (so they can be written as probabilites of unions of events), and integrating with respect to $v \in D(\mathbb{R}, \mathbb{R})$ equipped with the probability measure $P$, we get that 
%$\sum_{k=1}^{n_t} \int_0^1 \nu_t(v, k, z) \times P^v ( \sup_{1 \leq j \leq k - 1} \mathcal{L}_X(H(\tilde L_j), \tilde m_j) \leq t / x_t, \ H(\tilde m_k)/ t \in dz)$ 
the first term in the right hand side of \eqref{lastestim0} is less than
\begin{align*}
& \mathbb{P} \left ( \frac{1- H(\tilde m_{N_t})/ t}{R_{N_t}^t} (1- e^{- \tilde c h_t}) \leq 1/x_t, \ \sup_{1 \leq j \leq N_t - 1} \mathcal{L}_X(H(\tilde L_j), \tilde m_j) \leq t / x_t \right ) \\
+ & E \left [ \mathds{1}_{V \in \mathcal{G}_t} P^V \left ( \cup_{k=1}^{n_t} \{ N_t \geq k \} \cap \mathcal{E}^9_t(V, k, H(\tilde m_{k})/t) \right) \right ] + P(V \notin \mathcal{G}_t) \\
%+ & \mathbb{E} \left [ \sum_{k=1}^{n_t} \int_0^1 P^v \left ( \mathcal{E}^9_t(v, k, z) \right) \times P^v \left ( \sup_{1 \leq j \leq k - 1} \mathcal{L}_X(H(\tilde L_j), \tilde m_j) \leq t / x_t, \ H(\tilde m_k)/ t \in dz \right ) \right ], \mathbb{P} \left ( \cup_{k=1}^{n_t} \mathcal{E}^9_t(v, k, H(\tilde m_k)/ t) \right ) \\
\leq & \mathbb{P} \left ( (1- e^{- \tilde c h_t})^{-1} \frac{R_{N_t}^t}{x_t} + \frac{H(\tilde m_{N_t})}{t} \geq 1, \ \sup_{1 \leq j \leq N_t - 1} \mathcal{L}_X(H(\tilde L_j), \tilde m_j) \leq t / x_t \right ) + e^{-c \phi(t)}, 
\end{align*}
where $c$ is a positive constant and where we have used \eqref{majoe9} for the second term, Lemma \ref{measofgoodenv} for the third term, and the fact that $t$ is large enough, 
\begin{align*}
\leq \mathbb{P} & \left ( (1- e^{- \tilde c h_t})^{-1} \frac{R_{N_t}^t}{x_t} + \frac{(1+e^{- \tilde c h_t})}{t} \sum_{j=1}^{N_t - 1} e_j S_j^t R_j^t \geq 1-2/\log h_t, \ \sup_{1 \leq j \leq N_t - 1} e_j S_j^t \leq (1 - e^{- \tilde c h_t})^{-1} t / x_t, \right. \\
& \left. V \in \mathcal{V}_t, \ N_t < n_t, \ \mathcal{E}^5_t, \mathcal{E}^6_t, \mathcal{E}^7_t \right ) + \mathbb{P} \left ( V \notin \mathcal{V}_t \right ) + \mathbb{P} \left ( N_t \geq n_t \right ) + \mathbb{P} \left ( \overline{\mathcal{E}^5_t} \right ) + \mathbb{P} \left ( \overline{\mathcal{E}^6_t} \right ) + \mathbb{P} \left ( \overline{\mathcal{E}^7_t} \right ) + e^{-c \phi(t)}, 
\end{align*}
where we used the definitions of $\mathcal{E}^5_t$, $\mathcal{E}^6_t$ and $\mathcal{E}^7_t$, 
\[ \leq \mathbb{P} \left ( \frac{R_{N_t}^t}{x_t} + \frac1{t} \sum_{j=1}^{N_t - 1} e_j S_j^t R_j^t \geq 1/2, \ \sup_{1 \leq j < N_t - 1} e_j S_j^t < 2 t / x_t, \ V \in \mathcal{V}_t, \ N_t < n_t, \ \mathcal{E}^5_t, \mathcal{E}^6_t, \mathcal{E}^7_t \right ) + e^{-c \phi(t)} \]
where we used the fact that $t$ is large enough, Fact \ref{minimacoincide}, \eqref{majonbvalleesvisit} and \eqref{bigeventsleq}, and where the constant $c$ has been suitably decreased, 
\begin{align*}
\leq & \mathbb{P} \left ( \frac{R_{N_t}^t}{x_t} + \frac1{t} \sum_{j=1}^{N_t - 1} e_j S_j^t R_j^t \geq 1/2, \ \sup_{1 \leq j \leq N_t - 1} e_j S_j^t < 2 t / x_t, \ e_{N_t} S_{N_t}^t \geq 2 t / x_t \right ) \\
+ & \mathbb{P} \left ( \sup_{1 \leq j \leq N_t} e_j S_j^t < 2 t / x_t, \ V \in \mathcal{V}_t, \ N_t < n_t, \ \mathcal{E}^5_t, \mathcal{E}^6_t, \mathcal{E}^7_t \right ) + e^{-c \phi(t)}. 
\end{align*}
On the event in the probability of the first term, we have $N_t = k^t(2/x_t)$. For the second term we use \eqref{bigeventsnbvalles} with $\eta = 3/4$. The above is thus less than
\begin{align*}
& \mathbb{P} \left ( \frac{R_{k^t(2/x_t)}^t}{x_t} + \frac1{t} \sum_{j=1}^{k^t(2/x_t) - 1} e_j S_j^t R_j^t \geq 1/2 \right ) + \mathbb{P} \left ( \sup_{1 \leq j \leq \mathcal{N}_{t/4}} e_j S_j^t < 2 t / x_t \right ) + e^{-c \phi(t)} \\
\leq & \mathbb{P} \left ( \frac{R_{k^t(2/x_t)}^t}{x_t} \geq 1/4 \right ) + \mathbb{P} \left ( \frac1{t} \sum_{j=1}^{k^t(2/x_t) - 1} e_j S_j^t R_j^t \geq 1/4 \right ) + \mathbb{P} \left ( \sup_{1 \leq j \leq \mathcal{N}_{t/4}} e_j S_j^t < 2 t / x_t \right ) + e^{-c \phi(t)} \\
\leq & \mathbb{P} \left ( R_1^t \geq x_t /4 \right ) + 2 \mathbb{P} \left ( Y_1^{\natural, t} \left ( Y_2^{-1, t}(1/4) \right ) \leq 2/x_t \right ) + e^{-c \phi(t)}
\end{align*}
where, for the last inequality, we used the fact that the sequence $(R_j^t)_{j \geq 1}$ is \textit{iid} and independent from the random index $k^t(2/x_t)$, \eqref{queueloilimdiscretbis1} with $b = 1/4$, $y_t = x_t/2$, and the definition of $(Y_1^t, Y_2^t)$. 
%for the third term we used Fact \ref{minimacoincide}, and for the fourth term we used \eqref{majonbvalleesvisit}. 
Then, note that according to \eqref{cvrlapl} we have for all $t$ large enough, 
\[ \mathbb{P} \left ( R_1^t \geq x_t /4 \right ) \leq e^{- \lambda_0 x_t /8}. \]
Bounding the three terms in the right hand side of \eqref{lastestim0} thanks to the above, Fact \ref{minimacoincide}, \eqref{majonbvalleesvisit} and \eqref{bigeventsleq} we get the upper bound for a suitably chosen constant $c$ and $t$ large enough. 

\end{proof}

We now study the functional involved in Proposition \ref{approxdupetittl}. For this we need two lemmas. 

In the remaining part of this subsection we fix $\eta \in ]0, 1/3[$. Let $y_t$ go to infinity with $t$ satisfying $\log (y_t) << \phi(t)$. Let $p_t := \mathbb{P} (e_1 S_1^t > t / y_t)$ and $(\overline{\mathcal{H}_{i}})_{i \geq 1}$ be \textit{iid} random variables such that $\overline{\mathcal{H}_{1}}$ has the same law as $e_1 S_1^t R_1^t$ conditionally to $\{ e_1 S_1^t \leq t / y_t \}$: $\mathcal{L}(\overline{\mathcal{H}_{1}}) = \mathcal{L} (e_1 S_1^t R_1^t|e_1 S_1^t \leq t / y_t)$. Since we have $\log (y_t) << \phi(t)$, \eqref{cvmeasure7.1} gives $p_t \sim \mathcal{C}' e^{-\kappa \phi(t)} y_t^{\kappa}$. We have

\begin{lemme} \label{etudelaplace}

\begin{eqnarray}
1 - \mathbb{E} \left [ e^{-\lambda \overline{\mathcal{H}_{1}} /t} \right ] \underset{t \rightarrow +\infty}{\sim} \lambda \frac{\mathcal{C}' \kappa \mathbb{E} \left [ \mathcal{R} \right ]}{(1-\kappa) e^{\kappa \phi(t)} y_t^{1-\kappa}}, \label{queueloilimdiscretbis8lp}
\end{eqnarray}
where $\mathcal{C}' $ is the constant in Fact \ref{queueiid} and $\mathcal{R}$ is defined in the beginning of Section \ref{results}. 

\end{lemme}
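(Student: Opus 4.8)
The plan is to compute the asymptotics of $1 - \mathbb{E}[e^{-\lambda \overline{\mathcal{H}_1}/t}]$ by writing the Laplace transform of $\overline{\mathcal{H}_1} = e_1 S_1^t R_1^t$ conditioned on $\{e_1 S_1^t \leq t/y_t\}$ in terms of the unconditioned quantity, and then splitting the integral according to the value of $e_1 S_1^t$. First I would write, using $\mathcal{L}(\overline{\mathcal{H}_1}) = \mathcal{L}(e_1 S_1^t R_1^t \mid e_1 S_1^t \leq t/y_t)$ and the independence of $e_1 S_1^t$ and $R_1^t$,
\[
1 - \mathbb{E}\left[e^{-\lambda \overline{\mathcal{H}_1}/t}\right] = \frac{1}{1-p_t}\,\mathbb{E}\left[\left(1 - e^{-\lambda (e_1 S_1^t/t) R_1^t}\right)\mathds{1}_{e_1 S_1^t \leq t/y_t}\right].
\]
Since $p_t \to 0$, the prefactor $1/(1-p_t) \to 1$, so the task is to estimate the expectation. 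Conditioning on $R_1^t$ and using $1 - e^{-x} \sim x$ for small $x$ (which applies since on the event $e_1 S_1^t/t \leq 1/y_t \to 0$ and $R_1^t$ converges in distribution to $\mathcal{R}$ by Fact \ref{queueiid}), I expect the dominant contribution to be
\[
\lambda\,\mathbb{E}\left[R_1^t \cdot \frac{e_1 S_1^t}{t}\,\mathds{1}_{e_1 S_1^t \leq t/y_t}\right] = \lambda\,\mathbb{E}[R_1^t]\,\mathbb{E}\left[\frac{e_1 S_1^t}{t}\,\mathds{1}_{e_1 S_1^t \leq t/y_t}\right],
\]
again by independence. The convergence $\mathbb{E}[R_1^t] \to \mathbb{E}[\mathcal{R}]$ is guaranteed by the moment convergence in Fact \ref{queueiid}.

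The core computation is therefore the asymptotics of $\mathbb{E}[(e_1 S_1^t/t)\mathds{1}_{e_1 S_1^t \leq t/y_t}]$. I would write this as $\int_0^{1/y_t} \mathbb{P}(e_1 S_1^t/t > x)\,dx - (1/y_t)\mathbb{P}(e_1 S_1^t/t > 1/y_t)$ (integration by parts for the truncated first moment), or more directly as $\int_0^{1/y_t}\mathbb{P}(1/y_t > e_1 S_1^t/t > x)\,dx$. Using \eqref{cvmeasure7.1}, which gives $\mathbb{P}(e_1 S_1^t/t > x) \sim \mathcal{C}' e^{-\kappa\phi(t)} x^{-\kappa}$ uniformly for $x$ in the relevant range $[e^{-(1-2\eta)\phi(t)}, +\infty[$, and noting that the hypothesis $\log(y_t) \ll \phi(t)$ ensures $1/y_t$ stays inside this uniform range, the integral $\int_0^{1/y_t} x^{-\kappa}\,dx = \frac{(1/y_t)^{1-\kappa}}{1-\kappa}$ (convergent at $0$ since $\kappa < 1$; the contribution from $x$ below $e^{-(1-2\eta)\phi(t)}$ is negligible because there $\mathbb{P}(e_1 S_1^t/t > x) \leq 1$ and the interval has length $e^{-(1-2\eta)\phi(t)}$, which is much smaller than $e^{-\kappa\phi(t)} y_t^{-(1-\kappa)}$ once one checks the exponents — here $(1-2\eta)\phi(t)$ beats $\kappa\phi(t)$ if $\eta$ is small, using $\kappa<1$; one must be slightly careful with this estimate). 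Hence
\[
\mathbb{E}\left[\frac{e_1 S_1^t}{t}\,\mathds{1}_{e_1 S_1^t \leq t/y_t}\right] \underset{t\to\infty}{\sim} \frac{\mathcal{C}'}{1-\kappa}\,\frac{1}{e^{\kappa\phi(t)}\,y_t^{1-\kappa}},
\]
and multiplying by $\lambda\,\mathbb{E}[\mathcal{R}]$ gives \eqref{queueloilimdiscretbis8lp}.

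The main obstacle I anticipate is justifying the replacement $1 - e^{-\lambda(e_1 S_1^t/t) R_1^t} \approx \lambda(e_1 S_1^t/t) R_1^t$ inside the expectation uniformly enough to conclude an equivalence (not just an upper bound): one has the elementary two-sided bound $x - x^2/2 \leq 1 - e^{-x} \leq x$, so the error term is controlled by $\tfrac{\lambda^2}{2}\mathbb{E}[(e_1 S_1^t/t)^2 (R_1^t)^2 \mathds{1}_{e_1 S_1^t \leq t/y_t}]$, and I would need to show this is $o$ of the main term. Factoring out $\mathbb{E}[(R_1^t)^2] \to \mathbb{E}[\mathcal{R}^2] < \infty$, it reduces to showing $\mathbb{E}[(e_1 S_1^t/t)^2 \mathds{1}_{e_1 S_1^t \leq t/y_t}] = o(e^{-\kappa\phi(t)} y_t^{-(1-\kappa)})$; by the same integration-by-parts computation this truncated second moment is of order $\int_0^{1/y_t} x^{1-\kappa}\,dx \asymp e^{-\kappa\phi(t)} y_t^{-(2-\kappa)}$, which is indeed $o$ of the main term since $y_t \to \infty$. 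Collecting the prefactor $(1-p_t)^{-1} \to 1$, the convergence $\mathbb{E}[R_1^t] \to \mathbb{E}[\mathcal{R}]$, and this negligibility of the quadratic error, one obtains \eqref{queueloilimdiscretbis8lp}.
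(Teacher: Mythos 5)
Your strategy --- separate out the $(1-p_t)^{-1}$ prefactor, linearize $1-e^{-x}$, control the quadratic error by a truncated second moment, and reduce to $\mathbb{E}[\mathcal{R}]\cdot\mathbb{E}[(e_1S_1^t/t)\,\mathds{1}_{e_1S_1^t\le t/y_t}]$ --- is a genuinely cleaner route than the paper's, which conditions on $R_1^t$, integrates by parts against the weight $e^{-\lambda x u}$, and must then track two same-order terms ($p_t(1-\mathbb{E}[e^{-\lambda R_1^t/y_t}])$ and $\lambda\int_0^{1/y_t}\mathbb{E}[R_1^t e^{-\lambda x R_1^t}]\mathbb{P}(e_1S_1^t/t>x)dx$) whose difference yields the factor $\kappa/(1-\kappa)$. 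In your formulation the same cancellation reappears as an integration-by-parts boundary term, and this is exactly the step that goes wrong.

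You correctly record
\[
\mathbb{E}\left[\frac{e_1 S_1^t}{t}\,\mathds{1}_{e_1 S_1^t\le t/y_t}\right]
=\int_0^{1/y_t}\mathbb{P}\left(\frac{e_1 S_1^t}{t}>x\right)dx \;-\; \frac{1}{y_t}\,\mathbb{P}\left(\frac{e_1 S_1^t}{t}>\frac{1}{y_t}\right),
\]
but then evaluate the asymptotics of the integral alone and silently drop the subtracted boundary term. That term equals $p_t/y_t$, and by \eqref{cvmeasure7.1} it is $\sim \mathcal{C}' e^{-\kappa\phi(t)}y_t^{\kappa-1}$, i.e.\ of exactly the same order as the integral $\sim \mathcal{C}' e^{-\kappa\phi(t)}y_t^{\kappa-1}/(1-\kappa)$. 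Keeping it replaces the constant $1/(1-\kappa)$ by $1/(1-\kappa)-1=\kappa/(1-\kappa)$. Your displayed equivalent for the truncated first moment is therefore off by a factor $\kappa$, and after multiplying by $\lambda\,\mathbb{E}[\mathcal{R}]$ you get $\lambda\,\mathcal{C}'\mathbb{E}[\mathcal{R}]/((1-\kappa)e^{\kappa\phi(t)}y_t^{1-\kappa})$ rather than the claimed limit in \eqref{queueloilimdiscretbis8lp}. Once the boundary term is reinstated, the remaining ingredients --- $(1-p_t)^{-1}\to 1$, $\mathbb{E}[R_1^t]\to\mathbb{E}[\mathcal{R}]$ by Fact \ref{queueiid}, the quadratic error of order $e^{-\kappa\phi(t)}y_t^{\kappa-2}$, and the negligibility of the region $x<e^{-(1-2\eta)\phi(t)}$ --- are all sound, and the argument does prove \eqref{queueloilimdiscretbis8lp}.
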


\begin{proof}

For any $\lambda \geq 0$ we have
\begin{align*}
\mathbb{E} \left [ e^{-\lambda \overline{\mathcal{H}_{1}} /t} \right ] & = \mathbb{E} \left [ e^{-\lambda e_1 S_1^t R_1^t /t} | e_1 S_1^t \leq t / y_t \right ] = (1-p_t)^{-1} \mathbb{E} \left [ e^{-\lambda e_1 S_1^t R_1^t /t} \mathds{1}_{e_1 S_1^t \leq t / y_t} \right ] \\
& = (1-p_t)^{-1} \int_0^{+\infty} \mathbb{E} \left [ e^{-\lambda u e_1 S_1^t /t} \mathds{1}_{e_1 S_1^t \leq t / y_t} \right ] \times \mathbb{P}(R_1^t \in du) \\
& = (1-p_t)^{-1} \int_0^{+\infty} \left ( 1 - e^{-\lambda u / y_t} p_t - \lambda \int_0^{1 / y_t} u e^{-\lambda x u} \mathbb{P}(e_1 S_1^t/t > x) dx \right ) \times \mathbb{P}(R_1^t \in du), 
\end{align*}
where we used iteration by parts, 
\[ = (1-p_t)^{-1} \left ( 1 - \mathbb{E} \left [ e^{-\lambda R_1^t / y_t} \right ] p_t - \lambda \int_0^{1 / y_t} \mathbb{E} \left [ R_1^t e^{-\lambda x R_1^t} \right ] \mathbb{P}(e_1 S_1^t/t > x) dx \right ), \]
where we used Fubini's Theorem, 
\begin{eqnarray}
= (1-p_t)^{-1} \left ( 1 - p_t + p_t(1-\mathbb{E} \left [ e^{-\lambda R_1^t / y_t} \right ]) - \lambda \int_0^{1 / y_t} \mathbb{E} \left [ R_1^t e^{-\lambda x R_1^t} \right ] \mathbb{P}(e_1 S_1^t/t > x) dx \right ). \label{queueloilimdiscretbis3lp}
\end{eqnarray}
We now study the second and third term in $\eqref{queueloilimdiscretbis3lp}$. Using the fact that the difference between two points of a continuously differentiable function is the integral of its derivative, the last part of Fact \ref{queueiid} and the equivalent for $p_t$ we get
\begin{eqnarray}
p_t(1-\mathbb{E} \left [ e^{-\lambda R_1^t / y_t} \right ]) = p_t \frac{\lambda}{y_t} \int_0^1 \mathbb{E} \left [ R_1^t e^{-\lambda u R_1^t / y_t} \right ] du \underset{t \rightarrow +\infty}{\sim} \lambda \frac{p_t \mathbb{E}  [ \mathcal{R} ]}{y_t} \underset{t \rightarrow +\infty}{\sim} \lambda \frac{\mathcal{C}' \mathbb{E}  [ \mathcal{R} ]}{e^{\kappa \phi(t)} y_t^{1-\kappa}}, \label{queueloilimdiscretbis4lp}
\end{eqnarray}
where $\mathcal{C}' $ is the constant in Fact \ref{queueiid}. Then, from the last part of Fact \ref{queueiid} again, 
\begin{eqnarray}
\int_0^{1 / y_t} \mathbb{E} \left [ R_1^t e^{-\lambda x R_1^t} \right ] \mathbb{P}(e_1 S_1^t/t > x) dx \underset{t \rightarrow +\infty}{\sim} \mathbb{E} \left [ \mathcal{R} \right ] \int_0^{1 / y_t} \mathbb{P}(e_1 S_1^t/t > x) dx. \label{queueloilimdiscretbis5lp}
\end{eqnarray}
Recall that $\eta \in ]0, 1/3[$. $\int_0^{1 / y_t} \mathbb{P}(e_1 S_1^t/t > x) dx$ equals
\begin{align}
& \int_0^{e^{-(1-2 \eta)\phi(t)}} \mathbb{P}(e_1 S_1^t/t > x) dx + \ e^{-\kappa \phi(t)} \int_{e^{-(1-2 \eta)\phi(t)}}^{1 / y_t} x^{-\kappa} x^{\kappa} e^{\kappa \phi(t)} \mathbb{P}(e_1 S_1^t/t > x) dx \nonumber \\
= & \int_0^{e^{-(1-2 \eta)\phi(t)}} \mathbb{P}(e_1 S_1^t/t > x) dx + \ e^{-\kappa \phi(t)} \int_{e^{-(1-2 \eta)\phi(t)}}^{1 / y_t} x^{-\kappa} \left ( x^{\kappa} e^{\kappa \phi(t)} \mathbb{P}(e_1 S_1^t/t > x) - \mathcal{C}' \right ) dx \nonumber \\
+ & \ \mathcal{C}' e^{-\kappa \phi(t)} \int_0^{1 / y_t} x^{-\kappa} dx - \ \mathcal{C}' e^{-\kappa \phi(t)} \int_0^{e^{-(1-2 \eta)\phi(t)}} x^{-\kappa} dx. \label{queueloilimdiscretbis6lp}
\end{align}
Now, the absolute values of the first and fourth terms of \eqref{queueloilimdiscretbis6lp} are respectively less than $e^{-(1-2 \eta)\phi(t)}$ and $\mathcal{C}' e^{-(2 \eta \kappa +(1-2 \eta))\phi(t)}/(1-\kappa)$. In particular, thanks to $2 \eta \kappa +(1-2 \eta) > \kappa$ (which is trivial) and $\log (y_t) << \phi(t)$, both are negligible with respect to $e^{-\kappa \phi(t)} / y_t^{1-\kappa}$. From \eqref{cvmeasure7.1} we also have
\[ e^{-\kappa \phi(t)} \int_{e^{-(1-2 \eta)\phi(t)}}^{1 / y_t} x^{-\kappa} \left ( x^{\kappa} e^{\kappa \phi(t)} \mathbb{P}(e_1 S_1^t/t > x) - \mathcal{C}' \right ) dx = \mathcal{O} (e^{-\kappa \phi(t)} / y_t^{1-\kappa}), \]
and the third term of \eqref{queueloilimdiscretbis6lp} equals $\mathcal{C}' e^{-\kappa \phi(t)} / (1-\kappa) y_t^{1-\kappa}$. Combining with \eqref{queueloilimdiscretbis5lp} we get
\begin{eqnarray}
-\lambda \int_0^{1 / y_t} \mathbb{E} \left [ R_1^t e^{-\lambda x R_1^t} \right ] \mathbb{P}(e_1 S_1^t/t > x) dx \underset{t \rightarrow +\infty}{\sim} -\lambda \frac{\mathcal{C}' \mathbb{E} \left [ \mathcal{R} \right ]}{(1-\kappa) e^{\kappa \phi(t)} y_t^{1-\kappa}}. \label{queueloilimdiscretbis7lp}
\end{eqnarray}

Putting together \eqref{queueloilimdiscretbis4lp} and \eqref{queueloilimdiscretbis7lp} in \eqref{queueloilimdiscretbis3lp} we obtain \eqref{queueloilimdiscretbis8lp}. 

\end{proof}

\begin{lemme} \label{etudelaplacege}

%Let $(\overline{\mathcal{H}_{i}})_{i \geq 1}$ be \textit{iid} random variables such that $\mathcal{L}_X(\overline{\mathcal{H}_{1}}) = \mathcal{L} (e_1 S_1^t R_1^t|e_1 S_1^t \leq t / y_t)$. 
Recall the $\lambda_0$ defined in Fact \ref{queueiid}. For any $\lambda \in [0, \lambda_0[$ we have
\begin{eqnarray}
\left ( \mathbb{E} \left [ e^{\lambda y_t \overline{\mathcal{H}_{1}} /t} \right ] - 1 \right ) / p_t \underset{t \rightarrow +\infty}{\longrightarrow} 1-\mathbb{E} \left [ e^{\lambda \mathcal{R}} \right ] + \lambda \int_0^{1} x^{-\kappa} \mathbb{E} \left [ \mathcal{R} e^{\lambda x \mathcal{R}} \right ] dx, \label{queueloilimdiscretbis8lpge}
\end{eqnarray}
and this limit is positive for all $\lambda \in ]0, \lambda_0[$. 
\end{lemme}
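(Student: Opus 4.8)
The plan is to mirror the computation in the proof of Lemma \ref{etudelaplace}, exploiting that the exponent $\lambda y_t \overline{\mathcal{H}_1}/t$ is scaled precisely so that, on the conditioning event $\{ e_1 S_1^t \leq t/y_t \}$, one has $e_1 S_1^t R_1^t /t \leq R_1^t$ and hence $e^{\lambda y_t \overline{\mathcal{H}_1}/t} \leq e^{\lambda R_1^t}$, which is integrable for $\lambda < \lambda_0$ by \eqref{cvrlapl}. Writing $Z := e_1 S_1^t /t$ and using the independence of $e_1 S_1^t$ and $R_1^t$, I would first condition on $R_1^t = u$, then integrate by parts in the (atomless) law of $Z$ and substitute $x = y_t z$ to get, for each $u \geq 0$,
\[ \mathbb{E}\left[ e^{\lambda y_t u Z} \mathds{1}_{Z \leq 1/y_t} \right] = 1 - e^{\lambda u} p_t + \lambda u \int_0^1 e^{\lambda u x} \, \mathbb{P}(Z > x/y_t) \, dx . \]
Integrating in $u$ against $\mathcal{L}(R_1^t)$ and applying Fubini's theorem (legitimate since $\mathbb{E}[R_1^t e^{\lambda R_1^t}] < +\infty$), this yields
\[ \mathbb{E}\left[ e^{\lambda y_t \overline{\mathcal{H}_1}/t} \right] = (1-p_t)^{-1} \left( 1 - p_t \mathbb{E}[e^{\lambda R_1^t}] + \lambda \int_0^1 \mathbb{E}\left[ R_1^t e^{\lambda x R_1^t} \right] \mathbb{P}(e_1 S_1^t/t > x/y_t) \, dx \right) , \]
whence, subtracting $1$ and dividing by $p_t$,
\[ \frac{\mathbb{E}[e^{\lambda y_t \overline{\mathcal{H}_1}/t}] - 1}{p_t} = (1-p_t)^{-1} \left( 1 - \mathbb{E}[e^{\lambda R_1^t}] + \frac{\lambda}{p_t} \int_0^1 \mathbb{E}\left[ R_1^t e^{\lambda x R_1^t} \right] \mathbb{P}(e_1 S_1^t/t > x/y_t) \, dx \right) . \]

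Next I would pass to the limit $t \to +\infty$ term by term. Here $p_t \to 0$; $\mathbb{E}[e^{\lambda R_1^t}] \to \mathbb{E}[e^{\lambda \mathcal{R}}]$ by \eqref{cvrlapl}; and $\mathbb{E}[R_1^t e^{\lambda x R_1^t}] \to \mathbb{E}[\mathcal{R} e^{\lambda x \mathcal{R}}]$ uniformly in $x \in [0,1]$: picking $\lambda < \lambda' < \lambda_0$, the convergence in distribution $R_1^t \to \mathcal{R}$ of Fact \ref{queueiid} together with the uniform integrability coming from $r e^{\lambda r} \leq C_{\lambda,\lambda'} e^{\lambda' r}$ and $\sup_t \mathbb{E}[e^{\lambda' R_1^t}] < +\infty$ gives this. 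For the decisive factor $\mathbb{P}(e_1 S_1^t/t > x/y_t)/p_t$, the hypothesis $\log(y_t) \ll \phi(t)$ gives $p_t \sim \mathcal{C}' e^{-\kappa \phi(t)} y_t^{\kappa}$, and \eqref{cvmeasure7.1} shows $(x/y_t)^{\kappa} e^{\kappa \phi(t)} \mathbb{P}(e_1 S_1^t/t > x/y_t) \to \mathcal{C}'$ uniformly for $x \in [x_t^0, 1]$ with $x_t^0 := y_t e^{-(1-2\eta)\phi(t)} \to 0$, hence $\mathbb{P}(e_1 S_1^t/t > x/y_t)/p_t \to x^{-\kappa}$ uniformly there. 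Splitting $\frac{1}{p_t}\int_0^1$ at $x_t^0$: on $[x_t^0,1]$ I conclude by dominated convergence (the limit $x^{-\kappa}\mathbb{E}[\mathcal{R} e^{\lambda x \mathcal{R}}]$ is integrable on $]0,1]$ since $\kappa < 1$); on $[0,x_t^0]$ I bound $\mathbb{P}(\cdot) \leq 1$ and $\mathbb{E}[R_1^t e^{\lambda x R_1^t}] \leq \sup_t \mathbb{E}[R_1^t e^{\lambda R_1^t}] < +\infty$, so that part contributes $O(x_t^0/p_t) = O(y_t^{1-\kappa} e^{-(1-2\eta-\kappa)\phi(t)}) = o(1)$, exactly as in the proof of Lemma \ref{etudelaplace}. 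This gives the limit $1 - \mathbb{E}[e^{\lambda \mathcal{R}}] + \lambda \int_0^1 x^{-\kappa} \mathbb{E}[\mathcal{R} e^{\lambda x \mathcal{R}}] \, dx$, i.e. \eqref{queueloilimdiscretbis8lpge}.

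For the positivity when $\lambda \in \, ]0,\lambda_0[$, I would use $\mathbb{E}[e^{\lambda \mathcal{R}}] - 1 = \lambda \int_0^1 \mathbb{E}[\mathcal{R} e^{\lambda s \mathcal{R}}] \, ds$ (integrating $\frac{d}{ds}\mathbb{E}[e^{\lambda s \mathcal{R}}]$ from $0$ to $1$), so that the limit in \eqref{queueloilimdiscretbis8lpge} rewrites as $\lambda \int_0^1 (x^{-\kappa} - 1) \, \mathbb{E}[\mathcal{R} e^{\lambda x \mathcal{R}}] \, dx$; since $x^{-\kappa} - 1 > 0$ on $]0,1[$ (as $\kappa > 0$), $\mathbb{E}[\mathcal{R} e^{\lambda x \mathcal{R}}] > 0$ (as $\mathcal{R} > 0$ a.s.) and $\lambda > 0$, this integral is strictly positive.

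The main obstacle is the limit passage for $\frac{1}{p_t}\int_0^1 \mathbb{E}[R_1^t e^{\lambda x R_1^t}] \mathbb{P}(e_1 S_1^t/t > x/y_t) \, dx$, where one must simultaneously control the failure of the uniform tail estimate \eqref{cvmeasure7.1} near $x = 0$ and the $x^{-\kappa}$ singularity of the limit integrand; this is precisely the delicate splitting already performed in the proof of Lemma \ref{etudelaplace}, which I re-use. Everything else is a routine variant of that proof, together with the short positivity computation above.
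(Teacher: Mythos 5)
Your derivation of the limit \eqref{queueloilimdiscretbis8lpge} is essentially the paper's own proof: the same conditioning on $R_1^t$, integration by parts in the law of $e_1S_1^t/t$, change of variable, Fubini, and then the same splitting of $\int_0^1$ at $y_te^{-(1-2\eta)\phi(t)}$ with \eqref{cvmeasure7.1} and \eqref{cvrlapl} handling the main piece and the small-$x$ piece being negligible relative to $p_t$; your bookkeeping (including the algebra after dividing by $p_t$) matches the paper's display term by term, and your uniform-integrability remarks are if anything slightly more careful than the paper's brief appeal to \eqref{cvrlapl}. Note that, exactly as in the paper, the negligibility of the $[0,y_te^{-(1-2\eta)\phi(t)}]$ piece, i.e. $y_t^{1-\kappa}e^{-((1-2\eta)-\kappa)\phi(t)}\to 0$, implicitly requires $\eta<(1-\kappa)/2$, so $\eta$ must be taken small enough when $\kappa$ is close to $1$; this is a shared (and easily repaired, since $\eta$ is a free parameter) imprecision rather than a gap specific to your argument. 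Where you genuinely depart from the paper is the positivity of the limit: the paper shows the limit is $\sim\lambda\kappa\mathbb{E}[\mathcal{R}]/(1-\kappa)$ as $\lambda\to 0$ and then uses monotonicity in $\lambda$ of $(\mathbb{E}[e^{\lambda y_t\overline{\mathcal{H}_{1}}/t}]-1)/p_t$ to propagate positivity over all of $]0,\lambda_0[$, whereas you write $\mathbb{E}[e^{\lambda\mathcal{R}}]-1=\lambda\int_0^1\mathbb{E}[\mathcal{R}e^{\lambda s\mathcal{R}}]\,ds$ (legitimate for $\lambda<\lambda_0$ by dominating with $e^{\lambda'\mathcal{R}}$, $\lambda<\lambda'<\lambda_0$) and recast the limit as $\lambda\int_0^1(x^{-\kappa}-1)\mathbb{E}[\mathcal{R}e^{\lambda x\mathcal{R}}]\,dx$, which is manifestly positive since $\kappa\in\,]0,1[$ and $\mathcal{R}>0$ a.s.; this is a cleaner, more direct argument that also yields a pleasant closed form for the limit, at no extra cost.
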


\begin{proof}

For any $\lambda \in [0, \lambda_0[$ we have $\lambda y_t e_1 S_1^t /t \in [0, \lambda_0[$ on the event $\{ e_1 S_1^t /t \leq 1/y_t \}$, so
\begin{align*}
\mathbb{E} \left [ e^{\lambda y_t \overline{\mathcal{H}_{1}} /t} \right ] & = \mathbb{E} \left [ e^{\lambda y_t e_1 S_1^t R_1^t /t} | e_1 S_1^t \leq t / y_t \right ] = (1-p_t)^{-1} \mathbb{E} \left [ e^{\lambda y_t e_1 S_1^t R_1^t /t} \mathds{1}_{e_1 S_1^t \leq t / y_t} \right ] \\
& = (1-p_t)^{-1} \int_0^{+\infty} \mathbb{E} \left [ e^{\lambda y_t u e_1 S_1^t /t} \mathds{1}_{e_1 S_1^t \leq t / y_t} \right ] \times \mathbb{P}(R_1^t \in du) \\
& = (1-p_t)^{-1} \int_0^{+\infty} \left ( 1 - e^{\lambda y_t u / y_t} p_t + \lambda y_t \int_0^{1 / y_t} u e^{\lambda y_t x u} \mathbb{P}(e_1 S_1^t/t > x) dx \right ) \times \mathbb{P}(R_1^t \in du) \\
& = (1-p_t)^{-1} \int_0^{+\infty} \left ( 1 - e^{\lambda u} p_t + \lambda \int_0^{1} u e^{\lambda y u} \mathbb{P}(e_1 S_1^t/t > y/y_t) dy \right ) \times \mathbb{P}(R_1^t \in du), 
\end{align*}
where we used iteration by parts and made the change of variable $y = y_t x$, 
\[ = (1-p_t)^{-1} \left ( 1 - \mathbb{E} \left [ e^{\lambda R_1^t} \right ] p_t + \lambda \int_0^{1} \mathbb{E} \left [ R_1^t e^{\lambda y R_1^t} \right ] \mathbb{P}(e_1 S_1^t/t > y/y_t) dy \right ) \]
where we used Fubini's Theorem, 
\begin{eqnarray}
= (1-p_t)^{-1} \left ( 1 - p_t + p_t(1-\mathbb{E} \left [ e^{\lambda R_1^t} \right ]) + \lambda \int_0^{1} \mathbb{E} \left [ R_1^t e^{\lambda y R_1^t} \right ] \mathbb{P}(e_1 S_1^t/t > y/y_t) dy \right ). \label{queueloilimdiscretbis3lpge}
\end{eqnarray}
According to \eqref{cvrlapl} the second term is equivalent to $p_t(1-\mathbb{E} [ e^{\lambda \mathcal{R}}])$. We now study the third term in $\eqref{queueloilimdiscretbis3lpge}$. Recall that $\eta \in ]0, 1/3[$. This term equals 
\begin{align}
& \lambda \int_0^{y_t e^{-(1-2 \eta)\phi(t)}} \mathbb{E} \left [ R_1^t e^{\lambda y R_1^t} \right ] \mathbb{P}(e_1 S_1^t/t > y/y_t) dy \nonumber \\
+ & \lambda y_t^{\kappa} e^{-\kappa \phi(t)} \int_{e^{-(1-2 \eta)\phi(t)}}^{1} y^{-\kappa} \mathbb{E} \left [ R_1^t e^{\lambda y R_1^t} \right ] (y/y_t)^{\kappa} e^{\kappa \phi(t)} \mathbb{P}(e_1 S_1^t/t > y/y_t) dy \nonumber \\
= & \lambda \int_0^{y_t e^{-(1-2 \eta)\phi(t)}} \mathbb{E} \left [ R_1^t e^{\lambda y R_1^t} \right ] \mathbb{P}(e_1 S_1^t/t > y/y_t) dy \nonumber \\
+ & \lambda y_t^{\kappa} e^{-\kappa \phi(t)} \int_{y_t e^{-(1-2 \eta)\phi(t)}}^{1} y^{-\kappa} \mathbb{E} \left [ R_1^t e^{\lambda y R_1^t} \right ] \left ( (y/y_t)^{\kappa} e^{\kappa \phi(t)} \mathbb{P}(e_1 S_1^t/t > y/y_t) - \mathcal{C}' \right ) dy \nonumber \\
+ & \lambda \mathcal{C}' y_t^{\kappa} e^{-\kappa \phi(t)} \int_0^{1} y^{-\kappa} \mathbb{E} \left [ R_1^t e^{\lambda y R_1^t} \right ] dy - \lambda \mathcal{C}' y_t^{\kappa} e^{-\kappa \phi(t)} \int_0^{y_t e^{-(1-2 \eta)\phi(t)}} y^{-\kappa} \mathbb{E} \left [ R_1^t e^{\lambda y R_1^t} \right ] dy. \label{queueloilimdiscretbis6lpge}
\end{align}
In the above, $\mathcal{C}' $ is the constant in Fact \ref{queueiid}. Now, thanks to \eqref{cvrlapl}, the absolute values of the first and fourth terms of \eqref{queueloilimdiscretbis6lpge} are ultimately less than $2 \lambda \mathbb{E} [\mathcal{R}] y_t e^{-(1-2 \eta)\phi(t)}$ and \\ $2 \lambda \mathcal{C}' \mathbb{E} [\mathcal{R}] y_t e^{-(2 \eta \kappa +(1-2 \eta))\phi(t)} /(1-\kappa)$. In particular, thanks to $2 \eta \kappa +(1-2 \eta) > \kappa$ (which is trivial) and $\log (y_t) << \phi(t)$, both are negligible with respect to $e^{-\kappa \phi(t)} y_t^{\kappa}$. From \eqref{cvmeasure7.1} and \eqref{cvrlapl} we also have
\[ \lambda y_t^{\kappa} e^{-\kappa \phi(t)} \int_{y_t e^{-(1-2 \eta)\phi(t)}}^{1} y^{-\kappa} \mathbb{E} \left [ R_1^t e^{\lambda y R_1^t} \right ] \left ( (y/y_t)^{\kappa} e^{\kappa \phi(t)} \mathbb{P}(e_1 S_1^t/t > y/y_t) - \mathcal{C}' \right ) dy = \mathcal{O} (e^{-\kappa \phi(t)} y_t^{\kappa}), \]
and, thanks to \eqref{cvrlapl}, the third term of \eqref{queueloilimdiscretbis6lpge} is equivalent to $\lambda \mathcal{C}' y_t^{\kappa} e^{-\kappa \phi(t)} \int_0^{1} y^{-\kappa} \mathbb{E} [ \mathcal{R} e^{\lambda y \mathcal{R}} ] dy$. We thus get 
\begin{align*}
\lambda \int_0^{1} \mathbb{E} \left [ R_1^t e^{\lambda y R_1^t} \right ] \mathbb{P}(e_1 S_1^t/t > y/y_t) dy & \underset{t \rightarrow + \infty}{\sim} \lambda \mathcal{C}' y_t^{\kappa} e^{-\kappa \phi(t)} \int_0^{1} y^{-\kappa} \mathbb{E} [ \mathcal{R} e^{\lambda y \mathcal{R}} ] dy \\
& \underset{t \rightarrow + \infty}{\sim} \lambda p_t \int_0^{1} y^{-\kappa} \mathbb{E} [ \mathcal{R} e^{\lambda y \mathcal{R}} ] dy. 
\end{align*}
Putting into \eqref{queueloilimdiscretbis3lpge} we obtain \eqref{queueloilimdiscretbis8lpge}. 

We justify the positivity of the limit as follows: we see that the right hand side of \eqref{queueloilimdiscretbis8lpge} is equivalent to $\lambda \kappa \mathbb{E} [ \mathcal{R} ] / (1-\kappa)$ when $\lambda$ goes to $0$. The limit in \eqref{queueloilimdiscretbis8lpge} is therefore positive for small $\lambda$. On the other hand, $( \mathbb{E} [ e^{\lambda y_t \overline{\mathcal{H}_{1}} /t} ] - 1) / p_t$ increases with $\lambda$ so the limit in \eqref{queueloilimdiscretbis8lpge} is non-decreasing on $[0, \lambda_0[$. We thus get the positivity of the limit for all $\lambda \in ]0, \lambda_0[$. 

\end{proof}

We can now study the lower and upper bounds given by Proposition \ref{approxdupetittl}: 

\begin{prop} \label{queueloilimdiscret}
Let $y_t$ be chosen as before (that is, $y_t \rightarrow +\infty$ and $\log (y_t) << \phi(t)$). There is a positive constant $L$ (not depending on the choice of $y_t$), such that for any $b > 0$, $u > 1$ and $t$ large enough we have 
\[ e^{- u b(1-\kappa) y_t /\kappa \mathbb{E}[\mathcal{R}]} \leq \mathbb{P} \left ( Y_1^{\natural, t} \left ( Y_2^{-1, t}(b) \right ) \leq 1/y_t \right ) \leq e^{- L b y_t}. \]
\end{prop}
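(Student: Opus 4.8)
The plan is first to rewrite $\mathbb{P}\big(Y_1^{\natural,t}(Y_2^{-1,t}(b))\le 1/y_t\big)$ as an \emph{exact} functional of the \textit{iid} sequence $(e_jS_j^t,e_jS_j^tR_j^t)_{j\ge1}$ --- this is possible, and is why the statement carries no $e^{-c\phi(t)}$ error term. Unwinding the definitions: $Y_2^t$ is an increasing pure-jump step process with jumps exactly at the points $ne^{-\kappa\phi(t)}$, so $Y_2^{-1,t}(b)=\mathcal{N}_{tb}\,e^{-\kappa\phi(t)}$ and hence $Y_1^{\natural,t}(Y_2^{-1,t}(b))=t^{-1}\max_{1\le n\le\mathcal{N}_{tb}}e_nS_n^t$. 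Letting $k^t$ be the first index $n$ with $e_nS_n^t>t/y_t$, I would observe that $\{Y_1^{\natural,t}(Y_2^{-1,t}(b))\le 1/y_t\}=\{\mathcal{N}_{tb}\le k^t-1\}=\{\sum_{i=1}^{k^t-1}e_iS_i^tR_i^t>tb\}$. Now $T:=k^t-1$ is geometric with $\mathbb{P}(T=n)=(1-p_t)^np_t$, and conditionally on $T$ the summands $e_iS_i^tR_i^t$ ($i\le T$) are \textit{iid} copies of $\overline{\mathcal{H}_1}$ (the law $\mathcal{L}(e_1S_1^tR_1^t\mid e_1S_1^t\le t/y_t)$ of Lemmas \ref{etudelaplace}--\ref{etudelaplacege}), independent of $T$, since the $R_i^t$ are independent of the $e_iS_i^t$ which alone determine $k^t$. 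Thus $\mathbb{P}\big(Y_1^{\natural,t}(Y_2^{-1,t}(b))\le 1/y_t\big)=\mathbb{P}\big(\sum_{i=1}^{T}\overline{\mathcal{H}_i}>tb\big)$.

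\textbf{Step 2: upper bound via a Chernoff estimate at a fixed parameter.} Write $\ell(\lambda):=1-\mathbb{E}[e^{\lambda\mathcal{R}}]+\lambda\int_0^1x^{-\kappa}\mathbb{E}[\mathcal{R}e^{\lambda x\mathcal{R}}]\,dx$ for the limit in \eqref{queueloilimdiscretbis8lpge}; it is finite and non-decreasing on $[0,\lambda_0[$ with $\ell(0)=0$, so I fix once and for all $\lambda_1\in]0,\lambda_0[$ with $\ell(\lambda_1)<1$ --- crucially this choice does not depend on $y_t$ or $b$. Since $T\perp(\overline{\mathcal{H}_i})$ and $\overline{\mathcal{H}_1}\le tR_1^t/y_t$ has finite exponential moments ($\lambda_1<\lambda_0$, Fact \ref{queueiid}), $\mathbb{E}[e^{(\lambda_1y_t/t)\sum_{i=1}^T\overline{\mathcal{H}_i}}]=p_t\big/\big(1-(1-p_t)\mathbb{E}[e^{\lambda_1y_t\overline{\mathcal{H}_1}/t}]\big)$; by \eqref{queueloilimdiscretbis8lpge} the denominator equals $p_t(1-\ell(\lambda_1)+o(1))$, which is positive for $t$ large, so this expression is at most $2/(1-\ell(\lambda_1))$ for $t$ large. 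Markov's inequality then gives $\mathbb{P}\big(\sum_{i=1}^T\overline{\mathcal{H}_i}>tb\big)\le\frac{2}{1-\ell(\lambda_1)}e^{-\lambda_1y_tb}\le e^{-(\lambda_1/2)y_tb}$ for $t$ large (using $y_t\to\infty$), i.e. the claimed bound with $L:=\lambda_1/2$, independent of $y_t$ and $b$.

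\textbf{Step 3: lower bound.} Fix $\epsilon>0$ with $1+\epsilon<u$, set $c_t:=\mathcal{C}'\kappa\mathbb{E}[\mathcal{R}]/((1-\kappa)e^{\kappa\phi(t)}y_t^{1-\kappa})$ and $n_t:=\lceil(1+\epsilon)b/c_t\rceil$, and use $\mathbb{P}\big(\sum_{i=1}^T\overline{\mathcal{H}_i}>tb\big)\ge\mathbb{P}(T\ge n_t)\,\mathbb{P}\big(\sum_{i=1}^{n_t}\overline{\mathcal{H}_i}>tb\big)$ (valid since the $\overline{\mathcal{H}_i}\ge0$ are independent of $T$). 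For the first factor, $\mathbb{P}(T\ge n_t)=(1-p_t)^{n_t}=e^{-n_tp_t(1+o(1))}$; since $n_tc_t=(1+\epsilon)b+o(1)$ and $p_t\sim\mathcal{C}'e^{-\kappa\phi(t)}y_t^{\kappa}$ one gets $n_tp_t=\frac{(1+\epsilon)b(1-\kappa)}{\kappa\mathbb{E}[\mathcal{R}]}y_t(1+o(1))$, hence $\mathbb{P}(T\ge n_t)=\exp\!\big(-\tfrac{(1+\epsilon)b(1-\kappa)}{\kappa\mathbb{E}[\mathcal{R}]}y_t+o(y_t)\big)$. For the second factor, a Chernoff bound at parameter $\lambda/t$ with $\lambda:=(\ln4)/(\epsilon b)$ fixed gives $\mathbb{P}\big(\sum_{i=1}^{n_t}\overline{\mathcal{H}_i}\le tb\big)\le e^{\lambda b}(\mathbb{E}[e^{-\lambda\overline{\mathcal{H}_1}/t}])^{n_t}$; Lemma \ref{etudelaplace} gives $\mathbb{E}[e^{-\lambda\overline{\mathcal{H}_1}/t}]=1-\lambda c_t(1+o(1))$, so $(\mathbb{E}[e^{-\lambda\overline{\mathcal{H}_1}/t}])^{n_t}=e^{-\lambda n_tc_t(1+o(1))}=e^{-\lambda(1+\epsilon)b+o(1)}$, whence $\mathbb{P}\big(\sum_{i=1}^{n_t}\overline{\mathcal{H}_i}\le tb\big)\le e^{-\lambda\epsilon b+o(1)}\le1/2$ for $t$ large. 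Multiplying the two factors, $\mathbb{P}\big(\sum_{i=1}^T\overline{\mathcal{H}_i}>tb\big)\ge\tfrac12\exp\!\big(-\tfrac{(1+\epsilon)b(1-\kappa)}{\kappa\mathbb{E}[\mathcal{R}]}y_t+o(y_t)\big)$, which exceeds $\exp\!\big(-\tfrac{ub(1-\kappa)}{\kappa\mathbb{E}[\mathcal{R}]}y_t\big)$ once $t$ is large, because $u>1+\epsilon$ and $y_t\to\infty$ absorb the factor $\tfrac12$ and the $o(y_t)$ correction.

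\textbf{Expected difficulty.} The two Chernoff computations are routine once the lemmas are in hand. The delicate points are: Step 1 must be an exact identity (there is no slack in the statement); in Step 2 the constant $L$ has to come out independent of $y_t$ and $b$, which is precisely why one works at a fixed $\lambda_1$ with $\ell(\lambda_1)<1$ rather than optimizing over $\lambda$; and in Step 3 the coefficient of $y_t$ in the exponent must be controlled sharply enough that $u>1$ (rather than, say, $u>2$) suffices --- this forces taking $n_t\mathbb{E}[\overline{\mathcal{H}_1}]$ only a factor $1+\epsilon$ above $tb$ and using the sharp asymptotics of Lemma \ref{etudelaplace} and of $p_t$, all $O(1)$ prefactors and $o(y_t)$ corrections being killed by $y_t\to\infty$. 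I expect this last bookkeeping in the lower bound to be the main obstacle.
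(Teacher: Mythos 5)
Your proposal is correct and follows essentially the same route as the paper: the identical exact identification of the event with $\{\sum_{i=1}^{T}\overline{\mathcal{H}_i}>tb\}$ for a geometric $T$ independent of the $\overline{\mathcal{H}_i}$, the same reliance on Lemmas \ref{etudelaplace} and \ref{etudelaplacege} together with $p_t\sim\mathcal{C}'e^{-\kappa\phi(t)}y_t^{\kappa}$, and the same two-factor structure (and effectively the same number of summands) for the lower bound. The only, harmless, variation is in the upper bound, where you evaluate the compound-geometric Laplace transform in closed form and apply Markov once (which requires the limit in Lemma \ref{etudelaplacege} to be $<1$, guaranteed by fixing $\lambda_1$ small), whereas the paper splits into $\{T>\alpha h(t)\}$ plus a Chernoff bound on a deterministic number of terms.
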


\begin{proof}

\textit{Lower bound}

Let us fix $\alpha \in ]b(1-\kappa)/\kappa \mathbb{E}[\mathcal{R}], u b(1-\kappa)/\kappa \mathbb{E}[\mathcal{R}] [$. For any $z > 0$, $k^t(z)$ still denotes the first index $k \geq 1$ such that $e_k S_k^t /t \geq z$. We have
\begin{eqnarray}
\left \{ Y_1^{\natural, t} \left ( Y_2^{-1, t}(b) \right ) \leq 1/y_t \right \} = \left \{ k^t(1 / y_t) > \mathcal{N}_{tb} \right \} = \left \{ \sum_{i=1}^{k^t(1 / y_t)-1} e_i S_i^t R_i^t > tb \right \}. \label{queueloilimdiscretbis1}
\end{eqnarray}

Now, recall that $(\overline{\mathcal{H}_{i}})_{i \geq 1}$ are \textit{iid} random variables such that $\mathcal{L}(\overline{\mathcal{H}_{1}}) = \mathcal{L} (e_1 S_1^t R_1^t|e_1 S_1^t \leq t / y_t)$ and let $T$ be a geometric random variable with parameter $p_t = \mathbb{P}(e_1 S_1^t > t / y_t)$, independent from the sequence $(\overline{\mathcal{H}_{i}})_{i \geq 1}$. Recall that $p_t \sim \mathcal{C}' e^{-\kappa \phi(t)} y_t^{\kappa}$, where $\mathcal{C}' $ is the constant in Fact \ref{queueiid}. We have
\begin{align}
\mathbb{P} \left ( \sum_{i=1}^{k^t(1 / y_t)-1} e_i S_i^t R_i^t > tb \right ) = \mathbb{P} \left ( \sum_{i=1}^{T-1} \overline{\mathcal{H}_{i}} > tb \right ) \geq \mathbb{P} \left ( T > \alpha h(t) + 1 \right ) \times \mathbb{P} \left ( \sum_{i=1}^{\lfloor \alpha h(t) \rfloor} \overline{\mathcal{H}_{i}} > tb \right ), \label{queueloilimdiscretbis2}
\end{align}
where we put $h(t) := y_t/p_t \sim e^{\kappa \phi(t)} y_t^{1-\kappa} / \mathcal{C}'$. We give a lower bound for the two factors in the left hand side of \eqref{queueloilimdiscretbis2}. We first study the Laplace transform of the normalized sum of the second factor to prove its convergence to a constant number. For any $\lambda \geq 0$, we have
\[ \mathbb{E} \left [ e^{-\lambda \sum_{i=1}^{\lfloor \alpha h(t) \rfloor} \overline{\mathcal{H}_{i}}/t} \right ] = \left ( \mathbb{E} \left [ e^{-\lambda \overline{\mathcal{H}_{1}} /t} \right ] \right )^{\lfloor \alpha h(t) \rfloor} = e^{\lfloor \alpha h(t) \rfloor \log \left (1 + \mathbb{E} \left [ e^{-\lambda \overline{\mathcal{H}_{1}} /t} \right ] - 1 \right )}. \]
According to Lemma \ref{etudelaplace}, the exponent is equivalent to
\[ -\lambda \mathcal{C}' \kappa \mathbb{E} \left [ \mathcal{R} \right ] \alpha h(t) / (1-\kappa) e^{\kappa \phi(t)} y_t^{1-\kappa}, \]
and since $h(t) \sim e^{\kappa \phi(t)} y_t^{1-\kappa} / \mathcal{C}'$ we get
\[ \mathbb{E} \left [ e^{-\lambda \sum_{i=1}^{\lfloor \alpha h(t) \rfloor} \overline{\mathcal{H}_{i}}/t} \right ] \underset{t \rightarrow +\infty}{\longrightarrow} e^{-\lambda \alpha \kappa \mathbb{E} \left [ \mathcal{R} \right ] / (1-\kappa)}, \]
so $\sum_{i=1}^{\lfloor \alpha h(t) \rfloor} \overline{\mathcal{H}_{i}}/t$ converges in probability to $\alpha \kappa \mathbb{E} [ \mathcal{R} ] / (1-\kappa)$ which yields 
\begin{eqnarray}
\mathbb{P} \left ( \sum_{i=1}^{\lfloor \alpha h(t) \rfloor} \overline{\mathcal{H}_{i}} > tb \right ) \underset{t \rightarrow +\infty}{\longrightarrow} 1, \label{queueloilimdiscretbis9}
\end{eqnarray}
since $\alpha > b (1-\kappa) / \kappa \mathbb{E} [ \mathcal{R} ]$. 

We now study the first factors in the left hand side of \eqref{queueloilimdiscretbis2}. Since $T$ is geometric with parameter $p_t$, we have
\[ \mathbb{P} \left ( T > \alpha h(t) + 1 \right ) = (1-p_t)^{\lfloor \alpha h(t) + 1 \rfloor} = e^{\lfloor \alpha h(t) + 1 \rfloor \log \left (1 - p_t \right )}. \]
Now, since $h(t) \sim e^{\kappa \phi(t)} y_t^{1-\kappa} / \mathcal{C}'$ and $p_t \sim \mathcal{C}' e^{-\kappa \phi(t)} y_t^{\kappa}$ we get
\begin{eqnarray}
\log \left ( \mathbb{P} \left ( T > \alpha h(t) + 1 \right ) \right ) \underset{t \rightarrow +\infty}{\sim} \alpha y_t. \label{queueloilimdiscretbis10}
\end{eqnarray}

Now, putting \eqref{queueloilimdiscretbis9} and \eqref{queueloilimdiscretbis10} into \eqref{queueloilimdiscretbis2}, and combining the latter with \eqref{queueloilimdiscretbis1}, we get the result for $t$ large enough since $\alpha < u b(1-\kappa)/\kappa \mathbb{E}[\mathcal{R}]$. 

\textit{Upper bound}

%SEPARER EN DEUX OU FAIRE DIRECTEMENT LA LAPLACE DU TRUC AVEC LA GEOM ? mettre la geom obligerait a parler de cv des zeros...
Recall \eqref{queueloilimdiscretbis1} and the definitions of $(\overline{\mathcal{H}_{i}})_{i \geq 1}$, $p_t$, $T$ and $h(t)$. Let us fix $\alpha > 0$ that will be chosen latter. We have
\begin{eqnarray}
\mathbb{P} \left ( \sum_{i=1}^{k^t(t / y_t)-1} e_i S_i^t R_i^t > tb \right ) = \mathbb{P} \left ( \sum_{i=1}^{T-1} \overline{\mathcal{H}_{i}} > tb \right ) \leq \mathbb{P} \left ( T > \alpha h(t) \right ) + \mathbb{P} \left ( \sum_{i=1}^{\lfloor \alpha h(t) \rfloor} \overline{\mathcal{H}_{i}} > tb \right ). \label{queueloilimdiscrettris2}
\end{eqnarray}

Let us choose $\lambda \in ]0, \lambda_0[$ where $\lambda_0$ defined in Fact \ref{queueiid}. The second term equals 
\begin{align*}
\mathbb{P} \left ( \sum_{i=1}^{\lfloor \alpha h(t) \rfloor} y_t \overline{\mathcal{H}_{i}}/t > b y_t \right ) & \leq e^{- \lambda b y_t} \mathbb{E} \left [ \exp \left ( \lambda \sum_{i=1}^{\lfloor \alpha h(t) \rfloor} y_t \overline{\mathcal{H}_{i}}/t \right ) \right ] \\
& = e^{- \lambda b y_t} \left ( 1 + \mathbb{E} \left [ \exp \left ( \lambda y_t \overline{\mathcal{H}_{i}}/t \right ) \right ] - 1 \right )^{\lfloor \alpha h(t) \rfloor} \\
& = e^{- \lambda b y_t + \lfloor \alpha h(t) \rfloor \log \left ( 1 + \mathbb{E} \left [ \exp \left ( \lambda y_t \overline{\mathcal{H}_{i}}/t \right ) \right ] - 1 \right )}. 
\end{align*}
According to Lemma \ref{etudelaplacege}, the fact that $h(t) \sim e^{\kappa \phi(t)} y_t^{1-\kappa} / \mathcal{C}'$ and $p_t \sim \mathcal{C}' e^{-\kappa \phi(t)} y_t^{\kappa}$ we have 
\[ \lfloor \alpha h(t) \rfloor \log \left ( 1 + \mathbb{E} \left [ \exp \left ( \lambda y_t \overline{\mathcal{H}_{i}}/t \right ) \right ] - 1 \right ) \underset{t \rightarrow +\infty}{\sim} \alpha y_t \left ( 1-\mathbb{E} \left [ e^{\lambda \mathcal{R}} \right ] + \lambda \int_0^{1} x^{-\kappa} \mathbb{E} \left [ \mathcal{R} e^{\lambda x \mathcal{R}} \right ] dx \right ). \]
Thanks to the positivity of the limit in Lemma \ref{etudelaplacege} we can choose $\alpha$ such that $0 < \alpha < b \lambda / 2 ( 1-\mathbb{E} [ e^{\lambda \mathcal{R}} ] + \lambda \int_0^{1} x^{-\kappa} \mathbb{E} [ \mathcal{R} e^{\lambda x \mathcal{R}} ] dx )$. We thus get for $t$ large enough
\begin{eqnarray}
\mathbb{P} \left ( \sum_{i=1}^{\lfloor \alpha h(t) \rfloor} y_t \overline{\mathcal{H}_{i}}/t > b y_t \right ) \leq e^{- \lambda b y_t/2}. \label{queueloilimdiscrettris3}
\end{eqnarray}

Since $T$ is geometric with parameter $p_t$, we have
\begin{eqnarray}
\mathbb{P} \left ( T > \alpha h(t) \right ) = (1-p_t)^{\lfloor \alpha h(t) \rfloor} = e^{\lfloor \alpha h(t) \rfloor \log \left (1 - p_t \right )} \underset{t \rightarrow +\infty}{\approx} e^{-\alpha y_t} \leq e^{-\alpha y_t/2}, \label{queueloilimdiscrettris4}
\end{eqnarray}
where we used the equivalents for $h(t)$ and $p_t$ and where the last inequality holds for $t$ large enough. 

Now, putting \eqref{queueloilimdiscrettris3} and \eqref{queueloilimdiscrettris4} into \eqref{queueloilimdiscrettris2}, and combining the latter with \eqref{queueloilimdiscretbis1}, we get the result for $t$ large enough. 

\end{proof}

%We define the sequence $(t_n)_{n \geq 1}$ of real numbers by $t_n := r^n$ and for any $\sigma > 0$ we define the sequence $(\mathcal{B}^{\sigma}_n)_{n \geq 1}$ of events by
%\[ \mathcal{B}^{\sigma}_n := \left \{ \inf_{t \in [t_n, t_{n+1}]} \frac{\log (\log(t)) \mathcal{L}^*_X(t)}{t} \leq \sigma \right \} \]

Fix $\theta > 1$. We apply Proposition \ref{approxdupetittl} with $a=\theta^{1/3}$ and Proposition \ref{queueloilimdiscret} (the lower bound with $b=\theta^{1/3}$, $u=\theta^{1/3}$, $y_t = \theta^{1/3} x_t$ and the upper bound with $b = 1/4$, $y_t = x_t / 2$). We get: 

\begin{prop} \label{pretpourbcindepliminf}
Let $\tilde L \in ]0, \min \{ L/ 8, \lambda_0 /8 \}[$ where $L$ is the positive constant defined in Proposition \ref{queueloilimdiscret} and $\lambda_0$ is defined in Fact \ref{queueiid}. There is a positive constant $c$ such that for any $\theta > 1$ and $t$ large enough we have
\[ e^{- \theta (1-\kappa) x_t /\kappa \mathbb{E}[\mathcal{R}]} - e^{-c \phi(t)} \leq \mathbb{P} \left( \mathcal{L}^*_X(t) \leq t /x_t \right ) \leq e^{- \tilde L x_t} + e^{-c \phi(t)}. \]
\end{prop}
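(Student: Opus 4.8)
The plan is to obtain Proposition \ref{pretpourbcindepliminf} by a direct combination of Propositions \ref{approxdupetittl} and \ref{queueloilimdiscret} with the parameter choices announced just above the statement, followed by a short comparison of exponents. Throughout, recall that in this subsection $\omega = 2$, so that $\phi(t) = (\log(\log(t)))^2$, and that $x_t \sim D(\log(\log(t)))^{\mu - 1}$ with $\mu \in ]1,2]$ by \eqref{defxtlim}; in particular any $y_t$ of the form $\lambda x_t$ with $\lambda > 0$ satisfies $y_t \to +\infty$ and $\log(y_t) \sim (\mu-1)\log(\log(\log(t))) \ll \phi(t)$, hence is admissible in Proposition \ref{queueloilimdiscret}.

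\textbf{Lower bound.} I would apply the left inequality of Proposition \ref{approxdupetittl} with $a = \theta^{1/3}$, which gives
\[ \mathbb{P}\left(\mathcal{L}^*_X(t) \leq t/x_t\right) \geq \mathbb{P}\left( Y_1^{\natural,t}\left(Y_2^{-1,t}(\theta^{1/3})\right) \leq 1/\theta^{1/3}x_t\right) - e^{-c\phi(t)}. \]
Setting $y_t := \theta^{1/3}x_t$ (so that $1/y_t = 1/\theta^{1/3}x_t$) and invoking the lower bound of Proposition \ref{queueloilimdiscret} with $b = u = \theta^{1/3}$ yields
\[ \mathbb{P}\left( Y_1^{\natural,t}\left(Y_2^{-1,t}(\theta^{1/3})\right) \leq 1/y_t\right) \geq e^{-\theta^{1/3}\cdot\theta^{1/3}(1-\kappa)\cdot\theta^{1/3}x_t/\kappa\mathbb{E}[\mathcal{R}]} = e^{-\theta(1-\kappa)x_t/\kappa\mathbb{E}[\mathcal{R}]}, \]
which is exactly the announced lower bound (with $u = \theta^{1/3} > 1$ being legitimate since $\theta > 1$).

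\textbf{Upper bound.} Here I would use the right inequality of Proposition \ref{approxdupetittl} as is,
\[ \mathbb{P}\left(\mathcal{L}^*_X(t) \leq t/x_t\right) \leq 2\,\mathbb{P}\left( Y_1^{\natural,t}\left(Y_2^{-1,t}(1/4)\right) \leq 2/x_t\right) + e^{-\lambda_0 x_t/8} + e^{-c\phi(t)}, \]
then take $y_t := x_t/2$ (so that $1/y_t = 2/x_t$) and apply the upper bound of Proposition \ref{queueloilimdiscret} with $b = 1/4$, getting $\mathbb{P}( Y_1^{\natural,t}(Y_2^{-1,t}(1/4)) \leq 1/y_t) \leq e^{-Lx_t/8}$. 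This produces
\[ \mathbb{P}\left(\mathcal{L}^*_X(t) \leq t/x_t\right) \leq 2e^{-Lx_t/8} + e^{-\lambda_0 x_t/8} + e^{-c\phi(t)} \leq 3e^{-\min\{L,\lambda_0\}x_t/8} + e^{-c\phi(t)}. \]
Since $\tilde L < \min\{L/8, \lambda_0/8\}$ and $x_t \to +\infty$, for $t$ large enough $3e^{-\min\{L,\lambda_0\}x_t/8} \leq e^{-\tilde L x_t}$, which gives the claimed upper bound after a harmless decrease of $c$.

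\textbf{Main obstacle.} There is no genuine obstacle: everything reduces to matching the arguments of the functional $Y_1^{\natural,t}(Y_2^{-1,t}(\cdot))$ in the two propositions and checking that the chosen $y_t$'s satisfy the admissibility hypotheses of Proposition \ref{queueloilimdiscret}. The only point requiring a little care is the upper bound, where the term $e^{-\lambda_0 x_t/8}$ coming from Proposition \ref{approxdupetittl} must be taken into account alongside the term $e^{-Lx_t/8}$ when fixing the admissible range $]0, \min\{L/8, \lambda_0/8\}[$ for $\tilde L$.
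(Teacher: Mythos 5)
Your proposal is correct and follows exactly the paper's route: the paper itself obtains the proposition by applying Proposition \ref{approxdupetittl} with $a=\theta^{1/3}$ together with Proposition \ref{queueloilimdiscret} (lower bound with $b=u=\theta^{1/3}$, $y_t=\theta^{1/3}x_t$; upper bound with $b=1/4$, $y_t=x_t/2$), which are precisely your parameter choices. Your verification of the admissibility of $y_t$ and the absorption of the three error terms into $e^{-\tilde L x_t}+e^{-c\phi(t)}$ fills in the same routine bookkeeping the paper leaves implicit.
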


%\begin{prop} \label{majoparfonctliminf}
%For all $t$ large enough we have
%\[ \mathbb{P} \left( \mathcal{L}^*_X(t) \leq t /x_t \right ) \leq e^{- \lambda_0 x_t /8} + 2 \mathbb{P} \left ( Y_1^{\natural, t} \left ( Y_2^{-1, t}(1/4) \right ) \leq 2/x_t \right ) + e^{-c \phi(t)}. \]
%\end{prop}
%
%\begin{proof}
%
%
%\end{proof}

%\begin{prop} \label{queueloilimdiscretupperbound}
%There is a positive constant $c$ such that for any positive $a$ and $t$ large enough we have
%\[ \mathbb{P} \left ( Y_1^{\natural, t} \left ( Y_2^{-1, t}(a) \right ) \leq 1/x_t \right ) \leq e^{- c a x_t}. \]
%\end{prop}
%
%\begin{proof}
%
%
%
%\end{proof}

%Combining Propositions \ref{majoparfonctliminf} and \ref{queueloilimdiscretupperbound} we get: 
%\begin{prop} \label{pretpourbcfacilliminf}
%There is a positive constant $c$ such that for all $t$ large enough, 
%\[ \mathbb{P} \left( \mathcal{L}^*_X(t) \leq t /x_t \right ) \leq e^{-c x_t} + e^{-c \phi(t)}, \]
%\end{prop}

We can now prove Theorem \ref{liminfkappa<1}. 

\begin{proof} of Theorem \ref{liminfkappa<1}

Recall that $\tilde L$ is the positive constant defined in Proposition \ref{pretpourbcindepliminf} and let $x_t := 4 \log (\log(t)) /2\tilde L$. Note that such a choice of $x_t$ satisfies \eqref{defxtlim} with $\mu = 2$ and $D = 4/2\tilde L$. We define the events
\[ \mathcal{A}_n := \left \{ \inf_{t \in [2^n, 2^{n+1}]} \frac{\mathcal{L}^*_X(t)}{t /\log (\log(t))} \leq \tilde L/4 \right \}. \]

From the increase of $\mathcal{L}^*_X(.)$ and Proposition \ref{pretpourbcindepliminf} (the upper bound applied with $t = 2^n$) we have for $n$ large enough, 
\begin{align*}
\mathbb{P} \left ( \mathcal{A}_n \right ) \leq \mathbb{P} \left ( \mathcal{L}^*_X(2^{n}) \leq 2^{n+1} \tilde L / 4 \log (\log(2^n)) \right ) & \leq \exp \left ( -2 \log (\log(2^n)) \right ) + e^{-c \phi(2^n)} \\
& = (\log(2))^{-2} n^{-2} + e^{-c \phi(2^n)}. 
\end{align*}
Since $e^{-c \phi(2^{n})} = e^{-c (\log \log(2^{n}))^2} \leq n^{-2}$ for $n$ large enough, the above is the general term of a converging series so, using the Borel-Cantelli lemma we deduce that $\mathbb{P}$-almost surely, 
\[ \liminf_{t \rightarrow +\infty} \frac{\mathcal{L}^*_X(t)}{t /\log (\log(t))} \geq \tilde L/4, \]
so the $\liminf$ is positive. 

We now prove the upper bound for the $\liminf$. Let $a>1$ and let $t_n$, $u_n$, $v_n$ (defined from this $a$) and $X^n$ be as in Subsection \ref{decompindeppart} (recall that $X^n - v_n$ is equal in law to $X$ under the annealed probability $\mathbb{P}$). We define
\[ \mathcal{B}_n := \left \{ \frac{\mathcal{L}^{*}_{X^n} (t_n)}{t_n /\log (\log(t_n))} \leq \frac{a^2 (1-\kappa)}{\kappa \mathbb{E}[\mathcal{R}]} \right \}, \ \ \ \mathcal{B}_n' := \left \{ \frac{\mathcal{L}^{*}_{X} (H(v_n))}{t_n /\log (\log(t_n))} \leq \frac1{n} \right \}. \]
We also define $\mathcal{C}_n$ and $\mathcal{D}_n$ to be as in Lemma \ref{lemprindep} and $\mathcal{E}_n := \mathcal{B}_n \cap \mathcal{C}_n$. 
We define $x_t := \kappa \mathbb{E}[\mathcal{R}] \log (\log(t)) /a^2 (1-\kappa)$. Note that such a choice of $x_t$ satisfies \eqref{defxtlim} with $\mu = 2$ and $D = \kappa \mathbb{E}[\mathcal{R}] /a^2 (1-\kappa)$. Recall the notation $\mathcal{L}^{*, +}_X$ defined in Subsection \ref{factnot}. Let us choose $K$ as in Lemma \ref{infsupdiff}, $\eta$ and $C$ as in Lemma \ref{tlenpetittpsatt} of the next section and $Q$ be as defined in the next section. According to \eqref{queuetlneg} and Lemma \ref{tlenpetittpsatt} applied with $u = t_n /n \log (\log(t_n)) $, $v = v_{n}$ we get for all $n$ large enough, 
%ATTENTION ATTENTION ATTENTION: ON NE MAJORE QUE LE TEMPS LOCAL SUR LES POSITIFS AVEC CE LEMME -> IL FAUT COMBINER AVEC UN AUTRE SUR LE TL EN LES NEGATIFS 
\begin{align*}
\mathbb{P} \left ( \overline{\mathcal{B}_n'} \right ) & \leq \mathbb{P} \left ( \sup_{]-\infty, 0]} \mathcal{L}_X(+\infty, .) > t_n / n \log (\log(t_n)) \right ) + \mathbb{P} \left ( \frac{\mathcal{L}^{*, +}_{X} (H(v_n))}{t_n /\log (\log(t_n))} > \frac1{n} \right ) \\
& \leq K (t_n / n \log (\log(t_n)))^{-\kappa/(2+\kappa)} + C \left ( v_n /Q + v_n^{7/8} \right ) ( n \log (\log(t_n)) )^{\kappa} / (t_n)^{\kappa} + v_n^{- \eta}. 
\end{align*}
From the definition of $t_n$, the fact that $n^{\kappa} v_n / t_n^{\kappa} = n^{\kappa} e^{- \kappa 2 a n^{a-1}/3}$ and $\log (\log(t_n)) = a \log(n)$, we get
\begin{eqnarray}
\sum_{n \geq 1} \mathbb{P} \left ( \overline{\mathcal{B}_n'} \right ) < +\infty. \label{negtpsloctpsatt}
\end{eqnarray}

According to Proposition \ref{pretpourbcindepliminf} (the lower bound applied with $t=t_n$, $\theta = a$) and the definition of $t_n$ we have 
\[ \mathbb{P} \left ( \mathcal{B}_n \right ) \geq e^{- \log (\log(t_n))/a} - e^{-c \phi(t_n)} = e^{- a \log (n)/a} - e^{-c \phi(t_n)}= n^{-1} - e^{-c \phi(t_n)}. \]
Since $e^{-c \phi(t_n)} = e^{-c (\log \log(t_n))^2} \leq n^{-2}$ for $n$ large enough, we get
\begin{eqnarray}
\sum_{n \geq 1} \mathbb{P} \left ( \mathcal{B}_n \right ) = +\infty. \label{serieinfliminf}
\end{eqnarray}

Then, the combination of \eqref{serieinfliminf} and \eqref{inegindep3} yields
\begin{eqnarray}
\sum_{n \geq 1} \mathbb{P} \left ( \mathcal{E}_n \right ) \geq \sum_{n \geq 1} \mathbb{P} \left ( \mathcal{B}_n \right ) - \sum_{n \geq 1} \mathbb{P} \left ( \overline{\mathcal{C}_n} \right ) = +\infty. \label{sommeenkappa<12}
\end{eqnarray}

As in the proof of Theorem \ref{limsupenfctdelefttail}, we see that each event $\mathcal{E}_n$ belongs to the $\sigma$-field $\sigma ( V(s) - V(u_n), u_n \leq s \leq u_{n+1}, \ X(t), H(v_n) \leq t \leq H(v_n) + T_n)$ so the events $(\mathcal{E}_n)_{n \geq 1}$ are independent. Combining this independence with \eqref{sommeenkappa<12} and the Borel-Cantelli Lemma we get that $\mathbb{P}$-almost surely, the event $\mathcal{E}_n$ is realized infinitely many often. Combining with \eqref{negtpsloctpsatt} and the Borel-Cantelli Lemma we get that $\mathbb{P}$-almost surely, the event $\mathcal{B}_n' \cap \mathcal{B}_n \cap \mathcal{C}_{n}$ is realized infinitely many often. For $n$ such that this event is realized we have
\begin{eqnarray}
\frac{\mathcal{L}^{*}_{X} (H(v_n) + t_n)}{t_n /\log (\log(t_n))} \leq \frac{\mathcal{L}^{*}_{X^n} (t_n)}{t_n /\log (\log(t_n))} + \frac{\mathcal{L}^{*}_{X} (H(v_n))}{t_n /\log (\log(t_n))} \leq \frac{a^2 (1-\kappa)}{\kappa \mathbb{E}[\mathcal{R}]} + \frac1{n}. \label{equivh(vn)2}
\end{eqnarray}
%IL FAUT UN LEMME POUR MAJORER LE TL EN UN TEMPS D'ATTEINTE: ON PEUT LE FAIRE EN SUIVANT LA MEME LOGIQUE QUE DANS LA SECTION $\kappa > 1$. D'AILLEURS ON MONTRE EXACTEMENT CA POUR CONTROLER $\overline{\mathcal{B}_n}$ DANS LE BOREL CANTELLI SUR LA LIMINF QUAND $\kappa > 1$ -> FAIRE UN LEMME QUI SERT POUR LES 2
Recall that according to \eqref{inegindep4} and the Borel-Cantelli Lemma we have $\mathbb{P}$-almost surely
\[ H(v_n) + t_n \underset{n \rightarrow +\infty}{\sim} t_n, \]
so combining with \eqref{equivh(vn)2} we deduce that $\mathbb{P}$-almost surely, 
\[ \liminf_{t \rightarrow +\infty} \frac{\mathcal{L}^{*}_{X} (t)}{t /\log (\log(t))} \leq \frac{a^2 (1-\kappa)}{\kappa \mathbb{E}[\mathcal{R}]}, \]
and letting $a$ go to $1$ we get the asserted upper bound for the $\liminf$.

%Let us define $M := (1-\kappa)/\kappa \mathbb{E}[\mathcal{R}]$, the expected upper bound for the $\liminf$. Let $R > 2$. We define the events
%\begin{align*}
%\mathcal{C}_n & := \left \{ \frac{\log (\log(R^{n+1})) \mathcal{L}^{*}_{X(H(2 R^{n}) + .)} \left (\tau \left ( X(H(2 R^{n}) + .), R^{n+1} \right ) \right )}{R^{n+1}} \leq M \right \}, \\
%\mathcal{D}_n & := \left \{ \tau \left ( X(H(2 R^{n}) + .), R^{n+1} \right ) < \tau \left ( X(H(2 R^{n}) + .), R^{n} \right ) \right \}, \\
%\mathcal{E}_n & := \mathcal{C}_n \cap \mathcal{D}_n. 
%\end{align*}
%
%Let us define $s_n := (R-2) \log (\log(R^{n+1})) / M R$. From the Markov property at time $H(2 R^{n})$, the definition of $s_n$, Proposition \ref{pretpourbc} applied with ..., we have
%\begin{align*}
%\mathbb{P} \left ( \mathcal{C}_n \right ) & = \mathbb{P} \left ( \mathcal{L}^*_X((R-2) R^{n}) \leq M R^{n+1} / (\log (\log(R^{n+1}))) \right ) \nonumber \\
%& = \mathbb{P} \left ( \mathcal{L}^*_X((R-2) R^{n}) \leq (R-2) R^{n} / s_n \right ) \nonumber \\
%& \geq e^{- truc (1-\kappa) s_n /\kappa \mathbb{E}[\mathcal{R}]} - ... \\
%& = (\log(R))^{- truc \frac{R-2}{R}} \times (n+1)^{- truc \frac{R-2}{R}} - ...
%\end{align*}
%
%ON PEUT REMPLACER $R$ PAR $2$ PAR EXEMPLE. PAREIL POUR LA LIMSUP ?

\end{proof}

\section{Almost sure behavior when $\kappa > 1$} \label{bigkappa}

In this section we prove Theorems \ref{limsupkappa>1} and \ref{liminfkappa>1}. Let us first recall some facts and notations from \cite{Singh} and \cite{caslevyvech}. Our proof is based on the study of the so-called generalized Ornstein-Uhlenbeck process defined by
\[ Z(x) := e^{V(x)} R \left ( \int_0^x e^{-V(y)} dy \right ), \]
where $R$ is a two-dimensional squared Bessel process independent from $V$. Let $L$ be the local time of $Z$ for the position $1$, $n$ the associated excursion measure, and $L^{-1}$ the right continuous inverse of $L$. We denote by $\xi$ a generic excursion. Let us denote by $Q$ the positive constant denoted by $n [\zeta]$ in Section 5 of \cite{Singh}. Recall also the notations $K$ and $m$ defined in the Introduction ($m$ only appears in the context where $\kappa > 1$, where it is properly defined). We have: 
 
\begin{fact} \label{supdeZ}

There is $\eta > 0$ and $r_0 > 0$ such that for all $r \geq r_0$ and $h>1$ we have
\begin{eqnarray}
e^{-(r/Q + r^{7/8}) n(\sup \xi > h)} - r^{-\eta} \leq \mathbb{P} \left ( \sup_{x \in [0,r]} Z(x) \leq h \right ) \leq e^{-(r/Q - r^{7/8}) n(\sup \xi > h)} + r^{-\eta}, \label{supz}
\end{eqnarray}
\begin{eqnarray}
n(\sup \xi > h) \underset{h \rightarrow +\infty}{\sim} Q 2^{\kappa} \Gamma(\kappa) \kappa^2 K/ h^{\kappa}. \label{singh}
\end{eqnarray}
\end{fact}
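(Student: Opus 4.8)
The statement combines It\^o's excursion theory for $Z$ at the level $1$ with the explicit description of the excursion measure $n$ obtained in \cite{Singh}. The plan is to derive \eqref{supz} by a soft excursion-theoretic sandwiching, and to obtain \eqref{singh} directly from the potential-theoretic analysis of the generalized Ornstein--Uhlenbeck process $Z$ carried out in \cite{Singh}.

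For \eqref{supz}, recall that the excursions of $Z$ away from $1$, indexed by the local time $\ell$, form a Poisson point process on $\mathbb{R}_+$ times the excursion space with intensity $\mathrm{d}\ell \otimes n$; consequently $L^{-1}$ is a (driftless) subordinator with $\mathbb{E}[L^{-1}(\ell)] = \ell\, n[\zeta] = \ell Q$, and for every $\ell>0$ the number of excursions of index $\le\ell$ whose sup exceeds $h$ is Poisson with parameter $\ell\, n(\sup\xi>h)$, so that $\mathbb{P}(\text{no excursion of index }\le\ell\text{ reaches height }h) = e^{-\ell\, n(\sup\xi>h)}$. I would then sandwich $\{\sup_{[0,r]}Z\le h\}$ between local-time events: on the one hand $\{\sup_{[0,r]}Z\le h\}\cap\{L(r)\ge\ell\}$ forces every excursion of index $<\ell$ (which is then completed before position $r$) to have height $\le h$; on the other hand, since $Z=1$ off the excursion intervals and the excursion straddling $r$ (if any) has index $L(r)$, the event $\{L(r)\le\ell\}\cap\{\text{no excursion of index}\le\ell\text{ reaches }h\}$ is contained in $\{\sup_{[0,r]}Z\le h\}$. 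Taking $\ell=\ell_\pm:=r/Q\pm r^{7/8}$ yields
\[ e^{-\ell_+ n(\sup\xi>h)} - \mathbb{P}\big(L(r)>\ell_+\big) \;\le\; \mathbb{P}\Big(\sup_{[0,r]}Z\le h\Big) \;\le\; e^{-\ell_- n(\sup\xi>h)} + \mathbb{P}\big(L(r)<\ell_-\big), \]
which already exhibits the two exponents of \eqref{supz}. It then remains to bound the two probabilities $\mathbb{P}(L(r)>\ell_+)=\mathbb{P}(L^{-1}(\ell_+)\le r)$ and $\mathbb{P}(L(r)<\ell_-)=\mathbb{P}(L^{-1}(\ell_-)\ge r)$, which are deviations of order $r^{7/8}$ of the subordinator $L^{-1}$ about its mean ($r\pm Q r^{7/8}$ at $\ell_\pm$). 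The downward deviation is handled by a Chernoff bound using the (always finite) negative exponential moments of $L^{-1}$, and the upward one by a truncation argument using the thin tail of $\zeta$ under $n$; both are then $\le r^{-\eta}$ for $r$ large, for a suitable $\eta>0$ depending only on $V$, which is exactly \eqref{supz}.

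For \eqref{singh}, the excursion measure of $Z$ away from $1$ is computed in \cite{Singh} in terms of the scale and speed measures of the generalized Ornstein--Uhlenbeck process built from the two-dimensional squared Bessel process $R$: up to the normalization of the local time $L$, the quantity $n(\sup\xi>h)$ is the rate at which $Z$ started from $1$ performs an excursion overshooting $h$, hence is proportional to the probability of hitting $h$ before returning to $1$, and its $h\to+\infty$ behavior is governed by the polynomial (index $\kappa$) tail of the exponential functional $\int_0^{+\infty}e^{V(y)}\,\mathrm{d}y$, whose $(\kappa-1)$-th moment is $K$. The BESQ$(2)$ scale function and the local-time normalization account for the remaining factors $2^\kappa$, $\Gamma(\kappa)$, $\kappa^2$ and $Q$. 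Thus \eqref{singh} is the excursion-measure form of the $\sup Z$ tail estimate established in \cite{Singh}, and we simply record it.

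The main obstacle, and the only genuinely non-soft ingredient, is \eqref{singh}: identifying the exact constant $Q\,2^\kappa\Gamma(\kappa)\kappa^2 K$ in the $h^{-\kappa}$ asymptotics of the excursion-sup tail, which relies entirely on the explicit potential theory of $Z$ and on the precise tail constant of the exponential functional of $V$ established in \cite{Singh}. A secondary technical point is making the deviation estimate for $L(r)$ quantitative with a genuine polynomial rate $r^{-\eta}$; this is where the control of the excursion lengths from \cite{Singh} is invoked, the exponent $7/8$ being merely a convenient choice comfortably compatible with those tail bounds.
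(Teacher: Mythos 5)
The paper does not actually prove this Fact: immediately after the statement it says ``The first point is Lemma 2.3 of \cite{caslevyvech} while the second point is Proposition 5.1 of \cite{Singh}'', i.e.\ both \eqref{supz} and \eqref{singh} are taken directly from prior work. Your proposal therefore compares not against a proof in this paper but against the cited lemmas.

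Your reconstruction of \eqref{supz} via It\^o excursion theory --- sandwiching $\{\sup_{[0,r]}Z\le h\}$ between the two local-time events $\{L(r)\ge \ell_+\}$ and $\{L(r)\le \ell_-\}$, using that the heights of excursions form a Poisson process with rate $n(\sup\xi>h)$, and then controlling $\mathbb{P}(L(r)\gtrless \ell_\pm)$ as deviations of the subordinator $L^{-1}$ about its mean $\ell Q$ --- is precisely the standard route and is almost certainly what Lemma 2.3 of \cite{caslevyvech} does. The one place where your sketch is thin is the quantitative estimate $\mathbb{P}(L^{-1}(\ell_\pm)\gtrless r)\le r^{-\eta}$: you invoke ``the always finite negative exponential moments'' on the downward side and a vague ``truncation argument using the thin tail of $\zeta$'' on the upward side, but the tail of $\zeta$ under $n$ is in fact polynomial of index $\kappa$ (comparable to the tail of the exponential functional of $V$), so for $\kappa\le 2$ a Chebyshev bound on $L^{-1}$ does not directly apply and one must genuinely use truncation at a power of $r$ and verify compatibility with the $r^{7/8}$ window. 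This is exactly the technical content of the cited Lemma 2.3; you correctly flag it as the main technical point, but your argument as written does not close it. For \eqref{singh} you simply record it as a citation to \cite{Singh}, which is what the paper does too.

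Summary: the paper proves the Fact by citation; your proof is a plausible and essentially correct reconstruction of the underlying excursion-theoretic argument of \cite{caslevyvech}, with the deviation estimate for $L(r)$ left at the level of a heuristic rather than a completed bound.
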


The first point is Lemma 2.3 of \cite{caslevyvech} while the second point is Proposition 5.1 of \cite{Singh}. Note that Fact \ref{supdeZ} is true for a general positive $\kappa$ and not only for $\kappa > 1$. 

Let us recall the link between $Z$ and the local time until the hitting times. The local time at point $x$ and within the hitting time $H(r)$ is given by: 
%\begin{equation}
\[ \mathcal{L}_X(H(r),x)= e^{- V(x)}\mathcal{L}_B(\tau (B, A_V(r)),A_V(x)). \]
%\label{expretl2}
%\end{equation}
$\mathcal{M}_1(r)$ and $\mathcal{M}_2(r)$ denote respectively the supremum of the above expression for $x \in ]-\infty, 0[$ and $x \in [0, +\infty[$. The supremum of the local time until instant $H(r)$ can be written
\begin{equation}
\mathcal{L}^*_X(H(r))= \max \{ \mathcal{M}_1(r), \mathcal{M}_2(r) \}, \label{expretl3}
\end{equation}
where 
\begin{eqnarray}
\mathcal{M}_1(r) \leq \mathcal{M}_1(+\infty) < +\infty \text{ and, as in (2.10) of \cite{caslevyvech}, } \mathcal{M}_2(r) \overset{\mathcal{L}}{=} \sup_{x \in [0,r]} Z(x). \label{tlneg}
\end{eqnarray}
%and 
%\[ \mathcal{M}_2(r) := \sup_{x \in [0,r]} \mathcal{L}_X(H(r),x) = \sup_{x \in [0,r]} e^{- V(x)}\mathcal{L}_B(\tau (B, A_V(r)),A_V(x)). \]
%
%and, as in \cite{caslevyvech}, 
%\begin{eqnarray}
%\mathcal{M}_2(r) \overset{\mathcal{L}}{=} \sup_{x \in [0,r]} Z(x). \label{expretl4}
%\end{eqnarray}This allows to give a general bound for the local time at a hitting-time: 
The boundedness of $\mathcal{M}_1(r)$ is justified in (2.9) of \cite{caslevyvech}. 

We can now study the behavior of the local time at a hitting time. This allows to prove the following useful lemma. 

\begin{lemme} \label{tlenpetittpsatt}

There exist $\eta > 0$, a positive constant $C$, $u_0 > 0$ and $v_0 > 0$ such that
\[ \forall u \geq u_0, v \geq v_0, \ \mathbb{P} \left ( \mathcal{L}^{*, +}_X(H(v)) > u \right ) \leq C ( v/Q + v^{7/8} ) u^{-\kappa} + v^{- \eta}, \]
where $\mathcal{L}^{*, +}_X$ is as defined in Subsection \ref{factnot}. 
\end{lemme}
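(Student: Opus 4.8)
The idea is that the statement is essentially a repackaging of Fact \ref{supdeZ} through the identification \eqref{tlneg}. First I would observe that, by the definition of $\mathcal{L}^{*,+}_X$ in Subsection \ref{factnot} together with \eqref{expretl2} and the definition of $\mathcal{M}_2$, one has $\mathcal{L}^{*,+}_X(H(v)) = \mathcal{M}_2(v)$ exactly, so by \eqref{tlneg},
\[ \mathbb{P} \left ( \mathcal{L}^{*,+}_X(H(v)) > u \right ) = \mathbb{P} \left ( \sup_{x \in [0,v]} Z(x) > u \right ). \]
So the whole problem is reduced to an upper bound on the right tail of $\sup_{[0,v]} Z$, which is controlled by the left-hand lower bound in \eqref{supz}.

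Next I would apply the first inequality of \eqref{supz} with $r = v$ and $h = u$ (valid for $v \geq r_0$ and $u > 1$), which gives
\[ \mathbb{P} \left ( \sup_{x \in [0,v]} Z(x) > u \right ) \leq 1 - e^{-(v/Q + v^{7/8}) n(\sup \xi > u)} + v^{-\eta}. \]
Using the elementary bound $1 - e^{-s} \leq s$ for $s \geq 0$, the right-hand side is at most $(v/Q + v^{7/8}) n(\sup \xi > u) + v^{-\eta}$. Finally, by \eqref{singh} there is a constant $C'$ and a threshold $u_0 > 1$ such that $n(\sup \xi > u) \leq C' u^{-\kappa}$ for all $u \geq u_0$; plugging this in yields
\[ \mathbb{P} \left ( \mathcal{L}^{*,+}_X(H(v)) > u \right ) \leq C' (v/Q + v^{7/8}) u^{-\kappa} + v^{-\eta} \]
for all $u \geq u_0$ and $v \geq v_0 := r_0$, which is the claim with $C := C'$ and $\eta$ the exponent furnished by Fact \ref{supdeZ}.

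There is no real obstacle here: the argument is a two-line chain of inequalities once \eqref{tlneg} and Fact \ref{supdeZ} are in hand. The only points requiring minor care are (i) checking that $\mathcal{L}^{*,+}_X(H(v))$ is genuinely $\mathcal{M}_2(v)$ and not $\mathcal{L}^*_X(H(v))$ — this is why the negative half-line contribution $\mathcal{M}_1$, which is not controlled by the same mechanism, does not enter — and (ii) bookkeeping of the thresholds $u_0, v_0$ so that simultaneously the asymptotic in \eqref{singh} has kicked in and the estimate \eqref{supz} applies. Neither is substantive.
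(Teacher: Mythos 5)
Your proposal is correct and follows essentially the same route as the paper: identify $\mathcal{L}^{*,+}_X(H(v))$ with $\mathcal{M}_2(v)$, pass to $\sup_{[0,v]} Z$ via \eqref{tlneg}, apply the lower bound in \eqref{supz}, linearize $1-e^{-s}\le s$, and bound the excursion measure by $C'u^{-\kappa}$ for large $u$ using \eqref{singh}. The only cosmetic difference is that the paper fixes the concrete constant $C > Q\,2^{\kappa}\Gamma(\kappa)\kappa^2 K$ before invoking \eqref{singh}, whereas you let the constant be furnished by \eqref{singh} afterwards; this is the same bookkeeping.
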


\begin{proof}

From the definition of $\mathcal{M}_2$ and \eqref{tlneg} we have
\begin{eqnarray}
\mathbb{P} \left ( \mathcal{L}^{*, +}_X(H(v)) \geq u \right ) = \mathbb{P} \left ( \mathcal{M}_2(v) > u \right ) = \mathbb{P} \left ( \sup_{x \in [0, v]} Z(x) > u \right ). \label{tlenpetittpsatt1}
\end{eqnarray}
Now let us choose $\eta$ and $v_0$ such that \eqref{supz} is true for all $r \geq v_0$ and $h > 1$ with this $\eta$. We choose $C > Q 2^{\kappa} \Gamma(\kappa) \kappa^2 K$ and $u_0$ such that $n ( \sup \xi > u ) \leq C u^{-\kappa}$ for all $u \geq u_0$. Such a $u_0$ exists thanks to \eqref{singh}. For $u \geq u_0$ and $v \geq v_0$ we have
\begin{align*}
\mathbb{P} \left ( \sup_{x \in [0, v]} Z(x) > u \right ) & \leq 1 - e^{- ( v/Q + v^{7/8} ) \times n ( \sup \xi > u )} + v^{- \eta} \\
& \leq ( v/Q + v^{7/8} ) \times n ( \sup \xi > u ) + v^{- \eta} \\
& \leq C ( v/Q + v^{7/8} ) u^{-\kappa} + v^{- \eta}. 
\end{align*}
Putting into \eqref{tlenpetittpsatt1} we get the result. 

\end{proof}

\begin{remarque} \label{allkappa}
Neither Lemma \ref{tlenpetittpsatt} nor its proof require the hypothesis that $\kappa > 1$. The lemma is thus true whatever is the value of $\kappa$. 
\end{remarque}

We need to study the supremum of the local time until a deterministic time. The following fact, from Lemma 2.1 of \cite{caslevyvech}, says that we can replace a deterministic time by a hitting time when $\kappa > 1$. \textbf{We now assume $\kappa > 1$ until the end of this section. } We have: 

\begin{fact} \label{chgtvar}

%%Recall the definition of $H_{-}$ in Subsection \ref{factsandnotations}. 
%For all $r$ large enough we have
%%\[ \mathbb{P} \left ( H_-(+\infty) > r \right ) \leq C r^{-\kappa/(2+\kappa)} \ \ \ \text{and} \ \ \ \mathbb{P} \left ( \inf_{]-\infty, 0]} \mathcal{L}_X(+\infty, .) > r \right ) \leq 3 r^{-\kappa/(2+\kappa)}. \]
%\begin{eqnarray}
%\mathbb{P} \left ( \inf_{]-\infty, 0]} \mathcal{L}_X(+\infty, .) > r \right ) \leq 3 r^{-\kappa/(2+\kappa)}. \label{queuetlneg}
%\end{eqnarray}
%%C'EST PLUS SIMPLE DE DIRE (APRES AVOIR DIT QUE $\mathcal{M}_1(+\infty) < + \infty$) QU'ON ETUDIE QUE $\mathcal{L}^{*, +}_X$: LE SUP DU TL SUR LES POSITIFS -> REMPLACER PAR CA: OUI MAIS ON NE PEUT FAIRE CA QUE POUR $\mathcal{B}_n$. 
For any $\alpha \in ]\max \{ 3/4, 1/ \kappa \}, 1[$ their exists $\eta > 0$ such that for $r$ large enough we have
\begin{eqnarray}
\mathbb{P} \left ( H(r/m - r^{\alpha}) \leq r \leq H(r/m + r^{\alpha}) \right ) & \geq 1 - r^{-\eta}. \label{tpsatt}
\end{eqnarray}

\end{fact}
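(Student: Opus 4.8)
The plan is to turn the two one-sided hitting-time inequalities of \eqref{tpsatt} into a single two-sided concentration estimate for $H$ and then prove that estimate by truncation, using the heavy (index $\kappa$) tail of the unit crossing time. Since $X$ has continuous paths and $X(0)=0$, the map $r\mapsto H(r)$ is non-decreasing and $\{H(u)\leq r\}=\{\sup_{[0,r]}X\geq u\}$ for $u>0$; hence the event in \eqref{tpsatt} equals $\{H(r/m-r^{\alpha})\leq r\}\cap\{H(r/m+r^{\alpha})\geq r\}$, and it suffices to bound $\mathbb{P}(H(r/m-r^{\alpha})>r)$ and $\mathbb{P}(H(r/m+r^{\alpha})<r)$ separately by $\tfrac{1}{2}r^{-\eta}$. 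Setting $s_{\pm}:=r/m\pm r^{\alpha}$ one has $m s_{\pm}=r\pm m r^{\alpha}=r\pm\Theta(s_{\pm}^{\alpha})$, so both bounds reduce, for a suitable small $c>0$, to the single claim that there is $\eta>0$ with, for all $s$ large,
\[
\mathbb{P}\bigl(\,|H(s)-m s|>c\,s^{\alpha}\,\bigr)\leq s^{-\eta}.
\]

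To prove this I would use the increment decomposition. By continuity of $X$, level $k$ is reached only after level $k-1$, so for $n\in\mathbb{N}$, $H(n)=\sum_{k=1}^{n}\Delta_k$ with $\Delta_k:=H(k)-H(k-1)\geq 0$, and $0\leq H(s)-H(\lfloor s\rfloor)$ is again a crossing time, $O(1)$ in probability with an index-$\kappa$ tail, hence negligible against $s^{\alpha}$. By the quenched strong Markov property at $H(k-1)$ and stationarity of the increments of $V$, each $\Delta_k$ has, under $\mathbb{P}$, the law of $H(1)$; its mean is $m$, since the scale function $A_V$ and speed measure $2e^{-V(x)}\,\textnormal{d}x$ give the quenched mean $E^V_0[H(1)]=2\int_0^1 e^{V(z)}\bigl(\int_{-\infty}^z e^{-V(y)}\,\textnormal{d}y\bigr)\textnormal{d}z$, and $\mathbb{E}[e^{V(z)-V(y)}]=e^{(z-y)\Psi_V(1)}$ for $y<z$ with $\Psi_V(1)<0$ (which holds because $1<\kappa$), so Fubini yields $\mathbb{E}[H(1)]=2\int_0^1(-1/\Psi_V(1))\,\textnormal{d}z=-2/\Psi_V(1)=m$. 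Moreover $\mathbb{P}(H(1)>x)\leq C x^{-\kappa}$: a crossing lasting at least $x$ requires a potential trap of depth at least $\log x$, and since $\sup V$ is exponentially distributed with parameter $\kappa$ this has probability of order $e^{-\kappa\log x}$ (a bound of Kesten--Kozlov--Spitzer type, available in \cite{Singh}). Finally, conditionally on $V$ the $\Delta_k$ are independent (disjoint time-windows of the driving Brownian motion), and under $\mathbb{P}$ they interact only through $V$: a value $\Delta_k\geq e^{d}$ forces the diffusion to climb the potential to the left of $k-1$ over a distance of order $d$, which has probability exponentially small in $d$; so up to an event of probability $\leq e^{-c(\log s)^2}$, $\Delta_k$ depends only on $V$ over a window of length $O(\log s)$, which gives fast enough decay of the covariances $\mathrm{Cov}(\Delta_j\wedge s^{\alpha},\Delta_k\wedge s^{\alpha})$.

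The estimate then follows by truncating at level $s^{\alpha}$. Large increments are handled by a union bound: $\mathbb{P}(\exists\,k\leq s:\Delta_k>s^{\alpha})\leq s\,\mathbb{P}(H(1)>s^{\alpha})=O(s^{1-\alpha\kappa})$, which is polynomially small exactly because $\alpha>1/\kappa$. For the truncated sum $\Sigma:=\sum_{k\leq\lfloor s\rfloor}(\Delta_k\wedge s^{\alpha})$ one has $|\mathbb{E}[\Sigma]-m s|\leq s\,\mathbb{E}[(\Delta_1-s^{\alpha})^{+}]=O(s^{1-\alpha(\kappa-1)})=o(s^{\alpha})$, while $\mathbb{E}[(\Delta_1\wedge s^{\alpha})^2]=\int_0^{s^{\alpha}}2x\,\mathbb{P}(H(1)>x)\,\textnormal{d}x=O(s^{\alpha(2-\kappa)}+\log s)$, which together with the rapid decay of the covariances bounds $\mathrm{Var}(\Sigma)$ by $O\bigl((s^{\,1+\alpha(2-\kappa)}+s)(\log s)^2\bigr)$; Chebyshev then controls $\mathbb{P}(|\Sigma-m s|>\tfrac{c}{2}s^{\alpha})$ by a negative power of $s$ whenever $\alpha>\max\{1/2,1/\kappa\}$. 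Adding the two contributions gives the displayed estimate, and going back up through the reductions yields \eqref{tpsatt}; the stated range $\alpha>\max\{3/4,1/\kappa\}$ leaves comfortable room on top of this to absorb the logarithmic losses from the mixing, the $O(s^{\alpha})$ slack from rounding $m s_{\pm}$ to $r$, and the negative half-line contribution when one passes between $H$ and $\sup_{[0,r]}X$ (cf.\ \eqref{queuetlneg}). The main obstacle is precisely this quantitative control of the annealed dependence of the $\Delta_k$, i.e. bounding $\mathrm{Var}(\Sigma)$ by $o(s^{2\alpha})$. An alternative closer to the machinery of \cite{caslevyvech} is to write $H(v)=\int_{-\infty}^{v}\mathcal{L}_X(H(v),x)\,\textnormal{d}x$ and use the Ray--Knight identity $\mathcal{L}_X(H(v),\cdot)\overset{\mathcal{L}}{=}Z$, so that $H(v)$ becomes, up to a negative part controlled as in \eqref{queuetlneg}, a sum of roughly $v/Q$ i.i.d.\ excursion functionals of $Z$ with index-$\kappa$ tails; the same truncation then applies, and the $r^{7/8}$ error already present in \eqref{supz} is one further term to fit under $r^{-\eta}$, which is again why one wants $\alpha>3/4$.
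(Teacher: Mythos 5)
The paper itself contains no proof of Fact \ref{chgtvar}: it is imported as-is from \cite{caslevyvech}, where it ultimately rests on the analysis of $H(r)$ via the representation of the local time at hitting times by the generalized Ornstein--Uhlenbeck process $Z$ (essentially the ``alternative'' you sketch in your last sentences). Your main route is therefore genuinely different and more elementary: reduce \eqref{tpsatt} to $\mathbb{P}(|H(s)-ms|>c\,s^{\alpha})\leq s^{-\eta}$, decompose $H$ into unit crossing times $\Delta_k$ which under $\mathbb{P}$ are each distributed as $H(1)$, compute $\mathbb{E}[H(1)]=-2/\Psi_V(1)=m$, and conclude by truncation at $s^{\alpha}$ plus Chebyshev. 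That skeleton is fine, and your bookkeeping (union bound $O(s^{1-\alpha\kappa})$, bias $O(s^{1-\alpha(\kappa-1)})=o(s^{\alpha})$, second moment of the truncated variable) is correct and only needs $\alpha>\max\{1/2,1/\kappa\}$.

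There is, however, a genuine gap exactly at the step you yourself call the main obstacle: the control of $\mathrm{Var}(\Sigma)$, i.e.\ the decoupling of the $\Delta_k$ under the annealed law, is asserted rather than proved, and the mechanism you invoke for it is wrong as stated. A crossing time $\Delta_k\geq e^{d}$ does not force the diffusion to ``climb the potential over a distance of order $d$'': it is produced by a trap of depth about $d$, and in an unbounded-variation L\'evy environment such a trap can sit in a spatial window of size $O(1)$ (this steepness of valleys is precisely what Remark \ref{diffavecdiscret} emphasizes). What your localization actually needs is a bound on the leftmost point visited during one crossing, and the only estimate of that type in the paper, \eqref{minoinfndiff}, decays like $3/r$ --- far too weak: with windows of width $w$ you need the per-crossing probability of backtracking beyond $w$ to be $o(s^{-2})$, so that the $\sim s^{2}$ distant pairs contribute $o(s^{2\alpha})$ to the variance after truncation at $s^{\alpha}$; this requires an (at least stretched-) exponentially small bound in $w$, which must be proved separately (e.g.\ by time-reversal, the environment seen leftward from $k-1$ being the dual, spectrally positive, process drifting upwards, whose downward deviations are light). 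Even granting such a bound, one still has to build localized surrogate crossing times supported by disjoint stretches of the environment, check their independence, and propagate the coupling error into both $\mathbb{E}[\Sigma]$ and $\mathrm{Var}(\Sigma)$; none of this is in the sketch. Finally, the input $\mathbb{P}(H(1)>x)\leq Cx^{-\kappa}$ is justified only heuristically; it can indeed be extracted from \cite{Singh}, but it should be quoted as a precise statement rather than argued via ``a trap of depth $\log x$''.
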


Let $\alpha \in ]\max \{ 3/4, 1/ \kappa \}, 1[$ be fixed until the end of this section and $\eta > 0$ be small enough so that both \eqref{supz} and \eqref{tpsatt} are satisfied (with this $\alpha$). 

\subsection{The $\liminf$}

In this subsection we prove Theorem \ref{liminfkappa>1}. Let us define $J := 2 (\Gamma(\kappa) \kappa^2 K/m)^{1/\kappa}$, the expected $\liminf$. We begin to prove that 
\begin{eqnarray}
\liminf_{t \rightarrow +\infty} \frac{\mathcal{L}^*_X(t)}{(t/ \log (\log(t)))^{1/\kappa}} \geq J. \label{minoliminf}
\end{eqnarray}

Let $a > 1$ and define the events
\[ \mathcal{A}_n := \left \{ \inf_{t \in [a^n, a^{n+1}]} \frac{\mathcal{L}^*_X(t)}{(t/ \log (\log(t)))^{1/\kappa}}  \leq \frac{J}{a^{3/\kappa}} \right \}. \]

From the increase of $\mathcal{L}^*_X(.)$, \eqref{tpsatt}, \eqref{expretl3}, \eqref{tlneg}, and \eqref{supz}, we have
\begin{align}
\mathbb{P} \left ( \mathcal{A}_n \right ) & \leq \mathbb{P} \left ( \mathcal{L}^*_X(a^n) \leq J (a^{n-2}/ \log (\log(a^n)))^{1/\kappa} \right ) \nonumber \\
& \leq \mathbb{P} \left ( \mathcal{L}^*_X(H(a^n/m - a^{\alpha n})) \leq J (a^{n-2}/ \log (\log(a^n)))^{1/\kappa} \right ) + a^{-n \eta} \nonumber \\
& \leq \mathbb{P} \left ( \mathcal{M}_2(a^n/m - a^{\alpha n}) \leq J (a^{n-2}/ \log (\log(a^n)))^{1/\kappa} \right ) + a^{-n \eta} \nonumber \\
& = \mathbb{P} \left ( \sup_{x \in [0, a^n/m - a^{\alpha n}]} Z(x) \leq J (a^{n-2}/ \log (\log(a^n)))^{1/\kappa} \right ) + a^{-n \eta} \nonumber \\
& \leq e^{- \big ( a^n/Q m - a^{\alpha n}/Q - (a^n/m - a^{\alpha n})^{7/8} \big ) \times n \big (\sup \xi > J (a^{n-2}/ \log (\log(a^n)))^{1/\kappa} \big )} + a^{-n \eta} \nonumber \\
& +  (a^n/m - a^{\alpha n})^{-\eta}. \label{seriecv}
\end{align}
According to the equivalent given by \eqref{singh}, the exponent in the above expression is, for $n$ large enough, less than $-a \log (\log(a^n))$, so for such large $n$, 
\[ e^{- \big ( a^n/Q m - a^{\alpha n}/Q - (a^n/m - a^{\alpha n})^{7/8} \big ) \times n \big (\sup \xi > J (a^{n-2}/ \log (\log(a^n)))^{1/\kappa} \big )} \leq (n \log(a))^{-a}. \]
The other two terms in the right hand side of \eqref{seriecv} are also general terms of converging series so we obtain, 
\[ \sum_{n \geq 1} \mathbb{P} ( \mathcal{A}_n ) < +\infty. \]

According to the Borel-Cantelli lemma we get
\[ \liminf_{t \rightarrow +\infty} \frac{\mathcal{L}^*_X(t)}{(t/ \log (\log(t)))^{1/\kappa}} \geq J/a^{3/\kappa}, \]
in which we can let $a$ go to $1$ which yields \eqref{minoliminf}. We now prove that 
\begin{eqnarray}
\liminf_{t \rightarrow +\infty} \frac{\mathcal{L}^*_X(t)}{(t/ \log (\log(t)))^{1/\kappa}} \leq J. \label{majoliminf}
\end{eqnarray}

Let us fix $a > 0$, $u_n := n^{2n}$, $v_n := u_n / m + u_n^{\alpha} = n^{2n} / m + n^{2 \alpha n}$ and $X^n := X(H(2 v_n) + .)$, the diffusion shifted by the hitting time of $2 v_n$. Note that from the Markov property for $X$ at time $H(2 v_n)$ and the stationarity of the increments of $V$, $X^n - 2 v_n$ is equal in law to $X$ under the annealed probability $\mathbb{P}$. We take $n$ so large such that $2 v_n < v_{n+1}$ and define the events
\begin{align*}
\mathcal{B}_n & := \left \{ \frac{\mathcal{L}^{*, +}_X(H(2 v_n))}{(u_{n+1}/ \log (\log(u_{n+1})))^{1/\kappa}}  \leq a J \right \}, \\
\mathcal{C}_n & := \left \{ \frac{\mathcal{L}^{*}_{X^n} \left (\tau \left ( X^n, v_{n+1} \right ) \right )}{(u_{n+1}/ \log (\log(u_{n+1})))^{1/\kappa}}  \leq (1+a) J \right \}, \\
\mathcal{D}_n & := \left \{ \tau \left ( X^n, v_{n+1} \right ) < \tau \left ( X^n, v_{n} \right ) \right \}, \\
\mathcal{E}_n & := \mathcal{C}_n \cap \mathcal{D}_n, \\
\mathcal{F}_n & := \left \{ \mathcal{L}^{*, +}_X(u_n) \leq \mathcal{L}^{*, +}_X(H(v_n)) \right \}. 
\end{align*}

Recall that $\eta >$ has been fixed so that \eqref{supz} is satisfied. For this $\eta$ and for $C> Q 2^{\kappa} \Gamma(\kappa) \kappa^2 K$, the inequality of Lemma \ref{tlenpetittpsatt} is true for $u$ and $v$ large enough. According to this lemma applied with $u = a J (u_{n+1}/ \log (\log(u_{n+1})))^{1/\kappa}$, $v = 2v_{n}$ we get for all $n$ large enough, 
\[ \mathbb{P} \left ( \overline{\mathcal{B}_n} \right ) \leq C \left ( 2 v_n /Q + (2 v_n)^{7/8} \right ) \log (\log(u_{n+1})) / J^{\kappa} a^{\kappa} u_{n+1} + (2 v_n)^{- \eta}. \]

%First, from \eqref{expretl3} and Fact \ref{supdeZ}, we have
%\begin{align*}
%\mathbb{P} \left ( \overline{\mathcal{B}_n} \right ) & = \mathbb{P} \left ( \mathcal{M}_2(2v_{n}) > \eta J (u_{n+1}/ \log (\log(u_{n+1})))^{1/\kappa} \right ) \\
%& = \mathbb{P} \left ( \sup_{[0, 2v_{n}]} Z > \eta J (u_{n+1}/ \log (\log(u_{n+1})))^{1/\kappa} \right ) \\
%& \leq 1 - e^{- \big ( 2 v_n /Q + (2 v_n)^{\alpha} \big ) \times n \big ( \sup \xi > \eta J (u_{n+1}/ \log (\log(u_{n+1})))^{1/\kappa} \big )} + (2 v_n)^{- \eta} \\
%& \leq \big ( 2 v_n /Q + (2 v_n)^{\alpha} \big ) \times n \big ( \sup \xi > \eta J (u_{n+1}/ \log (\log(u_{n+1})))^{1/\kappa} \big ) + (2 v_n)^{- \eta} \\
%& \leq 4 m v_{n} \log (\log(u_{n+1})) / \eta^{\kappa} u_{n+1} + (2 v_n)^{- \eta}
%\end{align*}
%where we use the equivalent \eqref{singh} for the last inequality, which is thus true for $n$ large enough. 
Since, for $n$ large enough, $v_{n} / u_{n+1} \leq 1/m n^2$ and $\log (\log(u_{n+1})) \sim \log(n)$ we can deduce that
\begin{eqnarray}
\sum_{n \geq 1} \mathbb{P} \left ( \overline{\mathcal{B}_n} \right ) < +\infty. \label{sommebn}
\end{eqnarray}

From the equality in law between $X^n - 2 v_n$ and $X$ under $\mathbb{P}$, , \eqref{expretl3}, \eqref{tlneg}, \eqref{queuetlneg}, \eqref{supz}
\begin{align}
\mathbb{P} \left ( \mathcal{C}_n \right ) & = \mathbb{P} \left ( \mathcal{L}^*_X \left (H(v_{n+1} - 2v_n) \right ) \leq (1+a) J \left ( u_{n+1} / \log (\log(u_{n+1})) \right )^{1/\kappa} \right ) \nonumber \\
& \geq \mathbb{P} \left ( \mathcal{M}_2(H(v_{n+1} - 2v_n)) \leq (1+a) J \left ( u_{n+1} / \log (\log(u_{n+1})) \right )^{1/\kappa} \right ) \nonumber \\
& - \mathbb{P} \left ( \sup_{]-\infty, 0]} \mathcal{L}_X(+\infty, .) > (1+a) J \left ( u_{n+1} / \log (\log(u_{n+1})) \right )^{1/\kappa} \right ) \nonumber \\
& \geq \mathbb{P} \left ( \sup_{x \in [0, v_{n+1} - 2v_n]} Z(x) \leq (1+a) J \left ( u_{n+1} / \log (\log(u_{n+1})) \right )^{1/\kappa} \right ) \nonumber \\
& - K ((1+a) J)^{-\kappa/(2 +\kappa)} \times \left ( u_{n+1} / \log (\log(u_{n+1})) \right )^{-1/(2+\kappa)} \nonumber \\
& \geq e^{- \big ( (v_{n+1} - 2v_n)/Q + (v_{n+1} - 2v_n)^{7/8} \big ) \times n \big (\sup \xi > (1+a) J \left ( u_{n+1} / \log (\log(u_{n+1})) \right )^{1/\kappa} \big )} \nonumber \\
& - (v_{n+1} - 2v_n)^{-\eta} - K ((1+a) J)^{-\kappa/(2 +\kappa)} \times \left ( u_{n+1} / \log (\log(u_{n+1})) \right )^{-1/(2+\kappa)}. \label{sommecn1}
\end{align}

According to the equivalent given by \eqref{singh} and the definitions of $u_n$ and $v_n$, the exponent in the above expression is equivalent to $(1+a)^{-\kappa} \log (\log(u_{n+1})) \sim (1+a)^{-\kappa} \log(n)$ so for $n$ large enough, 
\[ e^{- \big ( (v_{n+1} - 2v_n)/Q + (v_{n+1} - 2v_n)^{7/8} \big ) \times n \big (\sup \xi > (1+a) J \left ( u_{n+1} / \log (\log(u_{n+1})) \right )^{1/\kappa} \big )} \geq \frac1{n}, \]
and the remaining terms in the right hand side of \eqref{sommecn1} are the general terms of converging series. We thus get
\begin{eqnarray}
\sum_{n \geq 1} \mathbb{P} \left ( \mathcal{C}_n \right ) = +\infty. \label{sommecn2}
\end{eqnarray}

\begin{align*}
\mathbb{P} \left ( \overline{\mathcal{D}_n} \right ) & = \mathbb{P} \left ( \tau \left ( X^n, v_{n+1} \right ) > \tau \left ( X^n, v_{n} \right ) \right ) \nonumber \\
& \leq \mathbb{P} \left ( \tau \left ( X^n, v_{n} \right ) < \tau \left ( X^n, +\infty \right ) \right ) \nonumber \\
& \leq \mathbb{P} \left ( \tau \left ( X, -v_{n} \right ) < \tau \left ( X, +\infty \right ) \right ) \label{majocompdn0} \\
& = \mathbb{P} \left ( \inf_{[0, +\infty[} X < -v_{n} \right ), 
%& \leq \mathbb{P} \left ( \inf_{[0, +\infty[} X(H(2v_{n}) + .) < v_{n} \right ) = \mathbb{P} \left ( \inf_{[0, +\infty[} X < - v_{n} \right ), 
\end{align*}
where we used the equality in law between $X^n - 2 v_n$ and $X$ under $\mathbb{P}$. Combining with \eqref{minoinfndiff} applied with $r = v_n$ we get
\begin{eqnarray}
\sum_{n \geq 1} \mathbb{P} \left ( \overline{\mathcal{D}_n} \right ) < +\infty. \label{sommedn}
\end{eqnarray}

Then, the combination of \eqref{sommecn2} and \eqref{sommedn} yields
\begin{eqnarray}
\sum_{n \geq 1} \mathbb{P} \left ( \mathcal{E}_n \right ) \geq \sum_{n \geq 1} \mathbb{P} \left ( \mathcal{C}_n \right ) - \sum_{n \geq 1} \mathbb{P} \left ( \overline{\mathcal{D}_n} \right ) = +\infty. \label{sommeen}
\end{eqnarray}

According to the definitions of the sequences $(u_n)_{n \geq 1}$ and $(u_n)_{n \geq 1}$, to \eqref{tpsatt}, and to the increase of $\mathcal{L}^{*, +}_X$, we have, for $n$ large enough, $\mathbb{P} (\mathcal{F}_n) \leq u_n^{-\eta}$, so
\begin{eqnarray}
\sum_{n \geq 1} \mathbb{P} \left ( \overline{\mathcal{F}_n} \right ) < +\infty. \label{sommefn}
\end{eqnarray}

%The events $\mathcal{E}_n$ are independent since for each $n$, $\mathcal{E}_n$ belongs to the $\sigma$-field $\sigma ( V(s) - V(v_n), v_n \leq s \leq v_{n+1}, \ X(t), H(2v_n) \leq t \leq \min (\tau(X(H(2v_n) + .), v_n), \tau(X(H(2v_n) + .), v_{n+1}) ))$. Combining this independence with \eqref{sommeen} and the Borel-Cantelli Lemma we get that almost surely, the event $\mathcal{E}_n$ is realized infinitely many often. 
Note that each event $\mathcal{E}_n$ belongs to the $\sigma$-field $\sigma ( V(s) - V(v_n), v_n \leq s \leq v_{n+1}, \ X(t), H(2v_n) \leq t \leq \min (\tau(X^n, v_n), \tau(X^n, v_{n+1}) ))$, in other words, it only depends on the diffusion between times $H(2v_n)$ and $\min (\tau(X^n, v_n), \tau(X^n, v_{n+1}))$ and on the environment between positions $v_n$ and $v_{n+1}$. From the Markov property and the independence of the increments of the environment, we get that the events $(\mathcal{E}_n)_{n \geq 1}$ are independent. Combining this independence with \eqref{sommeen} and the Borel-Cantelli Lemma we get that $\mathbb{P}$-almost surely, the event $\mathcal{E}_n$ is realized infinitely many often. 

Combining \eqref{sommebn}, \eqref{sommefn} and the Borel-Cantelli Lemma we get that $\mathbb{P}$-almost surely the event $\mathcal{B}_n \cap \mathcal{F}_n$ is realized for all large $n$. We deduce that $\mathbb{P}$-almost surely, the event $\mathcal{B}_n \cap \mathcal{C}_n \cap \mathcal{D}_n \cap \mathcal{F}_{n+1}$ is realized infinitely many often. Then, for $n$ such that this event is realized we have, 
\begin{align*}
\frac{\mathcal{L}^{*, +}_X(u_{n+1})}{(u_{n+1}/ \log (\log(u_{n+1})))^{1/\kappa}} & \leq \frac{\mathcal{L}^{*, +}_X(H(v_{n+1}))}{(u_{n+1}/ \log (\log(u_{n+1})))^{1/\kappa}} \\
& \leq \frac{\mathcal{L}^{*, +}_X(H(2 v_n))}{(u_{n+1}/ \log (\log(u_{n+1})))^{1/\kappa}} + \frac{\mathcal{L}^{*}_{X^n} \left (\tau \left ( X^n, v_{n+1} \right ) \right )}{(u_{n+1}/ \log (\log(u_{n+1})))^{1/\kappa}} \\
& \leq a J + (1+a) J, 
\end{align*}
so 
\[ \liminf_{t \rightarrow +\infty} \frac{\mathcal{L}^{*, +}_X(t)}{(t/ \log (\log(t)))^{1/\kappa}} \leq (1 + 2 a)J. \]

Now, letting $a$ go to $0$ and combining with the finiteness of $\sup_{]-\infty, 0[} \mathcal{L}_X (+\infty)$ (see \eqref{expretl3} and \eqref{tlneg}) we obtain \eqref{majoliminf} so Theorem \ref{liminfkappa>1} is proved. 

\subsection{The $\limsup$}

In this subsection we prove Theorem \ref{limsupkappa>1}. First, let us assume that $\int_1^{+\infty} \frac{(f(t))^{\kappa}}{t} dt < + \infty$ and prove that
\begin{eqnarray}
\limsup_{t \rightarrow +\infty} \frac{f(t) \mathcal{L}^*_X(t)}{t} = 0. \label{majolimsup}
\end{eqnarray}

According to Remark \ref{serieint} the condition $\int_1^{+\infty} \frac{(f(t))^{\kappa}}{t} dt < + \infty$ is equivalent to
\begin{eqnarray}
\sum_{n=1}^{+\infty} (f(2^n))^{\kappa} < + \infty. \label{majolimsuphyp}
\end{eqnarray}

Let us fix $a > 0$ and define the events
\[ \mathcal{A}_n := \left \{ \sup_{t \in [2^n, 2^{n+1}]} \frac{f(t) \mathcal{L}^{*, +}_X(t)}{t^{1/\kappa}}  \geq a \right \}. \]

From the increase of $\mathcal{L}^{*, +}_X(.)$, \eqref{tpsatt}, \eqref{expretl3}, \eqref{tlneg}, and \eqref{supz}, we have
\begin{align*}
\mathbb{P} \left ( \mathcal{A}_n \right ) & \leq \mathbb{P} \left ( \mathcal{L}^{*, +}_X(2^{n+1}) \geq 2^{n/\kappa}a / f(2^n) \right ) \nonumber \\
& \leq \mathbb{P} \left ( \mathcal{L}^{*, +}_X \left ( H(2^{n+1}/m + 2^{\alpha (n+1)}) \right ) \geq 2^{n/\kappa}a / f(2^n) \right ) + 2^{-\eta (n+1)} \nonumber \\
& = \mathbb{P} \left ( \mathcal{M}_2 \left ( 2^{n+1}/m + 2^{\alpha (n+1)} \right ) \geq 2^{n/\kappa}a / f(2^n) \right ) + 2^{-\eta (n+1)} \nonumber \\
& = \mathbb{P} \left ( \sup_{x \in [0, 2^{n+1}/m + 2^{\alpha (n+1)}]} Z(x) \geq 2^{n/\kappa}a / f(2^n) \right ) + 2^{-\eta (n+1)} \nonumber \\
& \leq 1 - e^{- \big ( 2^{n+1}/Q m + 2^{\alpha (n+1)}/Q + \left (2^{n+1} /m + 2^{\alpha (n+1)} \right )^{7/8} \big ) \times n \big (\sup \xi > 2^{n/\kappa}a / f(2^n) \big )} \nonumber \\
& + 2^{-\eta (n+1)} + \left (2^{n+1}/m + 2^{\alpha (n+1)} \right )^{-\eta} \nonumber \\
& \leq \big ( 2^{n+1}/Q m + 2^{\alpha (n+1)}/Q + \left (2^{n+1} /m + 2^{\alpha (n+1)} \right )^{7/8} \big ) \times n \big (\sup \xi > 2^{n/\kappa}a / f(2^n) \big ) \nonumber \\
& + 2^{-\eta (n+1)} + \left (2^{n+1}/m + 2^{\alpha (n+1)} \right )^{-\eta}. 
\end{align*}

According to \eqref{singh}, the first term in the right hand side is equivalent to 
\[2^{1 +\kappa} \Gamma(\kappa) \kappa^2 K (f(2^n))^{\kappa} /m a^{\kappa}\] 
which is the general term of a convergent series, according to \eqref{majolimsuphyp}. The two remaining terms are also the general terms of convergent series so we get
\[ \sum_{n \geq 1} \mathbb{P} ( \mathcal{A}_n ) < +\infty, \]
and applying the Borel-Cantelli lemma we deduce
\[ \limsup_{t \rightarrow +\infty} \frac{f(t) \mathcal{L}^{*, +}_X(t)}{t} \leq a. \]
Now, letting $a$ go to $0$ and combining with the finiteness of $\sup_{]-\infty, 0[} \mathcal{L}_X (+\infty)$ (see \eqref{expretl3} and \eqref{tlneg}) we obtain \eqref{majolimsup}. 

Let us now assume that $\int_1^{+\infty} \frac{(f(t))^{\kappa}}{t} dt = + \infty$ and prove that
\begin{eqnarray}
\limsup_{t \rightarrow +\infty} \frac{f(t) \mathcal{L}^*_X(t)}{t} = + \infty. \label{minolimsup}
\end{eqnarray}

According to Remark \ref{serieint} the condition $\int_1^{+\infty} \frac{(f(t))^{\kappa}}{t} dt = + \infty$ is equivalent to
\begin{eqnarray}
\sum_{n=1}^{+\infty} (f(2^n))^{\kappa} = + \infty. \label{minolimsuphyp}
\end{eqnarray}

Let $M > 0$, $u_n := 2^n/m - 2^{\alpha n}$ and $X^n := X(H(\sqrt{2} u_n) + .)$, the diffusion shifted by the hitting time of $\sqrt{2} u_n$. Note that from the Markov property for $X$ at time $H(\sqrt{2} u_n)$ and the stationarity of the increments of $V$, $X^n - \sqrt{2} u_n$ is equal in law to $X$ under the annealed probability $\mathbb{P}$. We take $n$ so large such that $\sqrt{2} u_n < u_{n+1}$ and define the events 
\begin{align*}
\mathcal{C}_n & := \left \{ \frac{f(2^{n+1}) \mathcal{L}^*_{X^n} \left (\tau \left ( X^n, u_{n+1} \right ) \right )}{2^{(n+1)/\kappa}}  \geq M \right \}, \\
\mathcal{D}_n & := \left \{ \tau \left ( X^n, u_{n+1} \right ) < \tau \left ( X^n, u_{n} \right ) \right \}, \\
\mathcal{E}_n & := \mathcal{C}_n \cap \mathcal{D}_n, \\
\mathcal{F}_n & := \left \{ \mathcal{L}^{*}_X(H(u_n)) \leq \mathcal{L}^{*}_X(2^n) \right \}. 
\end{align*}

From the equality in law between $X^n - \sqrt{2} u_n$ and $X$ under $\mathbb{P}$, \eqref{expretl3}, \eqref{tlneg}, and \eqref{supz}, we have
%\begin{align}
%\mathbb{P} \left ( \mathcal{C}_n \right ) & \geq \mathbb{P} \left ( \mathcal{L}^{*, +}_X(H((2-\sqrt{2})2^{n} - (2^{\alpha} - \sqrt{2})2^{\alpha n})) \geq 2^{(n+1)/\kappa} M / f(2^{n+1}) \right ) \nonumber \\
%& = \mathbb{P} \left ( \mathcal{M}_2 \left ( (2-\sqrt{2})2^{n} - (2^{\alpha} - \sqrt{2})2^{\alpha n} \right ) \geq 2^{(n+1)/\kappa} M / f(2^{n+1}) \right ) \nonumber \\
%& = \mathbb{P} \left ( \sup_{[0, (2-\sqrt{2})2^{n} - (2^{\alpha} - \sqrt{2})2^{\alpha n}]} Z \geq 2^{(n+1)/\kappa} M / f(2^{n+1}) \right ) \nonumber \\
%& \geq 1 - e^{- \big ( ((2-\sqrt{2})2^{n} - (2^{\alpha} - \sqrt{2})2^{\alpha n})/Q - ((2-\sqrt{2})2^{n} - (2^{\alpha} - \sqrt{2})2^{\alpha n})^{7 /8} \big ) \times n \big (\sup \xi > 2^{(n+1)/\kappa} M / f(2^{n+1}) \big )} \nonumber \\
%& - ((2-\sqrt{2})2^{n} - (2^{\alpha} - \sqrt{2})2^{\alpha n})^{-\eta} \label{minocnlimsup}
%\end{align}
\begin{align}
\mathbb{P} \left ( \mathcal{C}_n \right ) & = \mathbb{P} \left ( \mathcal{L}_X(H(u_{n+1} - \sqrt{2} u_n)) \geq 2^{(n+1)/\kappa} M / f(2^{n+1}) \right ) \nonumber \\
& \geq \mathbb{P} \left ( \mathcal{L}^{*, +}_X(H(u_{n+1} - \sqrt{2} u_n)) \geq 2^{(n+1)/\kappa} M / f(2^{n+1}) \right ) \nonumber \\
& = \mathbb{P} \left ( \mathcal{M}_2 \left ( u_{n+1} - \sqrt{2} u_n \right ) \geq 2^{(n+1)/\kappa} M / f(2^{n+1}) \right ) \nonumber \\
& = \mathbb{P} \left ( \sup_{x \in [0, u_{n+1} - \sqrt{2} u_n]} Z(x) \geq 2^{(n+1)/\kappa} M / f(2^{n+1}) \right ) \nonumber \\
& \geq 1 - e^{- \big ( (u_{n+1} - \sqrt{2} u_n)/Q - (u_{n+1} - \sqrt{2} u_n)^{7 /8} \big ) \times n \big (\sup \xi > 2^{(n+1)/\kappa} M / f(2^{n+1}) \big )} \nonumber \\
& - (u_{n+1} - \sqrt{2} u_n)^{-\eta}. \label{minocnlimsup}
\end{align}

According to \eqref{singh} and the definition of $u_n$, the exponent in the right hand side is equivalent to 
\[ (2-\sqrt{2}) 2^{\kappa - 1} \Gamma(\kappa) \kappa^2 K (f(2^{n+1}))^{\kappa} / m M^{\kappa}. \] 
Since $f$ converges to $0$ at infinity, the above is also an equivalent for the term $1-e^{-(...)}$ in the right hand side of \eqref{minocnlimsup}. Then, combining with \eqref{minolimsuphyp} and the fact that the other term is the general term of a covering series we get
\begin{eqnarray}
\sum_{n \geq 1} \mathbb{P} \left ( \mathcal{C}_n \right ) = +\infty. \label{minocnlimsup2}
\end{eqnarray}

Reasoning as in the proof of \eqref{sommedn} we can prove that
\begin{eqnarray}
\sum_{n \geq 1} \mathbb{P} \left ( \overline{\mathcal{D}_n} \right ) < +\infty, \label{sommednsup}
\end{eqnarray}
so
\begin{eqnarray}
\sum_{n \geq 1} \mathbb{P} \left ( \mathcal{E}_n \right ) \geq \sum_{n \geq 1} \mathbb{P} \left ( \mathcal{C}_n \right ) - \sum_{n \geq 1} \mathbb{P} \left ( \overline{\mathcal{D}_n} \right ) = +\infty. \label{sommeensup}
\end{eqnarray}

According to \eqref{tpsatt}, and to the increase of $\mathcal{L}_X^*$, we also prove that
\begin{eqnarray}
\sum_{n \geq 1} \mathbb{P} \left ( \overline{\mathcal{F}_n} \right ) < +\infty. \label{sommefnsup}
\end{eqnarray}

The events $\mathcal{E}_n$ are independent since for each $n$, $\mathcal{E}_n$ belongs to the $\sigma$-field $\sigma ( V(s) - V(u_n), u_n \leq s \leq u_{n+1}, \ X(t), H(\sqrt{2} u_n) \leq t \leq H(\sqrt{2} u_n) + \min (\tau(X^n, u_n), \tau(X^{n+1}, u_{n+1}) )$. Combining this independence with \eqref{sommeensup} and the Borel-Cantelli Lemma we get that $\mathbb{P}$-almost surely, the event $\mathcal{E}_n$ is realized infinitely many often. Combining with \eqref{sommefnsup} and the Borel-Cantelli Lemma we get that $\mathbb{P}$-almost surely, the event $\mathcal{C}_n \cap \mathcal{D}_n \cap \mathcal{F}_{n+1}$ is realized infinitely many often. Then, for $n$ such that this event is realized we have, 
\begin{align*}
f(2^{n+1}) \mathcal{L}^{*}_X(2^{n+1})/2^{(n+1)/\kappa} & \geq f(2^{n+1}) \mathcal{L}^{*}_X(H(u_{n+1}))/2^{(n+1)/\kappa} \\
& \geq f(2^{n+1}) \mathcal{L}^{*}_{X^n} \left (\tau \left ( X^n, u_{n+1} \right ) \right ) / 2^{(n+1)/\kappa} \\
& \geq M, 
\end{align*}
so
\[ \limsup_{t \rightarrow +\infty} \frac{f(t) \mathcal{L}^*_X(t)}{t} \geq M. \]
Now, letting $M$ go to infinity we get \eqref{minolimsup} so Theorem \ref{limsupkappa>1} is proved. 

\section{Some lemmas} \label{toolbox}

In this section, we justify some technical facts and lemmas for $V$, $V^{\uparrow}$ and the diffusion in $V$. Some of them are known or can be easily obtained from results of \cite{caslevyvech}, and we give some details for their justification when it is necessary. Some of these facts are new, like the approximation of the contributions of the valleys to the traveled distance by an \textit{iid} sequence. 

\subsection{Properties of $V$, $V^{\uparrow}$ and $\hat V^{\uparrow}$} \label{toutsurlenv}

\begin{lemme} (Lemma 5.5 of \cite{caslevyvech}) \label{tpsatteinthatv}

There are two positive constants $c_1, c_2$ such that
\[ \forall y, r > 0, \ P \left ( \tau(V, ]-\infty, -y]) > r \right ) \leq e^{c_1 y -c_2 r}. \]

\end{lemme}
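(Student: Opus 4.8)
The plan is to bound the tail of the first-passage time $\tau(V,]-\infty,-y])$ by combining a pathwise comparison with the running infimum of $V$ and the classical fact that a spectrally negative Lévy process drifting to $-\infty$ has a running infimum whose passage times have exponential moments. First I would note that, since $V$ has no positive jumps, the process $\underline V(t):=\inf_{[0,t]}V$ decreases continuously, and $\tau(V,]-\infty,-y])=\inf\{t\ge 0:\ \underline V(t)\le -y\}$; this is just the first passage time of $-y$ by the (continuous, nonincreasing) process $-\underline V$. By the fluctuation theory of spectrally negative Lévy processes (see Bertoin \cite{Bertoin}, Chapter VII), the process $t\mapsto -\underline V(t)$ is, up to a time change, a (possibly killed) subordinator; more concretely, because $V$ drifts to $-\infty$, the overall infimum $\inf_{[0,+\infty[}V$ is finite a.s. and its passage times grow at least linearly.

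The cleanest route is via an exponential-Markov (Chernoff) argument applied directly to $V$. Fix $\lambda\in(0,\kappa)$ so that $\Psi_V(\lambda)<0$ (such $\lambda$ exists since $\Psi_V$ is convex, $\Psi_V(0)=0$, $\Psi_V'(0)<0$, and $\Psi_V(\kappa)=0$). Then $(e^{\lambda V(t)-t\Psi_V(\lambda)})_{t\ge 0}$ is a martingale, and on the event $\{\tau(V,]-\infty,-y])>r\}$ one has $V(s)>-y$ for all $s\le r$; but this event does not directly control $V(r)$ from below in a useful direction, so instead I would run the argument the other way: on $\{\tau(V,]-\infty,-y])>r\}$ the infimum $\underline V(r)>-y$. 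Using that $-\underline V$ is (after the Lamperti-type time substitution, or directly via the Wiener–Hopf factorization) stochastically comparable to a subordinator with positive drift, the passage time to level $y$ satisfies a large-deviation bound of the form $P(\tau > r)\le e^{c_1 y - c_2 r}$ for suitable $c_1,c_2>0$, the $c_1 y$ term absorbing the level and $c_2 r$ the exponential decay in time. Concretely: let $\theta>0$ be such that $\mathbb E[e^{-\theta \underline V(1)}]<\infty$ — this holds for $\theta$ small since $\underline V(1)\ge$ (a finite random variable with exponential tails because $\sup(-V)$ on $[0,1]$ has exponential moments, $V$ being spectrally negative with $V(1)\in L^p$ and no positive jumps). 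Then a union bound over integer times together with Markov's inequality gives
\[
P\big(\tau(V,]-\infty,-y])>r\big)\le P\big(\underline V(\lfloor r\rfloor)>-y\big)\le e^{\theta y}\,\mathbb E\big[e^{\theta\underline V(\lfloor r\rfloor)}\big]\le e^{\theta y}\big(\mathbb E[e^{\theta\underline V(1)}]\big)^{-(\lfloor r\rfloor)}\cdot(\cdots),
\]
using the superadditivity $\underline V(m+n)\le \underline V(m)+\big(\underline V(m+n)-V(m)\big)$ and the strong Markov property, so that $\mathbb E[e^{\theta\underline V(m)}]\le (\mathbb E[e^{\theta\underline V(1)}])^{m}$ once $\mathbb E[e^{\theta\underline V(1)}]\le 1$ (which forces choosing $\theta$ small enough that $\mathbb E[e^{\theta\underline V(1)}]<1$; this is possible because $\underline V(1)<0$ with positive probability and $\mathbb E[\underline V(1)]<0$, so $\theta\mapsto\mathbb E[e^{\theta\underline V(1)}]$ has negative derivative at $0$). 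Taking logarithms yields the claimed bound with $c_1=\theta$ and $c_2=-\log\mathbb E[e^{\theta\underline V(1)}]>0$, adjusting constants to handle the floor function and the range $r\le 1$ trivially.

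The main obstacle I anticipate is making the stochastic comparison between $-\underline V$ and a genuine subordinator fully rigorous, or equivalently verifying that $\theta$ can be chosen so that $\mathbb E[e^{\theta\underline V(1)}]<1$ with the correct quantitative control; this rests on the existence of exponential moments for $\sup_{[0,1]}(-V)$, which in turn follows from spectral negativity (so $\sup_{[0,1]}(-V)$ is, up to the drift, controlled by $-\underline V(1)$ and the overall supremum of $V$ on $[0,1]$, the latter having exponential tails since $\sup V$ is exponentially distributed with parameter $\kappa$ by Corollary VII.2 of \cite{Bertoin}). Once those moment bounds are in place the rest is a routine Chernoff/union-bound computation, and the separation of the $y$-dependence into the prefactor $e^{c_1 y}$ is automatic from the exponential Markov inequality. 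I would also remark that this lemma is exactly Lemma 5.4 of \cite{caslevyvech}, so in the paper one may simply cite it; the above sketch indicates how it is proved there.
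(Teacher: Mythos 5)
Your underlying idea — an exponential Markov/Chernoff bound with $\theta\in(0,\kappa)$, so that $\Psi_V(\theta)<0$ — is the right one (note the paper itself gives no proof of Lemma \ref{tpsatteinthatv}: it is quoted from Lemma 5.4 of \cite{caslevyvech}; the same one-line technique appears in the paper's proof of Lemma \ref{supdev}). But your execution has genuine errors. First, the reason you give for abandoning the direct route is mistaken: on $\{\tau(V,]-\infty,-y])>r\}$ one has $V(s)>-y$ for every $s\le r$, in particular $V(r)>-y$, so $P(\tau(V,]-\infty,-y])>r)\le P(V(r)>-y)\le e^{\theta y}\,\mathbb{E}[e^{\theta V(r)}]=e^{\theta y+r\Psi_V(\theta)}$, which is already the lemma with $c_1=\theta$ and $c_2=-\Psi_V(\theta)>0$. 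No running-infimum machinery, union bound over integer times, subordinator comparison or Wiener--Hopf input is needed, and no appeal to $V(1)\in L^p$: $\mathbb{E}[e^{\theta V(t)}]=e^{t\Psi_V(\theta)}$ for all $\theta\ge0$ by spectral negativity, and convexity of $\Psi_V$ with $\Psi_V(0)=\Psi_V(\kappa)=0$ and negative drift gives $\Psi_V<0$ on $(0,\kappa)$.

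Second, the detour you take instead is flawed as written. The inequality $\underline V(m+n)\le \underline V(m)+(\underline V(m+n)-V(m))$ is equivalent to $V(m)\le\underline V(m)$, which is false in general; the correct superadditivity is $\underline V(m+n)\ge \underline V(m)+\inf_{[m,m+n]}(V-V(m))$, and for $\theta>0$ this yields a \emph{lower} bound on $\mathbb{E}[e^{\theta\underline V(m)}]$, not the submultiplicative upper bound $\mathbb{E}[e^{\theta\underline V(m)}]\le(\mathbb{E}[e^{\theta\underline V(1)}])^m$ that your iteration requires (that step can be repaired via $\underline V(m+n)\le V(m)+\inf_{[m,m+n]}(V-V(m))$, or simply $\underline V(r)\le V(r)$ — but then the argument collapses to the one-line bound above). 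There is also a sign confusion at the start: you ask for $\mathbb{E}[e^{-\theta\underline V(1)}]<\infty$, which concerns exponential moments of $\sup_{[0,1]}(-V)$ and may fail under the standing assumptions (the negative jumps of $V$ need not have exponential moments), whereas the quantity you actually use, $\mathbb{E}[e^{\theta\underline V(1)}]$ with $\theta>0$, is trivially at most $1$; and the final display contains a stray factor $(\cdots)$ and the power $-\lfloor r\rfloor$ where $+\lfloor r\rfloor$ is meant. In short: right idea, but the step you rely on (the MGF submultiplicativity via "superadditivity") is not justified as stated, while the direct bound you discarded proves the lemma immediately.
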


\begin{lemme} \label{supdev}

There are positive constants $c_1, c_2$ such that, 
\[ \forall t, a > 0, \ P \left ( \sup_{[t, +\infty[} V > - a \right ) \leq e^{c_1 a - c_2 t} + e^{-\kappa a}. \]

\end{lemme}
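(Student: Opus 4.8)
The plan is to reduce the statement to the known law of the all-time supremum of $V$ together with an exponential (Chernoff) tail bound for $V(t)$. First I would apply the Markov property of $V$ at the deterministic time $t$: writing $V(s) = V(t) + (V(s)-V(t))$ for $s \geq t$ gives
\[ \sup\nolimits_{[t,+\infty[} V = V(t) + W_t, \qquad W_t := \sup\nolimits_{u\geq 0}\big(V(t+u)-V(t)\big). \]
By the Markov property and the stationarity of the increments, the process $(V(t+u)-V(t))_{u\geq 0}$ is independent of $\mathcal{F}^V_t$ and has the same law as $V$; hence $W_t$ is independent of $V(t)$ and $W_t \egloi \sup_{[0,+\infty[} V$, which by Corollary VII.2 in \cite{Bertoin} is exponentially distributed with parameter $\kappa$. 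In particular $\P(W_t > a) = e^{-\kappa a}$ for every $a>0$.

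Next I would split on whether $V(t)$ is very negative or $W_t$ is atypically large: for every $a>0$, if $V(t) \leq -2a$ and $W_t \leq a$ then $\sup_{[t,+\infty[}V = V(t)+W_t \leq -a$, so the union bound yields
\[ \P\!\left(\sup\nolimits_{[t,+\infty[} V > -a\right) \leq \P\big(V(t) > -2a\big) + \P(W_t > a) = \P\big(V(t) > -2a\big) + e^{-\kappa a}. \]
For the first term I would use that, $V$ having no positive jumps, $\Psi_V(\lambda) = \log\E[e^{\lambda V(1)}]$ is finite for all $\lambda\geq 0$, that $\Psi_V$ is convex with $\Psi_V(0)=\Psi_V(\kappa)=0$, and that $\Psi_V\not\equiv 0$ on $[0,\kappa]$ (otherwise $V(1)=0$ a.s.); hence there exists $\lambda^\ast \in\,]0,\kappa[$ with $\Psi_V(\lambda^\ast)<0$. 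A Chernoff bound then gives
\[ \P\big(V(t) > -2a\big) \leq e^{2\lambda^\ast a}\,\E\big[e^{\lambda^\ast V(t)}\big] = e^{2\lambda^\ast a + t\Psi_V(\lambda^\ast)} = e^{c_1 a - c_2 t}, \]
with $c_1 := 2\lambda^\ast > 0$ and $c_2 := -\Psi_V(\lambda^\ast) > 0$. Combining the last two displays gives the asserted bound.

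I do not expect a real obstacle here: the argument is a routine combination of the Markov property, the known law of the overall supremum of $V$, and an exponential Markov inequality. The only points worth a line of justification are the independence of $W_t$ and $V(t)$ together with the identification $W_t\egloi\sup_{[0,+\infty[}V$, and the existence of $\lambda^\ast\in\,]0,\kappa[$ with $\Psi_V(\lambda^\ast)<0$, which follows from the strict negativity of $\Psi_V$ on $]0,\kappa[$ (a consequence of convexity and the two zeros $0$ and $\kappa$, using that $V$ is genuinely random). Note finally that the bound is informative only when $c_1 a < c_2 t$; in the opposite regime its right-hand side already exceeds $1$, so nothing is lost.
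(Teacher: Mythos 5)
Your proof is correct and follows essentially the same route as the paper's: split on the event $\{V(t)>-2a\}$, use the Markov property plus the exponential law of the all-time supremum of $V$ for the second term, and a Chernoff bound $e^{2\gamma a}\,e^{t\Psi_V(\gamma)}$ with $\gamma\in\,]0,\kappa[$ (so $\Psi_V(\gamma)<0$) for the first. Your extra remarks on the independence of $W_t$ and $V(t)$ and the strict negativity of $\Psi_V$ on $]0,\kappa[$ merely make explicit what the paper leaves implicit.
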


\begin{proof}

Let us choose $\gamma \in ]0, \kappa[$, we have
\begin{align*}
P \left ( \sup_{[t, +\infty[} V > - a \right ) & \leq P \left ( V(t) > -2a \right ) + P \left ( V(t) \leq -2a, \sup_{[0, +\infty[} V(t + .) - V(t) > a \right ) \\
& \leq P \left ( e^{\gamma V(t)} > e^{-2\gamma a} \right ) + P \left ( \sup_{[0, +\infty[} V(t + .) - V(t) > a \right ) \\
& = e^{2\gamma a} E \left [ e^{\gamma V(t)} \right ] + e^{-\kappa a}, 
\end{align*}
where we used Markov's inequality for the first term and the Markov property at time $t$ for the second term, together with the fact that the supremum of $V$ on $[0, +\infty[$ follows an exponential distribution with parameter $\kappa$. Since $E [ e^{\gamma V(t)} ] = e^{t \Psi_V(\gamma)}$ and $\Psi_V(\gamma) < 0$ (because $0 < \gamma < \kappa$), we get the result with $c_1 := 2 \gamma$ and $c_2 := -\Psi_V(\gamma)$. 

\end{proof}

\begin{lemme} (Lemma 5.3 of \cite{caslevyvech}) \label{foncexpov}

There is a positive constant $\mathcal{C}$ such that
\[ \mathbb{P} \left ( \int_0^{+\infty} e^{V(u)} du \geq x \right ) \underset{x \rightarrow +\infty}{\sim} \mathcal{C} x^{-\kappa}. \]
%where $\mathcal{C}$ is the constant of Corollary 5 of Bertoin, Yor \cite{Bertoinyor} applied to $-V$. 

\end{lemme}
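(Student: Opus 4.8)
The plan is to read off the polynomial tail from a perpetuity-type distributional fixed-point equation satisfied by $I:=\int_0^{+\infty}e^{V(u)}\,\mathrm{d}u$ together with the implicit renewal theorem of Goldie, the exponent $\kappa$ entering solely through the Cram\'er condition $\Psi_V(\kappa)=0$. Fix two levels $a,b>0$ and set $\tau:=\inf\{t\geq0,\ V(t)\notin[-a,b]\}$. Since $V$ has no positive jumps it leaves $[-a,b]$ continuously through the top, so $V\leq b$ on $[0,\tau]$, and $\tau\leq\tau(V,]-\infty,-a])$, which is a.s.\ finite by Lemma~\ref{tpsatteinthatv}. Applying the strong Markov property at $\tau$, the process $(V(\tau+u)-V(\tau))_{u\geq0}$ is an independent copy of $V$, and one obtains
\[ I \;\overset{\mathcal{L}}{=}\; A+B\,I', \qquad A:=\int_0^{\tau}e^{V(u)}\,\mathrm{d}u,\quad B:=e^{V(\tau)}, \]
where $I'$ is a copy of $I$ independent of $(A,B)$; note $A>0$ a.s.\ and $0\leq B\leq e^{b}$ a.s.

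Next I would check the hypotheses of Goldie's theorem for the exponent $\kappa$. \emph{Cram\'er condition.} As $\Psi_V(\kappa)=0$ the process $(e^{\kappa V(t)})_{t\geq0}$ is a martingale, and $(e^{\kappa V(t\wedge\tau)})_{t\geq0}$ is bounded by $e^{\kappa b}$ (because $V\leq b$ on $[0,\tau]$), so optional stopping gives $\mathbb{E}[B^{\kappa}]=\mathbb{E}[e^{\kappa V(\tau)}]=1$; it is precisely to make $B$ genuinely random while keeping $\mathbb{E}[B^{\kappa}]=1$ that one regenerates at the exit of a strip rather than at a fixed passage level (the borderline relation $\mathbb{E}[e^{\kappa\sup_{[0,+\infty[}V}]=+\infty$ makes the naive choice fail). \emph{Moments.} Since $\log^{+}B=\max(V(\tau),0)\leq b$ one has $\mathbb{E}[B^{\kappa}\log^{+}B]\leq b<+\infty$; since $A\leq e^{b}\tau$ and $\tau$ has exponential moments by Lemma~\ref{tpsatteinthatv}, $\mathbb{E}[A^{s}]<+\infty$ for every $s>0$; and the remaining integrability requirement of Goldie, which when $\kappa>1$ cannot come from $\mathbb{E}[I^{\kappa}]$ (this being infinite), follows from the elementary bound $\bigl|\,|A+BI'|^{\kappa}-|BI'|^{\kappa}\bigr|\lesssim A^{\kappa}+B^{\kappa-1}(I')^{\kappa-1}A$, the independence of $I'$ and $(A,B)$, $\mathbb{E}[(I')^{\kappa-1}]<+\infty$ (as $\kappa-1<\kappa$), and the bounds $A\leq e^{b}\tau$, $B\leq e^{b}$. \emph{Non-arithmeticity.} $\log B=V(\tau)$ is non-lattice, carrying a spread-out component from the overshoot of $V$ below $-a$, outside the degenerate lattice compound-Poisson case excluded here. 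Goldie's implicit renewal theorem then gives a constant $\mathcal{C}\geq0$ with $\mathbb{P}(I\geq x)\sim\mathcal{C}\,x^{-\kappa}$ as $x\to+\infty$.

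Finally I would rule out $\mathcal{C}=0$ by a matching lower bound. For $x$ large put $b_x:=\log x$ and $T^{+}:=\inf\{t\geq0,\ V(t)\geq b_x\}=\tau(V,b_x)$, which is reached continuously; by Corollary~VII.2 in \cite{Bertoin} the supremum of $V$ is exponential with parameter $\kappa$, so $\mathbb{P}(T^{+}<+\infty)=\mathbb{P}\bigl(\sup_{[0,+\infty[}V\geq b_x\bigr)=x^{-\kappa}$. On $\{T^{+}<+\infty\}$ the strong Markov property produces an independent copy $\widehat V$ of $V$ with $V(T^{+})=b_x$, so $I\geq e^{V(T^{+})}\int_0^{1}e^{\widehat V(u)}\,\mathrm{d}u=x\int_0^{1}e^{\widehat V(u)}\,\mathrm{d}u$, and with $p_0:=\mathbb{P}\bigl(\int_0^{1}e^{V(u)}\,\mathrm{d}u\geq\tfrac12\bigr)>0$ this gives $\mathbb{P}(I\geq x/2)\geq p_0\,x^{-\kappa}$, hence $\mathcal{C}\geq p_0 2^{-\kappa}>0$. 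The main obstacle in this scheme is the verification of Goldie's integrability/regularity conditions when $\kappa>1$, where $\mathbb{E}[I^{\kappa}]=+\infty$ forces one to exploit the cancellation in $\mathbb{E}\bigl[|A+BI'|^{\kappa}-|BI'|^{\kappa}\bigr]$ rather than crude bounds; everything else is routine given the exponential tail estimate of Lemma~\ref{tpsatteinthatv} and the known law of $\sup V$. (Alternatively, the statement is a special case of classical tail results for exponential functionals of L\'evy processes under a Cram\'er condition, due to Rivero and to Maulik--Zwart, and could simply be quoted.)
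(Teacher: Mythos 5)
The paper itself gives no proof of this lemma: it is imported verbatim as Lemma 5.3 of \cite{caslevyvech}, which in turn rests on the classical tail estimates for exponential functionals of L\'evy processes under Cram\'er's condition (Rivero, Maulik--Zwart) --- precisely the alternative you mention in your closing parenthesis. Your blind proof instead reconstructs the result from scratch: the a.s.\ decomposition $I=A+BI'$ at the exit time of the strip $[-a,b]$, optional stopping of the bounded martingale $e^{\kappa V(t\wedge\tau)}$ to get $\mathbb{E}[B^{\kappa}]=1$, Goldie's implicit renewal theorem for the affine recursion, and the neat lower bound $\mathbb{P}(I\geq x/2)\geq p_0x^{-\kappa}$ via the exponential law of $\sup V$ to rule out $\mathcal{C}=0$; this is sound and is essentially the Kesten--Goldie mechanism underlying the quoted references, so what your route buys is a self-contained argument that exposes why $\kappa$ appears (Cram\'er for the multiplicative factor obtained by regenerating in a strip), at the cost of redoing work the paper simply cites. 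Two remarks. First, your worry about the cancellation when $\kappa>1$ is superfluous: for $A,B\geq0$ Goldie's theorem for $R\overset{\mathcal{L}}{=}A+BR$ only requires $\mathbb{E}[B^{\kappa}]=1$, $\mathbb{E}[B^{\kappa}\log^{+}B]<\infty$, non-arithmeticity of $\log B$ and $\mathbb{E}[A^{\kappa}]<\infty$ (the cancellation estimate is internal to his proof); moreover the bound $\mathbb{E}[(I')^{\kappa-1}]<\infty$ you invoke is itself a non-trivial known fact about moments of order less than $\kappa$, so better not to lean on it. Second, the one soft spot is non-arithmeticity of $V(\tau)$: ``a spread-out component from the overshoot'' is not an argument. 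It is true, but it needs a short case check --- every admissible $V$ has either a Gaussian part, unbounded variation, or bounded variation with strictly positive drift, and in each case the law of the position at the lower exit has a diffuse part (downward crossing occurs by a jump from a pre-jump position with non-lattice law, or with creeping when there is a Gaussian component), while the upper exit only adds the atom at $b$. With that point patched, your proof is complete and correct.
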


We now state some Lemmas about $V^{\uparrow}$. First, we recall how $V^{\uparrow}$ and $\hat V^{\uparrow}$ are defined. 

$V$ being spectrally negative, the Markov family $( V^{\uparrow}_x, x \geq 0 )$ may be defined by $h$-transform as in \cite{Bertoin}, Section VII.3. For any $x \geq 0$, the process $V^{\uparrow}_x$ must be seen as $V$ conditioned to stay positive and starting from $x$. We denote $V^{\uparrow}$ for the process $V^{\uparrow}_0$. It is known that $V^{\uparrow}_x$ converges in the Skorokhod space to $V^{\uparrow}$ when $x$ goes to $0$. Also, as well as $V$, $V^{\uparrow}$ has no positive jumps so it reaches every positive level continuously. 

We now define $\hat V^{\uparrow}$, that is, $\hat V$ conditioned to stay positive. Since $V$ is spectrally negative and not the opposite of a subordinator, it is regular for $]0, +\infty[$ (see \cite{Bertoin}, Theorem VII.1), so $\hat{V}$ is for $]-\infty, 0[$. Moreover, $\hat V$ drifts to $+\infty$. We can thus define the Markov family $( \hat V^{\uparrow}_x, \ x \geq 0 )$ as in Doney \cite{Doney}, Chapter 8. It can be seen from there that for any $x > 0$ the process $\hat V^{\uparrow}_x$ is Markovian and has infinite life-time. If moreover $V$ has unbounded variation then $\hat V$ is regular for $]0, +\infty[$, and from Theorem 24 of \cite{Doney}, we have that $\hat V^{\uparrow}_0$, that we denote by $\hat V^{\uparrow}$, is well defined. 
%and that $\hat V^{\uparrow}_0$, that we denote by $\hat V^{\uparrow}$, is indeed well defined, and that $\hat{V}^{\uparrow}_x$ converges in the Skorokhod space to $\hat{V}^{\uparrow}$ when $x$ goes to $0$. 

Here again, for any $x \geq 0$, the process $\hat V^{\uparrow}_x$ must be seen as $\hat V$ conditioned to stay positive and starting from $x$. Note that, since $\hat V$ converges almost surely to infinity, for $x > 0$, $\hat V^{\uparrow}_x$ is only $\hat V_x$ conditioned in the usual way to remain positive. 

\begin{lemme} (Lemma 5.8 of \cite{caslevyvech}) \label{vuprestegrand}

There are two positive constants $c_1, c_2$ such that, for all $1 < a < b$, we have 
\[ \mathbb{P} \left ( \inf_{[0, +\infty[} V_b^{\uparrow} < a \right ) \leq c_2 e^{-c_1 (b-a)}. \]

\end{lemme}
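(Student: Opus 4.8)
The plan is to prove the bound via the classical $h$-transform description of the process conditioned to stay positive, together with the exponential growth of the scale function of $V$. Write $W$ for the ($0$-)scale function of $V$, the non-decreasing function on $[0,+\infty[$ determined by $\int_0^{+\infty}e^{-\lambda x}W(x)\,\mathrm{d}x=1/\Psi_V(\lambda)$ for $\lambda>\kappa$. By the construction recalled in \cite{Bertoin}, Section VII.3 (equivalently, by conditioning on $\{\tau(V,[a,+\infty[)<\tau(V,]-\infty,0])\}$ and letting $a\to+\infty$), $V^{\uparrow}_b$ is the Doob transform, by the harmonic function $W$, of $V$ started at $b$ and killed upon entering $]-\infty,0]$; in particular $V^{\uparrow}_b$ stays strictly positive, and for every $x>0$ and $u\geq 0$ one has $\mathbb{E}\big[1/W(V^{\uparrow}_x(u))\big]=\mathbb{P}_x\big(\tau(V,]-\infty,0])>u\big)/W(x)\leq 1/W(x)$, where $\mathbb{P}_x$ refers to $V$ started at $x$.

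From this identity and the Markov property of $V^{\uparrow}$, the process $M(u):=1/W(V^{\uparrow}_b(u))$, $u\geq 0$, is a non-negative c\`ad-l\`ag supermartingale with $M(0)=1/W(b)$. I would then record the two standard facts about $W$ that make the estimate exponential: $W$ is non-decreasing, and, since $V$ drifts to $-\infty$ so that $\kappa$ is the right-most root of the convex function $\Psi_V$, one has $e^{-\kappa x}W(x)\to 1/\Psi_V'(\kappa)\in\,]0,+\infty[$ as $x\to+\infty$ (a Tauberian argument applied to the Laplace transform $1/\Psi_V$). Hence there are constants $0<c\leq C$ with $c\,e^{\kappa x}\leq W(x)\leq C\,e^{\kappa x}$ for all $x\geq 1$.

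To finish, I would apply Doob's maximal inequality for non-negative supermartingales, $\mathbb{P}\big(\sup_{u\geq 0}M(u)\geq\lambda\big)\leq M(0)/\lambda$ for $\lambda>0$. Since $V^{\uparrow}_b\geq 0$ and $W$ is non-decreasing, on the event $\{\inf_{[0,+\infty[}V^{\uparrow}_b<a\}$ there is a time $u$ with $W(V^{\uparrow}_b(u))\leq W(a)$, i.e. $M(u)\geq 1/W(a)$; hence that event is contained in $\{\sup_{u\geq 0}M(u)\geq 1/W(a)\}$, and taking $\lambda=1/W(a)$ yields, for $1<a<b$,
\[ \mathbb{P}\Big(\inf_{[0,+\infty[}V^{\uparrow}_b<a\Big)\ \leq\ \frac{W(a)}{W(b)}\ \leq\ \frac{C}{c}\,e^{-\kappa(b-a)}, \]
which is the assertion with $c_1=\kappa$ and $c_2=C/c$.

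The main obstacle is not the inequality itself, which is short once the setup is in place, but the careful justification of the two inputs: the $h$-transform description of $V^{\uparrow}$ (handling the killing at $\tau(V,]-\infty,0])$ and the strict positivity of the conditioned process) and the scale-function asymptotics; both are classical for spectrally negative L\'evy processes and I would simply invoke \cite{Bertoin}. A variant bypassing the supermartingale would apply the $h$-transform identity at the stopping time $\tau(V,]-\infty,a[)\wedge u$, use $W(V(\tau(V,]-\infty,a[)))\leq W(a)$ on the event that $V^{\uparrow}_b$ enters $]-\infty,a[$ before time $u$, and let $u\to+\infty$.
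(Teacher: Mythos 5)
Your proof is correct. Note that the paper itself gives no proof of this lemma: it is imported verbatim as Lemma~5.7 of the reference \cite{caslevyvech}, so there is no in-text argument to compare against. Your derivation stands on its own: the $h$-transform description of $V^{\uparrow}_b$ via the scale function $W$ gives the identity $\mathbb{E}[1/W(V^{\uparrow}_b(u))]=\mathbb{P}_b(\tau(V,]-\infty,0])>u)/W(b)\le 1/W(b)$; the Markov property upgrades this to the supermartingale property of $M(u)=1/W(V^{\uparrow}_b(u))$; Doob's maximal inequality then yields $\mathbb{P}(\inf_{[0,+\infty[}V^{\uparrow}_b<a)\le W(a)/W(b)$; and the standard asymptotics $e^{-\kappa x}W(x)\to 1/\Psi'_V(\kappa)\in\,]0,+\infty[$ (which is exactly the statement that the Esscher-shifted scale function $W_\kappa$ tends to $1/\Psi'_\kappa(0+)$) furnishes the uniform bounds $c\,e^{\kappa x}\le W(x)\le C\,e^{\kappa x}$ on $[1,+\infty[$, giving $c_1=\kappa$, $c_2=C/c$. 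The event inclusion $\{\inf V^{\uparrow}_b<a\}\subset\{\sup_u M(u)\ge 1/W(a)\}$ is justified because $W$ is non-decreasing and $V^{\uparrow}_b$ is c\`ad-l\`ag, so $\inf V^{\uparrow}_b<a$ does force some time $u$ with $V^{\uparrow}_b(u)<a$. The only inputs you leave to the literature --- the $W$-harmonicity underlying the $h$-transform and the scale-function asymptotics --- are genuinely classical for spectrally negative L\'evy processes drifting to $-\infty$, and the appeal to \cite{Bertoin} is appropriate. Your variant via optional stopping at $\tau(V^{\uparrow}_b,]-\infty,a[)\wedge u$ would indeed give the same bound without naming the supermartingale.
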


\begin{lemme} \label{dernierpassage}

There are positive constants $c_3, c_4, c_5, c_6$ such that, 
\begin{align}
\forall \ 0 \leq x < y, \ r > 0, \ \mathbb{P} \left ( \tau(V^{\uparrow}_x, y) > r \right ) & \leq e^{c_3 y - c_4 r}, \label{tpsattup} \\
\forall z, r > 0, \ \mathbb{P} \left ( \mathcal{K}(V^{\uparrow}_z, z) > r \right ) & \leq e^{2 c_3 z - c_4 r} + c_6 e^{-c_5 z}. \label{boundlastpassage} 
\end{align}

\end{lemme}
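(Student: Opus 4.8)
The plan is to prove Lemma \ref{dernierpassage} by the same strategy used for the analogous statements about $V$ itself (Lemmas \ref{tpsatteinthatv} and \ref{supdev}), transferring them to the conditioned process $V^{\uparrow}$ via the absolute continuity relation between $V^{\uparrow}$ and $V$ on finite time (or rather finite space) horizons. Recall that $V$ being spectrally negative, for $x \geq 0$ the measure of $V^{\uparrow}_x$ can be written, on events measurable with respect to the trajectory up to the hitting time of a level $y > x$, as a Doob $h$-transform with $h$ the scale-type function; more concretely, since $V$ drifts to $-\infty$ and $\Psi_V(\kappa)=0$, conditioning to stay positive amounts to an exponential change of measure with parameter $\kappa$ together with conditioning on the infimum being positive. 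I would first record the key identity: for a positive functional $F$ of $(V(s), 0 \le s \le \tau(V,y))$ measurable and bounded,
\[
\mathbb{E}\left[ F\left( V^{\uparrow}_x \right) \right] = \mathbb{E}_x\left[ F(V) \, \frac{W(V(\tau(V,y)))}{W(x)} ; \tau(V,y) < \tau(V, ]-\infty,0[) \right],
\]
or an analogue using the exponential martingale $e^{\kappa V}$; the precise form is the one already invoked implicitly in \cite{caslevyvech} and \cite{foncexpovech}, so I may simply cite it.

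First I would prove \eqref{tpsattup}. The event $\{ \tau(V^{\uparrow}_x, y) > r \}$ means the conditioned process has not reached level $y$ by time $r$. Using the above change of measure (or, more robustly, the fact that $V^{\uparrow}_x$ started above $0$ dominates, in an appropriate coupling, a copy of $V$ conditioned only on non-negativity, together with the linear-in-$y$ cost of the $h$-transform), one reduces to a statement about $V$ itself: $V$ started from $x$ does not exit $[0,y]$ through the top before time $r$. Since $V$ has unbounded variation and drifts to $-\infty$, there is a uniform lower bound on the probability that $V$ increases by $y$ within one unit of time starting from any point — more precisely $P(\tau(V,y) \le 1) \ge e^{-c y}$ for a constant $c$ — and iterating this over $\lfloor r \rfloor$ disjoint unit intervals gives, after absorbing the $h$-transform density which is at most $e^{c_3 y}/$const, a bound of the form $e^{c_3 y - c_4 r}$. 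This is exactly the shape of Lemma \ref{tpsatteinthatv}, so the argument is a direct adaptation; the constants $c_3, c_4$ come out of the exponential decay rate of $P(\tau(V,y) > n)$ combined with the $e^{c_1 y}$-type density bound.

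Then \eqref{boundlastpassage} follows from \eqref{tpsattup} by a decomposition at a return: the event $\{ \mathcal{K}(V^{\uparrow}_z, z) > r \}$ forces $V^{\uparrow}_z$ to return to level $z$ at some time after $r$, hence in particular it must first go down to $z$ after having possibly gone up, OR it never escapes upward fast. I would split according to whether $V^{\uparrow}_z$ has reached level $2z$ before time $r$. On the complement, $\{ \tau(V^{\uparrow}_z, 2z) > r \}$, apply \eqref{tpsattup} with $x=z$, $y=2z$, giving $e^{c_3 (2z) - c_4 r} = e^{2c_3 z - c_4 r}$. On the event $\{ \tau(V^{\uparrow}_z, 2z) \le r \}$, use the Markov property of $V^{\uparrow}$ at the hitting time of $2z$: after reaching $2z$, the conditioned process has to come back down to $z$, i.e. $\inf_{[0,+\infty[} V^{\uparrow}_{2z} < z$, whose probability is at most $c_2 e^{-c_1 z}$ by Lemma \ref{vuprestegrand} (applied with $a = z$, $b = 2z$, so $b-a=z$). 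Collecting the two pieces yields $e^{2c_3 z - c_4 r} + c_6 e^{-c_5 z}$ with $c_5 = c_1$, $c_6 = c_2$.

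The main obstacle I anticipate is making the transfer from $V^{\uparrow}$ to $V$ fully rigorous with explicit control of the $h$-transform density uniformly in the starting point and the level $y$ — i.e. checking that the Radon–Nikodym density between $\mathcal{L}(V^{\uparrow}_x)$ and $\mathcal{L}(V_x)$ restricted to $\{\tau(V,y) < \tau(V,]-\infty,0[)\}$, or the relevant martingale density $e^{\kappa(V(\tau)-x)}$, is bounded by something of the form $e^{cy}$ — and ensuring these density bounds are compatible with the unbounded-variation regularity hypotheses that guarantee $V^{\uparrow}$ and $\hat V^{\uparrow}$ are genuinely well-defined as started from $0$. Everything downstream (the iteration over unit time intervals, the Markov decomposition) is routine and mirrors the proofs already given for $V$, $\hat V$, and for Lemma \ref{vuprestegrand}.
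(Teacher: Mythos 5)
Your treatment of \eqref{boundlastpassage} is exactly the paper's argument: split on whether $\tau(V^{\uparrow}_z,2z)\leq r$, use \eqref{tpsattup} with $x=z$, $y=2z$ on the complement, and on the other piece apply the Markov property at $\tau(V^{\uparrow}_z,2z)$ together with Lemma \ref{vuprestegrand} with $a=z$, $b=2z$. That part is fine.

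The gap is in your proof of \eqref{tpsattup}. First, the key elementary estimate you invoke, $P(\tau(V,y)\leq 1)\geq e^{-cy}$ uniformly, is false in general: when $V$ has a Gaussian component (e.g.\ the drifted Brownian case), the probability of climbing a height $y$ within unit time decays like $e^{-cy^{2}}$, not exponentially in $y$. Second, even granting some lower bound $p(y)$ for making the ascent within one unit of time, iterating over $\lfloor r\rfloor$ unit time intervals only yields a bound of the type $(1-p(y))^{\lfloor r\rfloor}$, whose decay rate in $r$ deteriorates as $y$ grows; it cannot be recast as $e^{c_{3}y-c_{4}r}$ with $c_{4}$ independent of $y$. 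To get the stated form one needs uniform exponential moments for the ascent time, typically by cutting the climb into unit \emph{height} increments (each ascent of one unit having a uniformly bounded exponential moment) and then applying a Chernoff bound — which is in substance what Lemma 5.6 of the cited companion paper provides for $V^{\uparrow}$ started at $0$, and which the paper simply cites. Finally, your whole $h$-transform apparatus (whose density control you yourself flag as the main obstacle) is unnecessary for passing from $x=0$ to general $x$: since $V^{\uparrow}$ has no positive jumps it reaches the level $x$ continuously, so by the Markov property at $\tau(V^{\uparrow},x)$ the post-hitting process is a copy of $V^{\uparrow}_x$, whence $\tau(V^{\uparrow}_x,y)$ is stochastically dominated by $\tau(V^{\uparrow},y)$ and the bound for the process started at $0$ transfers immediately, uniformly in $x$.
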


\begin{proof}

Let us fix $0 \leq x < y$. From the Markov property applied at $\tau(V^{\uparrow}, x)$, the hitting time of $x$ by $V^{\uparrow}$, we have
\[ \mathbb{P} \left ( \tau(V^{\uparrow}_x, y) > r \right ) = \mathbb{P} \left ( \tau(V^{\uparrow}(\tau(V^{\uparrow}, x) + .), y) > r \right ) \leq \mathbb{P} \left ( \tau(V^{\uparrow}, y) > r \right ), \]
so \eqref{tpsattup} follows from Lemma 5.7 of \cite{caslevyvech}. For the second point, we have
\[ \left \{ \tau(V^{\uparrow}_z, 2z) \leq r \right \} \cap \left \{ \inf_{[\tau(V^{\uparrow}_z, 2z), +\infty[} V^{\uparrow}_z > z \right \} \subset \left \{ \mathcal{K}(V^{\uparrow}_z, z) \leq r \right \}, \]
so taking the complementary, 
\begin{align*}
\mathbb{P} \left ( \mathcal{K}(V^{\uparrow}_z, z) > r \right ) & \leq \mathbb{P} \left ( \tau(V^{\uparrow}_z, 2z) > r \right ) + \mathbb{P} \left ( \inf_{[\tau(V^{\uparrow}_z, 2z), +\infty[} V^{\uparrow}_z \leq z \right ) \\
& \leq e^{2 c_3 z - c_4 r} + \mathbb{P} \left ( \inf_{[0, +\infty[} V^{\uparrow}_{2z} \leq z \right ), 
\end{align*}
where, for the first term, we used \eqref{tpsattup} with $x = z$, $y = 2 z$ and, for the second term, we used the Markov property at time $\tau(V^{\uparrow}_z, 2z)$. Combining with Lemma \ref{vuprestegrand} applied with $a = z$, $b = 2z$, we get \eqref{boundlastpassage}. 

\end{proof}

\begin{lemme} \label{fonctronque}

There is a positive constant $c$ such that for $t$ large enough, 
\[ \mathbb{P} \left ( \int_{\tau(V^{\uparrow}, h_t/2)}^{+\infty} e^{- V^{\uparrow}(x)} dx \geq e^{-h_t /4} \right ) \leq e^{-c h_t}. \]

\end{lemme}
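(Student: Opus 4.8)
## Proof plan for Lemma \ref{fonctronque}

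The plan is to bound the tail integral $\int_{\tau(V^{\uparrow}, h_t/2)}^{+\infty} e^{- V^{\uparrow}(x)} dx$ by splitting the integration range according to the successive levels reached by $V^{\uparrow}$ after the time $\tau(V^{\uparrow}, h_t/2)$. First I would apply the Markov property at $\tau := \tau(V^{\uparrow}, h_t/2)$: by Fact \ref{loidesval}-type reasoning (or simply the strong Markov property for $V^{\uparrow}$), the shifted process $V^{\uparrow}(\tau + \cdot)$ is a copy of $V^{\uparrow}_{h_t/2}$, so that
\[ \int_{\tau(V^{\uparrow}, h_t/2)}^{+\infty} e^{- V^{\uparrow}(x)} dx \overset{\mathcal{L}}{=} \int_0^{+\infty} e^{- V^{\uparrow}_{h_t/2}(x)} dx. \]
Thus it suffices to show that $\mathbb{P}(\int_0^{+\infty} e^{- V^{\uparrow}_{h_t/2}(x)} dx \geq e^{-h_t/4}) \leq e^{-ch_t}$.

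The key step is a decomposition over dyadic-type slices of the starting height. Set $a_t := h_t/2$. On the event that $V^{\uparrow}_{a_t}$ stays above, say, $a_t/2 = h_t/4$ forever, the integrand is everywhere at most $e^{-h_t/4}$; but that is not quite enough since the integral could still be large if $V^{\uparrow}_{a_t}$ lingers near level $h_t/4$ for a very long time. So I would instead write, for each integer $j \geq 0$, the contribution of the time interval during which $V^{\uparrow}_{a_t}$ is between levels $2^{-(j+1)} a_t$ and $2^{-j} a_t$ (more precisely the contribution after the last passage below $2^{-j} a_t$, using the last-passage control $\mathcal{K}$ from Lemma \ref{dernierpassage}), and bound it by
\[ e^{-2^{-(j+1)} a_t} \cdot \mathcal{K}\big(V^{\uparrow}_{2^{-j} a_t}, 2^{-j} a_t\big) \]
up to the event that $V^{\uparrow}_{2^{-j}a_t}$ ever returns below $2^{-j} a_t$ after some controlled time — Lemma \ref{vuprestegrand} handles the probability that $V^{\uparrow}$ started at $b$ ever drops below $a$. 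Then I would require each last-passage time $\mathcal{K}(V^{\uparrow}_{2^{-j} a_t}, 2^{-j} a_t)$ to be at most $e^{2^{-j} a_t /8}$ (say), which by \eqref{boundlastpassage} fails with probability at most $e^{-c' 2^{-j} a_t}$ for the relevant range of $j$; summing the slice contributions geometrically then yields a total at most of order $e^{-c'' h_t}$, which is $\ll e^{-h_t/4}$. Taking the complement of the union of all the bad events (over $j$ up to roughly $\log_2(h_t)$, beyond which $2^{-j}a_t$ is $O(1)$ and a cruder bound via Lemma \ref{foncexpov} or the finiteness of $I(V^{\uparrow})$ from \cite{foncexpovech} suffices) and bounding its probability by a geometric sum $\sum_j e^{-c' 2^{-j} a_t}$, one gets a bound of the form $e^{-c h_t}$.

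The main obstacle I anticipate is making the slicing argument clean near the "bottom" levels, i.e.\ when $2^{-j} a_t$ is small: there the exponential factor $e^{-2^{-(j+1)}a_t}$ is close to $1$ and does not help, so the control must come entirely from integrability of $e^{-V^{\uparrow}}$ near $0$, i.e.\ from the fact that $I(V^{\uparrow}) < \infty$ with good tails — and one must quantify that $V^{\uparrow}$ started from a height of order $h_t/4$ has, with probability $1 - e^{-ch_t}$, already passed \emph{all} low levels by the time it would matter, which is exactly the purpose of combining \eqref{tpsattup} and \eqref{boundlastpassage} with Lemma \ref{vuprestegrand}. A slightly more economical alternative, which I would try first, is to skip the full dyadic decomposition: condition on $\inf_{[0,+\infty[} V^{\uparrow}_{a_t} \geq a_t/2$ (probability $\geq 1 - c_2 e^{-c_1 a_t/2}$ by Lemma \ref{vuprestegrand}), on which event $V^{\uparrow}_{a_t}$ never revisits level $a_t/2$; then after the last passage time $\mathcal{K}(V^{\uparrow}_{a_t}, a_t)$ — which exceeds $e^{a_t/8}$ only with probability $\leq e^{-ca_t}$ by \eqref{boundlastpassage} — the whole process is $\geq a_t/2$ and, using the strong law / transience of $V^{\uparrow}$ (it drifts to $+\infty$ at a linear rate, cf.\ the asymptotics in \cite{foncexpovech}), the tail integral from that point is deterministically $\leq e^{-a_t/2} \cdot (\text{something of order } e^{-a_t/4}) \ll e^{-a_t/4} = e^{-h_t/8}$. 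Either route gives the claimed $e^{-ch_t}$ bound; the dyadic version is more robust if the linear-drift estimate for $V^{\uparrow}$ is not directly at hand in the stated form.
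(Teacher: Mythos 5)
Your identification of the relevant tools — splitting at the last passage time $\mathcal{K}(V^{\uparrow}, h_t/2)$ and invoking Lemma \ref{dernierpassage} (for $\mathcal{K}$) together with Lemma \ref{vuprestegrand} (for the infimum) — is exactly how the paper treats the ``middle'' part, i.e.\ the contribution from $[\tau(V^{\uparrow}, h_t/2), \mathcal{K}(V^{\uparrow}, h_t/2)]$, bounded by $\bigl(\mathcal{K} - \tau\bigr)\cdot\sup e^{-V^{\uparrow}}$. That piece of your plan is sound.

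The gap is in the remaining piece: the integral $\int_{\mathcal{K}(V^{\uparrow}, h_t/2)}^{+\infty} e^{-V^{\uparrow}(x)}\,dx$ after the last passage. Your dyadic scheme has no slice for heights above $h_t/2$, yet the process spends all but a finite amount of time there, so the scheme never produces a bound for the bulk of the integral. Your ``economical'' alternative asserts that after $\mathcal{K}$ the tail integral is ``deterministically $\leq e^{-a_t/2}\cdot(\text{something of order } e^{-a_t/4})$'' by the linear drift of $V^{\uparrow}$, but there is no such deterministic bound: that tail integral is a genuine unbounded random variable, and knowing only that the process stays above $h_t/2$ does not bound the \emph{time} integral. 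Moreover $\mathcal{K}$ is not a stopping time, so you cannot apply the Markov property there to restart the process. The paper's crucial ingredient, which your plan omits, is the renewal identity at last passage times (Corollary VII.19 of \cite{Bertoin}): it states precisely that $\int_{\mathcal{K}(V^{\uparrow}, h_t/2)}^{+\infty} e^{-V^{\uparrow}(x)}\,dx \overset{\mathcal{L}}{=} e^{-h_t/2}\, I(V^{\uparrow})$, and then Markov's inequality together with the finite expectation of $I(V^{\uparrow})$ (Theorem 1.1 of \cite{foncexpovech}) gives the exponential bound. Without that identity or some substitute — a quantitative law-of-large-numbers for $V^{\uparrow}$ with uniform-in-starting-level exponential tail control, which is not in the paper's toolbox — the argument does not close.
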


\begin{proof}

\begin{align*}
\int_{\tau(V^{\uparrow}, h_t/2)}^{\mathcal{K}(V^{\uparrow}, h_t/2)} e^{- V^{\uparrow}(x)} dx & \leq \left ( \mathcal{K}(V^{\uparrow}, h_t/2) - \tau(V^{\uparrow}, h_t/2) \right ) \times \sup_{[\tau(V^{\uparrow}, h_t/2), \mathcal{K}(V^{\uparrow}, h_t/2)]} e^{- V^{\uparrow}} \\
& \overset{\mathcal{L}}{=} \mathcal{K}(V^{\uparrow}_{h_t/2}, h_t/2) \times \sup_{[0, \mathcal{K}(V^{\uparrow}_{h_t/2}, h_t/2)]} e^{- V^{\uparrow}_{h_t/2}}, 
\end{align*}
where we used the Markov property at time $\tau(V^{\uparrow}, h_t/2)$ for the equality in law. We thus get
\begin{align}
\mathbb{P} \left ( \int_{\tau(V^{\uparrow}, h_t/2)}^{\mathcal{K}(V^{\uparrow}, h_t/2)} e^{- V^{\uparrow}(x)} dx \geq e^{-h_t /4}/2 \right ) & \leq \mathbb{P} \left ( \mathcal{K}(V^{\uparrow}_{h_t/2}, h_t/2) \geq e^{h_t /8}/2 \right ) + \mathbb{P} \left ( \inf_{[0, +\infty[} V^{\uparrow}_{h_t/2} \leq 3 h_t /8 \right ) \nonumber \\
& \leq e^{c_3 h_t - c_4 e^{h_t/8}/2} + c_6 e^{-c_5 h_t/2} + c_2 e^{-c_1 h_t/8}, \label{fonctronque1}
\end{align}
where, for the first term, we applied \eqref{boundlastpassage} with $z = h_t/2$, $r = e^{h_t/8} /2$ and, for the second term, we applied Lemma \ref{vuprestegrand} with $a = 3 h_t/8$, $b = h_t/2$. 

Then, according to Corollary VII.19 of \cite{Bertoin} we have 
\[ \int_{\mathcal{K}(V^{\uparrow}, h_t/2)}^{+\infty} e^{- V^{\uparrow}(x)} dx \overset{\mathcal{L}}{=} e^{-h_t/2} \int_{0}^{+\infty} e^{- V^{\uparrow}(x)} dx = e^{-h_t/2} I(V^{\uparrow}), \]
and, according to Theorem 1.1 of \cite{foncexpovech}, $I(V^{\uparrow})$ admits some finite exponential moments, so in particular it has finite expectation. 
%we can choose $\lambda > 0$ such that
%\[ \mathbb{E} \left [ e^{\lambda I(V^{\uparrow})} \right ] < +\infty. \]
We thus get
\begin{eqnarray}
\mathbb{P} \left ( \int_{\mathcal{K}(V^{\uparrow}, h_t/2)}^{+\infty} e^{- V^{\uparrow}(x)} dx \geq e^{-h_t /4} /2 \right ) = \mathbb{P} \left ( I(V^{\uparrow}) \geq e^{h_t /4}/2 \right ) \leq 2 e^{-h_t /4} \mathbb{E} \left [ I(V^{\uparrow}) \right ]. \label{fonctronque2}
\end{eqnarray}
%\begin{eqnarray}
%\mathbb{P} \left ( \int_{\mathcal{K}(V^{\uparrow}, h_t/2)}^{+\infty} e^{- V^{\uparrow}(x)} dx \geq e^{-h_t /4} /2 \right ) = \mathbb{P} \left ( I(V^{\uparrow}) \geq e^{h_t /4}/2 \right ) \leq e^{- \lambda e^{h_t /4}/2} \mathbb{E} \left [ e^{\lambda I(V^{\uparrow})} \right ]. \label{fonctronque2}
%\end{eqnarray}
The result follows from the combination of \eqref{fonctronque1} and \eqref{fonctronque2}. 

\end{proof}

The next fact gives the law of the bottom of the valleys in terms of the laws of $V^{\uparrow}$ and $\hat V^{\uparrow}$. It is a combination of Propositions 3.2 and 3.6 of \cite{caslevyvech}. 

\begin{fact} \label{loidesval}

Assume $V$ has unbounded variation. For all $i \geq 1$ let 
\begin{align*}
P^{(i)} & := (V^{(i)}(m_i-x),\ 0 \leq x \leq m_i-\tau_i^-(h_t)) \\
\tilde P^{(i)} & := (\tilde V^{(i)}(\tilde m_i-x),\ 0 \leq x \leq \tilde m_i- \tilde \tau_i^-(h_t)). 
\end{align*}

For all $i \geq 1$ we have 
\begin{eqnarray}
d_{VT} \left ( \tilde P^{(i)}, P^{(2)} \right ) \leq 2 e^{- \delta \kappa h_t /3} \label{vartot}
\end{eqnarray}
where $d_{VT}$ is the total variation distance. Moreover, the law of $P^{(2)}$ is absolutely continuous with respect to the law of the process $(\hat{V}^{\uparrow}(x), \ 0 \leq x \leq \tau(\hat{V}^{\uparrow}, h_t+))$ and has density $c_{h_t} /(1-e^{-\kappa \hat{V}^{\uparrow}(\tau(\hat{V}^{\uparrow}, h_t+))})$ with respect to this law, where $c_{h_t}$ is a constant increasing with $h_t$ and converging to $1$ when $t$ (and hence $h_t$) goes to infinity. 

For all $i \geq 1$, the two processes $(\tilde V^{(i)}(\tilde m_i-x),\ 0 \leq x \leq \tilde m_i- \tilde \tau_i^-(h_t)) = \tilde P^{(i)}$ and $(\tilde V^{(i)}(\tilde m_i+x),\  0\leq x \leq \tilde \tau_i(h_t)- \tilde m_i)$ are independent and the second is equal in law to $(V^{\uparrow}(x), \ 0 \leq x \leq \tau(V^{\uparrow}, h_t))$. 

\end{fact}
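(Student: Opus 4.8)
The plan is to observe that this statement is, up to the change of notation, a transcription of Propositions 3.2 and 3.6 of \cite{caslevyvech}: assertion \eqref{vartot} and the absolute continuity statement are Proposition 3.2 there, while the independence of the two halves of the valley and the identification of the right half with $(V^{\uparrow}(x),\ 0\leq x\leq\tau(V^{\uparrow},h_t))$ are Proposition 3.6. So the first thing I would do is check that the objects introduced above — the stopping times $\tilde L_i^{\sharp},\tilde\tau_i(h_t),\tilde m_i,\tilde L_i,\tilde\tau_i^{\pm}(a)$ and the recentred standard valleys $(\tilde V^{(i)}(x),\ \tilde L_{i-1}\leq x\leq\tilde L_i)$ — are literally the ones used in \cite{caslevyvech} (same defining recursion, same thresholds $e^{(1-\delta)\kappa h_t}$ and $h_t/2$), so that the cited propositions apply verbatim. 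After that I would recall the three arguments.

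For the right half of the valley I would apply the strong Markov property of $V$ at the stopping time $\tilde m_i$. The recentred post-minimum piece $(\tilde V^{(i)}(\tilde m_i+x),\ 0\leq x\leq\tilde\tau_i(h_t)-\tilde m_i)$ is then the excursion of a spectrally negative L\'evy process above its running infimum, started from that infimum and killed the first time it reaches height $h_t$; by the construction of the process conditioned to stay positive (\cite{Bertoin}, Section VII.3) and the description of $h$-extrema this piece has the law of $(V^{\uparrow}(x),\ 0\leq x\leq\tau(V^{\uparrow},h_t))$. Independence of the two halves follows from the same Markov property at $\tilde m_i$: conditionally on $V(\tilde m_i)$ the pre- and post-$\tilde m_i$ trajectories are independent, and both halves are recentred by the single value $V(\tilde m_i)$, so after recentring they are genuinely independent.

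For the left half I would first pass to the dual by time reversal: reading $V$ backward from $\tilde m_i$ turns $(\tilde V^{(i)}(\tilde m_i-x),\ 0\leq x\leq\tilde m_i-\tilde\tau_i^-(h_t))$ into a forward path of $\hat V$, which — since $V$ has unbounded variation — is regular for $]0,+\infty[$ and admits the conditioned-to-stay-positive version $\hat V^{\uparrow}$ (Subsection \ref{toutsurlenv}). The requirement that $\tilde m_i$ be the bottom of a true $h_t$-valley translates into a conditioning on this backward path; because $\hat V$ has positive jumps the level $\hat V^{\uparrow}(\tau(\hat V^{\uparrow},h_t+))$ at which it first exceeds $h_t$ is nondegenerate, and I expect the conditioning to factor through this overshoot, producing the announced density $c_{h_t}/(1-e^{-\kappa\hat V^{\uparrow}(\tau(\hat V^{\uparrow},h_t+))})$ with respect to the law of $(\hat V^{\uparrow}(x),\ 0\leq x\leq\tau(\hat V^{\uparrow},h_t+))$; the $\kappa$ in the exponent comes from $\sup_{[0,+\infty[}V$ being exponential with parameter $\kappa$ (Corollary VII.2 of \cite{Bertoin}), and the normalising constant $c_{h_t}$ is increasing in $h_t$ with limit $1$ because the conditioning becomes asymptotically trivial. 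Finally, for \eqref{vartot} I would compare the standard-valley minimum $\tilde m_i$ with the true $h_t$-minimum of the corresponding valley: they coincide except on the event where the preliminary descent of height $e^{(1-\delta)\kappa h_t}$ imposed in the definition of $\tilde L_i^{\sharp}$ is not yet followed by a rise of height $h_t$, and using that the maximal height attained before going down by $e^{(1-\delta)\kappa h_t}$ is stochastically dominated by an exponential variable of parameter $\kappa$ this discrepancy event has probability at most $2e^{-\delta\kappa h_t/3}$, giving the total variation bound.

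The main obstacle will be the overshoot computation on the left half: pinning down exactly which conditioning the recentred backward trajectory must satisfy for $\tilde m_i$ to be the bottom of an $h_t$-valley, and extracting from it both the precise Radon--Nikodym density and the monotonicity and limit of $c_{h_t}$; the total variation estimate \eqref{vartot}, though more routine, also requires careful control of the standard-valley construction. Since all of this is done in detail in Propositions 3.2 and 3.6 of \cite{caslevyvech}, the work here is essentially bookkeeping, and I would conclude by referring to those statements.

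\begin{proof} of Fact \ref{loidesval}
The three assertions are exactly Propositions 3.2 and 3.6 of \cite{caslevyvech}, transcribed into the present notation; the stopping times and recentred standard valleys introduced above are the same as there, so those propositions apply directly.
\end{proof}
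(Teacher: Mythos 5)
Your proposal is correct and takes exactly the same route as the paper: the paper itself introduces Fact \ref{loidesval} with the phrase that it "is a combination of Propositions 3.2 and 3.6 of \cite{caslevyvech}" and gives no further proof, and your final \texttt{\textbackslash begin\{proof\}}\ldots\texttt{\textbackslash end\{proof\}} block is precisely that citation. The preliminary sketch you give of the underlying mechanisms (strong Markov at $\tilde m_i$, time reversal to $\hat V$, the overshoot density, the discrepancy event of probability $\le 2e^{-\delta\kappa h_t/3}$) is a reasonable reconstruction of what must happen inside those cited propositions, but it plays no logical role in your proof and is not part of the paper's argument either.
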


Let us now recall a fact from \cite{caslevyvech}: 

\begin{fact} \label{3integrals}
Assume that the hypotheses of Theorems \ref{limsupkappa<1} and \ref{liminfkappa<1} are satisfied. Fix $\epsilon$ small enough. There is a positive constant $c$ (depending on $\epsilon$) such that for all $t$ large enough 
\begin{align}
\forall j \geq 1, \ & P \left( \int_{\tilde{L}_{j-1}}^{\tilde \tau_j^-(h_t / 2)} e^{-\tilde V^{(j)}(u)} du \leq e^{-\epsilon h_t} \right) \geq 1 - e^{-c h_t}, \label{neggauche} \\
\forall j \geq 1, \ & P \left( \int_{\tilde \tau_j^+(h_t / 2)}^{\tilde L_j} e^{-\tilde V^{(j)}(u)} du \leq e^{-\epsilon h_t} \right) \geq 1 - e^{-c h_t}, \label{negdroite} \\
%\forall j \geq 1, \ & P \left ( \sup_{u \in [\tilde \tau_j^-(h_t /2), \tilde \tau_j^+(h_t /2)]} \left | A^j(u) / A^j(\tilde{L}_j) \right | \leq e^{-h_t/3} \right ) \geq 1 - e^{-c h_t}, \label{mainmilieu} \\
\forall j \geq 1, \ & P \left ( \sup_{u \in [\tilde \tau_j^-(h_t /2), \tilde \tau_j^+(h_t /2)]} \left | A^j(u) \right | \leq e^{5 h_t/8} \right ) \geq 1 - e^{-c h_t}, \label{tildeamilieu}
%\forall j \geq 1, \ & P \left ( S_j^t \leq A^j(\tilde{L}_j) \leq (1 + e^{-h_t / 7}) S_j^t \right ) \geq 1 - e^{-c h_t}. \label{compa-s}
\end{align}
where $A^j$ is defined in Subsection \ref{trapsdiff}. 
\end{fact}

\begin{proof}
\eqref{neggauche} and \eqref{negdroite} 
%and \eqref{mainmilieu} 
are respectively Lemmas 4.6 and 4.7 
%and Lemma 4.8 (applied with $\epsilon = 1/6$) 
from \cite{caslevyvech}. For \eqref{tildeamilieu}, note that
\[ P \left ( \sup_{u \in [\tilde \tau_j^-(h_t /2), \tilde \tau_j^+(h_t /2)]} \left | A^j(u) \right | \leq e^{5 h_t/8} \right ) \leq P \left ( \left | A^j(\tilde \tau_j^-(h_t / 2)) \right | \vee A^j(\tilde \tau_j^+(h_t / 2)) \geq e^{5 h_t /8} \right ). \]
Then, $| A^j(\tilde \tau_j^-(h_t / 2)) |$ and $A^j(\tilde \tau_j^+(h_t / 2))$ can be bounded as in the proof of Lemma 4.8 of \cite{caslevyvech} which yields \eqref{tildeamilieu}. 
%\eqref{compa-s} is only Lemma 4.10 of \cite{caslevyvech}. 

\end{proof}

\subsection{Contribution of the valleys to the traveled distance}

We need to approximate the contribution to the distance traveled by the diffusion in each valley. We have: 

\begin{prop} \label{contribdist}
On an enlarged probability space, there is an \textit{iid} sequence $(D_j^t)_{j \geq 1}$, independent from the sequence $(e_j, S_j^t, R_j^t)_{j \geq 1}$ and such that for $t$ large enough, 
\[ \forall j \geq 1, \ \mathbb{P} \left ( \left | (\tilde L_{j} - \tilde L_{j-1}) - D_j^t \right | \leq e^{-c_1 h_t} D_j^t \right ) \geq 1 - e^{-c_2 h_t} \ \text{and} \ D_j^t \overset{\mathcal{L}}{\sim} \mathcal{E}( e^{-\kappa h_t} q), \]
where $q$ is the constant in Theorem 1.4 of \cite{caslevyvech} and $c_1, c_2$ are positive constants. 
\end{prop}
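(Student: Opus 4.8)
# Proof plan for Proposition \ref{contribdist}

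The plan is to construct the sequence $(D_j^t)_{j \geq 1}$ by coupling each $\tilde L_j - \tilde L_{j-1}$ with an appropriate exponential random variable, exploiting the \emph{iid} structure already recorded in Remark \ref{iid}. First I would recall that, by Remark \ref{iid}, the blocks $(\tilde V^{(i)}(x + \tilde m_i), \ \tilde L_{i-1} - \tilde m_i \leq x \leq \tilde L_i - \tilde m_i)_{i \geq 1}$ form an \emph{iid} sequence, so in particular the lengths $\tilde L_j - \tilde L_{j-1}$ are \emph{iid}; it therefore suffices to build the coupling for a single block and then reproduce it independently across blocks on an enlarged space (this also automatically yields independence from $(e_j, S_j^t, R_j^t)_{j \geq 1}$ once one checks that the extra randomness used in the coupling can be chosen independent of the diffusion, which is the standard enlargement-of-probability-space device). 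The key geometric input is that $\tilde L_j - \tilde L_{j-1}$ decomposes, via the stopping times $\tilde L_{j-1} \to \tilde L_j^{\sharp} \to \tilde\tau_j(h_t) \to \tilde L_j$, into a first piece that is the hitting time by $V$ of a level $-e^{(1-\delta)\kappa h_t}$ below $V(\tilde L_{j-1})$, plus a bounded-in-probability remainder (the ascending phase of height $h_t$ and the subsequent descent to level $h_t/2$). The dominant contribution is the first piece, which by spectral negativity and the strong Markov property is a passage time of a Lévy process.

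The main work is to show that this dominant passage time, suitably normalized, is close to an exponential variable with parameter $e^{-\kappa h_t} q$. I would use Theorem 1.4 of \cite{caslevyvech} (which introduces precisely this constant $q$ and governs the tail of the traveled distance / hitting times of the diffusion) together with the renewal/regeneration structure: the excursions of $V$ below successive running infima, together with the exponential law of the overall supremum of $V$ with parameter $\kappa$ (recalled in Subsection \ref{factnot}), make the first-passage position of $V$ to depth $e^{(1-\delta)\kappa h_t}$ a geometric-type sum whose macroscopic length concentrates. More precisely, the number of ``deep enough'' descents before reaching the target level is (approximately) geometric with a parameter comparable to $e^{-\kappa h_t}$, and each inter-descent horizontal displacement has exponential moments uniformly in $t$ by Lemma \ref{tpsatteinthatv} and Lemma \ref{supdev}; a standard Rényi-type representation then shows the total length is, up to a multiplicative error $e^{-c_1 h_t}$, an exponential variable with the claimed parameter. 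The smaller pieces $\tilde\tau_j(h_t) - \tilde L_j^{\sharp}$ and $\tilde L_j - \tilde\tau_j(h_t)$ are controlled by Lemma \ref{tpsatteinthatv}, Lemma \ref{dernierpassage} and Lemma \ref{supdev}: each is $\le e^{c h_t/2}$ with probability $\ge 1 - e^{-c' h_t}$, hence negligible compared with $D_j^t$ (which is of order $e^{\kappa h_t}$) on an event of probability $\ge 1 - e^{-c_2 h_t}$.

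Concretely, the steps are: (i) reduce to one block by the \emph{iid} property and set up the enlargement of the probability space so the coupling variables are independent across blocks and of $(e_j, S_j^t, R_j^t)_j$; (ii) decompose $\tilde L_j - \tilde L_{j-1}$ along the defining stopping times and identify the passage-time-of-$V$ main term; (iii) prove the quantitative exponential approximation of that main term with parameter $e^{-\kappa h_t} q$, using Theorem 1.4 of \cite{caslevyvech} and the exponential-moment estimates of Lemmas \ref{tpsatteinthatv} and \ref{supdev} to get a relative error $e^{-c_1 h_t}$ off an event of probability $e^{-c_2 h_t}$; (iv) absorb the remainder pieces using Lemma \ref{dernierpassage} and Lemma \ref{supdev}; (v) combine the error events and adjust the constants $c_1, c_2$. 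The hardest step is (iii): pinning down that the normalizing constant is exactly $q$ (the constant of Theorem 1.4 of \cite{caslevyvech}) with a multiplicative error that decays like $e^{-c_1 h_t}$, rather than merely a logarithmic-equivalence ($\approx$) statement, requires either a careful second-order analysis of the Laplace exponent $\Psi_V$ near its root $\kappa$ or a direct comparison with the quantity whose tail Theorem 1.4 describes; I expect this matching of constants to be where essentially all the genuine effort lies, the rest being routine moment bounds and a Borel-type union bound over the error events.
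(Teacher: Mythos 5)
Your high-level plan (reduce to one block by iid, decompose along stopping times, isolate a dominant piece, couple it with an exponential, absorb the remainder, union bound over error events) matches the paper's structure, but your identification of the dominant piece is backwards, and this mislocation would make step (iii) unworkable as you describe it. The first-passage time $\tau(V,\, ]-\infty, -e^{(1-\delta)\kappa h_t}])$ is \emph{not} the dominant contribution: since $V$ drifts to $-\infty$, this hitting time concentrates at scale $e^{(1-\delta)\kappa h_t}$ (Lemma \ref{tpsatteinthatv} gives $P(\tau > r) \leq e^{c_1 y - c_2 r}$ with $y = e^{(1-\delta)\kappa h_t}$), which is a factor $e^{-\delta\kappa h_t}$ smaller than the exponential variable $D_j^t$ of mean $e^{\kappa h_t}/q$. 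In the paper, pieces $1$, $3$ and $4$ are precisely the ones tossed into the error event via Lemma \ref{tpsatteinthatv} and \eqref{tpsattup}, each being $\leq e^{(1-\delta/2)\kappa h_t}$ with probability $\geq 1 - e^{-h_t}$. The dominant piece is the \emph{second} one, $\tilde m_j - \tilde L_j^{\sharp}$, equal in law to $m^*(h_t)$, the horizontal displacement of $V$ up to the minimum preceding the first ascent of height $h_t$ from the running infimum. This is where the rare-event / geometric counting you allude to actually lives: one waits for roughly $e^{\kappa h_t}$ excursions of $V$ above its running infimum before one of height $\geq h_t$ appears, because the supremum of an excursion exceeds $h$ with probability $\approx e^{-\kappa h}$.

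With the wrong piece identified, the mechanism you propose for pinning the constant (``a careful second-order analysis of $\Psi_V$ near $\kappa$ or a direct comparison with Theorem 1.4'') would not reproduce the statement. The paper's route is quite different and more concrete: Lemma 3.9 of \cite{caslevyvech} represents $m^*(h_t)$ as $S^{h_t,-}(T_{h_t})$, where $S^{h_t,-}$ is a pure-jump subordinator built from the excursion measure $\mathcal{N}$ restricted to excursions of height $< h_t$ and $T_{h_t}$ is an independent exponential with parameter $\mathcal{N}(\mathcal{F}_{h_t,+})$. One then sets $d := \mathcal{N}(\mathcal{F}_{h_t,+}) T_{h_t} \sim \mathcal{E}(1)$ and $D^t := e^{\kappa h_t} d/q$, and controls three error terms: $S^{h_t,-}(T_{h_t}) - S(T_{h_t})$ (an $L^1$ bound using finiteness of $\mathrm{Var}(S(1))$), $S(T_{h_t}) - \mathbb{E}[S(1)]T_{h_t}$ (Cauchy--Schwarz), and $\mathbb{E}[S(1)]T_{h_t} - e^{\kappa h_t} d/q$. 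The last term is where the constant $q$ appears, and it is identified not via $\Psi_V$ but via the asymptotic $\mathcal{N}(\mathcal{F}_{h_t,+}) = e^{-\kappa h_t}\mathcal{N}(\mathcal{F}_{1,+})(e^{\kappa} - \mathbb{E}[e^{\kappa V_1(\tau(V_1, ]-\infty,0]))}]) + O(e^{-2\kappa h_t})$ (Lemma 3.11 of \cite{caslevyvech}) combined with the explicit definition of $q$ in Proposition 3.12 of \cite{caslevyvech}, namely $q = \mathcal{N}(\mathcal{F}_{1,+})(e^{\kappa} - \mathbb{E}[e^{\kappa V_1(\tau(V_1,]-\infty,0]))}])/\mathbb{E}[S(1)]$. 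This algebraic cancellation, not a Laplace-exponent expansion, is what gives the multiplicative error $O(e^{-\kappa h_t})$. Your proposal would need to be rewritten around $m^*(h_t)$ and the subordinator representation to reach the stated conclusion.
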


\begin{proof}

%Let $t$ be fixed so $h_t$ is also fixed, and large enough (as it will be specified later). 
From the definitions of valleys we have
\begin{eqnarray}
\tilde L_{j} - \tilde L_{j-1} = (\tilde L^{\sharp}_{j} - \tilde L_{j-1}) + (\tilde m_{j} - \tilde L^{\sharp}_{j}) + (\tilde \tau_{j}(h_t) -\tilde m_{j}) + (\tilde L_{j} - \tilde \tau_{j}(h_t)). \label{suitevalley}
\end{eqnarray}
Recall the definition of $m^*(h_t)$ from Section 3.5 in \cite{caslevyvech}: 
\[ \tau^*(h_t) := \inf \left \{ u \geq 0, \ V(u) - \inf_{[0, u]}V = h_t \right \}, \ m^*(h_t) := \inf \left \{ u \geq 0, \ V(u) = \inf_{[0, \tau^*(h_t)]}V \right \}. \]
From the Markov property at the stopping times $\tilde L_{j-1}$, $\tilde L^{\sharp}_{j}$ and $\tilde \tau_{j}(h_t)$, and Fact \ref{loidesval} we get that the terms in the right hand side of \eqref{suitevalley} are respectively equal in law to $\tau(V, ]-\infty, -e^{(1-\delta)\kappa h_t}])$, $m^*(h_t)$, $\tau(V^{\uparrow}, h_t)$ and $\tau(V, ]-\infty, -h_t/2])$. For the first and fourth term, applying Lemma \ref{tpsatteinthatv} with $y =e^{(1-\delta)\kappa h_t}$ and $r=e^{(1-\delta/2)\kappa h_t}/2$ we get for $t$ large enough, 
\begin{eqnarray}
\forall j \geq 1, \ \mathbb{P} \left ( (\tilde L^{\sharp}_{j} - \tilde L_{j-1}) + (\tilde L_{j} - \tilde \tau_{j}(h_t)) > e^{(1-\delta/2)\kappa h_t} \right ) \leq e^{-h_t}. \label{suitevalley1}
\end{eqnarray}
For the third term, applying \eqref{tpsattup} with $x =0$, $y =h_t$ and $r = e^{(1-\delta/2)\kappa h_t}$ we get for $t$ large enough, 
\begin{eqnarray}
\forall j \geq 1, \ \mathbb{P} \left ( (\tilde \tau_{j}(h_t) -\tilde m_{j}) > e^{(1-\delta/2)\kappa h_t} \right ) \leq e^{-h_t}. \label{suitevalley2}
\end{eqnarray}
The most important term in \eqref{suitevalley} is the second one, its law is given by Lemma 3.9 of \cite{caslevyvech}. Before studying it, let us recall some of the notations used in Section 3.5 of \cite{caslevyvech}. 

Let $\mathcal{F}$ denote the space of excursions, that is,  c\`ad-l\`ag functions from $[0, +\infty[$ to $\mathbb{R}$, starting at zero and killed at the first positive instant when they reach $0$ (this instant can possibly be infinite). For $\xi \in \mathcal{F}$, let us denote $\zeta(\xi) := \inf \{ s > 0, \ \xi(s) = 0 \}$ for the length of the excursion $\xi$. For $h > 0$, let $\mathcal{F}_{h, -}$ and $\mathcal{F}_{h, +}$ denote respectively the set of excursions whose height is strictly less than $h$ and the set of excursions higher than $h$: 
\[ \mathcal{F}_{h, -} := \left \{ \xi \in \mathcal{F}, \ \sup_{[0, \zeta]} \xi < h \right \}, \ \ \ \ \ \mathcal{F}_{h, +} := \left \{ \xi \in \mathcal{F}, \ \sup_{[0, \zeta]} \xi \geq h \right \}. \]

%$\{0\}$ is instantaneous for $V - \underline{V}$ (see Theorem ... in \cite{Bertoin}), excursion theory above $0$ can thus be applied for $V - \underline{V}$ (see \cite{Bertoin}). Let $L$ be a local time at $0$ of $V - \underline{V}$, $\mathcal{N}$ its associated excursion measure and $L^{-1}$ its right continuous inverse. Then, the excursions above $0$ of $V - \underline{V}$ form a Poisson point process on $\mathcal{F}$ with intensity measure $\mathcal{N}$. In the irregular case (when $V$ has bounded variation) the local time $L$ has to be defined artificially as in \cite{Bertoin}, Section IV.5. In this case, the excursion measure is proportional to the law of the first excursion and in particular the total mass of the excursion measure is finite. 

Let $\mathcal{N}$ be the measure defined on $\mathcal{F}$ as in Subsection 3.5 of \cite{caslevyvech}. Let $S^{h, -}$ and $S^{h, +}$ be two independent pure jumps subordinators with L\'evy measures respectively $\zeta \mathcal{N}(\mathcal{F}_{h, -} \cap .)$ and $\zeta \mathcal{N}(\mathcal{F}_{h, +} \cap .)$, the image measures of respectively $\mathcal{N}(\mathcal{F}_{h, -} \cap .)$ and $\mathcal{N}(\mathcal{F}_{h, +} \cap .)$ by $\zeta$. The sum of these measures equals $\zeta \mathcal{N}$ so $S := S^{h, -} + S^{h, +}$ is a pure jumps subordinator with L\'evy measure $\zeta \mathcal{N}$. Let also $T_{h}$ be an exponential random variable with parameter $\mathcal{N}(\mathcal{F}_{h, +})$, independent from $S^{h, -}$ and $S^{h, +}$. According to Lemma 3.9 of \cite{caslevyvech} we have
\[ \forall j \geq 1, \ \tilde m_{j} - \tilde L^{\sharp}_{j} \overset{\mathcal{L}}{=} S^{h_t, -}(T_{h_t}). \]
Recall that the sequence $(\tilde m_{j} - \tilde L^{\sharp}_{j})_{j \geq 1}$ is \textit{iid} because of Remark \ref{iid}. Therefore, on an enlarged probability space, there is an \textit{iid} sequence $(S^{h_t, -}_j, T_{h_t, j})_{j \geq 1}$ such that for all $j \geq 1$, 
\begin{eqnarray}
S^{h_t, -}_j \perp\!\!\!\perp T_{h_t, j}, \ \ \ S^{h_t, -}_j \overset{\mathcal{L}}{=}  S^{h_t, -}, \ \ \ T_{h_t, j} \overset{\mathcal{L}}{=}  T_{h_t} \ \ \ \text{and} \ \ \ S^{h_t, -}_j (T_{h_t, j}) = \tilde m_{j} - \tilde L^{\sharp}_{j}. \label{intervexpo0}
\end{eqnarray}
Let $d := \mathcal{N}(\mathcal{F}_{h_t, +}) T_{h_t}$ and more generally $d_j := \mathcal{N}(\mathcal{F}_{h_t, +}) T_{h_t, j}$. Then, $d_j \overset{\mathcal{L}}{\sim} \mathcal{E}(1)$ and the sequence $(d_j)_{j \geq 1}$ is \textit{iid}. For large $t$, it is natural to approximate $S^{h_t, -}(T_{h_t})$ by a multiple of $d$, for this we write
\begin{align}
S^{h_t, -}(T_{h_t}) - e^{\kappa h_t} d /q = \left ( S^{h_t, -}(T_{h_t}) - S(T_{h_t}) \right ) + \left ( S(T_{h_t}) - \mathbb{E}[S(1)] T_{h_t} \right ) + \left ( \mathbb{E}[S(1)] T_{h_t} - e^{\kappa h_t} d /q \right ). \label{intervexpo1}
\end{align}
For the first term, using the expression of $S$ in terms of $S^{h_t, +}$ and $S^{h_t, -}$, the independence between $S^{h_t, +}$ and $T_{h_t}$, the definition of $S^{h_t, +}$, Cauchy-Schwarz's inequality and the definition of $S$, we have
\begin{align}
\mathbb{E} \left [ \left | S^{h_t, -}(T_{h_t}) - S(T_{h_t}) \right | \right ] & = \mathbb{E} \left [ S^{h_t, +}(T_{h_t}) \right ] = \frac1{\mathcal{N}(\mathcal{F}_{h_t, +})} \mathbb{E} \left [ S^{h_t, +}(1) \right ] \nonumber \\
&= \frac1{\mathcal{N}(\mathcal{F}_{h_t, +})} \int_{\mathcal{F}_{h_t, +}} \zeta(\xi) \mathcal{N}(d \xi) \leq \frac1{\sqrt{\mathcal{N}(\mathcal{F}_{h_t, +})}} \sqrt{ \int \zeta^2(\xi) \mathcal{N}(d \xi)}, \nonumber \\
& = \frac1{\sqrt{\mathcal{N}(\mathcal{F}_{h_t, +})}} \sqrt{ Var(S(1))}. \label{intervexpo2}
\end{align}

$Var(S(1))$ is indeed finite thanks to Lemma 3.10 of \cite{caslevyvech}. For the second term in the right hand side of \eqref{intervexpo1}, using Cauchy-Schwarz's inequality and the independence between $S$ and $T_{h_t}$, 
\begin{align}
\mathbb{E} \left [ \left | S(T_{h_t}) - \mathbb{E}[S(1)] T_{h_t} \right | \right ] \leq \sqrt{\mathbb{E} \left [ \left ( S(T_{h_t}) - \mathbb{E}[S(1)] T_{h_t} \right )^2 \right ]} = \frac1{\sqrt{\mathcal{N}(\mathcal{F}_{h_t, +})}} \sqrt{ Var(S(1))}. \label{intervexpo3}
\end{align}
For the third term in the right hand side of \eqref{intervexpo1}, using the definition of $d$, 
\[ \mathbb{E}[S(1)] T_{h_t} - e^{\kappa h_t} d /q = (\mathbb{E}[S(1)] / \mathcal{N}(\mathcal{F}_{h_t, +}) - e^{\kappa h_t} /q) d. \]
Then, recall the expression of $q$ in Proposition 3.13 of \cite{caslevyvech} (see also the proof of that proposition for the expression of $q$ involving $\mathbb{E}[S(1)]$) and Lemma 3.11 of \cite{caslevyvech}: 
\[ q := \mathcal{N}(\mathcal{F}_{1, +}) \times (e^{\kappa} - \mathbb{E} [ e^{\kappa V_1(\tau(V_1, ]-\infty, 0]))} ] ) / \mathbb{E}[S(1)], \]
and 
\begin{eqnarray}
\mathcal{N}(\mathcal{F}_{h_t, +}) = e^{-\kappa h_t} \mathcal{N}(\mathcal{F}_{1, +}) \times \left ( e^{\kappa} - \mathbb{E} \left [ e^{\kappa V_1(\tau(V_1, ]-\infty, 0]))} \right ] \right ) + \underset{t \rightarrow +\infty}{\mathcal{O}} (e^{-2\kappa h_t}). \label{intervexpo3.1}
\end{eqnarray}
We thus get
\begin{align}
\text{a.s.} \ \mathbb{E}[S(1)] T_{h_t} - e^{\kappa h_t} d /q = d \times \underset{t \rightarrow +\infty}{\mathcal{O}} (e^{-\kappa h_t}). \label{intervexpo4}
\end{align}
Let us define $D^t := e^{\kappa h_t} d /q$ and more generally $D_j^t := e^{\kappa h_t} d_j /q$. We then have indeed $D_j^t \overset{\mathcal{L}}{\sim} \mathcal{E}( e^{-\kappa h_t} q)$ and the fact that the sequence sequence $(D_j^t)_{j \geq 1}$ is \textit{iid}. \eqref{intervexpo4} becomes 
\begin{align}
\text{a.s.} \ \left | \mathbb{E}[S(1)] T_{h_t} - D^t \right | \leq D^t \times C e^{-2\kappa h_t}, \label{intervexpo5}
\end{align}
where $C$ is a positive constant. Using Markov's inequality, \eqref{intervexpo2} and \eqref{intervexpo3}, and the asymptotic of $\mathcal{N}(\mathcal{F}_{h_t, +})$ given by \eqref{intervexpo3.1}, we get the existence of a positive constant $C$ such that for $t$ large enough, 
\[ \mathbb{P} \left ( \left | S^{h_t, -}(T_{h_t}) - S(T_{h_t}) \right | + \left | S(T_{h_t}) - \mathbb{E}[S(1)] T_{h_t} \right | \geq e^{6 \kappa h_t /10} \right ) \leq C e^{-\kappa h_t /10}. \]

\[ \mathbb{P} \left ( D^t \leq e^{9 \kappa h_t /10} \right ) = \mathbb{P} \left ( d \leq q e^{- \kappa h_t /10} \right ) \leq q e^{- \kappa h_t /10}. \]

As a consequence there is a positive constant $C$ such that for $h_t$ large enough, 
\[ \mathbb{P} \left ( \left | S^{h_t, -}(T_{h_t}) - S(T_{h_t}) \right | + \left | S(T_{h_t}) - \mathbb{E}[S(1)] T_{h_t} \right | \geq e^{-3 \kappa h_t /10} D^t \right ) \leq C e^{-\kappa h_t /10}. \]
Combining with \eqref{intervexpo5} and putting into \eqref{intervexpo1} we get 
\[ \mathbb{P} \left ( \left | S^{h_t, -}(T_{h_t}) - D^t \right | \geq e^{-2 \kappa h_t /10} D^t \right ) \leq C e^{-\kappa h_t /10}. \]
Combining with \eqref{intervexpo0} we get
\begin{eqnarray}
\forall j \geq 1, \ \mathbb{P} \left ( \left | \tilde m_{j} - \tilde L^{\sharp}_{j} - D^t_j \right | \geq e^{-2 \kappa h_t /10} D^t_j \right ) \leq C e^{-\kappa h_t /10}. \label{intervexpo6}
\end{eqnarray}
Then, we have $\mathbb{P} ( D^t_j \leq e^{(1-\delta/2)\kappa h_t} ) = \mathbb{P} ( d_j \leq q e^{-\delta \kappa h_t /2} ) \leq q e^{-\delta \kappa h_t /2}$ so putting into \eqref{suitevalley1} and \eqref{suitevalley2} we get for all $j \geq 1$
\begin{eqnarray}
\mathbb{P} \left ( (\tilde L^{\sharp}_{j} - \tilde L_{j-1}) + (\tilde \tau_{j}(h_t) -\tilde m_{j}) + (\tilde L_{j} - \tilde \tau_{j}(h_t)) > 2 e^{-\delta \kappa h_t /2} D^t_j \right ) \leq 2 e^{-h_t} + q e^{-\delta \kappa h_t /2}. \label{intervexpo7}
\end{eqnarray}
Combining \eqref{intervexpo6} and \eqref{intervexpo7}, and putting into \eqref{suitevalley} we get the result for $t$ large enough. 

\end{proof}

\subsection{Proof of some facts and lemmas} \label{preuvedesfacts}

This subsection is devoted to the justification of the facts stated in Subsection \ref{trapsdiff}, which mainly come from \cite{caslevyvech}. As these results are included in Section \ref{smallkappa}, we prove them under the hypotheses of Theorems \ref{limsupkappa<1} and \ref{liminfkappa<1}: $0 < \kappa < 1$, $V$ has unbounded variation and there exists $p>1$ such that $V (1) \in L^p$. 

In the facts and lemmas considered here, the value of the constant $c$ is not important (as long as it is positive) so we allow it to decrease from line to line and it is implicit that the estimates are true for $t$ large enough. 

\begin{proof} of Lemma \ref{minmajnbvalleesvisit}

\eqref{majonbvalleesvisit} is only Lemma 4.17 of \cite{caslevyvech}. For \eqref{minonbvalleesvisit}, note that $ \{ N_t \leq \tilde n_t \} = \{ H(m_{\tilde n_t + 1}) > t \}$ and $H(\tilde m_{\tilde n_t + 1}) = H(m_{\tilde n_t + 1})$ on $\{ V \in \mathcal{V}_t \}$ (since $\tilde n_t < n_t$). Let $t$ be large enough so that $(1- 2 / \log(h_t)) (1+ e^{- \tilde c h_t})^{-1} \geq 1/2$. Using the definitions of $\mathcal{E}^5_t$ and $\mathcal{E}^7_t$ (which is possible here since $\tilde n_t < n_t$) we get that $\mathbb{P} ( \{ N_t \leq \tilde n_t \} \cap \{ V \in \mathcal{V}_t \} \cap \mathcal{E}^5_t \cap \mathcal{E}^7_t )$ is less than
\begin{align*}
& \mathbb{P} \left( H(\tilde m_{\tilde n_t + 1}) >  t, \ \mathcal{E}^5_t, \ \mathcal{E}^7_t \right) \leq \mathbb{P} \left( \sum_{i=1}^{\tilde n_t} e_i S_i^t R_i^t \geq t /2 \right) \\
\leq & \mathbb{P} \left( \sup_{1 \leq i \leq \tilde n_t} e_i S_i^t R_i^t \geq t /2 \tilde n_t \right) = 1 - \left [ \mathbb{P} \left( e_1 S_1^t R_1^t /t < 1/2 \tilde n_t \right) \right ]^{\tilde n_t} \\
= & 1 - \left [ 1 - \mathbb{P} \left( e_1 S_1^t R_1^t /t \geq 1/ 2 \tilde n_t \right) \right ]^{\tilde n_t}. 
\end{align*}
According to \eqref{cvmeasure9.1} applied with some $\eta \in ]0, (1 - \rho)/3[$ we have that 
\[ \mathbb{P} \left ( e_1 S_1^t R_1^t /t \geq 1/2 \tilde n_t \right) \underset{t \rightarrow +\infty}{\sim} 2^{\kappa} \mathcal{C}' \mathbb{E}\left [ \mathcal{R}^{\kappa} \right ] \tilde n_t^{\kappa} e^{- \kappa \phi(t)} \underset{t \rightarrow +\infty}{\sim} 2^{\kappa} \mathcal{C}' \mathbb{E}\left [ \mathcal{R}^{\kappa} \right ] e^{\kappa (\rho - 1) \phi(t)}, \]
where $\mathcal{C}' $ is the constant in Fact \ref{queueiid}. Since $\rho < 1$ the later converges to $0$ so 
\begin{align*}
\left [ 1 - \mathbb{P} \left( e_1 S_1^t R_1^t /t \geq 1/2 \tilde n_t \right) \right ]^{\tilde n_t} & \underset{t \rightarrow +\infty}{\approx} \exp \left ( -2^{\kappa} \mathcal{C}' \mathbb{E}\left [ \mathcal{R}^{\kappa} \right ] \tilde n_t e^{\kappa (\rho - 1) \phi(t)} \right ) \\
& \underset{t \rightarrow +\infty}{\approx} \exp \left ( - 2^{\kappa} \mathcal{C}' \mathbb{E}\left [ \mathcal{R}^{\kappa} \right ] e^{(\rho + \kappa (\rho - 1)) \phi(t)} \right ). 
\end{align*}
Let $\rho' := - (\rho + \kappa (\rho - 1))$. $\rho'$ is positive thanks to the hypothesis $\rho \in ]0, \kappa/(1+\kappa)[$ so $e^{(\rho + \kappa (\rho - 1)) \phi(t)}$ converges to $0$ and we deduce that for $t$ large enough, 
\[ \mathbb{P} ( \{ N_t \leq \tilde n_t \} \cap \{ V \in \mathcal{V}_t \} \cap \mathcal{E}^5_t \cap \mathcal{E}^7_t ) \leq 2 \mathcal{C}' \mathbb{E}\left [ \mathcal{R}^{\kappa} \right ] e^{-\rho' \phi(t)}. \]
Combining with the bounds for $\mathbb{P} ( V \notin \mathcal{V}_t )$ and $\mathbb{P} ( \overline{\mathcal{E}^5_t} ) + \mathbb{P} ( \overline{\mathcal{E}^7_t} )$ given by respectively Fact \ref{minimacoincide} and \eqref{bigeventsleq} we get \eqref{minonbvalleesvisit}. 
\end{proof}

Now, let us recall a fact about Brownian local time: 

\begin{fact} \label{tlmb}
There is a positive constant $c$ such that for all $t$ large enough 
\begin{align}
%\mathbb{P} \left ( \sup_{|y| \leq e^{-h_t/3}} \left | \mathcal{L}_{B} ( \tau(B, 1), y ) - \mathcal{L}_{B} ( \tau(B, 1), 0 ) \right | > e^{-h_t/9} \mathcal{L}_{B} ( \tau(B, 1), 0 ) \right ) & \leq e^{-c h_t}, \label{rayknight3} \\
\mathbb{P} \left ( \sup_{|y| \leq e^{-h_t/8}} \left | \mathcal{L}_B(\sigma_{B}(1,0), y) - 1 \right | \geq e^{-h_t /20} \right ) & \leq e^{-h_t}, \label{rayknight1} \\
\mathbb{P} \left ( \sup_{y \in \mathbb{R}} \mathcal{L}_B(\sigma_{B}(1,0), y) \geq u \right ) & \leq 2/u, \label{rayknight2}. 
\end{align}
\end{fact}

\begin{proof}

%\eqref{rayknight3} is $(7.11)$ of \cite{advech} applied with $\delta = e^{-h_t/3}$, $\epsilon = e^{-h_t/9}$. 
\eqref{rayknight1} comes from the second Ray-Knight Theorem combined with estimate $(7.15)$ of \cite{advech} applied with $u = e^{-h_t /20}$, $v = e^{-h_t/8}$. \eqref{rayknight2} comes from the second Ray-Knight Theorem combined with estimate $(7.16)$ of \cite{advech}. 

\end{proof}

\begin{proof} of Fact \ref{bigevents}

According to the combination of Lemmas 5.21 and 5.22 of \cite{caslevyvech}, there is a positive constant $c$ such that $\mathbb{P} ( \overline{\mathcal{E}^1_t} ) \leq e^{-c h_t}$ for large $t$. Since $h_t$ is ultimately greater than $\phi(t)$ we get $\mathbb{P} ( \overline{\mathcal{E}^1_t} ) \leq e^{-c \phi(t)}$ for large $t$. 

$\mathbb{P} ( \overline{\mathcal{E}^2_t} ) \leq e^{-c \phi(t)}$ and $\mathbb{P} ( \overline{\mathcal{E}^3_t} ) \leq e^{-c \phi(t)}$, for $c$ possibly decreased and large $t$, come from Fact 4.4 of \cite{caslevyvech} (respectively the second and third point). 

For $\mathbb{P} ( \overline{\mathcal{E}^4_t} )$, note that $\mathcal{L}_X(H(\tilde L_j), .) - \mathcal{L}_X(H(\tilde m_j), .) = \mathcal{L}_{X_{\tilde m_j}}(H_{X_{\tilde m_j}}(\tilde L_j),.)$. Since $\tilde c > 0$ has been chosen "small enough" in the Introduction, we can assume $\tilde c \leq 1/9$ so $\mathbb{P} ( \overline{\mathcal{E}^4_t} ) \leq e^{-c h_t} \leq e^{-c \phi(t)}$, for $c$ possibly decreased and large $t$, follows from Fact 4.12 of \cite{caslevyvech} (and the fact that $h_t$ is ultimately greater than $\phi(t)$).

$\mathbb{P} ( \overline{\mathcal{E}^5_t} ) \leq e^{-c \phi(t)}$, for $c$ possibly decreased and large $t$, comes from Fact 4.3 of \cite{caslevyvech}. 

$\tilde c > 0$ has been fixed "small enough" in the Introduction. We can assume that it was chosen so small such that Proposition 4.5 of \cite{caslevyvech} apply with $\tilde c$ instead the constant $\epsilon/7$ there. $\mathbb{P} ( \overline{\mathcal{E}^6_t} ) \leq e^{-c h_t} \leq e^{-c \phi(t)}$ and $\mathbb{P} ( \overline{\mathcal{E}^7_t} ) \leq e^{-c h_t} \leq e^{-c \phi(t)}$, for $c$ possibly decreased and large $t$, thus come from Proposition 4.5 of \cite{caslevyvech} (and the fact that $h_t$ is ultimately greater than $\phi(t)$). 

The second point of the fact is Lemma 4.11 of \cite{caslevyvech}.

\end{proof}

\begin{proof} of Lemma \ref{measofgoodenv}

According to \eqref{cvrlapl} and Chernoff inequality we have for any $j \geq 1$
\begin{eqnarray}
P \left ( R_j^t \geq e^{h_t /8} \right ) \leq e^{- \lambda_0 e^{h_t /8} /2}. \label{encr1}
\end{eqnarray}
We have
\begin{align}
P \left ( R_j^t \leq e^{- \epsilon h_t /4} \right ) \leq P \left ( R_j^t \leq e^{- h_t^{1/3}} \right ) & \leq 2 P \left ( I(V^{\uparrow}) \leq 2 e^{- h_t^{1/3}} \right ) + 2 e^{- \delta \kappa h_t /3} \nonumber \\ 
& \leq 2 e^{- K_0 e^{h_t^{1/3}}/2} + 2 e^{- \delta \kappa h_t /3}, \label{encr2}
\end{align}
where, for the last two inequalities, we used Lemma \ref{approxder2cas} (applied with $z_t = e^{h_t^{1/3}}$, $a=2$) and \eqref{majouniv}, and where $K_0$ is the constant in \eqref{majouniv}. The term $2 e^{- \delta \kappa h_t /3}$ is only necessary in the case $V = W_{\kappa}$ (because of \eqref{approxder2casmdb}). Combining \eqref{encr1} and \eqref{encr2} we get that \eqref{invtl1.1.0} is satisfied with probability at least $1 - e^{- c h_t}$. 

According to Fact \ref{3integrals} we have that \eqref{invtl1.1.00}, \eqref{invtl1.1.001}, \eqref{invtl1.1.1} and \eqref{invtl1.1.12} are satisfied with probability at least $1 - e^{- c h_t}$. 

According to \eqref{bigeventsleq} we have $\mathbb{P} ( \cup_{i=1}^7 \overline{\mathcal{E}^i_t}) \leq e^{-L \phi(t)}$. Then, 
\[ e^{-L \phi(t)/2} P \left ( P^V ( \cup_{i=1}^7 \overline{\mathcal{E}^i_t}) > e^{-L \phi(t)/2} \right ) \leq E \left [ P^V ( \cup_{i=1}^7 \overline{\mathcal{E}^i_t}) \right ] = \mathbb{P} \left ( \cup_{i=1}^7 \overline{\mathcal{E}^i_t} \right ) \leq e^{-L \phi(t)}, \]
%\begin{align*}
%e^{-c \phi(t)/2} \mathbb{P} \left ( V \in \overline{\mathcal{G}_t} \right ) & \leq e^{-c \phi(t)/2} \mathbb{P} \left ( V \notin \mathcal{V}_t \right ) +  e^{-c \phi(t)/2} \mathbb{P} \left ( P^V ( \cup_{i=1}^7 \overline{\mathcal{E}^i_t}) > e^{-c \phi(t)/2} \right ) \\
%& \leq e^{-c \phi(t)/2} \mathbb{P} \left ( V \notin \mathcal{V}_t \right ) + \mathbb{E} \left [ P^V ( \cup_{i=1}^7 \overline{\mathcal{E}^i_t}) \right ] \\
%& = e^{-c \phi(t)/2} \mathbb{P} \left ( V \notin \mathcal{V}_t \right ) + \mathbb{P} \left ( \cup_{i=1}^7 \overline{\mathcal{E}^i_t} \right ) \leq e^{-c \phi(t)}, 
%\end{align*}
%where we used Fact \ref{minimacoincide} and the upper bound for $\mathbb{P} ( \cup_{i=1}^7 \overline{\mathcal{E}^i_t})$. 
We thus deduce that \eqref{bonenv1} is satisfied with probability at least $1 - e^{-L \phi(t) /2}$. 
%\begin{eqnarray}
%\mathbb{P} ( V \in \overline{\mathcal{G}_t} ) \leq e^{-c \phi(t) /2}. \label{bonenv2}
%\end{eqnarray}
Combing all this with Fact \ref{minimacoincide}, we get Lemma \ref{measofgoodenv} for a suitably chosen constant $c$. 
\end{proof}

%\begin{fact} \label{invtlinf}
%Let $y_t$ be any positive function of $t$. Then
%\[ \left \{ \sigma(t y_t,\tilde m_k) > H(\tilde L_k) \right \} \subset \left \{ t y_t R_k > (1-...) H(\tilde L_k) \right \} \cup \overline{\mathcal{E}^7_t} \]
%\end{fact}
%
%\begin{proof}
%
%We now study the case where $\sigma(t y_t,\tilde m_k) > H(\tilde L_k)$, following the ideas of the proof of Lemma 5.3 of \cite{advech}. Recall that $B$ is the Brownian motion driving the diffusion starting at $\tilde m_k$. We have
%\[ \sigma(t y_t,\tilde m_k) > H(\tilde L_k) \Leftrightarrow \sigma_B(t y_t, 0) > \tau_B(A^k(\tilde L_k)) \Leftrightarrow t y_t > \mathcal{L}_B \left ( \tau_B(A^k(\tilde L_k)), 0 \right ) \]
%Now, let $\tilde B := B((A^k(\tilde L_k))^2 .)/A^k(\tilde L_k)$. From the scaling property $\tilde B$ is still a Brownian motion. $\mathcal{L}_B \left ( \tau^B(A^k(\tilde L_k)), 0 \right ) = A^k(\tilde L_k) \mathcal{L}_{\tilde B} \left ( \tau^{\tilde B}(k), 0 \right )$. Note that from the definition of $e_k$ given in ... we have $\mathcal{L}_{\tilde B} \left ( \tau{\tilde B}(1), 0 \right ) = e_k$. As a consequence, 
% \[ \sigma(t y_t,\tilde m_k) > H(\tilde L_k) \Leftrightarrow t y_t > A^k(\tilde L_k) e_k \Leftrightarrow t R_k y_t > A^k(\tilde L_k) e_k R_k. \]
%According to (MINO $A^k(\tilde L_k)$ PAR $S_j^t$) and the definition of $\mathcal{E}^7_t$ we deduce that 
%\[ \left \{ \sigma(t y_t,\tilde m_k) > H(\tilde L_k) \right \} \subset \left \{ t R_k y_t > (1-...) H(\tilde L_k) \right \} \cup \overline{\mathcal{E}^7_t} \]
%
%\end{proof}

\begin{proof} of Fact \ref{fameuxe8}

The ideas are very similar to the ones used for the proof of Lemma 5.3 of \cite{advech}. Since we are in the more general context of a L\'evy potential and since we do not prove exactly the same thing (here, we work conditionally to the environment), we give the details. 

Recall that $\epsilon > 0$ has been fixed in the definition of $\mathcal{G}_t$. $\tilde c > 0$ has been fixed "small enough" in the Introduction (and we already fixed some constraints about how small it must be in the proof of Fact \ref{bigevents}). We can assume further that it was chosen so small such that $\tilde c < \min (1/20, \epsilon /4)$. Let us fix an environment $v \in \mathcal{G}_t$, $z \in [0, 1-4/\log(h_t)]$ and $k \leq n_t$. We put $\tilde x_t := (1+e^{- \tilde c h_t})^{-1} x_t$, $Z := (1-z) / (1-e^{- \tilde c h_t}) R_k^t$. To study $\mathcal{E}^8_t(v, k, z)$, we look at its intersections with the events $\{ \sigma_{X_{\tilde m_k}} ( t \tilde x_t, \tilde m_k) < H_{X_{\tilde m_k}} (\tilde L_k) \}$ and $\{ \sigma_{X_{\tilde m_k}} ( t \tilde x_t, \tilde m_k) > H_{X_{\tilde m_k}}(\tilde L_k) \}$. We have
\begin{align}
& \mathcal{E}^8_t(v, k, z) \cap \{ \sigma_{X_{\tilde m_k}} ( t \tilde x_t, \tilde m_k) < H_{X_{\tilde m_k}} (\tilde L_k) \} \nonumber \\
= & \left \{ t Z \leq t \tilde x_t, \ \sup_{\mathcal{D}_k} \mathcal{L}_{X_{\tilde m_k}}(t(1-z), .) \geq t x_t, \ \sigma_{X_{\tilde m_k}} ( t \tilde x_t, \tilde m_k) < H_{X_{\tilde m_k}} (\tilde L_k) < H_{X_{\tilde m_k}}(\tilde L_{k-1}) \right \}, \label{invtl}
\end{align}
where $\mathcal{D}_k$ is defined in \eqref{defdj}. 
On the above event, $\sigma_{X_{\tilde m_k}} ( t Z, \tilde m_k)$ is finite and the diffusion stays in $[\tilde L_{k-1}, \tilde L_k]$ until this time. $\sigma_{X_{\tilde m_k}} ( t Z, \tilde m_k)$ is thus equal to
\begin{eqnarray}
I := \int_{\tilde L_{k-1}}^{\tilde L_k}e^{-\tilde v^{(k)}(x)} \mathcal{L}_B(\sigma_{B}(t Z,0), A^k(x))dx, \label{invtl1.1}
\end{eqnarray}
where $B$ is the Brownian motion driving the diffusion $X_{\tilde m_k}$ and $A^k$ is defined in Subsection \ref{trapsdiff}. Let $\tilde B := B((t Z)^2 .)/t Z$ and $\tilde A^k(.) := A^k(.) / t Z$. From the scaling property $\tilde B$ is still a Brownian motion. We can write
\begin{eqnarray}
I = t Z \int_{\tilde L_{k-1}}^{\tilde L_k}e^{-\tilde v^{(k)}(x)} \mathcal{L}_{\tilde B}(\sigma_{\tilde B}(1,0), \tilde A^k(x))dx. \label{formulei}
\end{eqnarray}
Since $v \in \mathcal{G}_t$ it satisfies \eqref{invtl1.1.0}. 
%\begin{align}
%e^{- \epsilon h_t /4} \leq R_k^t = \int_{\tilde \tau_j^-(h_t / 2)}^{\tilde \tau_j^+(h_t / 2)} e^{-(v(u) - v(\tilde m_j))} du \leq e^{h_t /8}, \label{invtl1.1.0} \\
%\left | A^k(\tilde \tau_j^-(h_t / 2)) \right | \vee A^k(\tilde \tau_j^+(h_t / 2)) \leq e^{5 h_t /8}. \label{invtl1.1.00}
%\end{align}
Recall also that $t \geq e^{h_t}$. For $t$ large enough so that $4 e^{-h_t /8}/ (1-e^{- \tilde c h_t}) \log(h_t) \geq e^{-h_t /4}$ we thus have $t Z \geq e^{3 h_t /4}$ for any choice of $z \in [0, 1-4/\log(h_t)]$. Combining this with the fact that $v$ satisfies \eqref{invtl1.1.00} and \eqref{invtl1.1.001} we get
%\[ \mathbb{P} \left ( \sup_{[\tilde \tau_j^-(h_t / 2), \tilde \tau_j^+(h_t / 2)]} \left | \tilde A^k(.) \right | \geq e^{-h_t /8} \right ) \leq \mathbb{P} \left ( \sup_{[\tilde \tau_j^-(h_t / 2), \tilde \tau_j^+(h_t / 2)]} \left | A^k(.) \right | \geq e^{5 h_t /8} \right ) \leq e^{-c h_t}, \]
%where we used \eqref{tildeamilieu} for the last inequality. 
\[ \sup_{[\tilde \tau_k^-(h_t / 2), \tilde \tau_k^+(h_t / 2)]} \left | \tilde A^k(.) \right | \leq \left ( \left | A^k(\tilde \tau_k^-(h_t / 2)) \right | \vee A^k(\tilde \tau_k^+(h_t / 2)) \right ) / tZ \leq e^{- h_t /8}. \]
Combining with \eqref{rayknight1} we get
%\begin{eqnarray}
%\mathbb{P} \left ( \sup_{x \in [\tilde \tau_k^-(h_t /2), \tilde \tau_k^+(h_t /2)]} \left | \mathcal{L}_{\tilde B}(\sigma_{\tilde B}(1,0), \tilde A^k(x)) - 1 \right | \geq ... \right ) \leq e^{-c h_t}, \label{invtl1.1.1.1}
%\end{eqnarray}
\begin{eqnarray}
P^v \left ( \sup_{x \in [\tilde \tau_k^-(h_t /2), \tilde \tau_k^+(h_t /2)]} \left | \mathcal{L}_{\tilde B}(\sigma_{\tilde B}(1,0), \tilde A^k(x)) - 1 \right | \geq e^{-h_t /20} \right ) \leq e^{-h_t}, \label{invtl1.1.1.1pre}
\end{eqnarray}
and we deduce that
\begin{eqnarray}
P^v \left ( \left | \int_{\tilde \tau_k^-(h_t /2)}^{\tilde \tau_k^+(h_t /2)} e^{-\tilde v^{(k)}(x)} \mathcal{L}_{\tilde B}(\sigma_{\tilde B}(1,0), \tilde A^k(x))dx - R_k^t \right | \leq e^{-h_t /20} R_k^t \right ) \geq 1 - e^{-h_t}. \label{invtl1.1.1.1}
\end{eqnarray}
Since $v \in \mathcal{G}_t$ it satisfies \eqref{invtl1.1.1} and \eqref{invtl1.1.12}. 
%the events in \eqref{neggauche}, \eqref{negdroite}: 
%\begin{eqnarray}
%\int_{\tilde{L}_{k-1}}^{\tilde \tau_k^-(h_t / 2)} e^{-(v(u) - v(\tilde m_k))} du \leq e^{-\epsilon h_t} \ \ \ \text{and} \ \ \ \int_{\tilde \tau_k^+(h_t / 2)}^{\tilde L_k} e^{-(v(u) - v(\tilde m_k))} du \leq e^{-\epsilon h_t}. \label{invtl1.1.1}
%\end{eqnarray}
Combining with \eqref{rayknight2} (applied with $u = e^{\epsilon h_t /2}$) and the lower bound for $R_k^t$ in \eqref{invtl1.1.0} we get 
\begin{align}
P^v \left ( \int_{\tilde \tau_k^+(h_t /2)}^{\tilde L_k}e^{-\tilde v^{(k)}(x)} \mathcal{L}_{\tilde B}(\sigma_{\tilde B}(1,0), A^k(x))dx \leq e^{-\epsilon h_t /2} \leq e^{-\epsilon h_t /4} R_k^t \right ) & \geq 1 - e^{-c h_t}, \label{invtl1.1.2} \\
P^v \left ( \int_{\tilde L_{k-1}}^{\tilde \tau_k^-(h_t /2)}e^{-\tilde v^{(k)}(x)} \mathcal{L}_{\tilde B}(\sigma_{\tilde B}(1,0), A^k(x))dx \leq e^{- \epsilon h_t /2} \leq e^{-\epsilon h_t /4} R_k^t \right ) & \geq 1 - e^{-c h_t}. \label{invtl1.1.3}
\end{align}
Now putting \eqref{invtl1.1.1.1}, \eqref{invtl1.1.2} and \eqref{invtl1.1.3} into \eqref{formulei} (and combining with the fact that $\tilde c < \min (1/20, \epsilon /4)$) we get 
\[ P^v \left( \mathcal{E}_{neg}^1 := \left \{ \left | I - t Z R_k^t \right | > e^{- \tilde c h_t} t Z R_k^t \right \} \right) \leq e^{- c h_t}. \]
Combining with the definition of $Z$ we see that $t (1-z) \leq I$ on the complementary of $\mathcal{E}_{neg}^1$. Combining this with \eqref{invtl} we get that $\mathcal{E}^8_t(v, k, z) \cap \{ \sigma_{X_{\tilde m_k}} ( t \tilde x_t, \tilde m_k) < H_{X_{\tilde m_k}} (\tilde L_k) \}$ is included into
\begin{align}
\left \{ t Z \leq t \tilde x_t, \ \sup_{\mathcal{D}_k} \mathcal{L}_{X_{\tilde m_k}}(\sigma_{X_{\tilde m_k}} ( t Z, \tilde m_k), .) \geq t x_t, \ \sigma_{X_{\tilde m_k}} ( t \tilde x_t, \tilde m_k) < H_{X_{\tilde m_k}} (\tilde L_k) < H_{X_{\tilde m_k}}(\tilde L_{k-1}) \right \} \cup \mathcal{E}_{neg}^1. \label{invtl1.2}
\end{align}
%\begin{align}
%\left \{ t Z \leq t \tilde x_t, \ \sup_{\mathcal{D}_k} \mathcal{L}_{X_{\tilde m_k}}(\sigma ( t Z, \tilde m_k), .) \geq t x_t, H_{X_{\tilde m_k}} (\tilde L_k) > \sigma_{X_{\tilde m_k}} ( t \tilde x_t, \tilde m_k) \vee t(1-z), H_{X_{\tilde m_k}} (\tilde L_k) < H(\tilde L_{k-1}) \right \} \cup ... \label{invtl1.2}
%\end{align}
On the main event, $\sigma_{X_{\tilde m_k}} ( t Z, \tilde m_k)$ is finite. On this event, $\mathcal{L}_{X_{\tilde m_k}}(\sigma_{X_{\tilde m_k}} ( t Z, \tilde m_k), y)$ is thus equal to
\[ L(y) := e^{-\tilde v^{(k)}(y)} \mathcal{L}_B(\sigma_{B}(t Z,0), A^k(y)). \]
Here again, for $\tilde B := B((t Z)^2 .)/t Z$ and $\tilde A^k(.) := A^k(.) / t Z$. We have
\[ L(y) = t Z e^{-\tilde v^{(k)}(y)} \mathcal{L}_{\tilde B}(\sigma_{\tilde B}(1,0), \tilde A^k(y)). \]
Since $\mathcal{D}_k \subset [\tilde \tau_k^-(h_t /2), \tilde \tau_k^+(h_t /2)]$ we can apply \eqref{invtl1.1.1.1pre} (together with $\tilde c < 1/20$) and get 
\begin{eqnarray}
P^v \left( \mathcal{E}_{neg}^2 := \left \{ \exists y \in \mathcal{D}_k, \ L(y) \geq (1+e^{- \tilde c h_t}) t Z \right \} \right) \leq e^{-c h_t}. \label{invtl1.3}
\end{eqnarray}
On the big event in \eqref{invtl1.2} we thus have both $(1+e^{- \tilde c h_t}) t Z > t x_t$ and $t Z \leq t \tilde x_t$, (expect possibly on the event $\mathcal{E}_{neg}^2$). Since these two inequalities are not compatible we get 
\begin{eqnarray}
P^v \left( \mathcal{E}^8_t(v, k, z) \cap \{ \sigma_{X_{\tilde m_k}} ( t \tilde x_t, \tilde m_k) < H_{X_{\tilde m_k}} (\tilde L_k) \} \right ) \leq P^v \left( \mathcal{E}_{neg}^1 \right ) + P^v \left( \mathcal{E}_{neg}^2 \right ) \leq 2 e^{-c h_t}. \label{invtl1.4}
\end{eqnarray}

We now study the case where $\sigma_{X_{\tilde m_k}} ( t \tilde x_t, \tilde m_k) > H_{X_{\tilde m_k}} (\tilde L_k)$. First, we have
\begin{align*}
& \mathcal{E}^8_t(v, k, z) \cap \{ \sigma_{X_{\tilde m_k}} ( t \tilde x_t, \tilde m_k) > H_{X_{\tilde m_k}} (\tilde L_k) \} \nonumber \\
= & \left \{ t Z \leq t \tilde x_t, \ \sup_{\mathcal{D}_k} \mathcal{L}_{X_{\tilde m_k}}(t(1-z), .) \geq t x_t, \  H_{X_{\tilde m_k}} (\tilde L_k) < H_{X_{\tilde m_k}}(\tilde L_{k-1}) \wedge \sigma_{X_{\tilde m_k}} ( t \tilde x_t, \tilde m_k) \right \} \nonumber \\
\subset & \left \{ \sup_{\mathcal{D}_k} \mathcal{L}_{X_{\tilde m_k}}(t(1-z), .) \geq t x_t, \ \mathcal{L}_{X_{\tilde m_k}}(H_{X_{\tilde m_k}} (\tilde L_k), \tilde m_k) < t \tilde x_t \right \} \nonumber \\
\subset & \left \{ \sup_{\mathcal{D}_k} \mathcal{L}_{X_{\tilde m_k}}(H_{X_{\tilde m_k}} (\tilde L_k), .) \geq t x_t, \ \mathcal{L}_{X_{\tilde m_k}}(H_{X_{\tilde m_k}} (\tilde L_k), \tilde m_k) < t \tilde x_t \right \} \cup \overline{\mathcal{E}^1_t}. 
%\label{invtl1.5}
\end{align*}
Indeed, $\sup_{\mathcal{D}_k} \mathcal{L}_{X_{\tilde m_k}}(t(1-z), .) \leq \sup_{\mathcal{D}_k} \mathcal{L}_{X_{\tilde m_k}}(H_{X_{\tilde m_k}} (\tilde L_k), .)$ on $\{ t(1-z) \leq H_{X_{\tilde m_k}} (\tilde L_k) \}$ and $\sup_{\mathcal{D}_k} \mathcal{L}_{X_{\tilde m_k}}(t(1-z), .) = \sup_{\mathcal{D}_k} \mathcal{L}_{X_{\tilde m_k}}(H_{X_{\tilde m_k}} (\tilde L_k), .)$ on $\{ H_{X_{\tilde m_k}} (\tilde L_k) < t(1-z) \} \cap \mathcal{E}^1_t$. Since we are dealing with $X_{\tilde m_k}$, the diffusion shifted at time $H(\tilde m_k)$, we can see that the main event above is included in $\overline{\mathcal{E}^4_t}$. We thus get
\begin{eqnarray}
\mathcal{E}^8_t(v, k, z) \cap \{ \sigma_{X_{\tilde m_k}} ( t \tilde x_t, \tilde m_k) > H_{X_{\tilde m_k}} (\tilde L_k) \} \subset \overline{\mathcal{E}^1_t} \cup \overline{\mathcal{E}^4_t}. \label{invtl1.4.1}
\end{eqnarray}
\eqref{majoe8bis} follows easily from the combination of \eqref{invtl1.4} and \eqref{invtl1.4.1} applied with $z = 1-4/\log(h_t)$. Then, the right hand sides of \eqref{invtl1.4} and \eqref{invtl1.4.1} do not depend on $z$ (which is arbitrary in $[0, 1-4/\log(h_t)]$) and $H(\tilde m_{k})/t$ is independent from $X_{\tilde m_k}$. We can thus replace $z$ by $H(\tilde m_{k})/t$ in \eqref{invtl1.4} and \eqref{invtl1.4.1} (at least on $\{ N_t \geq k, \ H(\tilde m_{k})/t \leq 1 - 4/\log(h_t) \}$). Using the combination of \eqref{invtl1.4} and \eqref{invtl1.4.1} to study the union of events in \eqref{majoe8} we get
\[ P^v \left ( \cup_{k=1}^{n_t} \{ N_t \geq k, \ H(\tilde m_{k})/t \leq 1 - 4/\log(h_t) \} \cap \mathcal{E}^8_t(v, k, H(\tilde m_{k})/t) \right ) \leq 2 n_t e^{-c h_t} + P^v \left ( \overline{\mathcal{E}^1_t} \cup \overline{\mathcal{E}^4_t} \right ). \]
Since $v \in \mathcal{G}_t$ it satisfies \eqref{bonenv1}. We thus have $P^v ( \overline{\mathcal{E}^1_t} \cup \overline{\mathcal{E}^4_t}) \leq e^{-L \phi(t)/2}$ (where $L$ is the constant defined in Fact \ref{bigevents}) and from the definition of $h_t$ and $\phi(t)$ we have easily $2 n_t e^{-c h_t} \leq e^{-c \phi(t)}$ for large $t$. For $t$ large enough we thus get \eqref{majoe8}. 

We now prove \eqref{majoe9}. Let us fix an environment $v \in \mathcal{G}_t$, $z \in [0, 1]$ and $k \leq n_t$. Inversing the local time in the definition of $\mathcal{E}^9_t(v, k, z)$ we get that $\mathcal{E}^9_t(v, k, z)$ coincides with the event
%\begin{align}
%& \left \{ \mathcal{L}_{X_{\tilde m_k}}(t(1-z), \tilde m_k) \leq t / x_t, H_{X_{\tilde m_k}} (\tilde L_k) \geq t(1-z), H_{X_{\tilde m_k}} (\tilde L_k) < H(\tilde L_{k-1}) \right \} \nonumber \\
%= & \left \{ \sigma ( t / x_t, \tilde m_k) \wedge H_{X_{\tilde m_k}} (\tilde L_k) \geq t(1-z), H_{X_{\tilde m_k}} (\tilde L_k) < H(\tilde L_{k-1}) \right \}. \label{invtl2}
%\end{align}
\begin{eqnarray}
& \left \{ R_k^t / x_t < (1-e^{- \tilde c h_t})(1-z), \ \sigma_{X_{\tilde m_k}} ( t / x_t, \tilde m_k) \wedge H_{X_{\tilde m_k}} (\tilde L_k) \geq t(1-z), \right. \nonumber \\
& \left. H_{X_{\tilde m_k}} (\tilde L_k) < H_{X_{\tilde m_k}}(\tilde L_{k-1}) \right \} . \label{invtl2}
\end{eqnarray}
We have to distinguish the cases $H_{X_{\tilde m_k}} (\tilde L_k) > \sigma_{X_{\tilde m_k}} (t / x_t,\tilde m_k)$ and $\sigma_{X_{\tilde m_k}} (t / x_t,\tilde m_k) > H_{X_{\tilde m_k}} (\tilde L_k)$. On $\mathcal{E}^9_t(v, k, z) \cap \{ \sigma_{X_{\tilde m_k}} ( t \tilde x_t, \tilde m_k) < H_{X_{\tilde m_k}} (\tilde L_k) \}$, 
%$\{H_{X_{\tilde m_k}} (\tilde L_k) > \sigma ( t / x_t, \tilde m_k) \geq t(1-z), H_{X_{\tilde m_k}} (\tilde L_k) < H(\tilde L_{k-1})\}$, 
$\sigma_{X_{\tilde m_k}} ( t / x_t, \tilde m_k)$ is finite and the diffusion stays in $[\tilde L_{k-1}, \tilde L_k]$ until this time. On this event, $\sigma_{X_{\tilde m_k}} ( t / x_t, \tilde m_k)$ is thus equal to 
\[ I' := \int_{\tilde L_{k-1}}^{\tilde L_k}e^{-\tilde v^{(k)}(x)} \mathcal{L}_B(\sigma_{B}(t / x_t,0), A^k(x))dx, \]
where, as in \eqref{invtl1.1}, $B$ is the Brownian motion driving the diffusion $X_{\tilde m_k}$. Now, let $\tilde B := B((t / x_t)^2 .)/(t / x_t)$ and $\tilde A^k(.) := A^k(.) / (t / x_t)$. From the scaling property $\tilde B$ is still a Brownian motion. We can write
\[ I' = (t / x_t) \int_{\tilde L_{k-1}}^{\tilde L_k}e^{-\tilde v^{(k)}(x)} \mathcal{L}_{\tilde B}(\sigma_{\tilde B}(1,0), \tilde A^k(x))dx. \]
From $t \geq e^{h_t}$, the definition \eqref{defxtlim} of $x_t$ and the definition \eqref{paramtaillevallees} of $h_t$ we see that $t/x_t \geq e^{3 h_t /4}$ for $t$ large enough. Recall also that $v \in \mathcal{G}_t$ implies that $v$ satisfies \eqref{invtl1.1.00} and \eqref{invtl1.1.001}. We thus get
\[ \sup_{[\tilde \tau_k^-(h_t / 2), \tilde \tau_k^+(h_t / 2)]} \left | \tilde A^k(.) \right | \leq \left ( \left | A^k(\tilde \tau_k^-(h_t / 2)) \right | \vee A^k(\tilde \tau_k^+(h_t / 2)) \right ) / (t / x_t) \leq e^{- h_t /8}, \]
We can now proceed as in the proof of \eqref{majoe8} to get analogues of \eqref{invtl1.1.1.1}, \eqref{invtl1.1.2} and \eqref{invtl1.1.3}. We deduce that
\[ P^v \left( \mathcal{E}_{neg}^3 := \left \{ \left | I' - (t / x_t) R_k^t \right | > e^{- \tilde c h_t} (t / x_t) R_k^t \right \} \right) \leq e^{- c h_t}. \]
Now, note from \eqref{invtl2} that $\mathcal{E}^9_t(v, k, z) \subset \{ (1-e^{- \tilde c h_t}) \sigma_{X_{\tilde m_k}} ( t / x_t, \tilde m_k) \geq t R_k^t / x_t \}$. Since \\ $\sigma_{X_{\tilde m_k}} ( t / x_t, \tilde m_k) = I'$ on $\mathcal{E}^9_t(v, k, z) \cap \{ \sigma_{X_{\tilde m_k}} ( t / x_t, \tilde m_k) < H_{X_{\tilde m_k}} (\tilde L_k) \}$ and $(1+e^{- \tilde c h_t}) (1-e^{- \tilde c h_t}) < 1$ we deduce that $\mathcal{E}^9_t(v, k, z) \cap \{ \sigma_{X_{\tilde m_k}} ( t / x_t, \tilde m_k) < H_{X_{\tilde m_k}} (\tilde L_k) \} \subset \mathcal{E}_{neg}^3$. Then, 
%\begin{eqnarray}
%P^v \left ( R_k^t / x_t < (1+e^{- \tilde c h_t})^{-1}(1-z), H_{X_{\tilde m_k}} (\tilde L_k) > \sigma ( t / x_t, \tilde m_k) \geq t(1-z), H_{X_{\tilde m_k}} (\tilde L_k) < H(\tilde L_{k-1}) \right ) \leq ... \label{invtl3}
%\end{eqnarray}
\begin{eqnarray}
P^v \left( \mathcal{E}^9_t(v, k, z) \cap \{ \sigma_{X_{\tilde m_k}} ( t / x_t, \tilde m_k) < H_{X_{\tilde m_k}} (\tilde L_k) \} \right ) \leq e^{-c h_t}. \label{invtl3}
\end{eqnarray}

%We now study the case where $\sigma_{X_{\tilde m_k}} (t / x_t,\tilde m_k) > H_{X_{\tilde m_k}} (\tilde L_k)$. According to fact \ref{invtlinf} applied with $y_t = 1/x_t$ we deduce that 
%\[ \left \{ \sigma_{X_{\tilde m_k}} (t / x_t,\tilde m_k) > H_{X_{\tilde m_k}} (\tilde L_k) \right \} \subset \left \{ t R_k / x_t > (1-...) H_{X_{\tilde m_k}} (\tilde L_k) \right \} \cup \overline{\mathcal{E}^7_t} \]

We now study the case where $\sigma_{X_{\tilde m_k}} (t / x_t,\tilde m_k) > H_{X_{\tilde m_k}} (\tilde L_k)$, following the ideas of the proof of Lemma 5.3 of \cite{advech}. Recall that $B$ is the Brownian motion driving the diffusion $X_{\tilde m_k}$. We have
\[ \sigma_{X_{\tilde m_k}} (t / x_t,\tilde m_k) > H_{X_{\tilde m_k}} (\tilde L_k) \Leftrightarrow \sigma_B(t / x_t, 0) > \tau(B, A^k(\tilde L_k)) \Leftrightarrow t / x_t > \mathcal{L}_B \left ( \tau(B, A^k(\tilde L_k)), 0 \right ) \]
Now, let $\tilde B := B((A^k(\tilde L_k))^2 .)/A^k(\tilde L_k)$. From the scaling property $\tilde B$ is still a Brownian motion. $\mathcal{L}_B \left ( \tau(B, A^k(\tilde L_k)), 0 \right ) = A^k(\tilde L_k) \mathcal{L}_{\tilde B} \left ( \tau(\tilde B, 1), 0 \right )$. Note that from the definition of $e_k$ given in Subsection \ref{trapsdiff} we have $\mathcal{L}_{\tilde B} \left ( \tau{\tilde B}(1), 0 \right ) = e_k$. As a consequence, 
 \[ \sigma_{X_{\tilde m_k}} (t / x_t,\tilde m_k) > H_{X_{\tilde m_k}} (\tilde L_k) \Leftrightarrow t / x_t > A^k(\tilde L_k) e_k \Leftrightarrow t R_k^t / x_t > A^k(\tilde L_k) e_k R_k^t. \]
Using the trivial inequality 
\[ A^k(\tilde L_j) = \int_{\tilde m_j}^{\tilde L_j} e^{v(u) - v(\tilde m_j)} du \geq \int_{\tilde \tau_j^+(h_t / 2)}^{\tilde L_j} e^{v(u) - v(\tilde m_j)} du = S_j^t \] 
and the definition of $\mathcal{E}^7_t$ we deduce that 
\[ \left \{ \sigma_{X_{\tilde m_k}} (t / x_t,\tilde m_k) > H_{X_{\tilde m_k}} (\tilde L_k) \right \} \subset \left \{ t R_k^t / x_t > (1+e^{- \tilde c h_t})^{-1} H_{X_{\tilde m_k}} (\tilde L_k) \right \} \cup \overline{\mathcal{E}^7_t} \]
Now, note from \eqref{invtl2} that $\mathcal{E}^9_t(v, k, z) \subset \{ (1-e^{- \tilde c h_t}) H_{X_{\tilde m_k}} (\tilde L_k) \geq t R_k^t / x_t \}$. Since $(1+e^{- \tilde c h_t}) (1-e^{- \tilde c h_t}) < 1$, the inequality $t R_k^t / x_t > (1+e^{- \tilde c h_t})^{-1} H_{X_{\tilde m_k}} (\tilde L_k)$ is in contradiction with the event $\mathcal{E}^9_t(v, k, z)$. We thus get
\begin{eqnarray}
\mathcal{E}^9_t(v, k, z) \cap \{ \sigma_{X_{\tilde m_k}} ( t / x_t, \tilde m_k) > H_{X_{\tilde m_k}} (\tilde L_k) \} \subset \overline{\mathcal{E}^7_t} \label{invtl4}
\end{eqnarray}
%We thus get
%\begin{eqnarray}
%P^v \left ( R_k^t / x_t < (1-...)(1-z), \sigma ( t / x_t, \tilde m_k) > H_{X_{\tilde m_k}} (\tilde L_k) \geq t(1-z), H_{X_{\tilde m_k}} (\tilde L_k) < H(\tilde L_{k-1}) \right ) \leq ... \label{invtl4}
%\end{eqnarray}
Since the right hand sides of \eqref{invtl3} and \eqref{invtl4} do not depend on $z$ (which is arbitrary in $[0, 1]$) and $H(\tilde m_{k})/t$ is independent from $X_{\tilde m_k}$, we can replace $z$ by $H(\tilde m_{k})/t$ in \eqref{invtl3} and \eqref{invtl4} (at least on $\{ N_t \geq k \}$). We can thus use the combination of \eqref{invtl3} and \eqref{invtl4} to study the union of events in \eqref{majoe9}. We get
\[ P^v \left ( \cup_{k=1}^{n_t} \{ N_t \geq k \} \cap \mathcal{E}^9_t(v, k, H(\tilde m_{k})/t) \right ) \leq n_t e^{-c h_t} + P^v \left ( \overline{\mathcal{E}^7_t} \right ) \]
Since $v \in \mathcal{G}_t$ it satisfies \eqref{bonenv1}. We thus have $P^v ( \overline{\mathcal{E}^7_t}) \leq e^{-L \phi(t)/2}$ (where $L$ is the constant defined in Fact \ref{bigevents}) and from the definition of $h_t$ and $\phi(t)$ we have easily $n_t e^{-c h_t} \leq e^{-c \phi(t)}$ for large $t$. For $t$ large enough we thus get \eqref{majoe9}. 
%Finally, the combination of \eqref{invtl2}, \eqref{invtl3} and \eqref{invtl4} gives
%\[ P^v \left ( R_k^t / x_t < (1-...)(1-z), \mathcal{L}_{X_{\tilde m_k}}(t(1-z), \tilde m_k) \leq t / x_t, H_{X_{\tilde m_k}} (\tilde L_k) \geq t(1-z), H(\tilde L_k) < H(\tilde L_{k-1}) \right ) \leq ... \]
%and this is precisely (...). 

\end{proof}

\begin{proof} of Fact \ref{queueiid} 

\eqref{cvmeasure7.1} and \eqref{cvmeasure9.1} are included into Lemma 4.16 of \cite{caslevyvech} while \eqref{cvrlapl} comes from Proposition 3.8 of \cite{caslevyvech} applied with $h = h_t$. 
\end{proof}

\begin{proof} of Lemma \ref{infsupdiff}

First, note that from \eqref{paramtaillevallees} and the definition of $n_t$ just after, we have $2 t^{\kappa} e^{\kappa \delta (\log (\log (t)))^{\omega}} /q = 2 n_t e^{\kappa h_t} /q$. From the definition of $N_t$, we know that on $\{ V \in \mathcal{V}_t \}$, at time $t$ , $\tilde m_{N_t +1}$ has never been reached by the diffusion and neither $\tilde L_{N_t +1}$ (because $\tilde L_{N_t +1} > \tilde m_{N_t +1}$), we thus have
\begin{align*}
\mathbb{P} \left ( \sup_{[0, t]} X \geq 2 n_t e^{\kappa h_t} /q \right ) & \leq \mathbb{P} \left ( \tilde L_{N_t +1} \geq 2 n_t e^{\kappa h_t} /q \right ) + \mathbb{P} \left (  V \notin \mathcal{V}_t \right ) \\
& \leq \mathbb{P} \left ( \tilde L_{n_t} \geq 2 n_t e^{\kappa h_t} /q \right ) + \mathbb{P} \left (  V \notin \mathcal{V}_t \right ) + \mathbb{P} \left ( N_t \geq n_t \right ) \\
& \leq \mathbb{P} \left ( (1 + e^{-c_1 h_t}) \sum_{j=1}^{n_t} D_j^t \geq 2 n_t e^{\kappa h_t} /q \right ) + e^{- c h_t}, 
\end{align*}
where $c$ is a positive constant, $c_1$ has the same meaning as in Proposition \ref{contribdist} and $t$ is large enough. For the last inequality, we used Proposition \ref{contribdist}, Fact \ref{minimacoincide} and \eqref{majonbvalleesvisit}. 
\[ \leq e^{- (1 + e^{-c_1 h_t})^{-1} n_t} \left ( \mathbb{E} \left [ e^{q e^{-\kappa h_t} D_j^t/2} \right ] \right )^{n_t} + e^{- c h_t} = e^{(\log(2)- (1 + e^{-c_1 h_t})^{-1}) n_t} + e^{- c h_t}, \]
where we used Markov's inequality, the fact that the sequence $(q e^{-\kappa h_t} D_j^t)$ is an \textit{iid} sequence of exponential random variable with parameter $1$ and the expression of the Laplace transform for the exponential distribution. Since $\log(2) < 1$ and $n_t = e^{\kappa (1+\delta) (\log (\log(t)))^{\omega}} >> h_t$, we get the first point for $t$ large enough. 

We now prove the second point. We first note that from \eqref{paramtaillevallees} and the definition of $\tilde n_t$ just after, we have $t^{\kappa} e^{(\rho - \kappa) (\log (\log (t)))^{\omega}} /2q \underset{t \rightarrow +\infty}{\sim} \tilde n_t e^{\kappa h_t} /2q$. On $\{ V \in \mathcal{V}_t \} \cap \{ N_t < n_t \} \cap \mathcal{E}^1_t$ we have $X(t) \geq \tilde L_{N_t -1}$ so
\begin{align*}
\mathbb{P} \left ( X(t) \leq \tilde n_t e^{\kappa h_t} /2q \right ) & \leq \mathbb{P} \left ( \tilde L_{N_t -1} \leq \tilde n_t e^{\kappa h_t} /2q \right ) + \mathbb{P} \left (  V \notin \mathcal{V}_t \right ) + \mathbb{P} \left ( N_t \geq n_t \right ) + \mathbb{P} \left ( \overline{\mathcal{E}^1_t} \right ) \\
& \leq \mathbb{P} \left ( \tilde L_{\tilde n_t} \leq \tilde n_t e^{\kappa h_t} /2q \right ) + \mathbb{P} \left (  V \notin \mathcal{V}_t \right ) + \mathbb{P} \left ( N_t \leq \tilde n_t \right ) + \mathbb{P} \left ( N_t \geq n_t \right ) + \mathbb{P} \left ( \overline{\mathcal{E}^1_t} \right ) \\
& \leq \mathbb{P} \left ( (1 - e^{-c_1 h_t}) \sum_{j=1}^{\tilde n_t} D_j^t \leq \tilde n_t e^{\kappa h_t} /2q \right ) + e^{- c \phi(t)}
\end{align*}
where $c$ is a positive constant, $c_1$ has the same meaning as in Proposition \ref{contribdist} and $t$ is large enough. For the last inequality we used Proposition \ref{contribdist}, Fact \ref{minimacoincide}, Lemma \ref{minmajnbvalleesvisit}, \eqref{bigeventsleq} and the fact that $e^{- c h_t} \leq e^{- c \phi(t)}$ for large $t$. 
\[ \leq e^{(1 - e^{-c_1 h_t})^{-1} \tilde n_t} \left ( \mathbb{E} \left [ e^{-2 q e^{-\kappa h_t} D_j^t} \right ] \right )^{\tilde n_t} + e^{- c \phi(t)} = e^{(-\log(3) + (1 - e^{-c_1 h_t})^{-1}) \tilde n_t} + e^{- c \phi(t)}, \]
where we used Markov's inequality, the fact that the sequence $(q e^{-\kappa h_t} D_j^t)_{j \geq 1}$ is an \textit{iid} sequence of exponential random variable with parameter $1$ and the expression for the Laplace transform of the exponential distribution. Since $\log(3) > 1$ and $\tilde n_t = \lfloor e^{\rho (\log (\log(t)))^{\omega}} \rfloor >> h_t$, we get the second point for $t$ large enough. 

We now prove \eqref{minoinfndiff}. Note that for a fixed environment $v \in \mathcal{V}$ we have
%\begin{align*}
%P^v \left ( \inf_{[0, +\infty[} X \leq truc \right ) & = P^v \left ( H(truc) < H(+\infty) \right ) = \int_{0}^{+\infty} e^{v(y)} dy / \int_{truc}^{+\infty} e^{v(y)} dy \\
%& \leq \int_{0}^{+\infty} e^{v(y)} dy / \int_0^{truc} e^{v(y)} dy
%We have
%\begin{align*}
%\mathbb{P} \left ( \int_{truc}^{+\infty} e^{V(y)} dy \leq ... \right ) & \leq \mathbb{P} \left ( V(truc /2) \leq ... \right ) + \mathbb{P} \left ( \inf_{[truc, truc/2]} V(.) - V(truc /2) \leq ... \right ) \\
%& = \mathbb{P} \left ( V(truc /2) \geq ... \right ) + \mathbb{P} \left ( \sup_{[0, truc/2]} V \geq ... \right ), 
%\end{align*}
%where we used the fact that $(V(-x), x \geq 0)$ is equal in law to $-V$ and the stationarity of increments. 
%\begin{align*}
%& \leq \mathbb{P} \left ( e^{\kappa V(truc /2) /2} \geq e^{\kappa ... /2} \right ) + \mathbb{P} \left ( \sup_{[0, +\infty]} V \geq ... \right ) \\
%& \leq e^{truc \psi_V(\kappa/2) /2 - \kappa .../2} + e^{-\kappa ...}, 
%\end{align*}
%where we used Markov inequality and the definition of $\psi_V$ for the first term, the fact that the supremum of $V$ follows an exponential distribution with parameter $\kappa$ for the second term. 
\begin{eqnarray}
P^v \left ( \inf_{[0, +\infty[} X \leq -r \right ) = P^v \left ( H(-r) < H(+\infty) \right ) = \frac{\int_{0}^{+\infty} e^{v(x)} dx}{\int_{-r}^{+\infty} e^{v(x)} dx} \leq \frac{\int_{0}^{+\infty} e^{v(x)} dx}{\int_{-r}^{-r/2} e^{v(x)} dx} \label{lemmeinfmajocompdn1}
\end{eqnarray}
and note that
\[ \int_{-r}^{-r/2} e^{V(x)} dx = \int_{r/2}^{r} e^{V(-x)} dx \overset{\mathcal{L}}{=} \int_{r/2}^{r} e^{-V(x)} dx, \]
where the equality in law comes from the time-reversal property. Applying Lemma \ref{supdev} with $t = r /2$ and $a = \sqrt{r}$ we get
\begin{eqnarray}
P \left ( \int_{r/2}^{r} e^{-V(x)} dx \leq \frac{r}{2} e^{ \sqrt{r}} \right ) \leq e^{c_1 \sqrt{r} - c_2 r /2} + e^{-\kappa \sqrt{r}}, \label{lemmeinfmajocompdn2}
\end{eqnarray}
where $c_1$ and $c_2$ are the constants in the lemma. Then, applying Lemma \ref{foncexpov} we get for $r$ large enough, 
\begin{eqnarray}
P \left ( \int_{0}^{+\infty} e^{V(x)} dx \geq e^{ \sqrt{r}} \right ) \leq e^{-\kappa \sqrt{r} / 2}. \label{lemmeinfmajocompdn3}
\end{eqnarray}

Putting \eqref{lemmeinfmajocompdn2} and \eqref{lemmeinfmajocompdn3} into \eqref{lemmeinfmajocompdn1} we get that with $P$-probability greater than $1 - (e^{c_1 \sqrt{r} - c_2 r /2} + e^{-\kappa \sqrt{r}} + e^{-\kappa \sqrt{r}/2})$: $P^V \left ( \inf_{[0, +\infty[} X \leq -r \right ) \leq 2 r^{-1}$ (and it is bounded by $1$ when this estimates fails) so integrating on $\mathcal{V}$ with respect to $P$ we get
\[ \mathbb{P} \left ( \inf_{[0, +\infty[} X \leq -r \right ) \leq 2 r^{-1} + e^{c_1 \sqrt{r} - c_2 r /2} + e^{-\kappa \sqrt{r}} + e^{-\kappa \sqrt{r}/2}. \]
\eqref{minoinfndiff} follows for $r$ large enough. Finally, \eqref{queuetlneg} is included in Lemma 5.19 of \cite{caslevyvech}. 

\end{proof}

\subsection{Almost sure constantness of $\limsup$ and $\liminf$} \label{cteness}

We now use a classical argument involving Kolmogorov $0-1$ law to justify the almost sure constantness of the $\limsup$ stated in Remark \ref{limsupetinfcte}. We here treat the case of the $\limsup$ with the renormalization $t \log (\log(t))$. The same argument can be used with the $\liminf$ instead of the $\limsup$, or with any of the other renormalizations used in the paper. 

We first fix $v \in \mathcal{V}$, a realization of the environment. For any $n \in \mathbb{N}$, the process $X^n := X(\tau(X, n) + .)$ is, according to the Markov property, a diffusion in the environment $v(n + .)$ and it is independent from $(X(s), \ 0 \leq s \leq \tau(X, n))$. We have
% \overset{\mathcal{L}}{=} V$, so it has the same law as $X$. In particular, 
\begin{eqnarray}
\limsup_{t \rightarrow + \infty} \frac{\mathcal{L}_{X}^*(t)}{t \log (\log(t))} = \limsup_{t \rightarrow + \infty} \frac{\mathcal{L}_{X^n}^*(t)}{(\tau(X, n) + t) \log (\log(\tau(X, n) + t))} = \limsup_{t \rightarrow + \infty} \frac{\mathcal{L}_{X^n}^*(t)}{t \log (\log(t))}, \label{limsupcteas}
\end{eqnarray}
where the first equality comes from the fact that $\mathbb{P}$-almost surely the favorite site $F^*(t)$ goes to $+\infty$. Indeed, the diffusion $\mathbb{P}$-almost surely converges to $+\infty$ and as we can see from the results of Subsection \ref{results}, $\mathcal{L}_{X}^*(t)$ converges $\mathbb{P}$-almost surely to infinity, these two facts imply the convergence of $F^*(t)$ to $+\infty$. As a consequence $F^*(t)$ will become greater than $n$ for $t$ large enough which implies the first equality in \eqref{limsupcteas}. The second equality comes from the equivalence when $t$ goes to infinity between $(\tau(X, n) + t) \log (\log(\tau(X, n) + t))$ and $t \log (\log(t))$. 

The $\limsup$ in the right hand side of \eqref{limsupcteas} belongs to the $\sigma$-field $\sigma (X^n(t), \ t \geq 0$, it is thus independent from $\sigma (X(s), \ 0 \leq s \leq \tau(X, n))$. Since this is true for any $n \in \mathbb{N}$ we get, according to Kolmogorov $0-1$ law, that the $\limsup$ is constant $P^v$-almost surely. In other words, the $\limsup$ is only a deterministic function of the environment $v$, let us denote it by $L(v)$. 

Let us fix $v \in \mathcal{V}$ and $n \in \mathbb{N}$. Note that $P^v (\inf X^n \geq n-1) > 0$ so the $\limsup$ still equals $L(v)$ on $\{ \inf X^n \geq n-1 \}$. As a consequence, $L(v)$ is only a function of $(v(x + n-1) - v(n-1), \ x \geq 0)$. 
%Since almost surely, for any $n \in \mathbb{N}$, the interval $]-\infty, n]$ is left in finite time, the value of the $\limsup$, $L(v)$, is only a function of $v(n + x) - v(n)$. 
If we consider the space $\mathcal{V}$ equipped with probability $P$, this implies that $L(V)$ is independent from the $\sigma$-field $\sigma (V(x), \ x \leq n-1)$. Since this is true for all $n \in \mathbb{N}$ we deduce from Kolmogorov $0-1$ law that $L(V)$ is constant $P$-almost surely. This proves that the $\limsup$ is in fact constant $\mathbb{P}$-almost surely. In other words, 
\[ \exists \lambda \in [0, +\infty] \ \text{such that} \ \mathbb{P}-a.s. \ \limsup_{t \rightarrow + \infty} \frac{\mathcal{L}_{X}^*(t)}{t \log (\log(t))} = \lambda. \]

\bibliographystyle{plain}
\bibliography{thbiblio}

\begin{thebibliography}{10}

\bibitem{Pierre2}
P.~Andreoletti.
\newblock On the concentration of {S}inai's walk.
\newblock {\em Stoch. Proc. Appl.}, 116:1377--1408, 2006.

\bibitem{Pierre3}
P.~Andreoletti.
\newblock Almost sure estimates for the concentration neighborhood of {S}inai's
  walk.
\newblock {\em Stoch. Proc. Appl.}, 117, 2007.

\bibitem{Pierre4}
P.~Andreoletti.
\newblock On the estimation of the potential of {S}inai's rwre.
\newblock {\em Braz. J. Probab. Stat.}, 25:121--235, 2011.

\bibitem{AndDev}
P.~Andreoletti and A.~Devulder.
\newblock Localization and number of visited valleys for a transient diffusion
  in random environment.
\newblock {\em Electronic Journal of Probability}, pages \ 1--59, 2015.

\bibitem{advech}
P.~Andreoletti, A.~Devulder, and G.~V\'echambre.
\newblock Renewal structure and local time for diffusions in random
  environment.
\newblock {\em ALEA, Lat. Am. J. Probab. Math. Stat.}, \textbf{13}:863--923,
  2016.

\bibitem{AndDiel}
P.~Andreoletti and R.~Diel.
\newblock Limit law of the local time for brox's diffusion.
\newblock {\em J. Theoretical Probab.}, \textbf{24}:\ 634--656, 2011.

\bibitem{Bertoin}
J.~Bertoin.
\newblock {\em L\'evy Processes}.
\newblock Tracts in Mathematics, Cambridge, 1996.

\bibitem{regvar}
N.~H. Bingham, C.~M. Goldie, and J.~L. Teugels.
\newblock {\em Regular Variation}.
\newblock Cambridge University Press, 1987.
\newblock Cambridge Books Online.

\bibitem{Brox}
T.~Brox.
\newblock A one-dimensional diffusion process in a {W}iener medium.
\newblock {\em Ann. Probab.}, \textbf{14}(4):\ 1206--1218, 1986.

\bibitem{Carmona}
P.~Carmona.
\newblock The mean velocity of a {B}rownian motion in a random {L}\'evy
  potential.
\newblock {\em Ann. Probab.}, \textbf{25}(4):\ 1774--1788, 1997.

\bibitem{Cheliotis2006715}
D.~Cheliotis.
\newblock One-dimensional diffusion in an asymmetric random environment.
\newblock {\em Annales de l'Institut Henri Poincare (B) Probability and
  Statistics}, 42(6):715 -- 726, 2006.

\bibitem{Comets_etal}
F.~Comets, M.~Falconnet, O.~Loukianov, D.~Loukianova, and C.~Matias.
\newblock Maximum likelihood estimator consistency for ballistic random walk in
  a parametric random environment.
\newblock {\em Stochastic Processes and Applications}, 124(1):268--288, 2014.

\bibitem{DemGanPerShi}
A.~Dembo, N.~Gantert, Y.~Peres, and Z.~Shi.
\newblock Valleys and the maximal local time for random walk in random
  environment.
\newblock {\em Probability Theory and Related Fields}, 137:\ 443--473, 2007.

\bibitem{Devmaxloc}
A.~Devulder.
\newblock The maximum of the local time of a diffusion process in a drifted
  {B}rownian potential.
\newblock {\em Preprint, arxiv 0604078}, 2015.

\bibitem{devulder2016}
A.~Devulder.
\newblock Persistence of some additive functionals of {S}inai's walk.
\newblock {\em Ann. Inst. H. Poincar\'e Probab. Statis.}, 52(3):\ 1076--1105,
  2016.

\bibitem{Diel}
R.~Diel.
\newblock Almost sure asymptotics for the local time of a diffusion in
  {B}rownian environment.
\newblock {\em Stoch. Proc. Appl.}, \textbf{121}:\ 2303--2330, 2011.

\bibitem{Dielvois}
R.~Diel and G.~Voisin.
\newblock Local time of a diffusion in a stable {L}\'evy environment.
\newblock {\em Stochastics An International Journal of Probability and
  Stochastic Processes}, 83(2):127--152, 2011.

\bibitem{Doney}
R.~Doney.
\newblock {\em Fluctuation Theory for L\'evy Processes (Summer school, St-Floor
  France)}.
\newblock Springer, 2005.

\bibitem{DoMoFi}
P.~Le Doussal, C.~Monthus, and D.~Fisher.
\newblock Random walkers in one-dimensional random environments : Exact
  renormalization group analysis.
\newblock {\em Phys, Rev. E}, pages \ 4795--4840, 1999.

\bibitem{enriquez2013}
N.~Enriquez, C.~Sabot, L.~Tournier, and O.~Zindy.
\newblock Quenched limits for the fluctuations of transient random walks in
  random environment on $\mathbb{Z}$.
\newblock {\em Ann. Appl. Probab.}, 23(3):1148--1187, 06 2013.

\bibitem{EnSaZi}
N.~Enriquez, C.~Sabot, and O.~Zindy.
\newblock Aging and quenched localization for one dimensional random walks in
  random environment in the sub-ballistic regime.
\newblock {\em Bulletin de la Soci\'et\'e Math\'ematique de France}, 137:\
  423--452, 2009.

\bibitem{Enriquez200900}
N.~Enriquez, C.~Sabot, and O.~Zindy.
\newblock Limit laws for transient random walks in random environment on
  $\mathbb{Z}$.
\newblock {\em Annales de l'institut Fourier}, 59(6):2469--2508, 2009.

\bibitem{Faraud2011}
G.~Faraud.
\newblock Estimates on the speedup and slowdown for a diffusion in a drifted
  {B}rownian potential.
\newblock {\em Journal of Theoretical Probability}, 2011.

\bibitem{GanPerShi}
N.~Gantert, Y.~Peres, and Z.~Shi.
\newblock The infinite valley for a recurrent random walk in random
  environment.
\newblock {\em Ann. Inst. H. Poincar\'e Probab. Statist.}, 46(2):525--536, 05
  2010.

\bibitem{GanShi}
N.~Gantert and Z.~Shi.
\newblock Many visits to a single site by a transient random walk in random
  environment.
\newblock {\em Stoch. Proc. Appl.}, 99:\ 159--176, 2002.

\bibitem{HuShi1}
Y.~Hu and Z.~Shi.
\newblock The local time of simple random walk in random environment.
\newblock {\em J. of Theoret. Probab.}, \textbf{11}(3), 1998.

\bibitem{hu2004}
Y.~Hu and Z.~Shi.
\newblock Moderate deviations for diffusions with {B}rownian potentials.
\newblock {\em Ann. Probab.}, 32(4):3191--3220, 10 2004.

\bibitem{HuShYo}
Y.~Hu, Z.~Shi, and M.~Yor.
\newblock Rates of convergence for one-dimensional diffusion in a drifted
  {B}rownian potential.
\newblock {\em Trans. Amer, Math. Soc}, 2000.

\bibitem{KawazuTanaka}
K.~Kawazu and H.~Tanaka.
\newblock A diffusion process in a {B}rownian environment with drift.
\newblock {\em J. Math. Soc. Japan}, \textbf{49}, pages={\ 189--211},
  year={1997}, %OPTvolume = {}, %OPTnumber = {}, %OPTpages = {}, %OPTmonth =
  {}, %OPTnote = {}, %OPTannote = {}.

\bibitem{kawazu1998}
K.~Kawazu and H.~Tanaka.
\newblock Invariance principle for a {B}rownian motion with large drift in a
  white noise environment.
\newblock {\em Hiroshima Math. J.}, 28(1):129--137, 1998.

\bibitem{Kochler}
M.~Kochler.
\newblock {\em Random Walks in Random Environment, Random Orientations and
  Branching}.
\newblock PhD thesis, Technische Universitat Munchen, 2012.

\bibitem{Mathieu}
P.~Mathieu.
\newblock Zero white noise limit through dirichlet forms, with application to
  diffusions in a random media.
\newblock {\em Proba. Theory Relat. Fields}, 99:\ 549--480, 1994.

\bibitem{Mathieu2}
P.~Mathieu.
\newblock Limit theorems for diffusions with a random potential.
\newblock {\em Stochastic Process. Appl.}, \textbf{60}:\ 103--111, 1995.

\bibitem{Revesz}
P.~R\'ev\'esz.
\newblock {\em Random walk in random and non-random environments}.
\newblock World Scientific, 1989.

\bibitem{Schumacher}
S.~Schumacher.
\newblock Diffusions with random coefficients.
\newblock {\em Contemp. Math.}, \textbf{41}:\ 351--356, 1985.

\bibitem{Shi}
Z.~Shi.
\newblock A local time curiosity in random environment.
\newblock {\em Stoch. Proc. Appl.}, \textbf{76}(2):\ 231--250, 1998.

\bibitem{Shi1}
Z.~Shi.
\newblock {S}inai's walk via stochastic calculus.
\newblock {\em Panoramas et Synth\`eses}, \textbf{12}:\ 53--74, 2001.

\bibitem{Singh2007101}
A.~Singh.
\newblock Limiting behavior of a diffusion in an asymptotically stable
  environment.
\newblock {\em Annales de l'Institut Henri Poincare (B) Probability and
  Statistics}, 43(1):101 -- 138, 2007.

\bibitem{Singh2007}
A.~Singh.
\newblock A slow transient diffusion in a drifted stable potential.
\newblock {\em Journal of Theoretical Probability}, 20(2):153--166, 2007.

\bibitem{Singh}
A.~Singh.
\newblock Rates of convergence of a transient diffusion in a spectrally
  negative l\'evy potential.
\newblock {\em Ann. Probab.}, \textbf{36}:\ 279--318, 2008.

\bibitem{Taleb0}
M.~Taleb.
\newblock Large deviations for a {B}rownian motion in a drifted {B}rownian
  potential.
\newblock {\em Ann. Probab.}, \textbf{29}(3):\ 1173--1204, 2001.

\bibitem{talet2007}
M.~Talet.
\newblock Annealed tail estimates for a {B}rownian motion in a drifted
  {B}rownian potential.
\newblock {\em Ann. Probab.}, 35(1):32--67, 01 2007.

\bibitem{tanaka1993}
H.~Tanaka.
\newblock Recurrence of a diffusion process in a multidimensional {B}rownian
  environment.
\newblock {\em Proc. Japan Acad. Ser. A Math. Sci.}, 69(9):377--381, 1993.

\bibitem{Tanaka19971807}
H.~Tanaka.
\newblock Limit theorems for a {B}rownian motion with drift in a white noise
  environment.
\newblock {\em Chaos, Solitons and Fractals}, \textbf{8}(11):\ 1807--1816,
  1997.

\bibitem{foncexpovech}
G.~V\'echambre.
\newblock Exponential functionals of spectrally one-sided {L}\'evy processes
  conditioned to stay positive.
\newblock {\em to appear in Ann. Inst. H. Poincar\'e Probab. Statist.,
  Preprint, arxiv 1507.02949, version of February 2018}, 2015.

\bibitem{caslevyvech}
G.~V\'echambre.
\newblock Path decompostion of spectrally negative {L}\'evy processes, and
  application to the local time of a diffusion in those environments.
\newblock {\em Markov Processes Relat. Fields}, \textbf{24}:\ 563--668, 2018.

\bibitem{Zeitouni}
O.~Zeitouni.
\newblock Lectures notes on random walks in random environment.
\newblock {\em St Flour Summer School}, 2001.

\bibitem{Zindy2006UpperLO}
O~Zindy.
\newblock Upper limits of sinai's walk in random scenery.
\newblock {\em Stoch. Proc. Appl.}, \textbf{118}:\ 981--1003, 2008.

\end{thebibliography}

\end{document}